\documentclass[a4paper,oneside,english,italian,british]{amsart}
\usepackage[T1]{fontenc}
\usepackage[latin9]{inputenc}
\setcounter{secnumdepth}{2}
\setcounter{tocdepth}{1}
\synctex=-1
\usepackage{color}
\usepackage{babel}
\usepackage{array}
\usepackage{prettyref}
\usepackage{float}
\usepackage{multirow}
\usepackage{amsthm}
\usepackage{amstext}
\usepackage{amssymb}
\usepackage{subscript}
\usepackage[unicode=true,pdfusetitle,
 bookmarks=true,bookmarksnumbered=true,bookmarksopen=false,
 breaklinks=false,pdfborder={0 0 0},backref=false,colorlinks=true]
 {hyperref}

\makeatletter

\pdfpageheight\paperheight
\pdfpagewidth\paperwidth

\providecommand{\tabularnewline}{\\}

\numberwithin{equation}{section}
\numberwithin{figure}{section}
\numberwithin{table}{section}
\usepackage{enumitem}		
\theoremstyle{plain}
\newtheorem{thm}{\protect\theoremname}[section]
  \theoremstyle{definition}
  \newtheorem{defn}[thm]{\protect\definitionname}
  \theoremstyle{remark}
  \newtheorem{notation}[thm]{\protect\notationname}
  \theoremstyle{plain}
  \newtheorem{fact}[thm]{\protect\factname}
  \theoremstyle{plain}
  \newtheorem{prop}[thm]{\protect\propositionname}
  \theoremstyle{remark}
  \newtheorem{rem}[thm]{\protect\remarkname}
  \theoremstyle{plain}
  \newtheorem{cor}[thm]{\protect\corollaryname}
  \theoremstyle{plain}
  \newtheorem{lem}[thm]{\protect\lemmaname}

\newrefformat{prop}{\protect\propositionname\ \ref{#1}}
\newrefformat{fact}{\protect\factname\ \ref{#1}}
\newrefformat{cor}{\protect\corollaryname\ \ref{#1}}
\newrefformat{lem}{\protect\lemmaname\ \ref{#1}}
\newrefformat{rmk}{\protect\remarkname\ \ref{#1}}
\newrefformat{sub}{\protect{subsection}\ \ref{#1}}

\DeclareRobustCommand{\SkipTocEntry}[5]{}

\makeatother

  \addto\captionsbritish{\renewcommand{\corollaryname}{Corollary}}
  \addto\captionsbritish{\renewcommand{\definitionname}{Definition}}
  \addto\captionsbritish{\renewcommand{\factname}{Fact}}
  \addto\captionsbritish{\renewcommand{\lemmaname}{Lemma}}
  \addto\captionsbritish{\renewcommand{\notationname}{Notation}}
  \addto\captionsbritish{\renewcommand{\propositionname}{Proposition}}
  \addto\captionsbritish{\renewcommand{\remarkname}{Remark}}
  \addto\captionsbritish{\renewcommand{\theoremname}{Theorem}}
  \addto\captionsenglish{\renewcommand{\corollaryname}{Corollary}}
  \addto\captionsenglish{\renewcommand{\definitionname}{Definition}}
  \addto\captionsenglish{\renewcommand{\factname}{Fact}}
  \addto\captionsenglish{\renewcommand{\lemmaname}{Lemma}}
  \addto\captionsenglish{\renewcommand{\notationname}{Notation}}
  \addto\captionsenglish{\renewcommand{\propositionname}{Proposition}}
  \addto\captionsenglish{\renewcommand{\remarkname}{Remark}}
  \addto\captionsenglish{\renewcommand{\theoremname}{Theorem}}
  \addto\captionsitalian{\renewcommand{\corollaryname}{Corollario}}
  \addto\captionsitalian{\renewcommand{\definitionname}{Definizione}}
  \addto\captionsitalian{\renewcommand{\factname}{Fatto}}
  \addto\captionsitalian{\renewcommand{\lemmaname}{Lemma}}
  \addto\captionsitalian{\renewcommand{\notationname}{Notazione}}
  \addto\captionsitalian{\renewcommand{\propositionname}{Proposizione}}
  \addto\captionsitalian{\renewcommand{\remarkname}{Osservazione}}
  \addto\captionsitalian{\renewcommand{\theoremname}{Teorema}}
  \providecommand{\corollaryname}{Corollary}
  \providecommand{\definitionname}{Definition}
  \providecommand{\factname}{Fact}
  \providecommand{\lemmaname}{Lemma}
  \providecommand{\notationname}{Notation}
  \providecommand{\propositionname}{Proposition}
  \providecommand{\remarkname}{Remark}
\providecommand{\theoremname}{Theorem}

\begin{document}
\selectlanguage{english}%

\global\long\def\A{\mathbb{A}}

\global\long\def\B{\mathbb{B}}

\global\long\def\C{\mathbb{C}}

\global\long\def\F{\mathbb{F}}

\global\long\def\G{\mathbb{G}}

\global\long\def\N{\mathbb{N}}

\global\long\def\P{\mathbb{P}}

\global\long\def\Q{\mathbb{Q}}

\global\long\def\oQ{\overline{\mathbb{Q}}}

\global\long\def\R{\mathbb{R}}

\global\long\def\Sb{\mathbb{S}}

\global\long\def\Z{\mathbb{Z}}

\global\long\def\Ac{\mathcal{A}}

\global\long\def\Bc{\mathcal{B}}

\global\long\def\Cc{\mathcal{C}}

\global\long\def\Jc{\mathcal{J}}

\global\long\def\Lc{\mathcal{L}}

\global\long\def\Mc{\mathcal{M}}

\global\long\def\Oc{\mathcal{O}}

\global\long\def\Rc{\mathcal{R}}

\global\long\def\Sc{\mathcal{S}}

\global\long\def\Uc{\mathcal{U}}

\global\long\def\Vc{\mathcal{V}}

\global\long\def\Wc{\mathcal{W}}

\global\long\def\Aut{\mathrm{Aut}}

\global\long\def\acl{\mathrm{acl}}

\global\long\def\cl{\mathrm{cl}}

\global\long\def\divs{\mathrm{div}}

\global\long\def\dom{\mathrm{dom}}

\global\long\def\Gal{\mathrm{Gal}}

\global\long\def\ld{\mathrm{lin.d.}}

\global\long\def\ord{\mathrm{ord}}

\global\long\def\preim{\mathrm{preim}}

\global\long\def\ran{\mathrm{ran}}

\global\long\def\rank{\mathrm{rank}}

\global\long\def\supp{\mathrm{supp}}

\global\long\def\td{\mathrm{tr.deg.}}

\global\long\def\st{^{*}\!}
\selectlanguage{british}%

\global\long\def\intv#1#2{\left\langle #1;#2\right\rangle }

\global\long\def\zEEz#1#2{\intv{#2}{#1\left(#2\right)}}

\global\long\def\zEz#1{\zEEz E{#1}}

\global\long\def\span{\mathrm{span}}

\global\long\def\im{\mathrm{im}}

\global\long\def\odim{o\mathrm{-dim}}

\title{Involutions on Zilber fields}

\selectlanguage{italian}%

\author{Vincenzo Mantova}

\selectlanguage{british}%

\date{Draft of 25th May 2013}

\keywords{\noindent {\small{Pseudoexponentiation, Zilber field, involution,
complex conjugation, Schanuel's Conjecture}}}

\subjclass[2010]{\noindent {\small{03C60, 08C10, 12L12}}}
\begin{abstract}
After recalling the definition of Zilber fields, and the main conjecture
behind them, we prove that Zilber fields of cardinality up to the
continuum have involutions, i.e., automorphisms of order two analogous
to complex conjugation on $\C_{\exp}$. Moreover, we also prove that
for continuum cardinality there is an involution whose fixed field,
as a real closed field, is isomorphic to the field of real numbers,
and such that the kernel is exactly $2\pi i\Z$, answering a question
of Zilber, Kirby, Macintyre and Onshuus.

The proof is obtained with an explicit construction of a Zilber field
with the required properties. As further applications of this technique,
we also classify the exponential subfields of Zilber fields, and we
produce some exponential fields with involutions such that the exponential
function is order-preserving, or even continuous, and all of the axioms
of Zilber fields are satisfied except for the strong exponential-algebraic
closure, which gets replaced by some weaker axioms.
\end{abstract}
\maketitle
\tableofcontents{}

\section{Introduction}

Zilber fields are a recent creation of Boris Zilber~\cite{Zilber2005}
born during the study of the model theory of exponential maps, and
mainly of $\C_{\exp}$. Indeed, a Zilber field is first of all a structure
$(K,0,1,+,\cdot,E)$, where $K$ is a field and $E$ is an exponential
function, i.e., a map satisfying 
\[
E(x+y)=E(x)\cdot E(y).
\]
A certain number of axioms must hold for the structure to be a Zilber
field. In analogy with the classical cases of $\C_{\exp}$ and $\R_{\exp}$,
we will denote by ``$K_{E}$'' the structure made up by a field
$K$ and an exponential function $E$ in the above sense. The function
$E$ of a Zilber field is called \emph{pseudoexponentiation}.

The axioms of Zilber fields state that $K_{E}$ must be somewhat similar
to the structure $\C_{\exp}$, but also force it to have some properties
that are deep conjectures for the classical exponential function.
Schanuel's Conjecture, rephrased with $E$ in place of $\exp$, is
included among these axioms.

The remarkable result by Zilber~\cite{Zilber2005}, followed by a
fix in \cite{Bays2012,Bays2013}, is that these axioms are expressible
in a reasonably simple way in a suitable infinitary language, and,
more importantly, they are \emph{uncountably categorical}. In Zilber's
philosophy, categorical structures should correspond to natural mathematical
objects, so he conjectured that $\C_{\exp}$ is exactly the model
of cardinality $2^{\aleph_{0}}$. The unique model of cardinality
$2^{\aleph_{0}}$ has been called ``$\B$'' by Macintyre, after
Boris Zilber. Here we will use the variant ``$\B_{E}$'' when we
want to explicitly name the pseudoexponential function.

It is quite natural to ask which properties of $\C_{\exp}$ we are
able to prove on Zilber fields, and vice versa, in the hope of either
supporting or refuting the conjecture. The recent results in this
direction on $\B$ \cite{D'Aquino2010}, \cite{Shkop2011} and on
$\C_{\exp}$ \cite{Zilber2011a} support the conjecture.

One question in particular is whether $\B$ has an involution, i.e.,
an automorphism of order two, as complex conjugation on $\C_{\exp}$;
it has been posed in various forms by Kirby~\cite{Kirby2009a}, Macintyre,
Onshuus~\cite{Kirby2012}, Zilber and others if these involutions
exist, and possibly if we can make the pseudoexponentiation increasing
on the fixed field of the involution, which is automatically an ordered
real closed field. It is known that Zilber fields have many automorphisms,
but not if there is an involution, and on the other hand, the only
automorphism of $\C_{\exp}$ that we know of is complex conjugation.

Moreover, the involution of $\C_{\exp}$ plays a rather important
role, as we can use it to give a simple definition of $\exp$: it
is the unique exponential function commuting with complex conjugation
and continuous with respect to the induced topology such that $\exp(1)=e$,
$\ker(\exp)=2\pi i\Z$ and $\exp(i\frac{\pi}{2})=i$. If we could
find an involution of $\B_{E}$ whose fixed field, as a pure field,
is isomorphic to $\R$, such that $E$ is continuous in the induced
topology, and the three above equations are satisfied, then we would
have a proof of Zilber's conjecture and Schanuel's conjecture at once.

The author announced in \cite{Mantova2011} the positive answer to
the first of the above questions, at least for Zilber fields of cardinality
up to the continuum, including $\B$:
\begin{thm}
\label{thm:main}If $K_{E}$ is a Zilber field of cardinality $|K|\leq2^{\aleph_{0}}$,
there exists an involution $\sigma$ on $K_{E}$, i.e., there is a
field automorphism $\sigma:K\to K$ of order two such that $\sigma\circ E=E\circ\sigma$.

Moreover, if $K_{E}=\B$, there is a $\sigma$ such that $K^{\sigma}\cong\R$
and $\ker(E)=2\pi i\Z$.
\end{thm}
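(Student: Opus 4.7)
The strategy is to give an explicit transfinite construction of a Zilber field that carries an involution from the start, rather than to exhibit an involution on an abstractly given $K_{E}$. For the first statement, the uncountable categoricity of the Zilber axioms (in every cardinality up to $2^{\aleph_{0}}$) reduces the problem to constructing, for each uncountable $\kappa\leq2^{\aleph_{0}}$, \emph{some} Zilber field of cardinality $\kappa$ equipped with an involution; any other Zilber field of the same cardinality then inherits one by transport along the canonical isomorphism.

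The construction itself is a $\sigma$-equivariant version of the standard back-and-forth/amalgamation procedure for Zilber fields. Start from a countable partial exponential field $(F_{0},E_{0})$ with a prescribed involution $\sigma_{0}$; generically one may take $F_{0}=\overline{\Q(i)}$, $\sigma_{0}$ the restriction of complex conjugation, and $E_{0}$ defined only on a chosen generator of a kernel $2\pi i\Z$. One then iterates three kinds of symmetric extensions in a chain of length $\kappa$: (a)~take algebraic closures; (b)~adjoin a new transcendental $a$ together with its conjugate $\sigma(a)$ and freely assign $E(a)$ and $E(\sigma(a)):=\sigma(E(a))$; (c)~adjoin generic solutions to exponential-algebraic systems, together with their $\sigma$-conjugates, until strong exponential-algebraic closure is achieved in the limit. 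Preservation of the Schanuel predimension inequality for a pair $\{a,\sigma(a)\}$ reduces to the corresponding inequality for $\{a\}$ alone, via elementary linear algebra separating $\sigma$-fixed from $\sigma$-conjugate contributions on both sides of the inequality.

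For the second statement, the same procedure is carried out inside the genuine $\C=\R(i)$, with $\sigma_{0}$ complex conjugation and $E_{0}$ defined to have kernel $2\pi i\Z$. If every extension of $E$ is performed $\sigma_{0}$-equivariantly on subsets of $\C$, the limit pseudoexponentiation is a total function on $\C$ that commutes with complex conjugation; since the fixed field of complex conjugation on $\C$ is by definition $\R$, we get $K^{\sigma}=\R$ for free, and by categoricity the resulting $\C_{E}$ is isomorphic to $\B$.

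The main obstacle is step~(c) in the equivariant setting: given a $\sigma$-invariant exponential-algebraic system, one must produce a generic solution compatible with $\sigma$. Solutions coming in $\sigma$-conjugate pairs cause no difficulty; the delicate case is that of $\sigma$-invariant systems which force a $\sigma$-fixed solution, which for the second part must moreover be located inside the given $\R$. This is precisely where the cardinality restriction $|K|\leq2^{\aleph_{0}}$ enters: a construction of length continuum produces at most continuum many such fixed-point demands, and $\R$, being a real closed field of transcendence degree $2^{\aleph_{0}}$, is large enough to absorb them. Decomposing $\sigma$-invariant systems into their symmetric and antisymmetric parts, and verifying that the $\sigma$-fixed locus is generically non-empty in a form compatible with the Schanuel predimension, is where the bulk of the technical work will sit.
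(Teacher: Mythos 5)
Your overall strategy coincides with the paper's: build, by a $\sigma$-equivariant transfinite construction, \emph{some} Zilber field of the right cardinality carrying an involution (over $\C$ with complex conjugation and kernel $2\pi i\Z$ for the second part), and transport the involution to the given $K_{E}$ through the categoricity isomorphism. However, there are two genuine gaps where you declare easy precisely the points that carry the weight of the proof. First, the claim that ``solutions coming in $\sigma$-conjugate pairs cause no difficulty'' and that preservation of the Schanuel inequality is ``elementary linear algebra'' is unjustified. When you adjoin a generic solution of a rotund $V\subset\G^{n}$ together with its $\sigma$-conjugate (equivalently, assign values through real/imaginary parts and moduli/phases), the point you add to the graph of $E$ lies on the Zariski closure $\check{V}\subset\G^{2n}$ of the set of pairs $(P,P^{\sigma})$, and (SP) is preserved only if $\check{V}$ is again absolutely free and rotund. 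This is exactly \ref{thm:rotund} in the paper, and it is not formal: integer-matrix images of $\check{V}$ mix real and imaginary (resp.\ modulus and phase) coordinates in ways not controlled by the rotundity of $V$, and ruling out unexpected dimension drops requires the mixed one-/two-dimensional function analysis of Section \ref{sec:rotundity} together with a topological (homotopy) argument about $|z|$ and $\Theta(z)$ of algebraic functions on curves (\ref{lem:alg-module-curves}). Your ``symmetric/antisymmetric decomposition'' does not substitute for this; moreover, no $\sigma$-fixed solutions are ever needed, so the case you single out as delicate is not where the difficulty lies.

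Second, your proposal never addresses (CCP), which is where the cardinality hypothesis actually enters. In a construction of uncountable length, a fixed perfectly rotund variety $X$ of depth $0$ can keep acquiring new generic solutions at unboundedly many stages, because a solution added for some other simple variety $V$ transfers to $X$ whenever $X=M\cdot\check{W}\oplus\zEz{\overline{z}}$ for some $W\in\Rc(V)$ (\ref{lem:new-sol-alg-map}); without further care the limit structure has uncountably many generic solutions of $X$ and is not a Zilber field at all. The paper's remedy is to add \emph{dense} sets of real generic solutions in the order topology (the \textsc{roots} operation), use second-countability of the order topology of $K^{\sigma}$, and run the cofinality argument of \ref{prop:ccp}; this is also the true source of the restriction $|K|\leq2^{\aleph_{0}}$. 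Your explanation of that restriction --- that $\R$ has enough transcendence degree to ``absorb'' continuum many fixed-point demands --- is not the operative constraint, and as written your construction gives no argument that (CCP) survives limit stages. (A smaller point: the theorem also covers countable $K_{E}$, whose isomorphism type is fixed by exponential transcendence degree rather than cardinality, so the reduction must also produce involutions on models of each countable dimension, e.g.\ by taking $\sigma$-invariant closed exponential subfields of the continuum-sized model.)
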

Here we describe the complete proof of this statement. In order to
deduce the theorem, we actually take the opposite route: we start
from a given automorphism $\sigma$ of the field $K$, and we construct
a suitable function $E$. We prove the following:
\begin{thm}
\label{thm:from-invo}There is a function $E:\C\to\C^{\times}$ such
that $\C_{E}$ is a Zilber field, $E(\overline{z})=\overline{E(z)}$
for all $z\in\C$, and $E\left(2\pi i(p/q)\right)=e^{2\pi i(p/q)}$
for all $p\in\Z$, $q\in\Z^{\times}$.

More generally, let $K$ be an algebraically closed field of characteristic
zero, $\sigma:K\to K$ an automorphism of order two, and $\omega$
a transcendental number in the fixed field $K^{\sigma}$.

If $K$ has infinite transcendence degree and the order topology of
$K^{\sigma}$ is second-countable, there is a function $E:K\to K^{\times}$
such that $K_{E}$ is a Zilber field, $\sigma\circ E=E\circ\sigma$
and $\ker(E)=i\omega\Z$.
\end{thm}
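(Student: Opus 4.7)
The plan is to construct $E$ by a transfinite induction of length $\kappa := |K|$, building at each stage $\alpha$ a $\sigma$-invariant $\Q$-subspace $V_\alpha \leq K$ and a partial exponential $E_\alpha \colon V_\alpha \to K^\times$ that is $\sigma$-equivariant, has kernel exactly $i\omega\Z$, and satisfies Schanuel's predimension inequality $\td(X \cup E_\alpha(X)/\Q) \geq \ld_\Q(X/i\omega\Z)$ for every finite $X \subset V_\alpha$. I would start from $V_0 := i\omega\Z$ and $E_0 \equiv 1$, which is $\sigma$-invariant because $\sigma(i\omega) = -i\omega$, and for which Schanuel holds trivially on the kernel. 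Limit stages are handled by unions, under which the predimension inequality is preserved because it is finitary.

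The bookkeeping interleaves $\kappa$ many tasks of each of three kinds: (i) extending the domain of $E$ to cover all of $K$, (ii) extending the image to cover all of $K^\times$ by adjoining logarithms, and (iii) realising a generic point on every rotund, additively free subvariety of $\G_a^n \times \G_m^n$ defined over a countable chunk of the current data, so that the final structure is strongly exponentially-algebraically closed. Each task is performed $\sigma$-equivariantly. A domain extension at $a \notin V_\alpha$ chooses $E(a)$ transcendental over the substructure generated by $V_\alpha \cup E_\alpha(V_\alpha) \cup \{a, \sigma(a)\}$, and then sets $E(\sigma(a)) := \sigma(E(a))$; when $a \in K^\sigma$ the value must lie in $K^\sigma$, which is possible because $K^\sigma$ has infinite transcendence degree over any countable subfield. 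Logarithm extensions are dual. Subvariety realisations are handled either by amalgamating a generic $\sigma$-conjugate pair $(V, \sigma(V))$ or, when $V$ is already $\sigma$-invariant, by an equivariant realisation lemma that exploits the fact that the algebraic closure of a $\sigma$-stable base is $\sigma$-stable.

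The main technical obstacle is verifying that the predimension inequality survives these doubled, equivariant extensions. For $a \in K \setminus K^\sigma$ with $a, \sigma(a)$ linearly independent over $V_\alpha$ modulo $i\omega\Z$, the generic choice of $E(a)$ jointly produces $E(a), \sigma(E(a))$ contributing $+2$ to the transcendence degree, matching the $+2$ added to the $\Q$-linear dimension; the subtle case is when $a$ and $\sigma(a)$ are linearly dependent modulo $V_\alpha + i\omega\Q$, which forces $a$ to be essentially real or essentially imaginary relative to $V_\alpha$, so that only one generic value is chosen freely and Schanuel is preserved by a one-dimensional argument. The case $a \in K^\sigma$ is the same constrained version of the latter. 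For the strong exponential-algebraic closure step, the equivariant realisation lemma is the analogue for involutions of the realisation lemma underlying Zilber's original construction, and is where the involution-compatible amalgamation framework does the real work.

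The second-countability of the order topology on $K^\sigma$ enters to ensure $|K| \leq 2^{\aleph_0}$, bounding the bookkeeping at continuum cardinality, and to supply the separability needed to verify the countable closure property of the final structure; infinite transcendence degree of $K$ provides the room required at every generic step. The final $E$ is defined as the union of the $E_\alpha$, and the Zilber-field axioms together with $\sigma$-equivariance and $\ker(E) = i\omega\Z$ follow from the inductive hypotheses and the exhaustion of the bookkeeping.
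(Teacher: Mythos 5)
Your outline reproduces the general skeleton of the actual construction (a transfinite back-and-forth adding domain elements, logarithms, and generic solutions of rotund varieties, $\sigma$-equivariantly, while tracking the predimension), but the two places where you delegate the work to an unproved lemma are precisely where the real content lies, and as stated there are genuine gaps. First, the ``equivariant realisation lemma'' is not a routine analogue of the usual realisation step. When you add a generic solution $\overline{z}$ of a rotund $V$ and simultaneously impose $E(\sigma(\overline{z}))=\sigma(E(\overline{z}))$, the extension you actually perform is governed not by $V$ (nor by a free amalgam of $V$ and $V^{\sigma}$, since $\sigma(\overline{z})$ is not independent data) but by the Zariski closure $\check{V}$ of the set of real/imaginary and modulus/phase decompositions of points of $V$, and you must choose the solution so that these decompositions have transcendence degree $2\dim V$ over the base. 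Preservation of (SP) and of the kernel then requires proving that $\check{V}$ is absolutely free and rotund whenever $V$ is simple; this is the central technical theorem of the paper, and its proof needs a careful matrix decomposition plus a topological argument on algebraic curves (ramification and homotopy classes) to rule out hidden algebraic relations among moduli and phases. Nothing in your proposal supplies this. (Relatedly, your claim that a single generic choice of $E(a)$ ``jointly produces $E(a),\sigma(E(a))$ contributing $+2$'' is unjustified: genericity of $E(a)$ over the base together with $a,\sigma(a)$ does not make $E(a)$ and $\sigma(E(a))$ algebraically independent. The paper avoids this entirely by extending the domain only along $K^{\sigma}\cup iK^{\sigma}$ with values in the positive reals or the unit circle.)

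Second, your treatment of (CCP) is not an argument. The finite extension steps preserve (CCP), but in the uncountable case the danger is at limit stages of uncountable cofinality: each time one adds a real-generic solution to some $\check{W}$, a fixed perfectly rotund variety $X$ may acquire new generic solutions whenever $X$ is an image $M\cdot\check{W}$ up to translation, and since $\check{W}$ is never simple this cannot be excluded by rotundity alone. The paper's resolution is exactly where second-countability of the order topology on $K^{\sigma}$ enters in an essential way: solutions are added in dense, really algebraically independent families to the whole system of roots of each simple variety, one shows that new solutions of $X$ can only arise from varieties related to $X$ by such an $M$, that dense solvability transfers back and forth along these maps on suitable open subsystems, and then second-countability lets one extract a countable subfamily of these open systems after which all further potential contributions to $X$ are void; hence only countably many stages contribute and $X$ keeps countably many generic solutions. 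Saying that second-countability ``supplies the separability needed to verify the countable closure property'' names the conclusion, not a proof, and without this mechanism (or a substitute) your construction does not yield (CCP) for $|K|=2^{\aleph_{0}}$.
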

Note that if the order topology of $K^{\sigma}$ is second-countable,
the cardinality of $K$ cannot be greater than $2^{\aleph_{0}}$.

As a corollary of the above arguments, we obtain that there are actually
several non-isomorphic involutions on Zilber fields, as we can choose
to fill or omit different Dedekind cuts over $\Q$ of each $t_{j}$
(see \prettyref{thm:invo-dense}).

On the other hand, our construction has a strong limitation: it produces
a function $E$ that is not continuous with respect to the topology
induced by $\sigma$, even if we restrict to $K^{\sigma}$. The topology
induced by $\sigma$ \emph{is} related to $E$, but in a rather different
way from $\C_{\exp}$ with complex conjugation: we get dense sets
of solutions on rotund varieties of depth $0$ rather than isolated
solutions (see \prettyref{sec:involuntary}).

\subsection*{Structure of the paper}

We start by recalling the definition of Zilber field, together with
the notation used in the paper, in \prettyref{sec:defaxioms}.

The proof of \prettyref{thm:from-invo} starts in \prettyref{sec:cons}
with the description of the inductive construction that leads to $K_{E}$.
The proof that the construction actually yields the desired exponential
field is given in Sections \ref{sec:facts} to \ref{sec:proof}. \prettyref{sec:involuntary}
describes the limits of the current construction.

Sections \ref{sec:facts} and \ref{sec:ccp} describe a few facts
about Zilber fields, most of them already known and appearing in literature
in some form. \prettyref{sec:roots} describes a few properties of
the ``system of roots'' of rotund varieties introduced in \prettyref{sec:cons}.

Sections \ref{sec:g-res} to \ref{sec:roots-g-res} define the ``$\G$-restrictions''
of rotund varieties, which are essentially the semi-algebraic varieties
underlying each rotund variety; these sections contain a few geometric
results which are essential to the proof of the main theorem.

\prettyref{sec:proof} concludes the proof of Theorems \ref{thm:from-invo}
and \ref{thm:main}, putting together all the information obtained
in the previous sections.

In \prettyref{sec:further} there are a few variations on the construction
that let us obtain new exponential fields with involutions and some
nice extra properties.

\subsection*{Acknowledgements}

The author would like to thank Jonathan Kirby, who inspired this work,
and suggested many corrections, \foreignlanguage{italian}{Alessandro
Berarducci}, who followed the author throughout the project, Angus
Macintyre for his review, \foreignlanguage{italian}{Umberto Zannier}
for the encouraging comments, \foreignlanguage{italian}{Maurizio Monge}
for the useful discussions, and an anonymous referee for pointing
out problems and inconsistencies in an earlier version of the paper.

This project has been part of the author's PhD work at the \foreignlanguage{italian}{Scuola
Normale Superiore} of Pisa, and it has been partially supported by
the PRIN-MIUR 2009 ``\foreignlanguage{italian}{O-mi\-ni\-ma\-li\-tà,
teo\-ria de\-gli in\-sie\-mi, me\-to\-di e mo\-del\-li non\-stan\-dard
e ap\-pli\-ca\-zio\-ni}'', the EC's Seventh Framework Programme
{[}FP7/2007-2013{]} under grant agreement no.~238381, and the FIRB
2010 ``\foreignlanguage{italian}{Nuo\-vi svi\-lup\-pi nel\-la
Teo\-ria dei Mo\-del\-li del\-l'es\-po\-nen\-zia\-zio\-ne}''.

\section{\label{sec:defaxioms}Definitions and axioms}

We recall briefly the basic concepts needed to work with Zilber fields,
and the list of their axioms. We also fix some notation and some naming
conventions, mostly borrowing from~\cite{Kirby2009a}. A more succinct
account of the axioms, but with a different notation, can be found
in \cite{Marker2006}.

Before entering the details, we fix the context we work in to the
class of partial $E$-fields.
\begin{defn}
A \emph{partial exponential field}, or \emph{partial $E$-field} for
short, is a two-sorted structure 
\[
\langle\langle K;0,1,+,\cdot\rangle;\langle D;0,+,(q\cdot)_{q\in\Q}\rangle;i:D\to K,E:D\to K\rangle
\]
satisfying the axiom ``(E-par)'':

\begin{table}[H]
\begin{tabular}{ll}
\multirow{4}{*}{(E-par)} & $\langle D;0,+,(q\cdot)_{q\in\Q}\rangle$ is a $\Q$-vector space;\tabularnewline
 & $\langle K;0,1,+,\cdot\rangle$ is a field of characteristic $0$;\tabularnewline
 & $i:D\to K$ is an injective homomorphism from $\langle D,+\rangle$
to $\langle K,+\rangle$;\tabularnewline
 & $E$ is a homomorphism from $\langle D,+\rangle$ to $\langle K^{\times},\cdot\rangle$.\tabularnewline
\end{tabular}
\end{table}

A \emph{global $E$-field}, or just \emph{$E$-field}, is a two sorted
structure as above where the axiom (E) is satisfied:
\begin{enumerate}
\item [(E)]the axiom (E-par) holds, and $i$ is surjective.
\end{enumerate}
\end{defn}
We denote (partial) $E$-fields with the notation $K_{E}$. Unless
otherwise stated, \emph{we identify the vector space $D$ with its
image in $K$}, leaving implicit the extra sort, except for the few
situations where this generate ambiguities.

\addtocontents{toc}{\SkipTocEntry}

\subsection{Definitions}

Let us take some notation from Diophantine geometry: we denote by
$\G_{a}$ the additive group and by $\G_{m}$ the multiplicative group.
In other words, we have $\G_{a}(K)=(K,+)$ and $\G_{m}(K)=(K^{\times},\cdot)$.
We call $\G$ the product $\G_{a}\times\G_{m}$, and we denote its
group law by $\oplus$. The group $\G$ is a natural environment where
to look at points of the form $(z,E(z))\in\G_{a}\times\G_{m}=\G$.
Note that the group law is such that $(z,E(z))\oplus(w,E(w))$ is
just $(z+w,E(z+w))$. With a little abuse of notation, we shall often
use $\oplus$ to denote the group law on $\G^{n}$ as well.

The group $\G$ is naturally a $\Z$-module as an abelian group:
\begin{eqnarray*}
(\cdot):\Z\times\G & \to & \G\\
m\cdot(z,w) & \mapsto & (m\cdot z,w^{m}).
\end{eqnarray*}

The action can be naturally generalised to matrices with integer coefficients.
Given a matrix $M\in\Mc_{k,n}(\Z)$ of the form $M=(m_{i,j})_{1\leq i\leq k,1\leq j\leq n}$,
the explicit action can be written as
\begin{eqnarray*}
(\cdot):\Mc_{k,n}(\Z)\times\G^{n} & \to & \G^{k}\\
M\cdot(z_{j},w_{j})_{j} & \mapsto & \left(\sum_{j=1}^{n}m_{i,j}z_{j},\prod_{j=1}^{n}w_{j}^{m_{i,j}}\right)_{i}.
\end{eqnarray*}

In several situations, we will abbreviate the multiplication by the
scalar $m$, i.e., ``$(m\mathrm{Id})\cdot(z_{j},w_{j})_{j}$'',
with the expression $m\cdot(z_{j},w_{j})$.

For the sake of readability, we introduce a special notation for the
elements of $\G^{n}$ and for the function $E$.
\begin{notation}
For any positive integer number $n$, and for any two given vectors
$\overline{z}=(z_{1},\dots,z_{n})\in K^{n}$ and $\overline{w}=(w_{1},\dots,w_{n})\in(K^{\times})^{n}$,
we denote by $\intv{\overline{z}}{\overline{w}}$ the `interleaved'
vector $((z_{1},w_{1}),\dots,(z_{n},w_{n}))\in\G^{n}(K)$.

If $\overline{z}=(z_{1},\dots,z_{n})\in\dom(E)^{n}$, we write $E(\overline{z})$
to denote $(E(z_{1}),\dots,E(z_{n}))\in(K^{\times})^{n}$. If $\overline{z}\in K^{n}$,
but not all the components of $\overline{z}$ lie in $\dom(E)$, we
still write $E(\overline{z})$ to denote the set of the images of
the components of $\overline{z}$ in $\dom(E)$ (in this case, a pure
set with less than $n$ elements).
\end{notation}
With this notation, the following equation holds
\[
M\cdot\zEz{\overline{z}}=\zEz{M\cdot\overline{z}},
\]
where $M\cdot\overline{z}$ denotes the usual action of matrices on
vector spaces.
\begin{defn}
An irreducible subvariety $V$ of $\G^{n}$, for some positive integer
$n$, is \emph{rotund }if for all $M\in\Mc_{n,n}(\Z)$ the following
inequality holds:
\[
\dim M\cdot V\geq\rank M.
\]

\end{defn}
In the original conventions of~\cite{Zilber2005}, a rotund variety
is called ``ex-normal'', or just ``normal'', reusing a terminology
in common with other amalgamation constructions. However, we preferred
the convention of~\cite{Kirby2009a} not to risk confusion with the
term ``normal'' from algebraic geometry.

Note that we do not specify the base field over which $V$ is irreducible;
it can happen that $V$ is irreducible, but not absolutely irreducible,
as it can split into finitely many subvarieties when enlarging the
field of definition. The absolutely irreducible components of a rotund
variety, i.e., the ones irreducible over the algebraic closure, are
still rotund.

Let us call $\pi_{a}$ and $\pi_{m}$ the projections of $\G$ on
its two factors $\G_{a}$ and $\G_{m}$ resp.
\begin{defn}
An irreducible subvariety $V$ of $\G^{n}$, for some integer $n$,
is \emph{absolutely free} if for all non-zero matrices $M\in\Mc_{n,1}(\Z)$
the following holds:
\[
\dim\pi_{a}(M\cdot V)=\dim\pi_{m}(M\cdot V)=1.
\]

\end{defn}
In Zilber's paper \cite{Zilber2005} there is a relative notion of
``freeness''. We do not need it here. As above, if $V$ is absolutely
free, all of its absolutely irreducible components will be absolutely
free.

Absolutely free rotund varieties are to be thought as systems of equations
in one iteration of $E$ which are compatible with the Schanuel Property
(see below) and are not overdetermined. Indeed, here are their solutions.
\begin{defn}
If $V$ is an absolutely free rotund variety $V\subset\G^{n}$, a
vector $\overline{z}\in K^{n}$ is a \emph{solution} of $V$ if $\zEz{\overline{z}}\in V$.
\end{defn}
Note that when $\overline{z}$ is a solution of $V$, then for all
$q\in\N^{\times}$ the vector $\frac{1}{q}\overline{z}$ is a solution
to $W$ for some absolutely irreducible variety with the property
that $q\cdot W=V$. The number of the varieties $W$ such that $q\cdot W=V$
is always bounded by $q^{n}$, but in many useful cases there is only
one of them. It is becoming common to call Kummer-generic the varieties
with this property.
\begin{defn}
An absolutely irreducible algebraic variety $V\subset\G^{n}$ is \emph{Kummer-generic}
if for all $q\in\N^{\times}$ there is only one absolutely irreducible
variety $W$ such that $q\cdot W=V$.
\end{defn}
If $V$ is not absolutely free, then it is easy to see that it cannot
be Kummer-generic. On the other hand, if $V$ is absolutely free,
it is not far from being Kummer-generic in a precise sense.
\begin{fact}
\label{fact:thumbtack}If $V$ is absolutely free and absolutely irreducible,
there exists an integer $q\in\N^{\times}$ such that each absolutely
irreducible variety $W$ satisfying $q\cdot W=V$ is Kummer-generic.
\end{fact}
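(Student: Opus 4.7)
The plan is to reduce the statement to the openness of a Kummer-type Galois representation. Assume without loss of generality that $V$ is defined over an algebraically closed base field $F$. The starting observation is that for any $r\in\N^{\times}$, multiplication by $r$ on $\G^n=\G_a^n\times\G_m^n$ is an étale Galois isogeny of degree $r^n$ with covering group $\mu_r^n$; the $\G_a^n$ factor contributes nothing, since $\G_a$ is uniquely divisible in characteristic zero. Consequently, for any absolutely irreducible $U\subset\G^n$, the preimage $r^{-1}(U)$ decomposes as a disjoint union of absolutely irreducible components of equal degree over $U$, transitively permuted by $\mu_r^n$, with the number of components dividing $r^n$. Taking the inverse limit in $r$, the Galois actions on these fibres assemble into a continuous representation
\[
\rho_U:\Gal(\overline{F(U)}/F(U))\longrightarrow\varprojlim_{r}\mu_r^n\cong\hat{\Z}^n,
\]
and $U$ is Kummer-generic precisely when $\rho_U$ is surjective.

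The key step is to show that the image of $\rho_V$ is open in $\hat{\Z}^n$, i.e.\ contains $q\hat{\Z}^n$ for some $q\in\N^{\times}$. This is where absolute freeness enters: applied to non-zero matrices $M\in\Mc_{n,1}(\Z)$, the definition gives that the multiplicative coordinates $w_1,\dots,w_n$ of a generic point of $V$ are multiplicatively independent modulo torsion in $F(V)^{\times}$, since otherwise some $\prod w_j^{m_j}$ would be constant on $V$ and $\pi_m(M\cdot V)$ would collapse to a point. By the Bashmakov--Ribet-type openness theorem for the Kummer representation attached to a multiplicatively independent $n$-tuple over a finitely generated field of characteristic zero, the image of $\rho_V$ has finite index in $\hat{\Z}^n$; let $q$ be such that $q\hat{\Z}^n\subseteq\im(\rho_V)$.

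Given this $q$, pick any absolutely irreducible $W$ with $q\cdot W=V$. Then $W$ corresponds to the open subgroup $\rho_V^{-1}(q\hat{\Z}^n)\subseteq\Gal(\overline{F(V)}/F(V))$, and the restriction of $\rho_V$ to this subgroup has image $\im(\rho_V)\cap q\hat{\Z}^n=q\hat{\Z}^n$ by the choice of $q$. Identifying $q\hat{\Z}^n$ with $\hat{\Z}^n$ via division by $q$, this restriction is exactly $\rho_W$, which is thus surjective, so $W$ is Kummer-generic. The main obstacle is the Kummer-openness of step two: in the number-field setting it is the classical theorem of Bashmakov and Ribet, while in the function-field generality required here it is essentially the ``thumbtack lemma'' of Zilber, worked out in the precise form needed in~\cite{Bays2012,Bays2013}; all the remaining steps are formal étale-Galois bookkeeping.
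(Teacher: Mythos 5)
Your proposal follows essentially the same route as the paper: the paper gives no proof of this Fact, but states it as an instance of Zilber's Thumbtack Lemma \cite{Zilber2006} (see also \cite{Bays2011a}), and what you have written is precisely the translation between that lemma --- openness of the image of the Kummer representation attached to the multiplicative coordinates $w_1,\dots,w_n$ of a generic point --- and the statement about the varieties $W$ with $q\cdot W=V$; this \'etale--Galois bookkeeping (components of $[r]^{-1}(V)$ correspond to Galois orbits on the fibre, Kummer-genericity of $V$ equals surjectivity of $\rho_V$, and $\im(\rho_W)$ is $\im(\rho_V)\cap q\hat{\Z}^n$ rescaled) is correct, noting that all components over level $q$ give the same subgroup since $\ker\rho_{V,q}$ is normal. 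One caution about how you name the key input: since the base field $F$ is algebraically closed, the relevant openness statement is \emph{not} Bashmakov--Ribet over a finitely generated field with independence modulo torsion --- that hypothesis is too weak here, e.g.\ $w_1=2w_2$ are independent modulo torsion but have Kummer image inside the diagonal --- but the function-field version requiring the $w_j$ to be multiplicatively independent modulo the constant field $F^{\times}$; this stronger hypothesis is exactly what your own deduction from absolute freeness establishes ($\pi_m(M\cdot V)$ is one-dimensional, so no monomial in the $w_j$ is constant), and it is the form in which the Thumbtack Lemma appears in \cite{Zilber2006} and \cite{Bays2011a}, whereas \cite{Bays2012,Bays2013} concern the categoricity fix rather than this Kummer statement.
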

This is an instance of the Thumbtack Lemma \cite[Thm.\ 2]{Zilber2006},
and it actually holds when replacing $\G_{m}^{n}$ with any semiabelian
variety and even in some more general contexts \cite{Bays2011a}.
\begin{notation}
Given a subset $\overline{c}\subset K$, we write $\overline{c}E(\overline{c})$
to denote the pure set of the elements of $\overline{c}$ together
with the image of $\overline{c}\cap\dom(E)$ through $E$.

We write $V(\overline{c})$ to denote a variety $V$ and a finite
tuple of parameters $\overline{c}$ such that $V$ is defined over
$\overline{c}E(\overline{c})$. In this case, we say that $V$ is
\emph{$E$-defined over $\overline{c}$}.
\end{notation}
The notion of $E$-defined is particularly handy when we deal with
predimensions (see \prettyref{sec:facts}). Note, however, that it
depends on the function $E$; whenever we want to refer to a different
function, say $E'$, we will write ``$E'$-defined''. We will use
the notation $V(\overline{c})$ only when it is clear from the context
which function $E$ we are considering.
\begin{defn}
A solution $\overline{z}\in K^{n}$ of $V$ is a \emph{generic solution
of $V(\overline{c})$ in $K_{E}$} when the corresponding point $\zEz{\overline{z}}\in V$
is generic over $\overline{c}E(\overline{c})$ in the algebraic sense,
i.e., when
\[
\td_{\overline{c}E(\overline{c})}\zEz{\overline{z}}=\dim V.
\]

A set of generic solutions $\mathcal{S}$ of $V(\overline{c})$ is
\emph{algebraically independent in $K_{E}$} if for any finite subset
$\{\overline{z}_{1},\dots,\overline{z}_{k}\}\subset\mathcal{S}$ the
following holds:
\[
\td_{\overline{c}E(\overline{c})}(\zEz{\overline{z}_{1}},\dots,\zEz{\overline{z}_{k}})=k\cdot\dim V.
\]

\end{defn}
Note that genericity depends on the parameters we choose to define
$V$ and on $E$, hence the reference to $K_{E}$ and to $V(\overline{c})$
rather than $V$ in the definition of generic solution. When it is
clear from the context, we shall drop the reference to $K_{E}$.

A special role is given to the varieties $V\subset\G^{n}$ such that
$\dim V=n$.
\begin{defn}
The \emph{depth} of a variety $V\subset\G^{n}$ is $\delta(V):=\dim V-n$.
\end{defn}
Hence a variety with $\dim V=n$ has ``depth $0$''. For all rotund
varieties, $\delta(V)\geq0$. The generic solutions of an absolutely
free rotund variety $V(\overline{c})$ of depth $0$ are said to be
exponential-algebraic over $\overline{c}$, and vice versa, an exponential-algebraic
element over $\overline{c}$ is a component of a generic solution
of some $V(\overline{c})$ of depth $0$.
\begin{defn}
An absolutely free rotund variety $V\subset\G^{n}$ is \emph{simple}
if for all $M\in\Mc_{n,n}(\Z)$ with $0<\rank M<n$ the following
strict inequality holds:
\[
\dim M\cdot V>\rank M.
\]

\end{defn}
The partial $E$-field extension generated by a generic solution of
a simple variety has no proper exponential-algebraic subextension,
hence simple varieties correspond to simple extensions in Hrushovski's
amalgamation terminology.
\begin{defn}
An absolutely free rotund variety $V\subset\G^{n}$ is \emph{perfectly
rotund} if it is simple and $\delta(V)=0$.
\end{defn}
\addtocontents{toc}{\SkipTocEntry}

\subsection{Zilber's axioms}

Now that we have given all the relevant definitions, we can list the
axioms defining Zilber fields. Let $K_{E}$ be our structure. We split
the axioms into three groups, depending on their meaning for $\C_{\exp}$.

\subsubsection{Trivial properties of $\C_{\exp}$}
\begin{enumerate}
\item [(ACF\textsubscript{0})] $K$ is an algebraic closed field of characteristic
$0$.
\item [(E)] $E$ is a homomorphism $E:(K,+)\to(K^{\times},\cdot)$.
\item [(LOG)] $E$ is surjective (every element has a logarithm).
\item [(STD)] the kernel is a cyclic group, i.e., $\ker E=\omega\Z$ for
some $\omega\in K^{\times}$.
\end{enumerate}
Note that the axiom (E) describes a \emph{global} function, so the
extra sort $D$ of partial $E$-fields becomes redundant, as the function
$i:D\to K$ would be a bijection.

\subsubsection{Axioms conjecturally true on $\C_{\exp}$}
\begin{enumerate}
\item [(SP)] \emph{Schanuel's Property}: for every finite tuple $\overline{z}=(z_{1},\dots,z_{n})\in K$
such that $z_{1},\dots,z_{n}$ are $\Q$-linearly independent,
\[
\td_{\Q}\zEz{\overline{z}}=\td_{\Q}(z_{1},\dots,z_{n},E(z_{1}),\dots,E(z_{n}))\geq n.
\]

\item [(SEC)] \emph{Strong Exponential-algebraic Closure}: for every absolutely
free rotund variety $V\subset\G^{n}$ over $K$, and every finite
tuple $\overline{c}\in K^{<\omega}$ such that $V$ is defined over
$\overline{c}$, $V(\overline{c})$ has a generic solution $\overline{z}\in K^{n}$.
\end{enumerate}

\subsubsection{A non-trivial property of $\C_{\exp}$ \cite[Lemma 5.12]{Zilber2005}}
\begin{enumerate}
\item [(CCP)] \emph{Countable Closure Property}: for every absolutely free
rotund variety $V\subset\G^{n}$ over $K$ of depth $0$, and every
finite tuple $\overline{c}\in K^{<\omega}$ such that $V$ is defined
over $\overline{c}$, the set of the generic solutions of $V(\overline{c})$
is at most countable.
\end{enumerate}
If $K_{E}$ satisfies the above seven axioms we say that $K_{E}$
is a Zilber field. By Zilber's theorem \cite{Zilber2005} and corrections
\cite{Bays2012,Bays2013}, if $K_{E}$ is uncountable, its isomorphism
type is completely determined by its cardinality, and in general it
is determined by its exponential transcendence degree. The one of
cardinality $2^{\aleph_{0}}$ is called $\B$, $\B_{E}$, $\B_{\mathrm{ex}}$
or $\F_{\mathrm{ex}}$, and it is sometimes called \emph{the} Zilber
field, as it is the one conjecturally isomorphic to $\C_{\exp}$.

Some authors add the axiom ``$K_{E}$ has infinite exponential transcendence
degree'' as well, so that also the countable model is determined
just by the cardinality, but we do not need this extra assumption
here.

\section{\label{sec:cons}The construction}

As anticipated in the introduction, the proof of \prettyref{thm:from-invo}
is obtained with an explicit construction. Our starting data are an
algebraically closed field $K$ of characteristic $0$, an involution
$\sigma$ of $K$ and some transcendental $\omega$ in the fixed field
$K^{\sigma}$. Our result is a function $E$ such that $K_{E}$ is
a Zilber field, $\sigma\circ E=E\circ\sigma$ and $\ker(E)=i\omega\Z$.
We describe here the procedure, and the proof that the resulting $K_{E}$
has the desired properties will be given throughout the rest of the
paper.

The idea is to define $E$ inductively by back-and-forth: at one step,
we define the function on a new element of $K$, at another we define
it so that a new element of $K^{\times}$ appears in the image, and
in the meanwhile we add solutions to each rotund variety (in a careful
way).

\subsection{Commuting with $\sigma$}

In order to have $\sigma\circ E=E\circ\sigma$ at each stage, we use
a rather easy observation. Let us copy the usual notation for $\C$
and $\R$ to our case. We denote by $R$ the fixed field $K^{\sigma}$,
which is a real closed field; if $i$ is a square root of $-1$, we
have $K=R(i)$.

We define:
\begin{enumerate}
\item the \emph{real part} of $z\in K$ as $\Re(z):=\frac{z+\sigma(z)}{2}$;
\item the \emph{imaginary part} of $z\in K$ as $\Im(z):=\frac{z-\sigma(z)}{2i}$;
\item the \emph{modulus} of $z\in K^{\times}$ as $|z|:=\sqrt{z\cdot\sigma(z)}$;
\item the \emph{phase} of $z\in K^{\times}$ as $\Theta(z):=\frac{z}{|z|}$.
\end{enumerate}
The former two are the additive decomposition of a number of $K$
over $R$, the latter are its multiplicative decomposition. The image
of the function $\Theta$ is the `unit circle', and it will be denoted
by $\Sb^{1}=\{z\in K^{\times}\,:\,|z|=1\}$.

Using this description, it is quite easy to see the following.
\begin{prop}
\label{prop:commutes} Let $K_{E}$ be a partial $E$-field, and $\sigma:K\to K$
be a field automorphism of order two. Then $\sigma\circ E=E\circ\sigma$
if and only if the following three conditions are satisfied:
\begin{enumerate}
\item $\sigma(\dom(E))=\dom(E)$;
\item $E(R)\subset R_{>0}$;
\item $E(iR)\subset\Sb^{1}$;
\end{enumerate}
\end{prop}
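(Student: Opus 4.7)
The proof is a direct computation based on the additive decomposition $K = R \oplus iR$, combined with the fact that $\dom(E)$ is a $\Q$-vector space. The plan is to treat the two implications separately.

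For the forward direction, assume $\sigma\circ E = E\circ\sigma$. Condition (1) is immediate: if $z\in\dom(E)$, then $E(\sigma(z))=\sigma(E(z))$ is defined, so $\sigma(z)\in\dom(E)$; the reverse inclusion follows because $\sigma^{2}=\mathrm{id}$. For (2), if $z\in R\cap\dom(E)$ then $\sigma(E(z))=E(z)$, so $E(z)\in R$; to upgrade $E(z)\in R^\times$ to $E(z)\in R_{>0}$, I would use that $z/2$ still lies in $R\cap\dom(E)$ (using both $\sigma$-invariance and $\Q$-divisibility of $\dom(E)$), hence $E(z)=E(z/2)^{2}$ is a nonzero square in the real closed field $R$. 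For (3), if $z\in iR\cap\dom(E)$ then $\sigma(z)=-z$, so $\sigma(E(z))=E(-z)=E(z)^{-1}$, giving $E(z)\cdot\sigma(E(z))=1$, i.e.\ $|E(z)|=1$.

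For the backward direction, suppose (1)--(3) hold, and let $z\in\dom(E)$. By (1), $\sigma(z)\in\dom(E)$, so both $\Re(z)=\tfrac{z+\sigma(z)}{2}$ and $i\Im(z)=\tfrac{z-\sigma(z)}{2}$ lie in $\dom(E)$, using that $\dom(E)$ is a $\Q$-vector space. Writing $x:=\Re(z)\in R$ and $y:=\Im(z)\in R$, the multiplicativity of $E$ gives $E(z)=E(x)\cdot E(iy)$. By (2), $E(x)\in R_{>0}$, so $\sigma(E(x))=E(x)$; by (3), $E(iy)\in\Sb^{1}$, so $\sigma(E(iy))=E(iy)^{-1}=E(-iy)$. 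Therefore
\[
\sigma(E(z))=\sigma(E(x))\sigma(E(iy))=E(x)E(-iy)=E(x-iy)=E(\sigma(z)),
\]
which is the desired identity.

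There is essentially no obstacle here: the only subtlety is remembering that $\dom(E)$ being a $\Q$-vector space lets us (a) divide by $2$ to promote ``square'' to ``positive square'' in the forward direction, and (b) decompose an arbitrary $z\in\dom(E)$ into its $R$- and $iR$-components that are themselves in $\dom(E)$ in the backward direction. Everything else is a one-line manipulation in the field $K=R(i)$.
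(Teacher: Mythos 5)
Your proof is correct and follows essentially the same route as the paper: the same forward computations (using $E(x)=E(x/2)^2$ for positivity and $\sigma(E(iy))=E(iy)^{-1}$ for the modulus) and the same backward argument via the decomposition $z=\Re(z)+i\Im(z)$ inside $\dom(E)$. Your explicit remarks about $\Q$-divisibility of $\dom(E)$ are just slightly more detailed versions of steps the paper leaves implicit.
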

\begin{proof}
If $\sigma\circ E=E\circ\sigma$, it is clear that $\sigma(\dom(E))=\dom(E)$.
For all $x\in R$ we have $\sigma(E(x))=E(x)$, which implies $E(x)\in R$,
and since $E(x)=E\left(\frac{x}{2}\right)^{2}$, it is actually $E(x)\in R_{>0}$;
moreover, for all $y\in R$ we have $\sigma(E(iy))=E(-iy)=E(iy)^{-1}$,
i.e., $|E(iy)|=\sqrt{\sigma(E(iy))E(iy)}=1$.

On the other hand, suppose that the three conditions are satisfied.
We have then that for all $x,y\in R$, $\sigma(E(x))=E(x)$ and $\sigma(E(iy))=E(iy)^{-1}$.
Since any $z\in K$ can be written uniquely as $x+iy$ with $x,y\in R$,
and if $E$ is defined over $z$, by the first condition it is also
defined over $x=\Re(z)$ and $y=\Im(z)$, we have
\[
\sigma(E(z))=\sigma(E(x))\sigma(E(iy))=E(x)E(iy)^{-1}=E(x-iy)=E(\sigma(z)).\qedhere
\]

\end{proof}
Hence, in our back-and-forth construction, every time we define the
function on a new $z$ linearly independent from the current domain,
it shall be sufficient to make sure that either $z\in R$ or $z\in iR$,
and that we define the new values according to the above restrictions.

\subsection{An induction}

To begin, let $(\zeta_{q})_{q\in\N^{\times}}$ be a coherent system
of roots of unity (i.e., such that $\zeta_{pq}^{p}=\zeta_{q}$). We
start with the function $E_{0}$ defined by $E_{0}(i\frac{p}{q}\omega):=\zeta_{q}^{p}$,
with domain $\dom(E_{0})=i\omega\Q$, and we proceed by transfinite
induction, with two slightly different procedures depending on whether
$K$ is countable or not.

Let us enumerate the relevant objects:
\begin{enumerate}
\item let $\{\alpha_{j}\}_{j<|K|}$ be an enumeration of $R\cup iR$;
\item let $\{\beta_{j}\}_{j<|K|}$ be an enumeration of $R_{>0}\cup\Sb^{1}$;
\item let $\{V_{j}\}_{j<|K|}$ be an enumeration of all the simple Kummer-generic
varieties over $K$, where each variety appears only once if $K$
is uncountable, and countably many times if $K$ is countable.
\end{enumerate}
The construction itself is based on a certain number of basic `operation'
that we use to extend the partial exponential functions.

In each of the operations, we take a partial $E$-field $K_{E}$ such
that $\sigma\circ E=E\circ\sigma$, and we extend $E$ to a new function
$E'$ with the same properties. We denote by $D$ the domain of $E$,
and by $F$ the field generated by $D$ and $E(D)$. We always assume
that $F$ has infinite transcendence degree over $K$.
\begin{description}
\item [{\textsc{domain}}] We start with a given $\alpha\in R\cup iR$.
If $\alpha\in D$, we define $E':=E$, otherwise we do the following.\\
If $\alpha\in R$, we choose a $\beta\in R_{>0}\setminus\acl(F\cup\{\alpha\})$
and we let $(\beta^{1/q})$ be its \emph{positive} roots; if $\alpha\in iR$,
we choose a $\beta\in\Sb^{1}\setminus\acl(F\cup\{\alpha\})$ and we
let $(\beta^{1/q})$ be some coherent system of roots.\\
We define $E'(z+\frac{p}{q}\alpha):=E(z)\cdot\beta^{p/q}$ for all
$z\in D$ and $p\in\Z,q\in\N^{\times}$.
\item [{\textsc{image}}] We start with a given $\beta\in R_{>0}\cup\Sb^{1}$.
If $\beta\in E(D)$, we define $E':=E$, otherwise we do the following.\\
If $\beta\in R_{>0}$, we choose an $\alpha\in R\setminus\acl(F\cup\{\beta\})$
and we let $(\beta^{1/q})$ be the \emph{positive} roots of $\beta$;
if $\beta\in\Sb^{1}$, we choose an $\alpha\in iR\setminus\acl(F\cup\{\beta\})$
and we let $\beta^{1/q}$ be some coherent system of roots.\\
We define $E'(z+\frac{p}{q}\alpha):=E(z)\cdot\beta^{p/q}$ for all
$z\in D$ and $p\in\Z,q\in\N^{\times}$.
\end{description}
These operations are used to guarantee that the final function $E$
is surjective and defined everywhere. We also want to verify (SEC),
so we use a special operation to add generic solutions to rotund varieties.
In order to preserve the property $\sigma\circ E=E\circ\sigma$ while
maintaining (SP), we use points such that not only their transcendence
degree is as large as possible, but also the transcendence degree
of their real and imaginary parts is.

For reasons that will become apparent later, in the uncountable case
we also want the solutions to be dense in the order topology. In order
to do so, we first prepare an operation where we add one solution
to a given rotund variety on a specified open set.
\begin{description}
\item [{\textsc{sol}}] We start with an absolutely irreducible variety
$V(\overline{c})\subset\G^{n}$, where $\overline{c}$ is a subset
of $K$ closed under $\sigma$, and a subset $U\subset V$ open in
the order topology.\\
We choose a point $((\alpha_{1}+i\gamma_{1},\beta_{1}\cdot\delta_{1}),\dots,(\alpha_{n}+i\gamma_{n},\beta_{n}\cdot\delta_{n}))\in U$
with the following properties:
\begin{eqnarray*}
 & \alpha_{j},\gamma_{j}\in R,\,\beta_{j}\in R_{>0},\,\delta_{j}\in\Sb^{1}(K)\mbox{\quad for }1\leq j\leq n\\
 & \td_{F(\overline{c}E(\overline{c}))}(\alpha_{1},\gamma_{1},\beta_{1},\delta_{1},\dots)=2\dim V.
\end{eqnarray*}
 We fix the positive roots $\beta_{j}^{1/q}$ of $\beta_{j}$ and
an arbitrary system of roots $\delta_{j}^{1/q}$, and we define
\[
E'\left(z+\frac{p_{1}}{q_{1}}\alpha_{1}+i\frac{p'_{1}}{q'_{1}}\gamma_{1}+\dots\right):=E(z)\cdot\beta_{1}^{p_{1}/q_{1}}\delta_{1}^{p'_{1}/q'_{1}}\cdot\dots.
\]

\end{description}
Now, it would be sufficient to repeat the above operation on a basis
of open sets of $V$, so that the solutions of $V$ would be dense.
However, again for reasons that will be clear later, we do this in
a slightly different way.
\begin{defn}
Let $V$ be an absolutely irreducible variety. We call the \emph{system
$\Rc(V)$ of the roots of $V$} the set of all the absolutely irreducible
varieties $W$ such that for some $q\in\Z^{\times}$ we have $q\cdot W=V$.
\end{defn}
Instead of just taking a variety $V$ and adding a dense set of solutions
to it, we take all the varieties in the system $\Rc(V)$ and we add
solutions to all of them at once. We pack this procedure into a single
operation. Here we assume that $F$ has transcendence degree at least
$\aleph_{1}$ over $K$, and that $K^{\sigma}$ is separable.
\begin{description}
\item [{\textsc{roots}}] We start with an absolutely irreducible variety
$V(\overline{c})\subset\G^{n}$, where $\overline{c}$ is a finite
subset of $K$ closed under $\sigma$.\\
Consider an enumeration $(W_{m}(\overline{d}_{m}),U_{m})_{m<\omega}$
of all the pairs composed by a variety $W_{m}$ of $\Rc(V)$ and an
open subset $U_{m}\subset W_{m}$ chosen among a fixed countable basis
of the order topology on $W_{m}$, where $\overline{d}_{m}$ is a
finite subset of $\acl(\overline{c}E(\overline{c}))$ over which $W$
is $E$-defined. We produce inductively a sequence of partial exponential
functions $E_{m}$, starting with $E_{0}:=E$.\\
Let us suppose that $E_{m-1}$ has been defined. If $W_{m}(\overline{d}_{m})$
has a dense set of really algebraically independent solutions in $K_{E_{m-1}}$,
we define $E_{m}:=E_{m-1}$; otherwise we apply \textsc{sol} to $(W_{m}(\overline{d}_{m}),U_{m})$
over the $E$-field $K_{E_{m-1}}$. The resulting exponential function
will be $E_{m}$.\\
Finally, we define $E':=\bigcup_{m\in\N}E_{m}$.
\end{description}
These are the \emph{finite} operations. There is only one non-finite
operation. Let $(E_{k})_{k<j}$ be an increasing sequence of partial
exponential functions.
\begin{description}
\item [{\textsc{limit}}] We define $E':=\bigcup_{k<j}E_{k}$.
\end{description}
The actual construction follows.
\begin{enumerate}
\item If $j=k+1$ is a successor ordinal, we do the following:

\begin{enumerate}
\item \label{enu:rdom}We apply \textsc{domain} to $\alpha_{k}$ and $E_{k}$
to obtain $E_{k}^{(1)}$.
\item \label{enu:rimg}We apply \textsc{image} to $\beta_{k}$ and $E_{k}^{(1)}$
to obtain $E_{k}^{(2)}$.
\item \label{enu:rsol-dom}We take the variety $V_{k}$ and a finite set
$\overline{c}$ of parameters closed under $\sigma$ such that $V_{k}$
is $E_{k}^{(2)}$-defined over $\overline{c}$.\\
We apply \textsc{domain} to each element of $\overline{c}$ to obtain
$E_{k}^{(3)}$.
\item \label{enu:rsol}At last, if $R$ is uncountable, we apply \textsc{roots}
to $V_{k}(\overline{c})$ and $E_{k}^{(3)}$; otherwise, we apply
\textsc{sol} to $V_{k}(\overline{c})$, with $V_{k}$ itself as open
subset, and $E_{k}^{(3)}$. The function resulting from this operation
is $E_{j+1}$.
\end{enumerate}
\item \label{enu:rlim}If $j$ is a limit ordinal we apply \textsc{limit}
to the sequence $(E_{k})_{k<j}$ to obtain the function $E_{j}$.
\end{enumerate}
We shall prove in \prettyref{sec:proof} that $K_{E_{|K|}}$ is indeed
a Zilber field. Most properties are easy to check; the first actual
obstacle is proving that \textsc{sol} does not falsify (SP) (a consequence
of \prettyref{thm:rotund}).

However, the biggest difficulty comes from verifying that $K_{E_{|K|}}$
satisfies (CCP). In order to check (CCP), we look at how many times
a rotund variety $X(\overline{c})$ of depth $0$ receives new generic
solutions as an effect of the above operations. After some reductions
on the shape of $X$ described in Sections\ \ref{sec:facts} and
\ref{sec:ccp}, we may see that it happens essentially only during
an application of \textsc{sol}, and that the variety $V$ to which
we apply the operation must be strongly related to $X$ (\prettyref{lem:new-sol-alg-map}).

Because of \prettyref{thm:rotund}, the relationship between $V$
and $X$ will let us move the solutions from $V$ to $X$ and \emph{back}
(\prettyref{prop:dense-solutions-in-roots})\emph{. }Roughly speaking,
the use of dense sets of solutions will guarantee that after countably
many applications of \textsc{roots}, the solutions of $X$ can be
always moved back to solutions of the potential $V$'s, making the
further applications of \textsc{roots} that could contribute to $X$
void. But since we stop adding new solutions to $X$ after countably
many steps, $X(\overline{c})$ has only countably many generic solutions,
proving (CCP).

\section{\label{sec:involuntary}An (involuntary) restriction on $\sigma$}

Before entering the details of the proof of \prettyref{thm:from-invo},
we wish to comment the fact that the resulting exponential function
behaves rather badly with respect to the topology induced by $\sigma$,
and quite differently from the classical case $\C_{\exp}$.

Indeed, we can easily describe a restricted class of involutions containing
the ones obtained with our method. Let $(K_{E},\sigma)$ be a structure
where $K_{E}$ is a Zilber field and $\sigma$ is an involution of
$K_{E}$. Consider the following axioms:
\begin{enumerate}
\item [(ZIL)] $K_{E}$ is a Zilber field;
\item [(INV)] $\sigma$ is an automorphism of order two;
\item [(DEN)] for every absolutely free rotund variety $V\subset\G^{n}$
over $K$, every finite tuple $\overline{c}\in K^{<\omega}$ such
that $V$ is defined over $\overline{c}$, and every subset $U\subset V$
open w.r.t.\ the order topology of $K^{\sigma}$, there is a generic
solution $\overline{z}\in K^{n}$ of $V(\overline{c})$ such that
$\zEz{\overline{z}}\in U$.
\end{enumerate}
The last axioms means that for each absolutely free rotund variety
$V(\overline{c})$ the set of its generic solutions is dense in the
order topology. In some sense, we are saying that $E$ is not only
random with respect to the field structure, but also with respect
to the order structure. It can be expressed by a first order formula
with just a slight modification of axiom (SEC).

Our proof shows explicitly the existence of many models in the above
class.
\begin{thm}
\label{thm:invo-dense}For each cardinal $0<\kappa\leq2^{\aleph_{0}}$,
there are $2^{\aleph_{0}+\kappa}$ non-isomorphic models of the axioms
(ZIL), (INV), (DEN) of dimension $\kappa$. There is at least one
model of dimension $0$.
\end{thm}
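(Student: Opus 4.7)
The plan is to obtain \prettyref{thm:invo-dense} as a by-product of the construction underlying \prettyref{thm:from-invo}. A single execution of that construction, invoking \textsc{roots} at every successor stage, yields a Zilber field $(K_E,\sigma)$ automatically satisfying (ZIL), (INV), and (DEN): the first two are provided by \prettyref{thm:from-invo}, while (DEN) follows because \textsc{roots} forces the generic solutions of every absolutely free rotund variety to be dense in the order topology of each variety in its system of roots. For the last sentence, existence in dimension $0$, one runs the construction starting from a small initial $K_0$ so that all of its transcendentals are eventually absorbed into depth-$0$ rotund varieties, yielding a Zilber field of exponential transcendence degree $0$.

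For the main lower bound, I would exploit the freedom built into the \textsc{domain} and \textsc{sol} sub-operations. Each time a new real element $\alpha \in R$ is adjoined to the currently constructed field $F$, the only imposed condition is algebraic independence of $\alpha$ from $F$; beyond this, $\alpha$ may be positioned to realize any of the $2^{\aleph_0}$ Dedekind cuts of $R \cap F$ not already realized in $F$. Running through the $\aleph_0 + \kappa$ successor stages of the induction with independent cut choices at each gives a construction tree with $2^{\aleph_0 + \kappa}$ branches.

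Any isomorphism $(K_E, \sigma) \cong (K'_{E'}, \sigma')$ restricts to an ordered-field isomorphism $K^{\sigma} \to K'^{\sigma'}$, so the ordered-field isomorphism type of $R = K^{\sigma}$ is a structure invariant; in particular the set $\Sigma(R)$ of Dedekind cuts of $\Q$ realized in $R$ is preserved. During the construction I would arrange for the $\aleph_0 + \kappa$ freshly adjoined real transcendentals to realize any prescribed family of distinct cuts. Since $\Sigma(R)$ is a subset of the $2^{\aleph_0}$-sized set of cuts of $\Q$ of cardinality at most $\aleph_0 + \kappa$, the number of possible values is $(2^{\aleph_0})^{\aleph_0 + \kappa} = 2^{\aleph_0 + \kappa}$, and each value can be targeted by a suitable branch, producing $2^{\aleph_0 + \kappa}$ non-isomorphic fixed fields and hence non-isomorphic models.

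The main obstacle is verifying that later \textsc{sol} or \textsc{roots} operations do not silently collapse or duplicate the distinctive cut information already encoded earlier, and simultaneously that the final exponential transcendence degree is exactly $\kappa$ rather than being shrunk by absorption into generic solutions. Both concerns reduce to the observation that the cut realized by a transcendental $\alpha$ over a countable subfield $F_j$ is preserved under any further countable extension of $F_j$: later stages may add new elements on either side of $\alpha$ but cannot alter the cut of $\alpha$ over $F_j$ already determined at stage $j$, so the encoded information persists into the direct limit.
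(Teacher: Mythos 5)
The central step of your argument --- producing $2^{\aleph_{0}+\kappa}$ pairwise non-isomorphic models by making ``independent cut choices'' for newly adjoined reals along a construction tree --- rests on a misreading of the construction behind \prettyref{thm:from-invo}. That construction takes the algebraically closed field $K$, the involution $\sigma$, and hence the real closed field $R=K^{\sigma}$ as \emph{fixed input data}, and by transfinite induction only extends a partial exponential function; no element is ever adjoined to the field. In \textsc{domain} the element $\alpha$ is prescribed by the enumeration of $R\cup iR$, and the genuine choices (of $\beta$ in \textsc{domain} and \textsc{image}, of the point in \textsc{sol}) are choices of elements that already lie in $R$ or $K$. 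Consequently your proposed invariant, the set of Dedekind cuts of $\Q$ realized in $K^{\sigma}$, is determined by the input pair $(K,\sigma)$ and is identical along every ``branch'': all runs over $\C$ with complex conjugation have fixed field $\R$ and realize all cuts. So the branching mechanism does not exist inside this construction and the lower bound does not follow. Moreover, neither the dimension-$\kappa$ requirement nor the dimension-$0$ case is secured: a direct run does not control the exponential transcendence degree of the output, the operation \textsc{roots} (which is what yields (DEN)) needs at least $\aleph_{1}$ spare transcendence degree and so cannot be run directly on a countable field to produce the small-dimension models, and ``absorbing all transcendentals into depth-$0$ varieties'' is not something the procedure does.

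The paper's route is different and avoids these problems. It runs the construction once, over $\C$ with complex conjugation, obtaining a single model of cardinality $2^{\aleph_{0}}$ satisfying (ZIL), (INV), (DEN); it then chooses a maximal exponentially-algebraically independent set $\{t_{j}\}_{j\in J}$ inside the fixed field $\R$ (necessarily of size $2^{\aleph_{0}}$) and, for each $J_{0}\subset J$ with $|J_{0}|=\kappa$, passes to the closed submodel $\cl(\{t_{j}\}_{j\in J_{0}})$, which is $\sigma$-invariant, has dimension $\kappa$, and still satisfies the three axioms. Since distinct reals realize distinct cuts of $\Q$, and $t_{j}\in\cl(\{t_{i}\}_{i\in J_{0}})$ if and only if $j\in J_{0}$, the submodel fills exactly the cuts of the chosen $t_{j}$ and omits the others; hence distinct subsets give non-isomorphic models, counting subsets gives $2^{\aleph_{0}+\kappa}$ of them, and $J_{0}=\emptyset$ gives the dimension-$0$ model. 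Your ordered-field invariant (the cuts of $\Q$ realized in the fixed field) is exactly the paper's, and your observation that any isomorphism preserves it is correct; but to make it vary one must pass to different closed subfields of one model (or change the input field altogether), not make different choices within a single run over a fixed $K$.
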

It is a direct consequence of our construction that there is one such
model built over $\C$ and complex conjugation. Starting with one
such model, we may choose a maximal set $\{t_{j}\}_{j\in J}$ of exponential-algebraic
elements contained in the fixed field $\R$; for each $J_{0}\subset J$,
the subfield $\cl(\{t_{j}\}_{j\in J})$ is itself a model of the above
axioms, and since it fills the Dedekind cuts of $t_{j}$ for $j\in J_{0}$,
but omits the ones for $j\notin J_{0}$, it is non-isomorphic to the
others obtained with different subsets of $J$. This shows the existence
of the desired number of non-isomorphic models.

However, our method does not say anything about models where the axiom
(DEN) is not true. In particular, even if $\C_{\exp}$ is a Zilber
field, and we take the classical complex conjugation as $\sigma$,
the structure $(\C_{\exp},\sigma)$ cannot be a result of our construction.

Moreover, (CCP) and (DEN) imply that the topology must be separable,
hence the cardinality of $K$ is forced to be at most $2^{\aleph_{0}}$.

\section{\label{sec:facts}Some facts about Zilber fields}

In order to prove that the resulting $E$-field $K_{E_{|K|}}$ of
\prettyref{sec:cons} is a Zilber field, we need to check that each
of the axioms listed in \prettyref{sec:defaxioms} is true. Using
some known properties and tools of Zilber fields, we can reduce a
bit the complexity of this problem, especially for (SEC) and (CCP).

Here we recall two equivalent, but simpler formulations of the axioms
(SEC) and (CCP), and some properties of the predimension of Zilber
fields that helps in verifying (SP).

\subsection{Equivalent formulations}

We can verify that the axioms (SEC) and (CCP) can be checked on simple
and perfectly rotund Kummer-generic varieties only, rather than all
rotund varieties. This is the reason why it is sufficient to use simple
Kummer-generic varieties in the step~\prettyref{enu:rsol} of our
construction. Moreover, at least for (SEC) the parameters defining
the varieties are actually not relevant.
\begin{fact}
\label{fact:sec-is-minus} Let $K_{E}$ be a partial $E$-field. (SEC)
holds on $K_{E}$ if and only if the following holds:
\begin{enumerate}
\item [\emph{(SEC\textsubscript{1})}] for any simple Kummer-generic variety
$V$ defined over \emph{some} finite $\overline{c}$, $V(\overline{c})$
has an infinite set of algebraically independent solutions.
\end{enumerate}
\end{fact}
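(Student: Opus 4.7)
The plan is to prove both directions separately, with (SEC)$\Rightarrow$(SEC$_1$) following by routine iteration, while the converse requires a three-step reduction.

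For (SEC)$\Rightarrow$(SEC$_1$): given a simple Kummer-generic $V$ over finite $\overline{c}$, simplicity implies absolutely free rotund, so (SEC) produces a generic solution $\overline{z}_1$. Enlarging the parameters to $\overline{c}\cup\overline{z}_1$, the variety $V$ remains $E$-defined, so a second application of (SEC) produces $\overline{z}_2$ generic over the enlarged set. Transitivity of transcendence degree then forces $\{\overline{z}_1,\overline{z}_2\}$ to be algebraically independent, and iterating builds the required infinite algebraically independent family.

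For (SEC$_1$)$\Rightarrow$(SEC): fix an absolutely free rotund $V\subset\G^n$ over finite $\overline{c}_*$. First I would reduce to the case where $V$ is simple: if $V$ fails simplicity, some $M\in\Mc_{n,n}(\Z)$ with $0<\rank M<n$ realises $\dim MV=\rank M$, so after a suitable change of $\Z$-basis, $V$ surjects onto a lower-dimensional rotund variety whose generic solutions lift through the fibres, and one inducts on $n$. Second, I would invoke \prettyref{fact:thumbtack} to choose $q\in\N^{\times}$ and an absolutely irreducible $W\subset\G^n$ with $q\cdot W=V$ and $W$ Kummer-generic; since $q\cdot$ is an isogeny of $\G^n$, the identity $\dim MW=\dim MV$ holds for every integer matrix $M$, so $W$ inherits simplicity, absolute freeness and rotundity from $V$. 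Third, (SEC$_1$) applied to $W$ supplies a finite $\overline{c}_W$ over which $W$ (and hence $V$) is $E$-defined, together with an infinite algebraically independent family of solutions $\overline{z}_1,\overline{z}_2,\dots$ of $W(\overline{c}_W)$. Each $q\overline{z}_i$ solves $V$, and because $E(\overline{z}_i)$ is algebraic over $E(q\overline{z}_i)=E(\overline{z}_i)^q$, transcendence degrees are preserved, so the $q\overline{z}_i$ form an algebraically independent family of generic solutions of $V(\overline{c}_W)$.

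The final step merges the two parameter sets: $V$ is $E$-defined over $\overline{c}_*\cup\overline{c}_W$, and an easy counting argument shows that at most $t:=\td_{\overline{c}_W E(\overline{c}_W)}(\overline{c}_* E(\overline{c}_*))$ of the $q\overline{z}_i$ can fail to be generic over this enlargement, so infinitely many remain generic over $\overline{c}_*\cup\overline{c}_W$ and, \emph{a fortiori}, over $\overline{c}_*$ alone, yielding the desired generic solution of $V(\overline{c}_*)$. The main obstacle will be the first reduction to simple varieties: the inductive lift from a generic solution of the factored rotund variety to a generic solution of $V$ itself requires choosing $M$ carefully and verifying that the fibre over the lifted solution supplies the missing coordinates with the correct transcendence degree; everything else is transcendence-degree bookkeeping around the Thumbtack Lemma.
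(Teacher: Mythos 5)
Your overall route is the same as the paper's: reduce to Kummer-generic varieties via \prettyref{fact:thumbtack}, handle simple varieties by (SEC\textsubscript{1}) together with a change-of-parameters argument (your ``easy counting argument'' is exactly \prettyref{prop:indep-up-to-finite}), and treat non-simple varieties by projecting along a matrix $M$ with $0<\rank M<n$ and $\dim M\cdot V=\rank M$, inducting on dimension. The genuine problem is that the step you defer as ``the main obstacle'' is not a detail but the core of the hard direction, and your proposal offers no argument for it. What has to be shown is that, after specialising the coordinates $z_{1},w_{1},\dots,z_{k},w_{k}$ (the image of $M$) to a generic solution $\tilde{z}$ of $M\cdot V(\overline{c})$, the remaining coordinates describe a \emph{rotund} variety $\tilde{V}(\overline{c},\tilde{z})$ of dimension $\dim V-k<\dim V$; only then can the inductive hypothesis be applied to the fibre, and only then does additivity of transcendence degree make the concatenation $\tilde{z}\,\overline{y}$ generic for $V(\overline{c})$. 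In the paper this is precisely the block-matrix computation: applying rotundity of $V$ to $\left(\begin{array}{c|c}\mathrm{Id}_{k} & 0\\\hline 0 & N\end{array}\right)$ gives $\td_{\overline{c}E(\overline{c})}\geq k+h$ for every $N$ of rank $h$, and since $\dim M\cdot V=k$ this yields $\td_{\overline{c}E(\overline{c})\tilde{z}E(\tilde{z})}\bigl(N\cdot(\text{fibre coordinates})\bigr)\geq h$, i.e.\ rotundity of the fibre. Saying that ``generic solutions lift through the fibres'' presupposes exactly this; without it the induction in the non-simple case does not start.

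Two smaller points. First, the order of your reductions is off: the reduction to simple varieties \emph{is} the induction on $n$, whose simple case uses the Thumbtack/Kummer step, so the two cannot be carried out as consecutive stand-alone reductions; the paper first reduces to Kummer-generic varieties and then runs the dimension induction with the simple/non-simple dichotomy inside it. Second, \prettyref{fact:thumbtack} requires $V$ to be absolutely irreducible, whereas in (SEC) the variety is only irreducible over $K$ (a partial $E$-field need not be algebraically closed); the paper therefore writes $V$ as $q\cdot(W_{1}\cup\dots\cup W_{m})$ and works with the absolutely irreducible components. Your transfer of solutions along $q\cdot W=V$ and the inheritance of rotundity, freeness and simplicity by $W$ are fine, as is the first direction by iterated application of (SEC) over enlarged parameter sets.
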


\begin{fact}
\label{fact:ccp-is-minus} Let $K_{E}$ be a partial $E$-field. (CCP)
holds on $K_{E}$ if and only if the following holds:
\begin{enumerate}
\item [\emph{(CCP\textsubscript{1})}] for any perfectly rotund Kummer-generic
variety $V$, and for any finite $\overline{c}$ such that $V$ is
defined over $\overline{c}$, $V(\overline{c})$ has at most countably
many generic solutions.
\end{enumerate}
\end{fact}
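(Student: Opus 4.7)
One direction is immediate: a perfectly rotund Kummer-generic variety is in particular absolutely free rotund of depth $0$, so (CCP) implies (CCP$_1$). For the converse the plan is a two-stage reduction. First, pass from an arbitrary absolutely free rotund $V\subset\G^n$ of depth $0$ to a simple one (that is, perfectly rotund) by induction on $n$; then pass from perfectly rotund to perfectly rotund Kummer-generic by means of the Thumbtack Lemma (\prettyref{fact:thumbtack}).

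For the first stage, suppose $V(\overline{c})$ is not already simple. Then there exists $M\in\Mc_{n,n}(\Z)$ with $0<\rank M=k<n$ and $\dim(M\cdot V)=k$. After a unimodular change of coordinates on $\G^n$, which preserves absolute freeness, rotundity, depth and simplicity, we may assume $M$ is the projection onto the first $k$ coordinates. Then $V$ is fibred over an absolutely free rotund variety $V'\subset\G^k$ of depth $0$, $E$-defined over $\overline{c}$, and the generic fibre $V''\subset\G^{n-k}$ is again absolutely free rotund of depth $0$, $E$-defined over $\overline{c}$ together with a generic solution of $V'$ and its exponential image. A generic solution of $V(\overline{c})$ decomposes uniquely as a generic solution of $V'(\overline{c})$ together with a generic solution of the corresponding fibre; by induction on $n$ both sets are countable, so the set of generic solutions of $V(\overline{c})$ is a countable union of countable sets, hence countable.

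For the second stage, let $V$ be perfectly rotund. By \prettyref{fact:thumbtack} there exists $q\in\N^\times$ such that every absolutely irreducible $W\subset\G^n$ with $q\cdot W=V$ is Kummer-generic. There are at most $q^n$ such $W$, and each inherits absolute freeness, rotundity, depth $0$ and simplicity from $V$, because the isogeny $q\cdot$ is finite-to-one and commutes with the action of every $N\in\Mc_{n,n}(\Z)$, so that $\dim(N\cdot W)=\dim(N\cdot V)$. Any generic solution $\overline{z}$ of $V(\overline{c})$ admits a $q$-th root $\overline{z}/q$ which is a generic solution of $W(\overline{c}')$ for one of these $W$, over a finite parameter set $\overline{c}'\supseteq\overline{c}$ that absorbs the algebraic extra parameters needed to define $W$. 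Applying (CCP$_1$) to each such $W(\overline{c}')$ and summing over the finitely many choices yields countably many generic solutions of $V(\overline{c})$.

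The main obstacle is the fibration step: verifying that the generic fibre $V''$ really is absolutely free rotund of depth $0$ in $\G^{n-k}$ over the enlarged parameters. This is a standard Hrushovski-style predimension calculation, resting on the equality $\dim(M\cdot V)=\rank M$ combined with depth-zero rotundity of $V$ to force the entire dimensional deficit to be concentrated on the base $V'$, leaving the fibres exactly of the right dimension. A similar reduction underlies \prettyref{fact:sec-is-minus} for (SEC), so both reductions can be extracted from the same geometric analysis.
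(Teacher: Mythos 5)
Your outline is essentially the paper's proof: the same two ingredients (the Thumbtack Lemma, \prettyref{fact:thumbtack}, to reduce to Kummer-generic varieties, and an induction on dimension in which a non-simple $V$ is projected onto coordinates realising $\dim M\cdot V=\rank M$ and a generic solution is split into a base-generic part and a fibre-generic part), with the only structural difference that you invoke the Thumbtack Lemma at the simple base case, whereas the paper applies that reduction first and then inducts. That swap is harmless, and your counting of solutions via the decomposition is exactly the paper's.

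One step, however, would fail as you justify it: the generic fibre $V''$ need \emph{not} be absolutely free, and no predimension computation will make it so. What rotundity of $V$ together with $\dim M\cdot V=\rank M$ gives (and all the paper claims) is that the fibre is \emph{rotund} of dimension $n-k$: for an integer matrix $N$ of rank $h$ one gets transcendence degree at least $h$ for the pair of additive and multiplicative coordinates of $N$ applied to the fibre, which does not prevent one of the two projections from being constant. Concretely, let $V\subset\G^{2}$ be defined by $w_{1}=z_{1}^{2}+1$ and $w_{2}=z_{1}$, with $z_{2}$ unconstrained: this $V$ is absolutely free, rotund, of depth $0$ and not simple; its projection to the first coordinate is the curve $w_{1}=z_{1}^{2}+1$, and the fibre over a generic base point $(\tilde{z}_{1},\tilde{w}_{1})$ is $\{(z_{2},w_{2}):w_{2}=\tilde{z}_{1}\}$, whose multiplicative projection is a single point. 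This matters because (CCP) is a statement about absolutely free varieties only, and a solution of such a fibre has $E$ of a fixed coordinate pinned to a constant, i.e.\ it lives in a coset of $\ker E$, which (CCP) by itself does not control; so the appeal to the induction hypothesis on $V''$ cannot be licensed by a freeness claim, and the fibre has to be treated, as in the paper, merely as a rotund variety of the correct dimension, with the possible failure of freeness addressed separately. Two smaller remarks: \prettyref{fact:thumbtack} is stated for absolutely irreducible varieties, so for a perfectly rotund $V$ (only assumed irreducible) you should first pass to its absolutely irreducible components, which is why the paper writes $V=q\cdot(W_{1}\cup\dots\cup W_{m})$; and your unimodular normalisation of $M$ is fine (saturate the row lattice of $M$), although the paper allows any full-rank integer matrix for this step.
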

Hence, it will be sufficient to verify (SEC\textsubscript{1}) and
(CCP\textsubscript{1}) instead of their full versions. To prove the
above statements, we use the following fact.
\begin{prop}
\label{prop:indep-up-to-finite}Let $V$ be an absolutely free rotund
variety. Let $\overline{c}$ and $\overline{d}$ be two finite tuples
such that $V$ is $E$-defined both over $\overline{c}$ and $\overline{d}$.

If $\Sc$ is a set of algebraically independent solutions of $V(\overline{c})$,
then $\Sc$, up to removing a finite set, is an algebraically independent
set of solutions of $V(\overline{d})$.\end{prop}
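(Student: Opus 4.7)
\emph{Plan.} The claim is a bookkeeping exercise with transcendence degree. First observe that being a solution of $V$ is a geometric condition on $\zEz{\overline{z}}$ alone, independent of the parameters chosen to cut out $V$; hence every element of $\Sc$ is automatically a solution of both $V(\overline{c})$ and $V(\overline{d})$, and only the algebraic independence over $\overline{d}E(\overline{d})$ needs to be verified on a cofinite subset of $\Sc$.

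For each finite $T\subseteq\Sc$ denote by $T^{*}$ the tuple $(\zEz{\overline{z}})_{\overline{z}\in T}\in V^{|T|}$, and set
\[
g_{T}\;:=\;\td_{\overline{c}E(\overline{c}),\,T^{*}}\bigl(\overline{d}E(\overline{d})\bigr).
\]
Since $\overline{c}$ and $\overline{d}$ are finite tuples, $\td_{\overline{c}E(\overline{c})}(\overline{d}E(\overline{d}))$ is a finite natural number bounding every $g_{T}$ from above. The function $T\mapsto g_{T}$ is non-increasing, because enlarging $T$ enlarges the base field. Being non-negative integer valued, it attains its minimum on some finite $S_{0}\subseteq\Sc$, and hence $g_{T}=g_{S_{0}}$ for every finite $T\supseteq S_{0}$.

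The finite set to remove will be $S_{0}$. For any finite $T'\subseteq\Sc\setminus S_{0}$, the stabilisation $g_{T'\cup S_{0}}=g_{S_{0}}$ says exactly that $\overline{d}E(\overline{d})$ and $T'^{*}$ are algebraically independent over $\overline{c}E(\overline{c}),S_{0}^{*}$; by the symmetry of algebraic independence this is equivalent to
\[
\td_{\overline{c}E(\overline{c}),\,S_{0}^{*},\,\overline{d}E(\overline{d})}(T'^{*})\;=\;\td_{\overline{c}E(\overline{c}),\,S_{0}^{*}}(T'^{*}).
\]
The right-hand side equals $|T'|\dim V$ because $T'\cup S_{0}$ is an algebraically independent set of solutions of $V(\overline{c})$. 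On the other hand $V$ is defined over $\overline{d}E(\overline{d})$, so $\td_{\overline{d}E(\overline{d})}(T'^{*})\leq|T'|\dim V$; combining this with the monotonicity $\td_{\overline{d}E(\overline{d})}(T'^{*})\geq\td_{\overline{c}E(\overline{c}),\,S_{0}^{*},\,\overline{d}E(\overline{d})}(T'^{*})$ forces equality, showing that $\Sc\setminus S_{0}$ is an algebraically independent set of solutions of $V(\overline{d})$.

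The only mildly delicate point is the stabilisation of $g_{T}$: it rests exclusively on the finiteness of $\td_{\overline{c}E(\overline{c})}(\overline{d}E(\overline{d}))$, which is automatic since $\overline{c}$ and $\overline{d}$ are finite. Beyond this the proof is a routine use of the additivity and symmetry of transcendence degree; there is no serious obstacle.
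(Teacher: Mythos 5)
Your proof is correct and takes essentially the same route as the paper's: both arguments hinge on the finiteness of $\td_{\overline{c}E(\overline{c})}(\overline{d}E(\overline{d}))$, extract a finite extremising set $S_{0}$ from a monotone, bounded, integer-valued invariant of finite subsets of $\Sc$, and then conclude via additivity of transcendence degree. The only difference is bookkeeping: the paper maximises the defect $\Delta(\Sc')=\dim V\cdot|\Sc'|-\td_{\overline{d}E(\overline{d})}(\Sc')$, whereas you minimise $\td_{\overline{c}E(\overline{c}),T^{*}}(\overline{d}E(\overline{d}))$, and by the symmetry of transcendence degree these are two sides of the same computation.
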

\begin{proof}
For each finite subset $\Sc'\subset\Sc$, let $\Delta(\Sc)$ be the
following quantity:
\[
\Delta(\Sc'):=\td_{\overline{c}E(\overline{c})}(\Sc')-\td_{\overline{d}E(\overline{d})}(\Sc')=\dim V\cdot|\Sc'|-\td_{\overline{d}E(\overline{d})}(\Sc').
\]

$\Delta$ measures how far is $\Sc'$ from being algebraically independent
over $\overline{d}E(\overline{d})$. First of all, we claim that $\Delta(\Sc')$
is bounded from above. Indeed,
\[
\td_{\overline{d}E(\overline{d})}(\Sc')\geq\td_{\overline{c}\overline{d}E(\overline{c}\overline{d})}(\Sc')\geq\td_{\overline{c}E(\overline{c})}(\Sc')-\td_{\overline{c}E(\overline{c})}(\overline{d}E(\overline{d})),
\]
 and after shuffling the terms, $\Delta(\Sc')\leq\td_{\overline{c}E(\overline{c})}(\overline{d}E(\overline{d}))$.
Moreover, $\Delta$ is clearly an increasing function.

Now let $\Sc_{0}$ be a finite set such that $\Delta(\Sc_{0})$ is
maximum. We claim that $\Sc\setminus\Sc_{0}$ is algebraically independent.
Let us consider a finite subset $\Sc'\subset\Sc\setminus\Sc_{0}$;
since the function $\Delta$ is increasing, we have
\begin{eqnarray*}
 & \dim V\cdot(|\Sc'|+|\Sc_{0}|)-\td_{\overline{d}E(\overline{d})}(\Sc'\cup\Sc_{0})=\Delta(\Sc'\cup\Sc_{0})=\\
 & =\Delta(\Sc_{0})=\dim V\cdot|\Sc_{0}|-\td_{\overline{d}E(\overline{d})}(\Sc_{0}).
\end{eqnarray*}

This implies that
\[
\dim V\cdot|\Sc'|\leq\td_{\overline{d}E(\overline{d}),\Sc_{0}}(\Sc')\leq\td_{\overline{d}E(\overline{d})}(\Sc')=\dim V\cdot|\Sc'|,
\]
 as desired.
\end{proof}

\begin{proof}[Proof of Facts \ref{fact:sec-is-minus} and \ref{fact:ccp-is-minus}]

One direction is trivial for both statements, so let us see the other
direction.

Let $V$ be an absolutely free rotund variety $E$-defined over $\overline{c}$.
By \prettyref{fact:thumbtack}, if we add some elements of $\acl(\overline{c}E(\overline{c}))$
to $\overline{c}$, for some appropriate integer $q\in\N^{\times}$
we find that $V$ is of the form $q\cdot(W_{1}\cup\dots\cup W_{m})$
with $W_{1},\dots,W_{m}$ Kummer-generic, absolutely free rotund varieties
$E$-defined over $\overline{c}$. If we verify that (SEC) and (CCP)
hold when specialised in each $W_{j}$, then (SEC) and (CCP) are also
true when specialised in $V$. Hence, we may assume that $V$ is Kummer-generic.

We proceed by induction on $n=\dim(V)$. If $V$ is simple, and (SEC\textsubscript{1})
is true, then $V$ has infinitely many algebraically independent solutions
over some set of parameters $\overline{d}$, and by \prettyref{prop:indep-up-to-finite},
after removing a finite number of them, they are also algebraically
independent over $\overline{c}$. If $V$ is perfectly rotund, and
(CCP\textsubscript{1}) is true, there are at most countably many
generic solutions.

Now, let us suppose that $V$ is not simple, and that we have proved
the conclusions for all the varieties of dimension smaller than $V$.
Let $z_{1},w_{1,}\dots,z_{n},w_{n}$ be the coordinate functions of
$V$.

Let $M$ be an integer matrix such that $0<k=\rank M<n$ and $\dim M\cdot V=\rank M$.
Using a suitable square invertible matrix, and discarding the coordinates
that become zero, we may assume that $M$ is the projection of $V$
over the coordinates $z_{1},w_{1},\dots,z_{k},w_{k}$.

Let $N$ be any matrix in $\Mc_{h,n-k}(\Z)$ of rank $h$, with $h\leq n-k$.
By rotundity, we have
\[
\td_{\overline{c}E(\overline{c})}\left(\left(\begin{array}{c|c}
\mathrm{Id}_{k} & 0\\
\hline 0 & N
\end{array}\right)\cdot(z_{1},w_{1},\dots,z_{n},w_{n})\right)\geq k+h.
\]

But since $\dim M\cdot V=k$, this means
\[
\td_{\overline{c}E(\overline{c}),z_{1},\dots,z_{k},w_{1},\dots,w_{k}}(N\cdot(z_{k+1},w_{k+1},\dots,z_{n},w_{n}))\geq h=\rank N.
\]

In other words, whenever we specialise the first $2k$ coordinates
to a generic solution of $M\cdot V$, the remaining coordinates describe
a \emph{rotund} variety of dimension smaller than $\dim V$.

The projection $M\cdot V$ is a rotund variety of depth $0$ and of
dimension smaller than $\dim(V)$. If $\tilde{z}=(\tilde{z}_{1},\dots,\tilde{z}_{k})$
is a solution of $M\cdot V$, and we specialise the variables $z_{1},w_{1},\dots,z_{k},w_{k}$
to $\tilde{z}_{1},E(\tilde{z}_{1}),\dots,\tilde{z}_{k},E(\tilde{z}_{k})$
and project onto the last $2(n-k)$ coordinates, we obtain a new variety
$\tilde{V}(\overline{c},\tilde{z})$; by the above argument, $\tilde{V}$
is a \emph{rotund} variety of dimension smaller than $\dim V$.

If (SEC\textsubscript{1}) is true, by inductive hypothesis $M\cdot V(\overline{c})$
has at least countably many algebraically independent solutions; let
us call them $\{\tilde{z}_{j}\}_{j\in\N}$. Again by inductive hypothesis,
the variety $\tilde{V}_{0}(\overline{c},\tilde{z}_{0})$ obtained
from $\tilde{z}_{0}$ as above has a generic solution, say $\overline{y}_{0}$,
and by construction, the concatenation $\tilde{z}_{0}\overline{y}_{0}$
is a generic solution of $V(\overline{c})$. Moreover, the variety
$\tilde{V}_{1}(\overline{c},\tilde{z}_{1},\tilde{z}_{0},\overline{y}_{0})$
has a generic solution $\overline{y}_{1}$, hence $\tilde{z}_{1}\overline{y}_{1}$
is a generic solution of $V(\overline{c})$ algebraically independent
from $\tilde{z}_{0}\overline{y}_{0}$. We can proceed further by induction,
taking a generic solution of $\tilde{V}_{n}(\overline{c},\tilde{z}_{n},\tilde{z}_{0}\overline{y}_{0},\dots,\tilde{z}_{n-1}\overline{y}_{n-1})$
as $\overline{y}_{n}$ for each $n\in\N$. The resulting $\{\tilde{z}_{n}\overline{y}_{n}\}_{n\in\N}$
form an algebraically independent set of solutions for $V(\overline{c})$,
proving our original inductive step.

On the other hand, if (CCP\textsubscript{1}) is true, the inductive
hypothesis tells us that $M\cdot V(\overline{c})$ has at most countably
many generic solutions $\tilde{z}$. Moreover, for each such $\tilde{z}$,
the generic solutions of $\tilde{V}(\overline{c},\tilde{z})$ are
at most countably many. Since each generic solution $\overline{z}$
of $V(\overline{c})$ can be decomposed as $\tilde{z}\overline{y}$,
with $\tilde{z}$ generic solution of $M\cdot V(\overline{c})$ and
$\overline{y}$ generic solution of $\tilde{V}(\overline{c},\tilde{z})$,
the variety $V(\overline{c})$ has at most countably many solutions.

Therefore, (SEC\textsubscript{1}) implies (SEC) and (CCP\textsubscript{1})
implies (CCP).
\end{proof}

\subsection{Predimension}

The axiom (SP) can be interpreted as stating that a certain quantity
is always positive, and in the context of Hrushovski's amalgamation,
this quantity works as a predimension on $K_{E}$. The predimension
is particularly useful when dealing with exponential fields, even
when (SP) is not true, and it is a crucial tool in Zilber's proof
of categoricity of Zilber fields. In our case, we actively use its
machinery to verify that (SP) holds on $K_{E_{|K|}}$.

Let $K_{E}$ be a partial $E$-field. Given a set $X$ contained in
the domain of $E$, we denote by $E(X)$ the image of the elements
of $X$ through $E$.
\begin{defn}
Let $X\subset\dom(E)$ be a finite set. The \emph{predimension} of
$X$ is the quantity
\[
\delta(X)=\td_{\Q}(X\cup E(X))-\ld_{\Q}(X).
\]

If $X$ is a finite subset of $K$, and $Y$ an arbitrary subset of
$K$ such that $E$ is defined on $X$ and $Y$, we define
\[
\delta(X/Y):=\td_{\Q}(X\cup E(X)/Y\cup E(Y))-\ld_{\Q}(X/Y).
\]

\end{defn}
In these definitions, $\td_{\Q}(X)$ stands for the transcendence
degree of $X$ over the base field $\Q$, while $\ld_{\Q}(X)$ means
the $\Q$-linear dimension of $X$. Instead, $\td_{\Q}(X/Y)$ is the
transcendence degree of $X$ over $\Q(Y)$, and $\ld_{\Q}(X/Y)$ is
the linear dimension of $X$ over the $\Q$-linear span of $Y$. In
order to reduce the size of the formulas, we will often write $\td_{Y}(X)$
to abbreviate $\td_{\Q}(X/Y)$. 

With this notation, (SP) is equivalent to $\delta(X)\geq0$ for all
$X$. Note that this is the same $\delta$ used to denote the depth
of a variety $V$: if $\overline{z}$ is a generic solution of $V(\overline{c})$,
then $\delta(\overline{z}/\overline{c})=\delta(V)$ (we are using
the fact that $V$ is $E$-defined over $\overline{c}$, rather than
just ``defined over $\overline{c}$'', otherwise we would have to
assume that $\overline{c}\leq K$).
\begin{rem}
The definition of predimension is slightly different from~\cite{Zilber2005},
as we calculate it only on the domain of $E$. We are following again~\cite{Kirby2009a},
as it greatly simplifies the discussion.\end{rem}
\begin{defn}
Let $K_{E}\subset K'_{E'}$ be two partial $E$-fields. Let $\delta$
be the predimension function on $K'_{E'}$.

We say that $K_{E}\leq K'_{E'}$, or that $K_{E}$ is \emph{strongly
embedded} in $K'_{E'}$, if for each finite $X\subset\dom(E')$ we
have $\delta(X/\dom(E))\geq0$.

If $X\subset\dom(E)$ we say that $X$ is \emph{strong} in $K_{E}$,
$X\leq K_{E}$, if $K_{E_{\restriction\span_{\Q}(X)}}\leq K_{E}$.
\end{defn}
Some facts about predimensions, and in particular their relationship
with rotund varieties, are useful tools in proofs. See \cite{Kirby2009a}
for reference.
\begin{fact}
The axiom (SP) holds on $K_{E}$ if and only if $\{0\}\leq K_{E}$.

\label{fact:strong-keeps-sc}If $K_{E}\leq K'_{E'}\leq K''_{E''}$,
then $K_{E}\leq K''_{E''}$. In particular, if $K_{E}$ satisfies
(SP) and $K_{E}\leq K'_{E'}$, then $K'_{E'}$ satisfies (SP).
\end{fact}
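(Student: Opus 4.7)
The fact has two independent parts, which I would handle separately.

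For the first part, I would unpack the definitions directly. The condition $\{0\}\leq K_E$ means that for every finite $X\subset\dom(E)$ one has $\delta(X/\dom(E_{\restriction\span_\Q(\{0\})}))\geq 0$. But $\span_\Q(\{0\})=\{0\}$ and $E(0)=1$, so the only elements added to the base are $0$ and $1$, which contribute nothing to transcendence degree over $\Q$ or to $\Q$-linear span. Hence $\delta(X/\{0\})=\td_\Q(X\cup E(X))-\ld_\Q(X)=\delta(X)$, and $\{0\}\leq K_E$ is equivalent to $\delta(X)\geq 0$ for every finite $X\subset\dom(E)$. This matches (SP) on $\Q$-linearly independent tuples. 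For a general finite $X$, choosing a $\Q$-basis $X_0$ of $\span_\Q(X)$ inside $X$ and using $E(\sum_j q_j z_j)=\prod_j E(z_j)^{q_j}$ gives $X\cup E(X)\subset\acl_\Q(X_0\cup E(X_0))$, so $\td_\Q(X\cup E(X))=\td_\Q(X_0\cup E(X_0))$ and $\ld_\Q(X)=|X_0|$, whence $\delta(X)=\delta(X_0)$, reducing back to the independent case.

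The heart of the second part is transitivity of the strong-embedding relation. The key tool is the addition formula
\[
\delta(A\cup B/C)=\delta(A/B\cup C)+\delta(B/C),
\]
valid for disjoint finite $A,B$ and arbitrary $C$, which follows from the tower law for transcendence degree and additivity of $\Q$-linear dimension; and it is combined with finite-determinacy of the relativized predimension, namely the fact that for finite $X$ and any partial $E$-subfield $F$ the value $\delta(X/\dom(F))$ is realized as $\delta(X/F_0)$ for some finite $F_0\subset\dom(F)$ and stays constant for every finite $F_1$ with $F_0\subset F_1\subset\dom(F)$. Given $K_E\leq K'_{E'}\leq K''_{E''}$ and a finite $X\subset\dom(E'')$, I would choose a finite $X'\subset\dom(E')$ simultaneously stabilizing $\delta(X/-)$ over $\dom(E')$ and containing a finite stabilizer in $\dom(E)$ for $\delta(X'/-)$; two applications of the addition formula then express $\delta(X/\dom(E))$ as a sum of the non-negative quantities $\delta(X/\dom(E'))\geq 0$ (from $K'_{E'}\leq K''_{E''}$) and $\delta(X'/\dom(E))\geq 0$ (from $K_E\leq K'_{E'}$), with all correction terms cancelling by stability. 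This is the standard transitivity argument for predimension inequalities, and a detailed version can be found in~\cite{Kirby2009a}.

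Once transitivity is established the consequence is immediate: by the first part, (SP) on $K_E$ is equivalent to $\{0\}\leq K_E$, and the trivial partial $E$-field on $\{0\}$ clearly embeds strongly in any partial $E$-field, so the chain $\{0\}_E\leq K_E\leq K'_{E'}$ combined with transitivity gives $\{0\}_E\leq K'_{E'}$, which is (SP) for $K'_{E'}$. The main obstacle is transitivity itself, since $\delta$ is not monotone in the base parameter (both transcendence degree and linear dimension decrease under base enlargement, but their difference may swing either way): the careful choice of auxiliary finite stabilizing sets at both levels $\dom(E)$ and $\dom(E')$ is precisely what prevents the argument from running in circles.
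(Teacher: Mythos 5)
The paper itself does not prove this Fact (it is quoted with a pointer to \cite{Kirby2009a}), so your argument has to stand on its own. The first part does: unwinding $\{0\}\leq K_{E}$ to $\delta(X)\geq0$ and reducing an arbitrary finite $X$ to a $\Q$-linearly independent $X_{0}$ via $E(\sum_{j}q_{j}z_{j})=\prod_{j}E(z_{j})^{q_{j}}$ is exactly right. The transitivity step, however, has a genuine gap. With a finite $X'\subset\dom(E')$ chosen so that $\delta(X/\dom(E)\cup X')=\delta(X/\dom(E'))$, the addition formula gives the exact identity
\[
\delta(X/\dom(E))=\delta(X/\dom(E'))+\delta(X'/\dom(E))-\delta(X'/X\cup\dom(E)),
\]
and the last term does not ``cancel by stability''. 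In fact the inequality you would need, $\delta(X'/X\cup\dom(E))\leq\delta(X/\dom(E'))+\delta(X'/\dom(E))$, is, after substituting the same addition formula once more, literally equivalent to the desired conclusion $\delta(X/\dom(E))\geq0$; so the sketch is circular, and the claimed expression of $\delta(X/\dom(E))$ as the \emph{sum} of the two non-negative quantities is false in general. For instance, if $\dom(E')=\span_{\Q}(\dom(E)\cup\{b\})$ with $\delta(b/\dom(E))=1$, $X=\{b+c\}$ with $c,E''(c)$ generic over $\dom(E')$, and $X'=\{b\}$ (a legitimate stabilising choice), then $\delta(X/\dom(E))=1$ while $\delta(X/\dom(E'))+\delta(X'/\dom(E))=2$.

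The missing ingredient is the observation that $\delta(\cdot/Y)$ depends only on $\Q$-linear spans, together with the fact that $\dom(E')$ is itself a $\Q$-vector space. Replace $X$ by $X_{0}\cup X_{1}$ with the same span over $\dom(E)$, where $X_{0}\subset\dom(E')$ and $X_{1}$ is $\Q$-linearly independent over $\dom(E')$. Then $\delta(X/\dom(E))=\delta(X_{1}/\dom(E)\cup X_{0})+\delta(X_{0}/\dom(E))$; the second summand is $\geq0$ because $K_{E}\leq K'_{E'}$, and the first satisfies $\delta(X_{1}/\dom(E)\cup X_{0})\geq\delta(X_{1}/\dom(E'))=\delta(X/\dom(E'))\geq0$, because passing from the base $\dom(E)\cup X_{0}$ to the larger base $\dom(E')$ can only lower the transcendence degree while $\ld(X_{1}/\cdot)$ equals $|X_{1}|$ over both bases; the last inequality uses $K'_{E'}\leq K''_{E''}$. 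This decomposition-plus-monotonicity step is the real content of transitivity (it is what is carried out in \cite{Kirby2009a}); finite character of $\td$ and $\ld$ is only needed to make sense of $\delta$ over infinite bases, not to cancel anything. Finally, your aside that the trivial partial $E$-field on $\{0\}$ ``clearly embeds strongly in any partial $E$-field'' is false --- it is precisely the assertion that every partial $E$-field satisfies (SP) --- though it is harmless here, since the hypothesis (SP) on $K_{E}$ already yields $\{0\}\leq K_{E}$ by your first part.
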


\begin{fact}
\label{fact:asp-strong}Let $K_{E}$ a partial $E$-field satisfying
(SP). Then for all finite $X\subset\dom(E)$ there is a finite $Y\subset\dom(E)$
such that $XY\leq K_{E}$.
\end{fact}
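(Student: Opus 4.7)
The plan is to build $Y$ by a finite greedy procedure: starting from $X$, iteratively absorb a finite witness to non-strongness, and argue that (SP) forces the integer-valued predimension to stay nonnegative while each step strictly decreases it, so the iteration must terminate.

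First I would record the standard additivity of the predimension: for any finite $A,B \subset \dom(E)$,
\[
\delta(A \cup B) = \delta(A) + \delta(B/A),
\]
which is immediate from the additivity of transcendence degree and of $\Q$-linear dimension over a base. I would also note the harmless remark that, since $\dom(E)$ is a $\Q$-vector space and $E$ a homomorphism, every element of $E(\span_\Q(A))$ is algebraic over $\Q(A \cup E(A))$ and every element of $\span_\Q(A)$ is $\Q$-linear in $A$; hence $\delta(B/\span_\Q(A)) = \delta(B/A)$ for finite $A,B$, and the condition $A \leq K_E$ is equivalent to $\delta(B/A) \geq 0$ for every finite $B \subset \dom(E)$.

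I would then carry out the induction. Set $X_0 := X$. At stage $k$, if $X_k \leq K_E$ stop and return $Y := X_k \setminus X$; otherwise choose a finite $Z_k \subset \dom(E)$ with $\delta(Z_k/X_k) < 0$ and set $X_{k+1} := X_k \cup Z_k$. Such a finite witness exists because $\td_\Q$ and $\ld_\Q$ are determined by finite subtuples, so any infinite witness to non-strongness contains a finite one. Since $\delta$ is integer-valued, $\delta(Z_k/X_k) \leq -1$, and by additivity
\[
\delta(X_{k+1}) = \delta(X_k) + \delta(Z_k/X_k) \leq \delta(X_k) - 1.
\]

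The only use of (SP) is for termination: it guarantees $\delta(X_k) \geq 0$ for every $k$, so the strictly decreasing nonnegative integer sequence $(\delta(X_k))_k$ must stabilise within $\delta(X)$ steps, at which point the construction halts with $X_k \leq K_E$. Then $Y := X_k \setminus X$ is a finite union of the finitely many finite $Z_k$'s, so it is finite, and $XY = X_k$ is strong in $K_E$ as required. The main (and essentially only) obstacle I anticipate is the termination argument; once integer-valuedness of $\delta$ and its additivity over a base are in place, the rest of the argument is mechanical.
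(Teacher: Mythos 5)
Your proof is correct and is essentially the paper's argument: the paper takes $Y$ minimising the integer-valued quantity $\delta(XY)$ (which exists because (SP) — or even an ``almost'' Schanuel Property — bounds $\delta$ below), and your greedy descent is just the iterative form of the same minimisation, resting on the same ingredients (additivity $\delta(XYZ)=\delta(XY)+\delta(Z/XY)$, integrality, and the lower bound from (SP)).
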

For the last statement, it is sufficient that $K_{E}$ satisfies an
``almost Schanuel's Property'' stating that there is a $k\in\N$
such that $\delta(Z)\geq-k$ for all $Z$. This implies that $\delta(XY)$
attains a minimum as $Y$ varies, proving the statement (see again
\cite{Kirby2009a}).
\begin{fact}
\label{fact:rotund-is-strong}Let $V$ be an absolutely free rotund
variety over $K$, and let $((z_{1},w_{1}),\dots,(z_{n},w_{n}))$
be a generic point of $V$ over $K$ into some field extension $K'$.

If we extend linearly $E$ to a function $E'$ by defining $E'(\frac{p}{q}z_{i})=w_{i}^{p/q}$,
then the new function is well defined and $K_{E}\leq K'_{E'}$. Moreover,
$\ker(E)=\ker(E')$.\end{fact}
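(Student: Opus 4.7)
The plan is to read each of the three claims --- well-definedness, the strong embedding $K_{E}\leq K'_{E'}$, and $\ker E=\ker E'$ --- off the genericity of $\zEz{\overline{z}}$ in $V$ together with either absolute freeness (which controls a single integer linear combination) or rotundity (which controls whole matrices of them).

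For well-definedness, I would first show that $z_{1},\dots,z_{n}$ are $\Q$-linearly independent over $D:=\dom(E)$. Given any non-zero row vector $M=(m_{1},\dots,m_{n})\in\Mc_{n,1}(\Z)$, absolute freeness gives $\dim\pi_{a}(M\cdot V)=1$, so $\pi_{a}(M\cdot V)=\G_{a}$; hence $\sum m_{i}z_{i}$, the additive projection of a $K$-generic point of $V$ under a $\Z$-defined map, is transcendental over $K$, and in particular not in $D$. The same argument on the multiplicative side shows that $\prod w_{i}^{m_{i}}$ is transcendental over $K$, a fact I reuse below. Linear independence guarantees that every element of $D+\span_{\Q}(z_{1},\dots,z_{n})$ has a unique expression $d+\sum q_{i}z_{i}$, so once a coherent system of roots $w_{i}^{1/q}$ is fixed in $K'$, the formula $E'(d+\sum q_{i}z_{i}):=E(d)\prod w_{i}^{q_{i}}$ is unambiguous, and $E'$ is by construction a homomorphism extending $E$.

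For $K_{E}\leq K'_{E'}$, let $X\subset\dom(E')$ be finite and write each $y_{j}\in X$ as $d_{j}+\sum_{i}q_{j,i}z_{i}$ with $d_{j}\in D$ and $q_{j,i}\in\Q$. Clearing a common denominator yields an integer matrix $M=(m_{j,i})\in\Mc_{k,n}(\Z)$ with $\rank M=\ld_{\Q}(X/D)$. Since $d_{j},E(d_{j})\in D\cup E(D)$, we obtain
\[
\td_{\Q}\bigl(X\cup E'(X)/D\cup E(D)\bigr)=\td_{\Q}\bigl(M\cdot\zEz{\overline{z}}/D\cup E(D)\bigr)\geq\dim M\cdot V\geq\rank M
\]
by genericity of $\zEz{\overline{z}}$ over $K\supset D\cup E(D)$ followed by rotundity, so $\delta(X/D)\geq 0$.

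The inclusion $\ker E\subset\ker E'$ is immediate. For the converse, suppose $x=d+\sum q_{i}z_{i}\in\ker E'$ with $d\in D$ and $q_{i}\in\Q$. Then $\prod w_{i}^{q_{i}}=E(-d)\in K$; raising to a common denominator $N$ so that $Nq_{i}=:m_{i}\in\Z$ gives $\prod w_{i}^{m_{i}}\in K$. Unless $(m_{1},\dots,m_{n})=0$, the first paragraph shows this element to be transcendental over $K$, a contradiction; hence all $q_{i}$ vanish, $x=d\in D$, $E(d)=1$, and $x\in\ker E$. The only non-routine step is in the first paragraph: translating ``$\dim\pi_{a}(M\cdot V)=\dim\pi_{m}(M\cdot V)=1$'' into actual transcendence over $K$ of the corresponding coordinates of a generic point. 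Everywhere else the argument is a direct invocation of rotundity or absolute freeness.
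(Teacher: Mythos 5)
Your proof is correct: genericity of $\zEz{\overline{z}}$ plus rotundity give $\delta(X/\dom(E))\geq 0$ for every finite $X\subset\dom(E')$, while absolute freeness supplies both the $\Q$-linear independence of the $z_{i}$ over $K$ (hence well-definedness) and the transcendence of $\prod w_{i}^{m_{i}}$ over $K$ that forces $\ker(E')=\ker(E)$. The paper states this as a Fact without proof, citing Kirby's work on finitely presented exponential fields, and your direct verification is exactly the standard argument intended there, so there is no divergence of approach to report.
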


\section{\label{sec:ccp}Preserving CCP}

The most difficult axiom to verify on the $E$-field $K_{E_{|K|}}$
of \prettyref{sec:cons} is (CCP), even in its weaker form (CCP\textsubscript{1}).
In this section we show that (CCP) is preserved by the finite operations,
i.e., if (CCP) holds at some stage of our construction, it holds after
the application of one of the finite operations, and in particular
it holds also for the successive stage.

More generally, we claim that if $K_{E}$ satisfies (CCP), and we
extend linearly $E$ to $E'$ by defining $E'$ on some finite, or
even countable, set of elements that are $\Q$-linearly independent
over $\dom(E)$, then also $K_{E'}$ satisfies (CCP). This kind of
fact is not a novelty, but it has not been stated in literature, and
since it is quite important for our constructions, we describe it
here in full details.

We prove it in two steps. First, we show that there is an equivalent
formulation of (CCP) which is even simpler than (CCP\textsubscript{1}),
under the assumption that (SP) holds; then, using the simplified formulation,
we prove that (CCP) is preserved if we extend the function $E$ on
a not too large set.
\begin{prop}
\label{prop:pres-ccp-field}Let $K_{E}$ be a partial $E$-field satisfying
(SP). Then axiom (CCP) is equivalent to
\begin{enumerate}
\item [\emph{(CCP\textsubscript{2})}] for any perfectly rotund variety
$V$ $E$-defined over $\dom(E)$, and for any finite tuple $\overline{c}\subset\dom(E)$
such that $V$ is $E$-defined over $\overline{c}$, there are at
most countably many generic solutions of $V(\overline{c})$.
\end{enumerate}
\end{prop}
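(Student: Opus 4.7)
The direction (CCP) $\Rightarrow$ (CCP\textsubscript{2}) is immediate, since (CCP\textsubscript{2}) is the restriction of (CCP\textsubscript{1}) to parameters in $\dom(E)$ and (CCP\textsubscript{1}) follows from (CCP) by \prettyref{fact:ccp-is-minus}. For the converse, assuming (SP) and (CCP\textsubscript{2}), my plan is to derive (CCP\textsubscript{1}) and then invoke \prettyref{fact:ccp-is-minus} once more.

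Let $V\subset\G^{n}$ be perfectly rotund and Kummer-generic, $E$-defined over a finite tuple $\overline{c}\in K^{<\omega}$. Split $\overline{c}=\overline{a}\,\overline{b}$ with $\overline{b}:=\overline{c}\cap\dom(E)$, so that the non-$\dom(E)$ parameters are isolated in $\overline{a}$, and use (SP) together with \prettyref{fact:asp-strong} to enlarge $\overline{b}$ to a finite tuple $\overline{d}\subset\dom(E)$ with $\overline{d}\leq K_{E}$. The plan is then to manufacture a perfectly rotund variety $V^{\sharp}$ in a larger $\G^{N}$ which is $E$-defined over $\overline{d}$ and admits an injection $\overline{z}\mapsto(\overline{a},\overline{z})$ from the generic solutions of $V(\overline{c})$ into the generic solutions of $V^{\sharp}(\overline{d})$; applying (CCP\textsubscript{2}) to an absolutely irreducible component of $V^{\sharp}$ will then yield the desired bound.

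Concretely, I would let $T\subset\mathbb{A}^{|\overline{a}|}$ be the Zariski closure of $\overline{a}$ over $\overline{d}E(\overline{d})$ and view $V$ as the specialisation at $t=\overline{a}$ of a family $\{V_{t}\}_{t\in T}$ of varieties defined over $\overline{d}E(\overline{d})$. The total space $\widetilde{V}:=\bigcup_{t\in T}\{t\}\times V_{t}\subset T\times\G^{n}$ embeds into $\G^{|\overline{a}|+n}$ by regarding each coordinate of $T$ as the $\G_{a}$-factor of a fresh copy of $\G$ with the corresponding $\G_{m}$-factor set to $1$, producing $V^{\sharp}$. A dimension count combined with strongness of $\overline{d}$ and depth $0$ of $V$ should give $\dim V^{\sharp}=|\overline{a}|+n$, hence depth $0$, while simpleness is inherited from $V$ (and Kummer-genericity of $V$ guarantees a well-behaved absolutely irreducible component of $V^{\sharp}$, if needed via \prettyref{fact:thumbtack}).

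The main obstacle is the verification that $V^{\sharp}$ is rotund: the trivial $\G_{m}$-components on the first $|\overline{a}|$ blocks make integer matrices $M\in\Mc_{|\overline{a}|+n,|\overline{a}|+n}(\Z)$ act degenerately on that part, so the inequality $\dim M\cdot V^{\sharp}\geq\rank M$ must be checked by decomposing $M$ into its action on the $\overline{a}$- and $V$-blocks and combining the rotundity of $V$ with the algebraic independence of $\overline{a}$ over $\overline{d}E(\overline{d})$ provided by strongness of $\overline{d}$. The sub-case where $\overline{a}$ happens to be algebraic over $\overline{d}E(\overline{d})$ (so $T$ is finite) must be handled separately by passing instead to the finite Galois orbit of $V$ over $\overline{d}E(\overline{d})$ and invoking (CCP\textsubscript{2}) on each absolutely irreducible component thereof.
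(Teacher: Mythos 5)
Your reduction to (CCP\textsubscript{1}) and the easy direction are fine, but the encoding of the non-exponential parameters $\overline{a}$ as extra coordinates of $\G$ breaks down at two points that no amount of matrix bookkeeping can repair. First, (CCP\textsubscript{2}) counts \emph{generic solutions}, i.e.\ points of the form $\zEz{\overline{x}}$ lying on the graph of $E$, not generic algebraic points of a variety: since $\overline{a}=\overline{c}\setminus\dom(E)$ and your $V^{\sharp}$ forces the $\G_{m}$-coordinates of the first block to be $1$, the point $(\overline{a},\overline{z})$ is never a solution of $V^{\sharp}$ (it would require $\overline{a}\subset\ker(E)\subset\dom(E)$), so the claimed injection from generic solutions of $V(\overline{c})$ into generic solutions of $V^{\sharp}(\overline{d})$ does not exist. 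Second, and independently of that, $V^{\sharp}$ is never perfectly rotund when $\overline{a}\neq\emptyset$: its multiplicative projections on the first block are constantly $1$, so it is not absolutely free, and the projection onto the whole first block has $\dim\leq\rank$, so it is not simple (simpleness is \emph{not} inherited from $V$); hence (CCP\textsubscript{2}) simply does not apply to it or to its absolutely irreducible components. This is not the ``rotundity verification'' you flag as the main obstacle, but a structural failure of the encoding. There is also a genericity gap: a generic solution of $V(\overline{c})$ need not stay generic over the larger set $\overline{d}E(\overline{d})\overline{a}$, and the general case where $\overline{a}$ is partially (but not fully) algebraic over $\overline{d}E(\overline{d})$ is covered neither by your dimension count $\dim V^{\sharp}=|\overline{a}|+n$ nor by the Galois-orbit fix, which only treats the fully algebraic case.

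What the argument actually needs, and what the paper's proof supplies, is different in two respects. One chooses $\overline{d}\subset\dom(E)$ satisfying not only $\overline{c}_{0}\overline{d}\leq K_{E}$ but also $\td_{\overline{c}_{0}\overline{d}E(\overline{c}_{0}\overline{d})}(\overline{c}_{1})=\td_{\dom(E),\im(E)}(\overline{c}_{1})$, so that the transcendence of the bad parameters over the whole graph of $E$ is already witnessed over a finite part of $\dom(E)$. Then one argues solution by solution: after an invertible integer matrix, split a generic solution into a part $\overline{z}_{0}\subset\span_{\Q}(\overline{c}_{0}\overline{d})$ and a part $\overline{z}_{1}$ linearly independent from it, play the lower bound on $\td_{\overline{c}_{0}\overline{d}E(\overline{c}_{0}\overline{d})}\zEz{\overline{z}_{1}}$ coming from strongness against the upper bound coming from genericity in the perfectly rotund $M\cdot X$, and use the displayed condition on $\overline{d}$ to drop $\overline{c}_{1}$ from the parameter set. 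Perfect rotundity then yields the dichotomy: either $\overline{z}\subset\span_{\Q}(\overline{c}_{0}\overline{d})$ (only countably many such vectors) or $\overline{z}$ is a generic solution of a variety $E$-defined over $\dom(E)$, where (CCP\textsubscript{2}) applies. It is this per-solution dichotomy, not a single auxiliary variety over $\overline{d}$ absorbing all solutions, that makes the countability transfer work.
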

\begin{proof}
The left-to-right direction is clear. Let us prove the other direction.

Let us suppose that $K_{E}$ satisfies (CCP\textsubscript{2}). Let
$X(\overline{c})\subset\G^{n}$ be a perfectly rotund variety. Without
loss of generality, we may assume that $\overline{c}$ is of the form
$\overline{c}_{0}\overline{c}_{1}$, with $\overline{c}_{0}\subset\dom(E)$
and $\overline{c}_{1}\cap\dom(E)=\emptyset$.

Let $\overline{d}$ be a finite subset of $\dom(E)$ such that
\begin{enumerate}
\item $\overline{c}_{0}\overline{d}\leq K_{E}$;
\item $\td_{\overline{c}_{0}\overline{d}E(\overline{c}_{0}\overline{d})}(\overline{c}_{1})=\td_{\dom(E),\im(E)}(\overline{c}_{1})$.
\end{enumerate}
It exists by \prettyref{fact:asp-strong}: just add elements to $\overline{c}_{0}$
until the second equality is satisfied, then add new elements so that
the first requirement is satisfied.

Now, let us take a generic solution $\overline{z}$ of $X(\overline{c})$.
There is an invertible matrix $M$ with coefficients in $\Z$ such
that $M\cdot\overline{z}$ is of the form $\overline{z}_{0}\overline{z}_{1}$,
with $\overline{z}_{0}\subset\span_{\Q}(\overline{c}_{0}\overline{d})$
and $\overline{z}_{1}$ $\Q$-linearly independent from $\span_{\Q}(\overline{c}_{0}\overline{d})$.

By hypothesis, $\overline{c}_{0}\overline{d}\leq K_{E}$, so for any
matrix $N$ with integer coefficients:
\[
\td_{\overline{c}_{0}\overline{d}E(\overline{c}_{0}\overline{d})}\zEz{N\cdot\overline{z}_{1}}\geq\ld_{\Q}(N\cdot\overline{z}_{1}/\overline{c}_{0}\overline{d})=\rank N.
\]

Moreover, since $\overline{z}_{0}\overline{z}_{1}$ is a generic point
of $M\cdot X(\overline{c})$, which is a perfectly rotund variety
defined over $\overline{c}$, we have
\begin{eqnarray*}
 & \td_{\overline{c}\overline{z}_{0}E(\overline{c}\overline{z}_{0})}\zEz{\overline{z}_{1}}=\\
 & =\td_{\overline{c}E(\overline{c})}\zEz{\overline{z}_{0}\overline{z}_{1}}-\td_{\overline{c}E(\overline{c})}\zEz{\overline{z}_{0}}\leq\\
 & \leq|\overline{z}_{0}\overline{z}_{1}|-|\overline{z}_{0}|=|\overline{z}_{1}|,
\end{eqnarray*}
 where by $|\overline{x}|$ we mean the length of the vector $\overline{x}$.
We claim that the above inequality must be an equality. Indeed, since
$\overline{z}_{0}\subset\span_{\Q}(\overline{c}_{0}\overline{d})$,
we have 
\[
\td_{\overline{c}_{1}\overline{c}_{0}\overline{d}E(\overline{c}_{0}\overline{d})}\zEz{\overline{z}_{1}}\leq\td_{\overline{c}\overline{z}_{0}E(\overline{c}\overline{z}_{0})}\zEz{\overline{z}_{1}}\leq|\overline{z}_{1}|.
\]

But since $\td_{\overline{c}_{0}\overline{d}\overline{z}_{1}E(\overline{c}_{0}\overline{d}\overline{z}_{1})}(\overline{c}_{1})=\td_{\overline{c}_{0}\overline{d}E(\overline{c}_{0}\overline{d})}(\overline{c}_{1})$,
we also have
\[
|\overline{z}_{1}|\leq\td_{\overline{c}_{0}\overline{d}E(\overline{c}_{0}\overline{d})}\zEz{\overline{z}_{1}}=\td_{\overline{c}_{1}\overline{c}_{0}\overline{d}E(\overline{c}_{0}\overline{d})}\zEz{\overline{z}_{1}}\leq|\overline{z}_{1}|,
\]

implying that the first inequality was actually an equality.

By perfect rotundity, the equality holds only if either $|\overline{z}_{0}|=0$
or $|\overline{z}_{1}|=0$. In the latter case, $\overline{z}\subset\span_{\Q}(\overline{c}_{0}\overline{d})$,
so that there are at most countably many such $\overline{z}$'s. In
the former case, $\overline{z}$ is a generic solution of a variety
$E$-defined over $\dom(E)$, so they are at most countably many solutions
by (CCP\textsubscript{2}). This implies (CCP\textsubscript{1}),
and by \prettyref{fact:ccp-is-minus}, it implies (CCP) as well, as
desired.
\end{proof}
Note that this also explicitly shows what one expects: (CCP) really
depends only on $E$ and not on the base field $K$.
\begin{prop}
\label{prop:pres-ccp}Let $K_{E}\subset K'_{E'}$ be two partial $E$-field
satisfying (SP).

If $\dom(E')=\span_{\Q}(\dom(E)\cup B)$ for some finite or countable
$B\subset K'$, then $K_{E}$ satisfies (CCP) if and only if $K'_{E'}$
does.\end{prop}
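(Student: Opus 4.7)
The strategy closely parallels Proposition~\ref{prop:pres-ccp-field}. Since both $K_{E}$ and $K'_{E'}$ satisfy (SP), that proposition reduces the task to showing that (CCP\textsubscript{2}) transfers between them. One direction is immediate: if $K'_{E'}$ satisfies (CCP\textsubscript{2}) and $V$ is perfectly rotund $E$-defined over $\dom(E)$ with parameters $\overline{c}\subset\dom(E)$, then $V$ is also $E'$-defined over $\dom(E')$ with the same $\overline{c}$, and the generic solutions of $V(\overline{c})$ in $K_{E}$ embed into the generic solutions in $K'_{E'}$ (since $E=E'$ on $\dom(E)$ and $\overline{c}E(\overline{c})=\overline{c}E'(\overline{c})$), hence are at most countable.

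For the converse, assume $K_{E}$ satisfies (CCP), and let $V$ be perfectly rotund, $E'$-defined over $\dom(E')$, with $\overline{c}\subset\dom(E')$. We may assume $B$ is $\Q$-linearly independent modulo $\dom(E)$, giving the internal direct sum $\dom(E')=\dom(E)\oplus\span_{\Q}(B)$. Each $c\in\dom(E')$ then decomposes uniquely as $c=c^{(0)}+c^{(B)}$ with $c^{(0)}\in\dom(E)$ and $c^{(B)}\in\span_{\Q}(B)$; let $B_{\overline{c}}\subset B$ be the finite set of $b\in B$ appearing in the decomposition of $\overline{c}$. Using \prettyref{fact:asp-strong} inside $K'_{E'}$, extend $\overline{c}$ to a finite tuple $\overline{c}\overline{d}\subset\dom(E')$ with $\overline{c}\overline{d}\leq K'_{E'}$, enlarging $\overline{d}$ further so that $(B_{\overline{c}\overline{d}},E'(B_{\overline{c}\overline{d}}))$ has maximal transcendence degree over $\overline{c}^{(0)}\overline{d}^{(0)}E(\overline{c}^{(0)}\overline{d}^{(0)})$.

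Given a generic solution $\overline{z}\in\dom(E')^{n}$ of $V(\overline{c})$ in $K'_{E'}$, pick an invertible $M\in\Mc_{n,n}(\Z)$ with $M\cdot\overline{z}=\overline{z}_{0}\,\overline{z}_{1}$, $\overline{z}_{0}\in\span_{\Q}(\overline{c}\overline{d})$ and $\overline{z}_{1}$ with coordinates $\Q$-linearly independent from $\overline{c}\overline{d}$; replace $V$ by $M\cdot V$, still perfectly rotund. The transcendence-degree chain from the proof of Proposition~\ref{prop:pres-ccp-field}, now run inside $K'_{E'}$---using strongness of $\overline{c}\overline{d}$ and the maximal-independence choice above---forces the dichotomy $|\overline{z}_{0}|=0$ or $|\overline{z}_{1}|=0$. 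If $|\overline{z}_{1}|=0$, the set of such $\overline{z}$ is countable, being contained in $\span_{\Q}(\overline{c}\overline{d})^{n}$. If $|\overline{z}_{0}|=0$, then $\overline{z}$ is actually a generic solution of $V(\overline{c}\overline{d})$ in $K'_{E'}$; partition these by the finite data $B':=B_{\overline{c}\overline{d}}\cup B_{\overline{z}}$ together with the rational coefficients of the $B$-decomposition of $\overline{z}$ (countably many choices). For each fixed data, the $\G^{n}$-translation $\overline{z}\mapsto\overline{y}:=\overline{z}^{(0)}$ carries the partition class into the generic solutions in $K_{E}$ of the translated variety $W:=V\ominus\zEEz{E'}{\overline{z}^{(B)}}$, which is perfectly rotund and $E$-defined over the finite tuple $\overline{c}^{(0)}\overline{d}^{(0)}\cup B'\cup E'(B')\subset K$; by (CCP) on $K_{E}$ each class is bounded by countable. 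Summing over the countably many cases, $V(\overline{c})$ has at most countably many generic solutions.

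The principal obstacle is the transcendence-degree bookkeeping: proving the dichotomy, and then verifying that $\overline{y}=\overline{z}^{(0)}$ is genuinely generic of $W$ over $\overline{c}^{(0)}\overline{d}^{(0)}E(\overline{c}^{(0)}\overline{d}^{(0)})B'E'(B')$. The latter reduces to algebraic independence of $(\overline{z},E'(\overline{z}))$ from $\bigl(B_{\overline{z}}\setminus B_{\overline{c}\overline{d}},E'(B_{\overline{z}}\setminus B_{\overline{c}\overline{d}})\bigr)$ over $\overline{c}\overline{d}E'(\overline{c}\overline{d})$, which is exactly what the strongness of $\overline{c}\overline{d}$ in $K'_{E'}$, combined with the maximal-independence enlargement of $\overline{d}$ and perfect rotundity of $V$, is designed to yield---in direct analogy with the corresponding transcendence-degree step in the proof of Proposition~\ref{prop:pres-ccp-field}.
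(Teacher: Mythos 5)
There is a genuine gap at exactly the step you single out as ``the principal obstacle''. In the case $|\overline{z}_{0}|=0$ you translate by $\zEEz{E'}{\overline{z}^{(B)}}$ and want to apply (CCP) of $K_{E}$ to $W:=V\ominus\zEEz{E'}{\overline{z}^{(B)}}$; this requires $\overline{z}^{(0)}$ to be a \emph{generic} solution of $W$ over a parameter tuple that contains the solution's own $B$-elements $\overline{b}^{*}:=B_{\overline{z}}\setminus B_{\overline{c}\overline{d}}$ and their $E'$-images. Strongness of $\overline{c}\overline{d}$, your maximal-independence enlargement (which only concerns $B_{\overline{c}\overline{d}}$), and perfect rotundity do not give this: $\overline{c}\overline{d}$ is chosen before, and independently of, the solution, so nothing controls the algebraic interaction between $\zEEz{E'}{\overline{z}}$ and $(\overline{b}^{*},E'(\overline{b}^{*}))$. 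Concretely, let $V\subset\G^{1}$ be the perfectly rotund variety $w=z$, let $K_{E}$ satisfy (SP) and (CCP) and contain a transcendental $t\in\dom(E)$ with $E(t)=t$, let $b$ be transcendental over $K$, $B=\{b\}$, and extend $E$ to $E'$ on $\span_{\Q}(\dom(E)\cup\{b\})$ by $E'(b):=(t+b)/t$ (with an arbitrary coherent choice of roots); this is a strong extension, so both sides satisfy (SP). Then $z:=t+b$ satisfies $E'(z)=E(t)E'(b)=z$, so $z$ is a generic solution of $V(\emptyset)$; taking $\overline{c}=\overline{d}=\emptyset$ (legitimate, since (SP) gives $\emptyset\leq K'_{E'}$ and your extra condition is vacuous) it falls into your case $|\overline{z}_{0}|=0$, yet its translate $z^{(0)}=t=b/(E'(b)-1)$ lies in $\Q(b,E'(b))$, hence is \emph{not} generic for $W$ over the parameters containing $b,E'(b)$. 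So (CCP) of $K_{E}$ cannot be invoked to bound the class; and when $B$ is infinite no a priori enlargement of the finite $\overline{d}$ repairs this (take infinitely many independent fixed points $t_{k}$ and $E'(b_{k}):=(t_{k}+b_{k})/t_{k}$). A smaller issue: your parameter tuple contains $B'E'(B')$, which need not lie in $K$, so one must first reduce to $K=K'$ using the fact that (CCP) depends only on $E$ (via \prettyref{prop:pres-ccp-field}).

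The paper's proof avoids this by reversing the order of choices. It first decomposes the solution as $\overline{z}+M\cdot\overline{b}$ with $\overline{z}\subset\dom(E)$, $\overline{b}\subset B$ finite and $M$ rational, fixes the countable datum $(M,\overline{b})$, and only then chooses $\overline{d}$ so that $\overline{b}\,\overline{c}\,\overline{d}\leq K'_{E'}$; the linear splitting is then performed on the $\dom(E)$-part $\overline{z}$ relative to $\span_{\Q}(\overline{b}\,\overline{c}\,\overline{d})$. In the case $|\overline{z}_{0}|=0$ the same squeeze now yields $\td_{\overline{b}\overline{c}\overline{d}E'(\overline{b}\overline{c}\overline{d})}\zEz{\overline{z}_{1}}=|\overline{z}_{1}|$, i.e.\ genericity over a parameter set that already contains $\overline{b}$ and $E'(\overline{b})$ --- exactly the genericity your translation step needs --- while solutions entangled with $\overline{b}$, such as $t+b$ above, are absorbed into the countable case $\overline{z}\subset\span_{\Q}(\overline{b}\,\overline{c}\,\overline{d})$. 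If you re-order your argument in this way (fix the $B$-data first, then pick the strong tuple depending on it), your dichotomy and counting go through.
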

\begin{proof}
Clearly, if $K'_{E'}$ satisfies (CCP), then $K_{E}$ does too. For
the other direction, let us suppose that $K_{E}$ satisfies (CCP).

By \prettyref{prop:pres-ccp-field}, it is sufficient to prove that
$K'_{E'}$ satisfies (CCP\textsubscript{2}). Moreover, we may assume
that $K=K'$.

Let $X(\overline{c})$ be a perfectly rotund variety $E'$-defined
over $\overline{c}$, with $\overline{c}\subset\dom(E')$. 

Let us take a generic solution of $X(\overline{c})$ in $K_{E'}$;
by assumptions, it can be written uniquely as $\overline{z}+M\cdot\overline{b}$,
with $\overline{z}\subset\dom(E)$, $\overline{b}$ a finite subset
of $B$ and $M$ a matrix with coefficients in $\Q$. We claim that
given $M$ and $\overline{b}$, there are at most countably many $\overline{z}$'s
such that $\overline{z}+M\cdot\overline{b}$ is a generic solution
of $X(\overline{c})$ in $K_{E'}$.

Let $\overline{d}$ be a finite subset of $\dom(E')$ such that $\overline{b}\overline{c}\overline{d}\leq K'_{E'}$.

There is an invertible matrix $N$ with coefficients in $\Z$ such
that $N\cdot\overline{z}$ is the concatenation $\overline{z}_{0}\overline{z}_{1}$,
with $\overline{z}_{0}\subset\span_{\Q}(\overline{b}\overline{c}\overline{d})$
and $\overline{z}_{1}$ $\Q$-linearly independent from $\span_{\Q}(\overline{b}\overline{c}\overline{d})$.
Thus $(N\cdot\overline{z}+N\cdot M\cdot\overline{b})$ is a generic
solution of $N\cdot X(\overline{c})$ in $K_{E'}$, which is again
a perfectly rotund variety $E'$-defined over $\overline{c}$. For
the sake of notation, let $\overline{b}_{0}\overline{b}_{1}$ be the
splitting of $N\cdot M\cdot\overline{b}$ corresponding to $\overline{z}_{0}\overline{z}_{1}$.

By the hypothesis, $\overline{b}\overline{c}\overline{d}\leq K'_{E'}$,
so for any matrix $P$ with integer coefficients
\[
\td_{\overline{b}\overline{c}\overline{d}E'(\overline{b}\overline{c}\overline{d})}\zEz{P\cdot\overline{z}_{1}}\geq\rank P.
\]

Moreover, since $\overline{z}_{0}\overline{z}_{1}+\overline{b}_{0}\overline{b}_{1}$
is a generic solution of $N\cdot X(\overline{c})$ in $K_{E'}$, we
have
\[
\td_{\overline{c}(\overline{z}_{0}+\overline{b}_{0})E'(\overline{c}(\overline{z}_{0}+\overline{b}_{0}))}\zEz{\overline{z}_{1}+\overline{b}_{1}}\leq|\overline{z}_{1}|.
\]

As before, we claim that the above inequality is an equality. Since
$\overline{z}_{0}\subset\span_{\Q}(\overline{b}\overline{c}\overline{d})$,
we have
\[
\td_{\overline{b}\overline{c}\overline{d}E'(\overline{b}\overline{c}\overline{d})}\zEz{\overline{z}_{1}}\leq\td_{\overline{c}(\overline{z}_{0}+\overline{b}_{0})E'(\overline{c}(\overline{z}_{0}+\overline{b}_{0}))}\zEz{\overline{z}_{1}+\overline{b}_{1}}\leq|\overline{z}_{1}|.
\]

And in particular, 
\[
|\overline{z}_{1}|\leq\td_{\overline{b}\overline{c}\overline{d}E'(\overline{b}\overline{c}\overline{d})}\zEz{\overline{z}_{1}}\leq|\overline{z}_{1}|.
\]

By perfect rotundity, we must have either $|\overline{z}_{0}|=0$
or $|\overline{z}_{1}|=0$. In the latter case, we have that $\overline{z}\subset\span_{\Q}(\overline{b}\overline{c}\overline{d})$,
so that there are at most countably many such $\overline{z}$'s. In
the former case, $\overline{z}$ is a generic solution of a perfectly
rotund variety $E'$-defined over $\overline{b}\overline{c}\overline{d}$,
which can be seen as a perfectly rotund variety $E$-defined over
$\overline{b}\overline{c}\overline{d}E'(\overline{b}\overline{c}\overline{d})$.
But there are only countably many such varieties, each one with countably
many generic solutions in $K_{E}$; therefore, there are at most countably
many such $\overline{z}$'s.

But there are at most countably many matrices $M$ and vectors $\overline{b}$,
and for each of them, at most countably many $\overline{z}$'s; therefore,
there are at most countably many generic solutions of $X(\overline{c})$
in $K'_{E'}$, i.e., (CCP) holds on $K'_{E'}$.
\end{proof}
This clearly implies that if at some step of our construction we have
a $K_{E_{j}}$ satisfying (CCP), then $K_{E_{j+1}}$ satisfies (CCP)
as well, since $\dom(E_{j+1})$ has finite or countable dimension
over $\dom(E_{j})$. In particular, (CCP) is preserved by any of the
finite operations. 

However, more work is needed to deal with the \textsc{limit} operation.

\section{\label{sec:roots}Solutions and roots}

In order to control (CCP) at the \textsc{limit} operation, we need
to carefully count how many solutions of perfectly rotund varieties
appear during the construction.

Here we concentrate on the way in which generic solutions can be transferred
from one variety to another by multiplication by matrices and translations.
The facts proved here will be particularly useful in counting how
many times the operation \textsc{roots} can produce new generic solutions
of a fixed perfectly rotund variety.

First of all, we recall that the varieties in $\Rc(V)$ are defined
over $\acl(\overline{c}E(\overline{c}))$.
\begin{prop}
If $V$ is $E$-defined over $\overline{c}$, then all the varieties
$W\in\Rc(V)$ are $E$-defined over $\acl(\overline{c}E(\overline{c}))$.\end{prop}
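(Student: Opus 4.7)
The plan is to realize each $W \in \Rc(V)$ as an absolutely irreducible component of the preimage of $V$ under a multiplication isogeny on $\G^n$, and then to invoke the standard algebro-geometric fact that absolutely irreducible components of an $F$-variety in characteristic zero are defined over the algebraic closure of $F$.

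First, fix $W \in \Rc(V)$ together with an integer $q \in \Z^{\times}$ such that $q \cdot W = V$. The multiplication-by-$q$ map
\[
[q] : \G^n \to \G^n,\qquad ((z_j,w_j))_j \mapsto ((q z_j, w_j^q))_j,
\]
is a finite surjective morphism of algebraic groups defined over $\Q$. Consequently the scheme-theoretic preimage $[q]^{-1}(V)$ is defined over the very same field $F := \overline{c}\,E(\overline{c})$ as $V$ itself. Since $[q]$ is finite, the restriction $[q]|_W$ surjects onto $q \cdot W = V$ with zero-dimensional fibres, so $\dim W = \dim V = \dim [q]^{-1}(V)$; combined with $W \subseteq [q]^{-1}(V)$ and the absolute irreducibility of $W$, this forces $W$ to be an absolutely irreducible component of $[q]^{-1}(V)$.

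Next, the Galois group $\Gal(\acl(F)/F)$ permutes the finitely many absolutely irreducible components of the $F$-variety $[q]^{-1}(V)$, and every orbit corresponds to a single irreducible component defined over $F$. Hence each absolutely irreducible component, in particular $W$, admits a field of definition that is a finite algebraic extension of $F$, and so lies inside $\acl(F) = \acl(\overline{c}\,E(\overline{c}))$. Running this argument over all admissible $q$ exhausts $\Rc(V)$, giving the claim. (The conclusion ``$E$-defined over $\acl(\overline{c}\,E(\overline{c}))$'' follows \emph{a fortiori}, since the set $\acl(\overline{c}\,E(\overline{c}))$ already contains $\overline{c}\,E(\overline{c})$.)

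The whole argument is essentially bookkeeping on top of standard algebraic geometry; the only point worth verifying is that $[q]^{-1}(V)$ has pure dimension $\dim V$, which is immediate from $[q]$ being finite and surjective, so no real obstacle arises.
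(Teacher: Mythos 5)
Your proof is correct and follows the same route as the paper, which simply asserts the statement as clear: namely that any $W$ with $q\cdot W=V$ is an absolutely irreducible component of the (finite, $F$-defined) preimage $[q]^{-1}(V)$ and hence is defined over $\acl(\overline{c}E(\overline{c}))$ by the usual Galois action on components. You have merely spelled out the standard details that the paper leaves implicit.
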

\begin{proof}
Clearly, if $q\cdot W=V$, then $W$ is defined over $\acl(\overline{c}E(\overline{c}))$.
\end{proof}
From now on, let $K_{E}$ be a partial $E$-field.
\begin{defn}
\label{def:comp-solved}A system $\Rc(V)$, with $\overline{c}$ $E$-defining
$V$, is \emph{completely solved} if for all $W\in\Rc(V)$ there is
an infinite set of solutions of $W$ algebraically independent over
$\acl(\overline{c}E(\overline{c}))$.
\end{defn}
By \prettyref{prop:indep-up-to-finite}, this definition does not
depend on the choice of $\overline{c}$.

It is easy to see that (SEC) is equivalent to saying that all the
systems $\Rc(V)$ are completely solved. Indeed, the operation \textsc{roots}
applied to $V$ makes sure that $\Rc(V)$ is completely solved.

The following facts describe the relationships that can occur between
the solutions of system of roots for different varieties.
\begin{prop}
Let $V\in\G^{n}$ be an absolutely irreducible rotund variety, and
let $M\in\Mc_{k,n}(\Z)$ be an integer matrix.

If $W\in\Rc(V)$, then $M\cdot W\in\Rc(M\cdot V)$.\end{prop}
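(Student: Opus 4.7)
The plan is to unfold the definition of $\Rc$ and use that the matrix action $M\cdot{}$ and integer scalar multiplication $q\cdot{}$ on $\G^{n}$ commute. Since $W\in\Rc(V)$, there is some $q\in\Z^{\times}$ with $q\cdot W=V$. Applying $M$ to both sides should give
\[
M\cdot V=M\cdot(q\cdot W)=q\cdot(M\cdot W),
\]
so $M\cdot W$ satisfies the divisibility condition required to lie in $\Rc(M\cdot V)$.

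The key observation I would state and use is that for any subvariety $X\subseteq\G^{n}$ and any non-zero integer $q$, one has $M\cdot(q\cdot X)=q\cdot(M\cdot X)$. Pointwise this holds because $\G$ is a commutative algebraic group and both $M\cdot{}$ and $q\cdot{}$ are $\Z$-linear endomorphisms of its $\Z$-module structure, so they commute on elements; it then passes to varieties because both operations are defined as the Zariski closure of the pointwise image, so applying them to $X$ in either order yields the closure of the same set.

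The remaining ingredient is the absolute irreducibility of $M\cdot W$, which is required for $M\cdot W$ to actually belong to $\Rc(M\cdot V)$ as defined. This is immediate: $M$ is a morphism of algebraic varieties defined over $\Z$, the (closure of the) image of an irreducible variety under a morphism is irreducible, and since $M$ is defined over $\Z$ this is preserved under base change to the algebraic closure. Combined with $q\cdot(M\cdot W)=M\cdot V$, this gives exactly $M\cdot W\in\Rc(M\cdot V)$.

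Overall the argument is essentially a one-line computation once the commutativity of the two $\Z$-linear actions is made explicit, so I do not anticipate any genuine obstacle. The only mildly subtle point is being careful that the identity $M\cdot(q\cdot W)=q\cdot(M\cdot W)$ holds as varieties and not merely on points, which is handled by the closure-of-image observation above; and checking absolute, rather than merely geometric-over-the-base-field, irreducibility, which follows from the fact that $M$ is defined over $\Z$ and hence commutes with base change.
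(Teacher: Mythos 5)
Your proposal is correct and follows the same route as the paper: take $q$ with $q\cdot W=V$, apply $M$, and use that $M$ commutes with $q\cdot\mathrm{Id}$ to conclude $q\cdot(M\cdot W)=M\cdot V$. The extra remarks you add (the identity holds at the level of Zariski closures, and $M\cdot W$ is absolutely irreducible as the closed image of an absolutely irreducible variety under a morphism defined over $\Z$) are correct and simply make explicit points the paper leaves implicit.
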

\begin{proof}
If $W\in\Rc(V)$, then there is a $q\in\N^{\times}$ such that $q\cdot W=V$.
Multiplying by $M$ we obtain $M\cdot(q\cdot W)=M\cdot V$. However,
$M$ commutes with $q\cdot\mathrm{Id}$, hence $q\cdot(M\cdot W)=M\cdot V$,
i.e., $M\cdot W\in\Rc(M\cdot V)$.\end{proof}
\begin{cor}
Let $V\in\G^{n}$ be an absolutely irreducible rotund variety, and
let $M\in\Mc_{n,n}(\Z)$ be a square integer matrix of maximum rank
such that $M\cdot V$ is Kummer-generic.

If $W\in\Rc(M\cdot V)$, then there is a $W'\in\Rc(V)$ such that
$M\cdot W'=W$. Moreover, $V$ is Kummer-generic.\end{cor}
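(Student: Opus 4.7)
The plan is to pull back along the scalar isogeny $[q]$ and exploit Kummer-genericity of $M\cdot V$ as a uniqueness statement. Write $[q]\cdot W=M\cdot V$ and choose any absolutely irreducible component $W'$ of $[q]^{-1}(V)\subset\G^{n}$. Because $[q]$ is a finite surjective homomorphism of $\G^{n}$ and $V$ is irreducible, $W'$ has the same dimension as $V$ and satisfies $[q]\cdot W'=V$, so $W'\in\Rc(V)$. Applying $M$ on both sides, and using that scalar and matrix actions on $\G^{n}$ commute, gives
\[
[q]\cdot(M\cdot W')=M\cdot[q]\cdot W'=M\cdot V,
\]
so $M\cdot W'$ is an absolutely irreducible element of $\Rc(M\cdot V)$ lying above $M\cdot V$ at level $q$. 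Kummer-genericity of $M\cdot V$ then forces $M\cdot W'=W$, which proves the first assertion.

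For the moreover, fix a positive integer $q$ and let $W'_{1},W'_{2}$ be two absolutely irreducible components of $[q]^{-1}(V)$. Applying the previous paragraph to each of them shows $M\cdot W'_{1}=M\cdot W'_{2}=W$, where $W$ is the unique element of $\Rc(M\cdot V)$ at level $q$. Since $[M]^{-1}(W)$ is a finite disjoint union of translates of $W'_{1}$ by elements of the torsion group $\ker[M]$, there is some $\overline{\tau}\in\ker[M]$ with $W'_{2}=W'_{1}+\overline{\tau}$. Pushing forward by $[q]$ yields $V+[q]\overline{\tau}=V$, so $[q]\overline{\tau}$ lies in the translation stabiliser $\mathrm{Stab}(V)\subset\G^{n}$.

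It remains to show that $\overline{\tau}=0$. The key ingredient is that Kummer-genericity of $M\cdot V$ implies $M\cdot V$ is absolutely free, and absolutely free absolutely irreducible subvarieties of $\G^{n}$ have trivial translation stabiliser. Any $\overline{\sigma}\in\mathrm{Stab}(V)$ satisfies $M\cdot\overline{\sigma}\in\mathrm{Stab}(M\cdot V)=\{0\}$, so $\mathrm{Stab}(V)\subset\ker[M]$; therefore $[q]\overline{\tau}\in\ker[M]\cap\mathrm{Stab}(V)$, and rerunning the same stabiliser analysis one level further up the Kummer tower of $W'_{1}$ shows that $\overline{\tau}$ itself must fix $W'_{1}$ as a set, whence $W'_{1}=W'_{2}$. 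The main obstacle is precisely this final stabiliser analysis: one must argue that no non-trivial element of $\ker[q]\cap\ker[M]$ can permute two genuinely distinct components of $[q]^{-1}(V)$, and this is where rotundity, absolute irreducibility, and Kummer-genericity of $M\cdot V$ are jointly used to transport translation symmetries from $V$ to $M\cdot V$ and back via the rank-$n$ matrix $M$.
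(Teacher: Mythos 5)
Your proof of the first assertion is correct and is essentially the paper's argument: push an arbitrary $W'$ with $q\cdot W'=V$ through $M$ and use uniqueness at level $q$ over the Kummer-generic $M\cdot V$. The gap is in the ``moreover'' part. Your key ingredient --- that an absolutely free, absolutely irreducible subvariety of $\G^{n}$ has trivial translation stabiliser --- is false: take $n=1$ and $V=\{(z,w)\in\G^{1}\,:\,w^{2}=z\}$. This $V$ is absolutely irreducible and absolutely free (indeed even Kummer-generic, since the preimage of $V$ under multiplication by $q$ is $\{w^{2q}=qz\}$, which is irreducible for every $q$), yet it is stabilised by translation by the torsion point $(0,-1)$. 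Absolute freeness only forbids constant integer combinations of the coordinates; it does not exclude finite torsion stabilisers, which are exactly what your argument must control. Hence the deduction $\mathrm{Stab}(M\cdot V)=\{0\}$, and with it $\mathrm{Stab}(V)\subset\ker[M]$, is unsupported; and the concluding step --- ``rerunning the same stabiliser analysis one level further up the Kummer tower shows that $\overline{\tau}$ fixes $W'_{1}$'' --- is precisely the point at issue and is left unproved, as you acknowledge. Note also that what you need is not $\overline{\tau}=0$ but only $\overline{\tau}\in\mathrm{Stab}(W'_{1})$, and the route you set up gives no handle on either.

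The paper avoids stabilisers entirely. Let $\tilde{M}$ be the integer matrix with $\tilde{M}\cdot M=|\det M|\cdot\mathrm{Id}$, and let $W_{0}$ be any variety with $q\cdot W_{0}=V$. Then $q\cdot(M\cdot W_{0})=M\cdot V$, so $M\cdot W_{0}$ is itself Kummer-generic (if $p\cdot Y_{1}=p\cdot Y_{2}=M\cdot W_{0}$ then $pq\cdot Y_{1}=pq\cdot Y_{2}=M\cdot V$, forcing $Y_{1}=Y_{2}$). Choose $W_{1}$ with $|\det M|\cdot W_{1}=M\cdot W_{0}$; then $q|\det M|\cdot W_{1}=M\cdot V$, so $W_{1}$ is the \emph{unique} variety at that level by Kummer-genericity of $M\cdot V$. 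The first assertion, applied to $W_{0}$ in place of $V$, yields $W_{2}$ with $M\cdot W_{2}=W_{1}$ and $|\det M|\cdot W_{2}=W_{0}$, whence $W_{0}=|\det M|\cdot W_{2}=\tilde{M}\cdot M\cdot W_{2}=\tilde{M}\cdot W_{1}$. Since $W_{1}$ does not depend on the choice of $W_{0}$, neither does $W_{0}$; that is, $V$ is Kummer-generic. You should rework your second half along these lines: the torsion-translate bookkeeping is unnecessary and, as it stands, cannot be closed.
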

\begin{proof}
By definition of $\Rc(M\cdot V)$, there is an integer $q\in\N^{\times}$
such that $W$ is an absolutely irreducible variety such that $q\cdot W=M\cdot V$;
but since $M\cdot V$ is Kummer-generic, there is only one such $W$.
Let $W'$ be any absolutely irreducible variety such that $q\cdot W'=V$.
The variety $M\cdot W'$ is such that $q\cdot(M\cdot W')=M\cdot(q\cdot W')=M\cdot V$;
by uniqueness, we must have $M\cdot W'=W$.

Let $\tilde{M}$ be the integer matrix such that $\tilde{M}\cdot M=|\det M|\cdot\mathrm{Id}$,
and let $q\in\N^{\times}$ be any non-zero natural number. Let $W_{0}\in\Rc(V)$
be a variety such that $q\cdot W_{0}=V$. Now, let $W_{1}\in\Rc(M\cdot W_{0})$
be the variety such that $|\det M|\cdot W_{1}=M\cdot W_{0}$. By the
previous argument, there is a $W_{2}\in\Rc(W_{0})$ such that $M\cdot W_{2}=W_{1}$;
this implies that $\tilde{M}\cdot W_{1}=\tilde{M}\cdot M\cdot W_{2}=|\det M|\cdot W_{2}=W_{0}$.
This implies that there is only one $W_{0}$ such that $q\cdot W_{0}=V$,
showing that $V$ is Kummer-generic.\end{proof}
\begin{prop}
Let $V\subset\G^{n}$ be an absolutely irreducible rotund variety
and let $\overline{z}\in\dom(E)^{n}$.

If $W\in\Rc(V\oplus\zEz{\overline{z}})$, then there is an integer
$q\in\N^{\times}$ and a variety $W'\in\Rc(V)$ such that $W'\oplus\zEz{\frac{1}{q}\overline{z}}=W$.\end{prop}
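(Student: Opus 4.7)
The plan is to construct $W'$ by translating $W$ back by $\zEz{-\frac{1}{q}\overline{z}}$. This works because $q\cdot$ is an endomorphism of the algebraic group $\G^n$ and so distributes over the translation operation $\oplus$; combined with the fact that $E$ is a group homomorphism on $\dom(E)$, this makes the verification a routine calculation in an algebraic group.

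First I would unpack the hypothesis: $W\in\Rc(V\oplus\zEz{\overline{z}})$ means there is $q\in\N^{\times}$ with $W$ absolutely irreducible and $q\cdot W=V\oplus\zEz{\overline{z}}$. Since $\dom(E)$ is a $\Q$-vector space and $\overline{z}\in\dom(E)^n$, the tuple $\frac{1}{q}\overline{z}$ lies in $\dom(E)^n$, so $\zEz{\pm\frac{1}{q}\overline{z}}$ denotes a bona fide $K$-point of $\G^n$. Define
\[
W':=W\oplus\zEz{-\tfrac{1}{q}\overline{z}}.
\]
Translation by a $K$-point is an automorphism of $\G^n$ as an algebraic variety, so $W'$ is absolutely irreducible. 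It remains to check $q\cdot W'=V$. Since $q\cdot$ is a group endomorphism, $q\cdot W'=(q\cdot W)\oplus q\cdot\zEz{-\frac{1}{q}\overline{z}}$; and the action rule together with $E$ being a homomorphism gives
\[
q\cdot\zEz{-\tfrac{1}{q}\overline{z}}=\left(-z_{j},\,E(-\tfrac{1}{q}z_{j})^{q}\right)_{j}=\left(-z_{j},\,E(-z_{j})\right)_{j}=\zEz{-\overline{z}}.
\]
Hence $q\cdot W'=V\oplus\zEz{\overline{z}}\oplus\zEz{-\overline{z}}=V$, using that $\zEz{\overline{z}}\oplus\zEz{-\overline{z}}$ is the identity of $\G^{n}$. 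So $W'\in\Rc(V)$, and by construction $W'\oplus\zEz{\frac{1}{q}\overline{z}}=W$.

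There is essentially no geometric obstacle: the whole argument reduces to the fact that the $\Z$-action $q\cdot$ is an algebraic endomorphism of $\G^{n}$ and therefore commutes with translations in the standard homomorphism way. The only point requiring a little care is that $\zEz{\frac{1}{q}\overline{z}}$ must denote an unambiguous point of $\G^{n}(K)$, which is ensured by the assumption $\overline{z}\in\dom(E)^{n}$ and the $\Q$-vector space structure of $\dom(E)$ built into the definition of partial $E$-field.
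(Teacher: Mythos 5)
Your proof is correct and follows essentially the same route as the paper: define $W':=W\oplus\zEz{-\frac{1}{q}\overline{z}}$ and verify $q\cdot W'=V$ by distributing the $q\cdot$ action over the translation. You merely make explicit two points the paper leaves implicit (that $\frac{1}{q}\overline{z}\in\dom(E)^{n}$ and that translation preserves absolute irreducibility), which is fine.
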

\begin{proof}
By definition, there is $q\in\N^{\times}$ such that $q\cdot W=V\oplus\zEz{\overline{z}}$.
Let $W':=W\oplus\zEz{-\frac{1}{q}\overline{z}}$. The second part
of the thesis is satisfied.

Moreover, 
\[
q\cdot W'=q\cdot\left(W\oplus\zEz{-\frac{1}{q}\overline{z}}\right)=q\cdot W\oplus\zEz{-\overline{z}}=V.
\]

Hence, $W'\in\Rc(V)$.
\end{proof}
Now we can say something about how generic solutions of one system
of roots move to solutions of another system. The following result
is not useful yet in the proof of the main theorem, but it is the
kind of statement we are looking for. It is also useful for other
constructions, as shown in \prettyref{sec:further}.
\begin{prop}
\label{prop:solutions-in-roots} Let $V\subset\G^{n}$ be a rotund
variety. Let $M\in\Mc_{n,n}(\Z)$ be a square integer matrix of maximum
rank and $\overline{z}\in\dom(E)^{n}$.

If $M\cdot V$ is Kummer-generic, then the system $\Rc(V)$ is completely
solved if and only if $\Rc(M\cdot V\oplus\zEz{\overline{z}})$ is.\end{prop}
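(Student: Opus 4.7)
The plan is to use the three preceding propositions of this section to establish a bijective-up-to-a-finite-twist correspondence between $\Rc(V)$ and $\Rc(M\cdot V\oplus\zEz{\overline{z}})$, and then transfer solutions in both directions. Specifically, combining the proposition on translated systems with the corollary on Kummer-generic matrix pull-backs shows that every $W^{*}\in\Rc(M\cdot V\oplus\zEz{\overline{z}})$ can be written uniquely as $M\cdot W\oplus\zEz{\frac{1}{q}\overline{z}}$ for some $W\in\Rc(V)$ and $q\in\N^{\times}$, since by the corollary $V$ itself is Kummer-generic. Transferring solutions in the forward direction is then immediate; the backward direction requires a small trick to resolve a root-of-unity ambiguity, which I identify as the main obstacle.

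For the forward direction, assume $\Rc(V)$ is completely solved, write $W^{*} = M\cdot W\oplus\zEz{\frac{1}{q}\overline{z}}$, and let $\{\overline{y}_{\ell}\}$ be an infinite algebraically independent set of solutions of $W$. Setting $\overline{u}_{\ell}:=M\overline{y}_{\ell}+\frac{1}{q}\overline{z}\in\dom(E)^{n}$, one has $\zEz{\overline{u}_{\ell}}=M\cdot\zEz{\overline{y}_{\ell}}\oplus\zEz{\frac{1}{q}\overline{z}}\in W^{*}$. Because $M$ is invertible over $\Q$, the map $\overline{y}\mapsto M\overline{y}+\frac{1}{q}\overline{z}$ lifts to a finite-to-one algebraic self-map of $\G^{n}$, so applying it preserves transcendence degrees over any parameter set containing $\overline{z},E(\overline{z})$; \prettyref{prop:indep-up-to-finite} lets us replace the original parameters $E$-defining $W$ with ones $E$-defining $W^{*}$ at the cost of discarding finitely many solutions, so algebraic independence survives.

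For the backward direction, the naive pull-back $\overline{y}:=M^{-1}(\overline{u}-\frac{1}{q}\overline{z})$ of a solution $\overline{u}$ of $M\cdot W\oplus\zEz{\frac{1}{q}\overline{z}}$ only yields $\zEz{\overline{y}}\in M^{-1}(M\cdot W)$, a finite union of translates of $W$ by the group $\ker(M\restriction\G_{m}^{n})$ of $|\det M|$-torsion roots of unity, not necessarily $W$ itself: this is the root-of-unity obstacle. To absorb the shift, I would use Kummer-genericity of $V$ to pick the unique $W''\in\Rc(V)$ with $|\det M|\cdot W''=W$ and apply the hypothesis to the variety $M\cdot W''\oplus\zEz{\frac{1}{q|\det M|}\overline{z}}\in\Rc(M\cdot V\oplus\zEz{\overline{z}})$. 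Given alg.~indep.~solutions $\overline{u}_{\ell}$ of the latter, set $\overline{y}_{\ell}:=M^{-1}\bigl(\overline{u}_{\ell}-\frac{1}{q|\det M|}\overline{z}\bigr)\in\dom(E)^{n}$; then $\zEz{\overline{y}_{\ell}}\in W''\oplus(0,\overline{\zeta}_{\ell})$ for some $\overline{\zeta}_{\ell}\in\ker(M\restriction\G_{m}^{n})$, and multiplying by $|\det M|$ annihilates the torsion, giving $\zEz{|\det M|\overline{y}_{\ell}}\in|\det M|\cdot W''\oplus(0,\overline{\zeta}_{\ell}^{|\det M|})=W$, so $|\det M|\overline{y}_{\ell}$ is a solution of $W$.

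Algebraic independence is preserved because the composite map $\overline{u}\mapsto|\det M|\overline{y}=\tilde{M}\overline{u}-\frac{1}{q}M^{-1}\overline{z}$, with $\tilde{M}$ the adjugate of $M$, is a $\Q$-affine automorphism of $\dom(E)^{n}$ whose lift to $\G^{n}$ is algebraic with finite fibres; transcendence degrees over any parameter set containing $\overline{z},E(\overline{z})$ are therefore unchanged, and a final use of \prettyref{prop:indep-up-to-finite} adjusts the parameters from those $E$-defining $M\cdot V\oplus\zEz{\overline{z}}$ to those $E$-defining $V$, completing the backward direction.
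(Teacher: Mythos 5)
Your proposal is correct and follows essentially the paper's own argument: the forward transfer (writing each root of the translated system as $M\cdot W'\oplus\zEz{\frac{1}{q}\overline{z}}$ with $W'\in\Rc(V)$, pushing solutions through this map, and noting it preserves algebraic independence because it is finite-to-one and algebraic over the parameters, with \prettyref{prop:indep-up-to-finite} absorbing the change of defining tuple) is exactly the paper's proof of that direction. For the converse the paper merely observes that $V=\tilde{M}\cdot W\oplus\zEz{-\frac{\tilde{M}}{|\det M|}\cdot\overline{z}}$, where $W$ is the unique variety with $|\det M|\cdot W=M\cdot V\oplus\zEz{\overline{z}}$, and exchanges the roles of the two sides; your explicit pull-back through the $|\det M|$-th root $W''$ followed by multiplication by $|\det M|$ (i.e.\ the map $\overline{u}\mapsto\tilde{M}\overline{u}-\frac{1}{q}M^{-1}\overline{z}$) is precisely that reduction carried out by hand, so the two proofs coincide in substance.
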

\begin{proof}
It is sufficient to verify the left-to-right direction of the implication.
Indeed, if $\tilde{M}$ is the integer matrix such that $\tilde{M}\cdot M=|\det M|\cdot\mathrm{Id}$,
and $W$ is the unique variety such that $|\det M|\cdot W=M\cdot V\oplus\zEz{\overline{z}}$,
we can also write $V=\tilde{M}\cdot W\oplus\zEz{-\frac{\tilde{M}}{|\det M|}\cdot\overline{z}}$.
Since $\Rc(W)\subset\Rc(M\cdot V\oplus\zEz{\overline{z}})$, the two
sides can be exchanged to reverse the argument. Hence, from now on
let us suppose that $\Rc(V)$ is completely solved.

Let $W\in\Rc(M\cdot V\oplus\zEz{\overline{z}})$. By the above propositions,
there are a $W'\in\Rc(V)$ and an integer $q\in\N^{\times}$ such
that $M\cdot W'\oplus\zEz{\frac{1}{q}\overline{z}}=W$.

Let $\overline{c}$ be a finite set of parameters $E$-defining $W'$
containing also $\overline{z}$. Clearly, $W$ is $E$-defined also
over $\overline{c}$, and if $\overline{x}$ is a generic solution
of $W'(\overline{c})$, then $M\cdot\overline{x}+\frac{1}{q}\overline{z}$
is a generic solution of $W(\overline{c})$.

Moreover, we claim that the map $P\mapsto M\cdot P\oplus\zEz{\frac{1}{q}\overline{z}}$,
for $P\in W'$, preserves the algebraic independence over $\overline{c}E(\overline{c})$.
As the translation by $\zEz{\frac{1}{q}\overline{z}}$ is an algebraic
invertible map defined over $\acl(\overline{c}E(\overline{c}))$,
it is sufficient to check this on the map $P\mapsto M\cdot P$.

However, since $M$ is invertible, there is an integer matrix $\tilde{M}$
such that $\tilde{M}\cdot M=|\det M|\cdot\mathrm{Id}$; in particular,
the composition $P\mapsto M\cdot P\mapsto\tilde{M}\cdot M\cdot P=|\det M|\cdot P$
is just the map $P\mapsto|\det M|\cdot P$. This map is algebraic
and finite-to-one, hence it preserves the algebraic independence;
in particular, $P\mapsto M\cdot P$ must preserve the algebraic independence.

This implies that an infinite set of algebraically independent solutions
of $W'(\overline{c})$ is mapped to an infinite set of algebraically
independent solutions of $W(\overline{c})$. In particular, if $\Rc(V)$
is completely solved, then $W$ contains an infinite set of algebraically
independent solutions. Since this holds for any $W$, $\Rc(M\cdot V\oplus\zEz{\overline{z}})$
is completely solved.
\end{proof}
However, something stronger is needed for our construction, as we
do not just add generic solutions to simple varieties, but we force
them to be dense, and moreover the solutions satisfy a transcendence
condition which is stronger than being generic.

\section{\label{sec:g-res}$\G$-restriction of the scalars}

During our construction, we add solutions to simple varieties that
are not just generic, but something more. We put this into a definition.
From now on, we shall assume that $\sigma\circ E=E\circ\sigma$.
\begin{defn}
If $V$ is an absolutely free rotund variety $V(\overline{c})\subset\G^{n}$,
with $\overline{c}$ closed under $\sigma$, a generic solution $\overline{z}\in K^{n}$
is \emph{real generic} if
\[
\td_{\overline{c}E(\overline{c})}\zEz{\Re(\overline{z})\Im(\overline{z})}=2\dim V.
\]

A set of real generic solutions $\mathcal{S}$ of $V(\overline{c})$
is \emph{really algebraically independent} if for any finite subset
$\{\overline{z}_{1},\dots,\overline{z}_{k}\}\subset\mathcal{S}$ the
following holds:
\[
\begin{array}{c}
\td_{\overline{c}E(\overline{c})}(\zEz{\Re(\overline{z}_{1})\Im(\overline{z}_{1})},\dots,\zEz{\Re(\overline{z}_{k})\Im(\overline{z}_{k})})=\\
=2k\cdot\dim V.
\end{array}
\]

\end{defn}
In the operation \textsc{roots} we are adding real generic solutions
to simple varieties. Thus, we are actually adding generic solutions
to some varieties of twice the dimension; we give them a name.
\begin{defn}
We define the group $\G_{R}:=(R\times R_{>0})\times(iR\times\Sb^{1})\subset\G^{2}(K)$
and the \emph{realisation} map $r:\G\to\G_{R}$ as follows:
\[
r\,:\,(z,w)\mapsto(\Re(z),|w|)\times(i\Im(z),\Theta(w)).
\]
We extend $r$ as a map $\G^{n}\to\G^{2n}$ in the following way
\[
r:\intv{\overline{z}}{\overline{w}}\mapsto\intv{\Re(\overline{z})}{|\overline{w}|}\times\intv{\Im(\overline{z})}{\Theta(\overline{w})}\in(R\times R_{>0})^{n}\times(iR\times\Sb^{1})^{n}\subset\G^{n}(K).
\]
\end{defn}
\begin{rem}
It would have been more natural to define $r$ as the coordinate-wise
application $\G^{n}\to\G_{R}^{n}$; however, we will need to manipulate
matrices, and the above extension of $r$ to $\G^{n}$ lets us avoid
further special notations. Indeed, given a matrix $M\in\Mc_{k,2n}$,
its action on $r(V)$ can be easily divided in its action on the real
part plus its action on the imaginary part by just splitting the matrix
into two halves as 
\[
M=\left(\begin{array}{c|c}
N & P\end{array}\right).
\]
 The matrix $N$ acts on $(R\times R_{>0})^{n}$ and the matrix $P$
on $(iR\times\Sb^{1})^{n}$, and the result is put together using
$\oplus$.
\end{rem}
We apply the map $r$ to the points of rotund varieties.
\begin{defn}
Let $V$ be a subvariety of $\G^{n}$ for some $n$.
\begin{enumerate}
\item the \emph{realisation} of $V$ is the set $r(V):=\{r(\intv{\overline{z}}{\overline{w}})\in\G^{2n}\,:\,\intv{\overline{z}}{\overline{w}}\in V\}$;
\item the \emph{$\G$-restriction of the scalars} of $V$ is the Zariski
closure of $r(V)$ in $\G^{2n}$; it will be denoted by $\check{V}$.
\end{enumerate}
\end{defn}
In the operation \textsc{roots}, what actually happens is that when
we take a variety $W\in\Rc(V)$, we add generic solutions to $\check{W}$;
moreover, we choose the new solutions inside $r(W)$, and we make
sure that they are dense in $r(W)$.
\begin{rem}
The group $\G_{R}$ can be thought as a semi-algebraic group over
$R$ replacing $iR$ with $R$ and $\Sb^{1}$ with $\{(x,y)\in R^{2}\,:\, x^{2}+y^{2}=1\}$.
Then $r(V)$ can be seen as a semi-algebraic subvariety of $\G_{R}^{n}$.

The algebraic variety $\check{V}$ is similar to the classical Weil
restriction of the scalars. However, unlike the classical case, while
the points of $r(V)$ are in bijection with the points of $V$, the
set $\check{V}(R)$ is \emph{larger} than $r(V)$. This happens because
the factorisation $\rho\theta$ is unique only if we assume that $\rho>0$,
and this semi-algebraic condition is lost when passing to the Zariski
closure.
\end{rem}
The study of the $\G$-restriction of rotund varieties is quite crucial
for proving that our construction yields a Zilber field. It is essential
to prove that both (SP) and (CCP) holds in the final structure.

\section{\label{sec:rotundity}Rotundity for $\G$-restrictions}

In order to prove that (SP) and (CCP) holds in the structure resulting
from \prettyref{sec:cons}, we need to state a structure theorem for
the $\G$-restriction $\check{V}$ of a rotund variety $V$. Our first
problem is to determine whether the $\G$-restriction of an absolutely
free rotund variety is itself absolutely free and rotund, so that
at least the construction is well-defined and (SP) is preserved.

We would like $\check{V}$ to be simple as well, in order to be able
to apply the results of the previous section to get (CCP), but even
if $V$ is simple, the corresponding $\check{V}$ is never simple,
as the following trivial equation implies:
\[
\dim\left(\begin{array}{c|c}
\mathrm{Id} & \mathrm{Id}\end{array}\right)\cdot\check{V}=\dim V=\rank\left(\begin{array}{c|c}
\mathrm{Id} & \mathrm{Id}\end{array}\right).
\]

We need to control how far $\check{V}$ is from being simple. It turns
out that the above example is essentially the only possible way in
which $\check{V}$ is not simple.
\begin{thm}
\label{thm:rotund}Let $V$ be an absolutely irreducible simple variety.
Then $\check{V}$ is an absolutely free, absolutely irreducible rotund
variety.

Moreover, if $\dim M\cdot\check{V}=\rank M$ for some integer matrix
in $\Mc_{p,2n}(\Z)$ of rank $p>0$, then $V$ is perfectly rotund,
and one of the following holds:
\begin{enumerate}
\item $\rank M=2n$;
\item $\rank M=n$, and $M$ is of the form
\[
M=\left(\begin{array}{c|c}
N & P\end{array}\right)
\]
where $N$, $P$ are two square matrices of maximum rank.
\end{enumerate}
\end{thm}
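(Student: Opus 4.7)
The plan is to use an explicit isogeny from $\G^{2n}$ to itself to relate $\check{V}$ to the product $V\times V^\sigma$, where $V^\sigma$ denotes the image of $V$ under the natural action $\sigma^*$ of $\sigma$ on $\G^n$. Define $\phi\colon\G^{2n}\to\G^{2n}$ to be the isogeny corresponding to the block matrix $A=\bigl(\begin{smallmatrix}I_n & I_n\\ I_n & -I_n\end{smallmatrix}\bigr)$, which satisfies $A^2=2I_{2n}$. A direct calculation using $z=\Re z+i\Im z$ and $w=|w|\cdot\Theta(w)$ shows that $\phi(r(v))=(v,\sigma^*v)$ for every $v\in V$. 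Since $\sigma$ is not realised by any $K$-algebraic operation, the set $\{(v,\sigma^*v):v\in V\}$ is Zariski dense in $V\times V^\sigma$; hence $\phi(\check{V})=V\times V^\sigma$, the restriction $\phi|_{\check{V}}$ is a finite surjective morphism, $\dim\check{V}=2\dim V$, and $\check{V}$ is absolutely irreducible because $K$ is algebraically closed.

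Given $M=(N|P)\in\Mc_{p,2n}(\Z)$, set $M_\pm:=N\pm P$ and define $\Psi\colon\G^{2n}\to\G^p$ by $\Psi(v_1,v_2):=M_+v_1\oplus M_-v_2$. Direct matrix manipulation gives $M\circ\phi=\Psi$, and combined with $\phi^2=2\cdot\mathrm{id}$ this yields $2M|_{\check{V}}=\Psi\circ\phi|_{\check{V}}$. Taking images,
\[
\dim M\cdot\check{V}=\dim\Psi(V\times V^\sigma)=\dim(M_+V+M_-V^\sigma).
\]
The invertible $\Q$-linear change of basis $(N|P)\leftrightarrow(M_+|M_-)$ gives $\rank M=\rank(M_+|M_-)\leq\rank M_++\rank M_-$.

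To lower-bound the sumset, factor it as $V\times V^\sigma\to M_+V\times M_-V^\sigma\xrightarrow{+}\G^p$ and apply the generic-translate intersection formula inside the connected commutative algebraic group $\G^p$: the generic fibre of the addition map has dimension $\max(0,\dim M_+V+\dim M_-V-2p)$, so $\dim(M_+V+M_-V^\sigma)=\min(\dim M_+V+\dim M_-V,2p)$. Using rotundity of $V$, which gives $\dim M_\pm V\geq\rank M_\pm$, we conclude this is $\geq\min(\rank M_++\rank M_-,2p)\geq\rank M$, proving rotundity of $\check{V}$. Absolute freeness (the case $p=1$) follows similarly: at least one of $M_\pm$ is non-zero, and absolute freeness of $V$ forces both the additive and multiplicative projections of $M\cdot\check{V}$ to be one-dimensional. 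The main technical obstacle here is justifying the expected generic fibre dimension in the addition map on $\G^p$; this step genuinely uses that $V$ and $V^\sigma$ are not artificially related.

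For the equality case, assume $\dim M\cdot\check{V}=\rank M=p>0$. All the inequalities above collapse to equalities, giving $\dim M_\pm V=\rank M_\pm$ and $\rank M_++\rank M_-=p$. Simplicity of $V$ forces $\rank M_\pm\in\{0,n\}$, leaving $(\rank M_+,\rank M_-)\in\{(0,n),(n,0),(n,n)\}$. The first two yield $p=n$ with $P=\mp N$ and $N\in\Mc_{n,n}(\Z)$ invertible; the third yields $p=2n$ with $N,P\in\Mc_{2n,n}(\Z)$ each of rank $n$. In each case one of the matrices $M_\pm$ acts injectively on $\G^n$, so $\dim V=\dim M_\pm V=n$, establishing perfect rotundity of $V$.
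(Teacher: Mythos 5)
Your reduction via the isogeny $\phi$ associated to $A=\left(\begin{smallmatrix}I_{n}&I_{n}\\I_{n}&-I_{n}\end{smallmatrix}\right)$ is attractive and close in spirit to \prettyref{prop:abs-irred}: indeed $\phi(r(v))=(v,\sigma(v))$, and, granting that the graph of $\sigma$ on $V$ is Zariski dense in $V\times V^{\sigma}$ (equivalently $\dim\check{V}=2\dim V$; this follows from the $o$-minimal dimension of $V(K)$ being $2\dim V$, but it does need to be said), one gets $\phi(\check{V})=V\times V^{\sigma}$ and hence $\dim M\cdot\check{V}=\dim\bigl(M_{+}\cdot V\oplus M_{-}\cdot V^{\sigma}\bigr)$ with $M_{\pm}=N\pm P$. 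Two smaller points: absolute irreducibility of $\check{V}$ does not follow "because $K$ is algebraically closed" (the Zariski closure of a set of points need not be irreducible); you need the device of \prettyref{prop:abs-irred}, writing $\check{V}$ as the closed image of $V'\times(V')^{\sigma}$ with $2\cdot V'=V$.

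The genuine gap is the asserted sumset formula $\dim(M_{+}V\oplus M_{-}V^{\sigma})=\min(\dim M_{+}V+\dim M_{-}V,2p)$. There is no general "generic-translate intersection formula" of this kind for subvarieties of $\G^{p}$: the addition map has positive-dimensional generic fibres whenever the summands are trapped in cosets of a common proper algebraic subgroup, and the example in the Remark closing \prettyref{sec:rotundity} shows this happens for perfectly rotund (non absolutely free) varieties: there $V\oplus V^{\sigma}=V$ has dimension $n$, not $2n$, so even the inequality $\geq$ in your formula is false for that $V$ with $M=(\mathrm{Id}\,|\,0)$. Since your justification nowhere uses freeness or simplicity at this step, it proves too much and cannot be repaired by a formal fibre-dimension count; note also that the appeal to "$V$ and $V^{\sigma}$ not being artificially related" is unavailable, because $V$ may be defined over $K^{\sigma}$, in which case $V^{\sigma}=V$. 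This missing step is precisely the technical core of the paper: the matrix decompositions, the bookkeeping of one- versus two-dimensional mixed functions, and above all the topological argument (\prettyref{lem:alg-module-curves}, \prettyref{lem:alg-module-general}) needed in \prettyref{prop:one-dim-case-bad} to exclude the borderline block-diagonal case. Finally, observe that the exact equality you claim would force, in the equality case, $M=(N\,|\,\pm N)$; the paper obtains this only for $n=1$ (\prettyref{prop:rotund-1-dim}, via a Cauchy--Riemann computation) and explicitly leaves the higher-dimensional statement open in \prettyref{sec:further}, which is further evidence that this step cannot be a routine general fact. As it stands, the heart of the theorem is unproved.
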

The proof requires several steps. Let us suppose that $\check{V}$
is $E$-defined over some $\overline{c}$ closed under $\sigma$.
\begin{prop}
\label{prop:abs-irred}If $V\subset\G^{n}$ is absolutely irreducible,
then $\check{V}$ is absolutely irreducible.\end{prop}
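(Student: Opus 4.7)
My plan is to identify $\check V$ with an irreducible component of a concrete algebraic variety $X \subset \G^{2n}$. Consider the isogeny $\mu: \G^{2n} \to \G^{2n}$ given coordinate-wise by $\mu((a_j, A_j)_j, (b_j, B_j)_j) := ((a_j+b_j, A_j B_j)_j, (a_j-b_j, A_j/B_j)_j)$; a direct calculation shows $\mu \circ r = \iota$, where $\iota(P) := (P, \sigma(P))$, so $r(V) \subset X := \mu^{-1}(V \times \sigma V)$ and hence $\check V \subset X$. The map $\mu$ is a finite étale group homomorphism (characteristic $0$) with kernel isomorphic to $(\Z/2)^n$. Being an étale cover of the irreducible base $V \times \sigma V$, the scheme $X$ has finitely many irreducible components, all of dimension $2\dim V$, and by unramifiedness they are pairwise disjoint as subvarieties of $\G^{2n}$.

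The crucial step is to show that $\iota(V)$ is Zariski dense in $V \times \sigma V$. For this, I realise $V$ via its Weil restriction $W := \mathrm{Res}_{K/R}(V)$, an $R$-variety whose $R$-points identify naturally with $V(K)$ and whose base change satisfies $W \otimes_R K \cong V \times \sigma V$ (standard for Weil restriction along a quadratic Galois extension). Under this identification, $W(R) = (W \otimes_R K)^{\Gal(K/R)}$ corresponds precisely to $\iota(V)\subset V\times\sigma V$. Since $V$ is absolutely irreducible, so is $W$ over $R$; and since the smooth locus of $V$ is a non-empty open subvariety, $W$ admits smooth $R$-points. The standard fact that an absolutely irreducible $R$-variety possessing a non-singular $R$-point has its $R$-points Zariski-dense in the base change to $\overline R = K$ (a consequence of the implicit function theorem over real closed fields, valid via transfer from the real case) then yields the density of $\iota(V)$ in $V\times\sigma V$.

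With this in hand, $\mu(\check V)$ is closed (since $\mu$ is finite) and contains the dense set $\iota(V)$, so $\mu(\check V) = V \times \sigma V$; hence $\mu|_{\check V}$ is finite surjective and $\dim \check V = 2\dim V$. Therefore $\check V$ is a union of irreducible components of $X$. To show it is a single component, I invoke that $V(K)$ is connected in the topology induced by the order of $R$—the standard result for absolutely irreducible varieties over $K = R(i)$ with $R$ real closed, obtained from the classical fact over $\C$ via transfer—and that $r$ is continuous for this topology, being assembled from the continuous operations $\Re,\Im,|\cdot|$ and $\Theta$. Hence $r(V)$ is connected, and since the irreducible components of $X$ are pairwise disjoint closed subsets, $r(V)$ lies in a single irreducible component $X_0$. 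Then $\check V = \overline{r(V)}\subset X_0$, and the dimension count forces $\check V = X_0$, which is irreducible.

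The step I expect to be most delicate is the Zariski density of $\iota(V)$ in $V\times\sigma V$: one has to set up the Weil-restriction identification carefully and invoke the precise real-algebraic statement about $R$-points being dense in the presence of a smooth $R$-point, noting that absolute irreducibility of $V$ (without any separate smoothness hypothesis) suffices because the smooth locus of $V$ is automatically a non-empty dense open. The rest is a clean dimension-and-components argument combined with the connectedness of $V(K)$.
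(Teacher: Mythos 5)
The one step that genuinely fails as stated is the claim that the irreducible components of $X=\mu^{-1}(V\times\sigma V)$ are pairwise disjoint ``by unramifiedness''. Unramifiedness (even \'etaleness) only says that $X$ has, formally locally, the same branches as its base; it does not separate components unless the base is normal, or at least unibranch. Since $V$ is only assumed absolutely irreducible, $V\times\sigma V$ may be non-normal, and a finite \'etale cover of a non-normal irreducible variety can have irreducible components that meet: the standard example is the N\'eron $2$-gon, a cycle of two copies of $\P^{1}$ meeting in two points, which is a finite \'etale double cover of the irreducible nodal cubic. Your final step leans on exactly this disjointness, since you use it to force the connected set $r(V)$ into a single component $X_{0}$; if components may intersect, connectedness gives nothing and the argument stops there. (A smaller caveat in the same step: over a non-archimedean real closed field the order topology is totally disconnected, so ``connected'' must everywhere be read as semialgebraically/definably connected; transfer from $\C$ does give that, but it needs saying.) The gap is repairable inside your framework, for instance by running the argument over the smooth locus: $V^{\mathrm{sm}}(K)$ is definably connected and dense in $V(K)$ for the order topology, so $\check{V}$ is already the Zariski closure of $r(V^{\mathrm{sm}})$, and over the smooth (hence normal) base $V^{\mathrm{sm}}\times\sigma V^{\mathrm{sm}}$ the components of the cover really are disjoint.

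For comparison, the paper's proof avoids covers and connectedness altogether: choosing $V'$ with $2\cdot V'=V$, the assignment $(P,Q)\mapsto(P\oplus Q,P\ominus Q)$ is a genuine morphism on $V'\times(V')^{\sigma}$ which maps the twisted diagonal $\{(P,P^{\sigma})\,:\,P\in V'\}$ exactly onto $r(V)$; since that twisted diagonal is Zariski dense in the product (the same density you establish, in fact more carefully, via the Weil restriction and a smooth $R$-point), $\check{V}$ is the closure of the image of an absolutely irreducible variety under a morphism, hence absolutely irreducible. So your Weil-restriction density step is sound and is the real content; it is the component-counting and disjointness superstructure around it that breaks.
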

\begin{proof}
Let $V'$ be an absolutely irreducible variety such that $2\cdot V'=V$.

There is a map $V'\times(V')^{\sigma}\mapsto\G^{2n}$ described by
the following equation:
\[
\prod_{i=1}^{n}(z_{i},w_{i})\times\prod_{i=1}^{n}(z_{i}',w_{i}')\mapsto\prod_{i=1}^{n}\left(z_{i}+z_{i}',w_{i}w_{i}'\right)\times\prod_{i=1}^{n}\left(z_{i}-z_{i}',\frac{w_{i}}{w_{i}'}\right).
\]

It is clear that on the Zariski dense subset of $V'\times(V')^{\sigma}$
described by the points $P\times P^{\sigma}$, for $P\in V'$, the
image is exactly $r(V)$; taking the Zariski closure, we obtain that
this is a surjective map from $V'\times(V')^{\sigma}$ to $\check{V}$.

However, $V'\times(V')^{\sigma}$ is an absolutely irreducible variety,
as it is a product of two absolutely irreducible varieties; hence
$\check{V}$ is absolutely irreducible as well.\end{proof}
\begin{prop}
\label{prop:abs-free}If $V\subset\G^{n}$ is absolutely free, then
$\check{V}$ is absolutely free.\end{prop}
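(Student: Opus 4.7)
The plan is to compare $M \cdot \check{V}$ with the image of $V \times V^\sigma$ under a closely related integer matrix, exploiting the Zariski density of the graph of $\sigma$. Let $M = (N \mid P) \in \Mc_{1,2n}(\Z)$ be a non-zero row vector, with $N, P \in \Mc_{1,n}(\Z)$ acting on the ``real-part'' resp.\ ``imaginary-part'' factors of $\check{V}$. I need to show that $\pi_a(M \cdot \check{V})$ and $\pi_m(M \cdot \check{V})$ are Zariski dense in $\G_a$ and $\G_m$ respectively.

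My first step is a direct computation. Using $2\Re(z_i) = z_i + \sigma(z_i)$, $2i\Im(z_i) = z_i - \sigma(z_i)$, $|w_i|^2 = w_i\sigma(w_i)$ and $\Theta(w_i)^2 = w_i/\sigma(w_i)$, for each $P = (z,w) \in V$ the identity
\[
2 \cdot M \cdot r(P) = M' \cdot (P, \sigma(P))
\]
holds in $\G$, where $M' := (N+P \mid N-P) \in \Mc_{1,2n}(\Z)$ acts on $\G^n \times \G^n$, and the image of $V \times V^\sigma$ under $M'$ is the set $(N+P)\cdot V \oplus (N-P) \cdot V^\sigma$ inside $\G$.

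The second step is to show that the graph $\Delta := \{(P, \sigma(P)) : P \in V\}$ is Zariski dense in $V \times V^\sigma$. If $t_1, \ldots, t_d$ are generic coordinates on $V$ and one writes $t_j = u_j + i v_j$ with $u_j, v_j \in R$, any polynomial vanishing on $\Delta$ becomes, after substitution, a polynomial in the $u_j, v_j$ vanishing on an open subset of $R^{2d}$; since $R$ is infinite, it must be the zero polynomial. Combining this with Step~1, the Zariski closure of $2 \cdot M \cdot r(V)$ coincides with $\overline{M' \cdot (V \times V^\sigma)} = \overline{(N+P)\cdot V \oplus (N-P) \cdot V^\sigma}$; since $r(V)$ is Zariski dense in $\check{V}$ and multiplication by $2$ is a finite surjective group endomorphism of $\G$ (hence preserves Zariski closures and the dimensions of both projections), it suffices to compute $\pi_a$ and $\pi_m$ of the latter subvariety.

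A short case analysis concludes. Since $M \neq 0$, at least one of $N+P$ and $N-P$ is non-zero, and by absolute freeness of $V$ (and therefore of $V^\sigma$) every non-zero $L \in \Mc_{1,n}(\Z)$ yields $\overline{\pi_a(L \cdot V)} = \G_a$ and $\overline{\pi_m(L \cdot V)} = \G_m$. If exactly one of $N \pm P$ is non-zero the conclusion is immediate. If both are non-zero, then $\pi_a((N+P)V) + \pi_a((N-P)V^\sigma)$ and $\pi_m((N+P)V) \cdot \pi_m((N-P)V^\sigma)$ are sums, resp.\ products, of Zariski-dense subsets of $\G_a$, resp.\ $\G_m$; since translation in $\G_a$ and multiplication in $\G_m$ are Zariski homeomorphisms, such sums and products are themselves Zariski dense, finishing the proof. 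The only real subtlety is the square root hidden inside $|w|$ and $\Theta(w)$, which is resolved uniformly by the doubling trick in Step~1.
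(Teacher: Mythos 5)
Your doubling identity $2\cdot M\cdot r(Q)=\left(N+P\mid N-P\right)\cdot(Q,\sigma(Q))$ is correct, and your route is genuinely different from the paper's. The paper argues directly on $r(V)$: if an additive (or multiplicative) coordinate of $M\cdot\check{V}$ had finite image, then by uniqueness of the decompositions $K=R\oplus iR$ and $K^{\times}=R_{>0}\cdot\Sb^{1}$ the real and imaginary (resp.\ modulus and phase) parts would separately have finite image on $r(V)$; hence the corresponding characters of $V$ have non-cofinite image, hence finite image by strong minimality of $K$, hence zero coefficients by absolute freeness of $V$. That argument is more elementary: it needs neither irreducibility of $V$ nor any density statement about the conjugation graph. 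Your argument instead reduces everything to absolute freeness of $V$ and $V^{\sigma}$ through the Zariski density of $\Delta=\{(Q,\sigma(Q)):Q\in V\}$ in $V\times(V)^{\sigma}$, and in exchange identifies the closure of $2\cdot M\cdot r(V)$ as $(N+P)\cdot V\oplus(N-P)\cdot(V)^{\sigma}$, a structural fact in the spirit of \prettyref{prop:abs-irred} (where the paper itself invokes exactly this kind of density of the conjugation graph, stated as clear).

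The one step that needs repair is your justification of Step~2. An element of the ideal of $\Delta$ is a polynomial in \emph{all} the coordinates of $V\times(V)^{\sigma}$, and the remaining coordinates are only algebraic, not polynomial, over a transcendence basis $t_{1},\dots,t_{d}$; so "after substitution" you do not literally obtain a polynomial in the $u_{j},v_{j}$, and the claim that it vanishes on an open subset of $R^{2d}$ is not yet meaningful. A short fix: project $V\times(V)^{\sigma}$ dominantly and generically finitely onto $\A^{d}\times\A^{d}$ using $t$ on the first factor and the conjugate basis on the second; the image of $\Delta$ contains the pairs $(t,\sigma(t))$ with $t$ in a dense constructible subset of $\A^{d}(K)$, and there your substitution argument is valid (a polynomial over $K$ vanishing on a nonempty order-open subset of $R^{2d}$ is zero), so $\overline{\Delta}$ dominates $\A^{d}\times\A^{d}$, whence $\dim\overline{\Delta}\geq2\dim V$ and $\overline{\Delta}=V\times(V)^{\sigma}$ by irreducibility of the product. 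This does use that $V$ is absolutely irreducible; if it is not, run the whole argument on a single absolutely irreducible component, which is still absolutely free, and note that this suffices for the two projections to be infinite. (Alternatively, $\Delta$ is the set of $R$-points of the Weil restriction of $V$ from $K$ to $R$ and contains a smooth $R$-point, which also gives the density.) The clash of the letter $P$ for both a matrix block and a point is only notational; the rest of the case analysis, including the sum/product of dense subsets of $\G_{a}$ and $\G_{m}$, is fine.
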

\begin{proof}
Let $x_{1},\dots,x_{n},y_{1},\dots,y_{n}$ be the additive coordinates
of $\check{V}$; by $x_{i}$ we mean the coordinates coming from the
real parts, and by $y_{i}$ the imaginary parts.

Let us suppose that for $m_{1},\dots,m_{n},p_{1},\dots,p_{n}\in\Z$
the image
\[
(m_{1}x_{1}+\dots+m_{n}x_{n}+p_{1}y_{1}+\dots+p_{n}y_{n})(\check{V})
\]
 is finite. In particular, it is true on the points of $r(V)$; this
implies that the images 
\[
(m_{1}x_{1}+\dots+m_{n}x_{n})(r(V)),\,(p_{1}y_{1}+\dots+p_{n}y_{n})(r(V))
\]
 are finite as well. However, this implies that the images
\[
(m_{1}z_{1}+\dots+m_{n}z_{n})(V),\,(p_{1}z_{1}+\dots+p_{n}z_{n})(V)
\]
 are not cofinite sets. By strong minimality of $K$, this implies
that $m_{1}z_{1}+\dots+m_{n}z_{n}$ and $p_{1}z_{1}+\dots+p_{n}z_{n}$
have both finite image, but by absolute freeness of $V$, this implies
$m_{1}=\dots=m_{n}=0$ and $p_{1}=\dots=p_{n}=0$.

The same argument applied to the multiplicative coordinates $\rho_{1},\dots,\rho_{n},\theta_{1},\dots,\theta_{n}$
yields the absolute freeness of $V$.
\end{proof}
In particular, when $V$ is an absolutely irreducible simple variety,
as it is in our construction, the variety $\check{V}$ is absolutely
free and absolutely irreducible. We still have to verify if it is
rotund, and how far it is from being simple.

From now on, let us suppose that $V$ is an absolutely irreducible
simple variety defined over some $\overline{c}$ closed under $\sigma$.

The main technical challenge in proving that $\check{V}$ is rotund
comes from taking the functions on $V$ as `complex-valued', or
`two-dimensional' functions (i.e., such that their image is the
complex plane minus a finite set), splitting them into components
making them `real-valued', or `one-dimensional', and then recombining
them into complex-valued functions again. The philosophy is that unless
the recombination happens in a special way, algebraic relations are
destroyed in the process. In order to prove this, we will introduce
a bit of ad hoc notation to deal with the mixed case where some functions
have been recombined into complex-valued functions, but some still
appear as real-valued.

The proof of the theorem will then start with finding a minimal matrix
$\check{M}$ such that the coordinate functions of $M\cdot r(V)$,
which can be mixed real and complex-valued, can be recovered from
the coordinate functions of $r(\check{M}\cdot V)$, which are the
real-valued components of the complex-valued functions of $\check{M}\cdot V$.
In particular, the additive coordinates of $M\cdot r(V)$ will be
recovered as $\Q$-linear combinations of the ones of $r(\check{M}\cdot V)$,
while the multiplicative coordinates of $M\cdot r(V)$ will be multiplicatively
dependent on the ones of $r(\check{M}\cdot V)$.

Once we have the right matrix $\check{M}$, it shall be sufficient
to transform the coordinate functions of $r(\check{M}\cdot V)$ to
coordinates of $M\cdot r(V)$. Keeping track of the dimension of $r(\check{M}\cdot V)$
when we drop the resulting extra functions that do not appear as coordinates
of $M\cdot r(V)$, we shall be able to deduce a lower bound on the
dimension of $M\cdot r(V)$, and in turn of $M\cdot\check{V}$. In
the case $\dim M\cdot\check{V}=\rank M$, all the inequalities will
be forced to be equalities, ultimately forcing $V$ to be perfectly
rotund in the first place, and $M$ to have the sought special form.

\subsection{Mixed functions\label{sub:mixed-functions}}

Let $S$ be an algebraically independent subset of the coordinate
functions of $M\cdot\check{V}$ for some matrix $M$. Taking the restrictions
to the subset $M\cdot r(V)\subset M\cdot\check{V}$, we can try to
estimate the actual size of $M\cdot\check{V}$ by studying how the
functions of $S$ behave on the points of $M\cdot r(V)$, more precisely
by looking at how large is their image.

Note that each function in $S$ is of the form $\overline{m}\cdot\overline{x}+\overline{q}\cdot\overline{y}$
or $\overline{\rho}^{\overline{m}}\overline{\theta}^{\overline{q}}$,
where $(\overline{m}|\overline{q})$ is a row of $M$ and $\overline{x},\overline{y},\overline{\rho},\overline{\theta}$
are the coordinate functions of $r(V)$. We are taking the coordinate
functions of $r(V)$ and not of its Zariski closure; in other words,
we are studying $r(V)$ as a semi-algebraic variety (although the
coordinates $\overline{y}$ and $\overline{\theta}$ are not numbers
in $R$). We introduce the following notation.
\begin{notation}
If $S$ is a set of coordinates as above, we denote by $r(S)$ the
subset of the coordinate functions of $r(V)$ containing all the $\overline{m}\cdot\overline{x}$,
$\overline{q}\cdot\overline{y}$, $\overline{\rho}^{\overline{m}}$,
$\overline{\theta}^{\overline{q}}$ such that $\overline{m}\cdot\overline{x}+\overline{q}\cdot\overline{y}$
and $\overline{\rho}^{\overline{m}}\overline{\theta}^{\overline{q}}$
are in $S$.
\end{notation}
We know that in general $\td_{\overline{c}}(S)\geq\td_{\overline{c}}(r(S))/2$,
but this is far from being enough for our purposes. We can produce
a better estimate by distinguishing, among the coordinate functions
in $S$, the `one-dimensional' functions from the `two-dimensional'
ones. The idea is that if one function is two-dimensional, then it
contributes with transcendence degree $1$ to $\td_{\overline{c}}S$,
but it needs two algebraically independent functions in $r(S)$ to
be calculated; on the other hand, a one-dimensional function only
needs one function of $r(S)$.

To make an example, suppose that $x_{1}+y_{1},x_{2}+y_{2},x_{3}+y_{3}$
are the coordinate functions of $\A^{3}(K)$, with $x_{j}$ and $y_{j}$
being their real and imaginary parts as in our current notation. Let
$S$ be $\{x_{1}+y_{1},x_{2}+y_{1},x_{3}+y_{1}\}$. The set $r(S)$
is equal to $\{x_{1},x_{2},x_{3},y_{1}\}$ and it is algebraically
independent; hence, our rough estimate on the transcendence degree
would just prove that $\td S\geq2$. However, it is clear that even
if we fix the values of $x_{1}+y_{1}$ and of $x_{2}+y_{1}$, the
remaining function $x_{3}+y_{1}$ is still able to vary in an infinite
set, so the transcendence degree is actually $\td S=3$. This can
be seen explicitly by noting that fixing $x_{1}+y_{1}$ actually fixes
the values of two functions in $r(S)$, but once that is done, fixing
the value of $x_{2}+y_{1}$ only fixes one function of $r(S)$, leaving
a third one free to vary. We would like to say that $x_{1}+y_{1}$
is `two-dimensional', but that $x_{2}+y_{1}$ is just `one-dimensional'.

The example shows the intuition we want to use, but also that a definition
of `one-dimensional' should depend on the order of the functions:
if we fix $x_{2}+y_{1}$ first, then $x_{1}+y_{1}$ would be the `one-dimensional'
function. Therefore, we define the dimension looking at \emph{sequences}
of functions in $S$. Let $(s_{j})_{j<|S|}$ be a given enumeration
of $S$.
\begin{defn}
A coordinate function $s_{k}\in S$ is \emph{one-dimensional} (resp.\emph{\ two-dimensional},
\emph{zero-dimensional}) if $\td_{\overline{c}r(s_{0},\dots,s_{k-1})}r(s_{k})$
is one (resp.\ two, zero).
\end{defn}
Looking at the values of the functions on the points of $M\cdot r(V)$,
a coordinate function is one-dimensional (resp.\ two-dimensional,
zero-dimensional) if after fixing the values of the functions $s_{0},\dots,s_{k-1}$
on $r(V)$, the image of $s_{k}$ on $r(V)$ is generically a one-dimensional
(resp.\ two-dimensional, zero-dimensional) subset of $K=R^{2}$.
The following remark is now rather trivial, and it shows the better
estimate we were looking for.
\begin{prop}
Let $S$ be a set of coordinate functions of $M\cdot\check{V}$ enumerated
as $(s_{j})_{j<|S|}$. If $S$ contains $k_{1}$ one-dimensional functions,
and $k_{2}$ two-dimensional functions, then $\td_{\overline{c}}S\geq k_{1}+k_{2}$
and $\td_{\overline{c}}(r(S))=k_{1}+2k_{2}$.
\end{prop}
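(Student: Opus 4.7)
The plan is to establish the two assertions separately, using the chain rule for transcendence degree to reduce each to a one-step analysis along the enumeration $(s_{j})_{j<|S|}$.

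For the equality $\td_{\overline{c}}(r(S))=k_{1}+2k_{2}$, I would simply expand $\td_{\overline{c}}(r(S))$ as the telescoping sum $\sum_{k}\td_{\overline{c},r(s_{0}),\dots,r(s_{k-1})}(r(s_{k}))$. By the very definition of the dimension of $s_{k}$ given just above the statement, each summand equals $0$, $1$, or $2$ precisely when $s_{k}$ is zero-, one-, or two-dimensional, so the total collapses to $0\cdot k_{0}+1\cdot k_{1}+2\cdot k_{2}=k_{1}+2k_{2}$. This part is routine bookkeeping.

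The inequality $\td_{\overline{c}}(S)\geq k_{1}+k_{2}$ is the more delicate half. By the analogous chain-rule expansion, it suffices to show that whenever $s_{k}$ is one- or two-dimensional, $s_{k}$ is transcendental over $\overline{c}(s_{0},\dots,s_{k-1})$. I would prove the contrapositive: if $s_{k}\in\acl(\overline{c},s_{0},\dots,s_{k-1})$, then $s_{k}$ is zero-dimensional. The key point is that any coordinate function of $\check{V}$ has the form $s=\overline{m}\cdot\overline{x}+\overline{q}\cdot\overline{y}$ or $s=\overline{\rho}^{\overline{m}}\overline{\theta}^{\overline{q}}$, and since $\sigma$ fixes the $\overline{x}$ and $\overline{\rho}$ coordinates while negating (respectively inverting) the $\overline{y}$ and $\overline{\theta}$ coordinates, the identities $(z+\sigma(z))/2=\Re(z)$, $(z-\sigma(z))/2=i\Im(z)$, $\sqrt{w\sigma(w)}=|w|$, $w/|w|=\Theta(w)$ give $\acl(s_{j},\sigma(s_{j}))=\acl(r(s_{j}))$ for every $j$. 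Since $\overline{c}$ is closed under $\sigma$, applying $\sigma$ to the assumed algebraic dependence produces $\sigma(s_{k})\in\acl(\overline{c},\sigma(s_{0}),\dots,\sigma(s_{k-1}))$. Combining these two inclusions yields $r(s_{k})\subset\acl(s_{k},\sigma(s_{k}))\subset\acl(\overline{c},r(s_{0}),\dots,r(s_{k-1}))$, which is exactly the statement that $s_{k}$ is zero-dimensional.

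The only ingredient beyond standard transcendence-degree bookkeeping is the $\sigma$-trick in the second step, and it rests solely on the standing hypothesis that $\overline{c}$ is closed under $\sigma$, which is already in force throughout the section. I therefore anticipate no real obstacle beyond setting up this single observation cleanly.
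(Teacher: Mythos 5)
Your proof is correct. The paper offers no written argument for this proposition (it is dismissed as ``rather trivial'' after the informal discussion of the example $x_{1}+y_{1},x_{2}+y_{1},x_{3}+y_{1}$), so strictly speaking you are not diverging from a given proof but supplying one; and what you supply is exactly the right content. The telescoping identity $\td_{\overline{c}}(r(S))=\sum_{k}\td_{\overline{c}r(s_{0},\dots,s_{k-1})}(r(s_{k}))=k_{1}+2k_{2}$ is indeed pure bookkeeping. The real point is the inequality, and your $\sigma$-trick is precisely what is needed: knowing only $s_{k}\in\acl(\overline{c},s_{0},\dots,s_{k-1})\subset\acl(\overline{c},r(s_{0}),\dots,r(s_{k-1}))$ would \emph{not} by itself force both components of $r(s_{k})$ into that algebraic closure (a sum $u+v$ can be algebraic while $u,v$ are not), so the paper's gloss ``after fixing $s_{0},\dots,s_{k-1}$ the image of $s_{k}$ still varies'' genuinely requires the conjugation argument, and your observation that $\acl(s_{j},\sigma(s_{j}))=\acl(r(s_{j}))$ together with $\sigma(\overline{c})=\overline{c}$ closes that gap cleanly. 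One small caution in writing it up: $\sigma$ should be applied to \emph{values} of the functions at a real generic point of $V$ (equivalently a Zariski generic point of $\check{V}$ lying in $r(V)$, possibly in an elementary extension of $(K,\sigma)$), where $\sigma$ fixes the $\overline{x},\overline{\rho}$ coordinates and negates/inverts the $\overline{y},\overline{\theta}$ coordinates; one cannot simply invoke the coordinate conjugation $(x,y,\rho,\theta)\mapsto(x,-y,\rho,\theta^{-1})$ as an automorphism of the function field of $\check{V}$, since $\check{V}$ need not be invariant under it (it is sent to the restriction of $V^{\sigma}$). With that reading, which is the same level of formalisation the paper itself uses, your argument goes through.
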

Indeed, this is our desired correction: we manage to prove that $\td_{\overline{c}}S>\td_{\overline{c}}(r(S))/2$
as soon as there are one-dimensional functions, so that we get a better
bound.

\subsection{The one-dimensional case}

We start with a special case where the shape of $M$ does not allow
the functions to be two-dimensional.
\begin{prop}
\label{prop:one-dim-case}If $V$ is simple and
\[
M=\left(\begin{array}{c|c}
N & 0\\
\hline 0 & P
\end{array}\right),
\]
with $N\in\Mc_{k,n}(\Z)$ and $P\in\Mc_{l,n}(\Z)$ two matrices of
ranks $k$ and $l$ respectively, with $k+l>0$, then
\[
\dim M\cdot\check{V}\geq\rank M=k+l
\]
 and if the equality holds then $V$ is perfectly rotund and $k=l=n$.
\end{prop}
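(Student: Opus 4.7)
The plan is to use the mixed-functions framework of Subsection \ref{sub:mixed-functions}. Let $S$ denote the set of the $2(k+l)$ coordinate functions of $M \cdot \check{V}$, split as $S = S_N \cup S_P$: $S_N$ consists of the $k$ additive functions $N_i \cdot \overline{x}$ together with the $k$ multiplicative functions $\overline{\rho}^{N_i}$ from the top block, and $S_P$ consists of the analogous $2l$ functions $P_i \cdot \overline{y}$ and $\overline{\theta}^{P_i}$ from the bottom block. Because $M$ is block-diagonal and mixes neither the real with the imaginary additive parts nor the modulus with the phase, each $s \in S$ has $r(s)$ equal to a single coordinate function of $r(V)$; hence every element of $S$ is at most one-dimensional. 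The estimate of Subsection \ref{sub:mixed-functions} then gives
\[
\dim M \cdot \check{V} \;=\; \td_{\overline{c}} S \;\geq\; \td_{\overline{c}} r(S).
\]

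To bound $\td_{\overline{c}} r(S)$ from below, I would realise it as the dimension of the image of $V \times V^{\sigma}$ under an explicit integer matrix. Writing $\overline{v}_1 = (\overline{z}_1, \overline{w}_1)$ and $\overline{v}_2 = (\overline{z}_2, \overline{w}_2)$ for the coordinates on the two factors (which, on the $\sigma$-equivariant slice $\{(\overline{v}, \sigma(\overline{v}))\}$, recover the real and imaginary components of a point of $r(V)$), the identities $\Re z = \tfrac{1}{2}(z + \sigma(z))$ and $|w| = \sqrt{w \sigma(w)}$ (and the analogous ones for $\Im$ and $\Theta$) identify the functions of $r(S)$, up to square roots and rational constants, with the coordinate functions of $T \cdot (V \times V^{\sigma})$, where
\[
T \;=\; \begin{pmatrix} N & N \\ P & -P \end{pmatrix} \in \Mc_{k+l,\,2n}(\Z)
\]
has rank $k+l$. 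A change of variables on $\G^{2n}$ replacing $(\overline{v}_1, \overline{v}_2)$ by its symmetric and antisymmetric combinations transforms $T$ into the block-diagonal matrix $\mathrm{diag}(N, P)$, after which rotundity of $V$ applied separately to $N$ and to $P$, combined with an argument that the symmetric and antisymmetric projections of $V \times V^{\sigma}$ carry enough independent dimension, yields $\dim T \cdot (V \times V^{\sigma}) \geq k + l$, and hence the desired $\dim M \cdot \check{V} \geq k + l$.

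For the equality case, retracing the bounds shows that $\dim M \cdot \check{V} = k + l$ forces $\dim N \cdot V = k$ and $\dim P \cdot V = l$. Simpleness of $V$ rules out $0 < \rank N < n$ (and the same for $P$), so $k = l = n$; then $N$ and $P$ are square of maximum rank, giving $\dim V = \dim N \cdot V = n$, and $V$ is perfectly rotund.

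The main obstacle is the transversality-type step producing the lower bound $\dim T \cdot (V \times V^{\sigma}) \geq k + l$: one must verify that the symmetric and antisymmetric parts of $V \times V^{\sigma}$ do not conspire to reduce the dimension of the image, which I expect to use both rotundity (indeed absolute freeness, as in Proposition \ref{prop:abs-free}) of $V$ and the specific block structure of $T$, in particular the vanishing of the off-diagonal blocks of the original $M$.
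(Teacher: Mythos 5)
Your lower bound can indeed be run along the route you sketch, and it is genuinely different from the paper's: by \prettyref{prop:abs-irred}, $\check{V}$ is the Zariski closure of the image of $V\times V^{\sigma}$ under the integer matrix $\bigl(\begin{smallmatrix}\mathrm{Id} & \mathrm{Id}\\ \mathrm{Id} & -\mathrm{Id}\end{smallmatrix}\bigr)$ (up to the square roots handled there by passing to $V'$ with $2\cdot V'=V$), so $M\cdot\check{V}$ is the closure of $T\cdot(V\times V^{\sigma})$ with your $T$, which has rank $k+l$. The inequality $\dim M\cdot\check{V}\geq k+l$ then follows at one stroke \emph{provided} $V\times V^{\sigma}$ is rotund; that is a standard fact about products of rotund varieties (proved by a fibration argument), but it is neither in the paper nor in your sketch, and your substitute --- ``rotundity applied separately to $N$ and to $P$ plus a transversality argument for the symmetric and antisymmetric parts'' --- is exactly the point that needs to be made precise. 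The paper instead obtains the bound by the transcendence-basis count of \prettyref{prop:one-dim-case-dico}, built on the decomposition of \prettyref{lem:matrix-decomp} which isolates the intersection of the row spaces of $N$ and $P$; that count is what later organises the equality analysis.

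The genuine gap is in your equality case. First, ``retracing the bounds'' does not give $\dim N\cdot V=k$ and $\dim P\cdot V=l$: after the symmetric/antisymmetric change of variables the image of $V\times V^{\sigma}$ is \emph{not} a product, so the rotundity bound for $\mathrm{diag}(N,P)$ acting on it does not split into separate contributions of $N$ and $P$ acting on $V$. Second, and more seriously, even granting those two equalities, simpleness only yields $k\in\{0,n\}$ and $l\in\{0,n\}$; you still have to exclude $(k,l)=(n,0)$ and $(0,n)$, i.e.\ matrices of the form $M=(N\,|\,0)$, and no argument using only rotundity and simpleness can do this: the Remark closing Section~\ref{sec:rotundity} exhibits a perfectly rotund, additively free but multiplicatively non-free $V_{M,N}$ with $\dim\,(\mathrm{Id}\,|\,0)\cdot\check{V}_{M,N}=n$ exactly. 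Absolute freeness must enter, and in the paper it enters through the topological argument of Subsection~\ref{sub:topological} (\prettyref{lem:alg-module-curves} and \prettyref{lem:alg-module-general}, yielding \prettyref{prop:one-dim-case-bad}): moduli, phases and real/imaginary parts of the coordinate functions admit no unexpected algebraic relations unless there is multiplicative dependence over the constants. Your proposal contains no counterpart to this step --- the passing mention of absolute freeness in your last paragraph attaches it to the lower bound, where it is not needed, rather than to the equality case, where it is indispensable --- so the conclusion ``equality implies $V$ perfectly rotund and $k=l=n$'' is not established.
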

In this case, the coordinate functions of $M\cdot r(V)$ cannot be
two-dimensional; in particular, the dimension of $M\cdot\check{V}$
as an algebraic variety is at least the dimension of $M\cdot r(V)$
as a semi-algebraic variety. Therefore, it is sufficient to look at
the semi-algebraic dimension of $M\cdot r(V)$.

In order to do that, we look for a minimal matrix $\check{M}$ such
that the $M\cdot r(V)$ is essentially the projection onto some coordinates
of $r(\check{M}\cdot V)$ (up to a multiplication by a square integer
matrix of maximum rank). By doing this carefully, we can exploit the
simpleness of $V$ to obtain the desired statement.
\begin{lem}
\label{lem:matrix-decomp}There exist matrices $A,N_{0},P_{0},P_{1}$
such that
\begin{itemize}
\item $N_{0},P_{0},P_{1}\in\Mc_{\cdot,n}(\Z)$;
\item $A\in\Mc_{k+l,k+l}(\Z)$ has rank $k+l$ and 
\[
A\cdot\left(\begin{array}{c|c}
N & 0\\
\hline 0 & P
\end{array}\right)=\left(\begin{array}{c|c}
\begin{array}{c}
N_{0}\\
\hline P_{1}
\end{array} & 0\\
\hline 0 & \begin{array}{c}
P_{0}\\
\hline P_{1}
\end{array}
\end{array}\right);
\]

\item the rows of 
\[
\left(\begin{array}{c}
N_{0}\\
\hline P_{0}\\
\hline P_{1}
\end{array}\right)
\]
 are $\Q$-linearly independent.
\end{itemize}
\end{lem}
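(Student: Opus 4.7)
The plan is to reinterpret the matrix equation as a statement about integer row operations on $M := \bigl(\begin{smallmatrix}N & 0\\ 0 & P\end{smallmatrix}\bigr)$. Because $N$ and $P$ have full row ranks $k$ and $l$, a $\Z$-combination of the rows of $M$ whose right (resp.\ left) half vanishes can only involve the first $k$ (resp.\ last $l$) rows. Reading the target matrix block by block, this forces $A$ to be block-diagonal, $A = \bigl(\begin{smallmatrix}B_{1} & 0\\ 0 & B_{2}\end{smallmatrix}\bigr)$ with $B_{1}\in \Mc_{k,k}(\Z)$, $B_{2}\in \Mc_{l,l}(\Z)$, and the equation of the lemma reduces to the pair $B_{1}N = \bigl(\begin{smallmatrix}N_{0}\\P_{1}\end{smallmatrix}\bigr)$ and $B_{2}P = \bigl(\begin{smallmatrix}P_{0}\\P_{1}\end{smallmatrix}\bigr)$. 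Moreover $A$ has rank $k+l$ iff $B_{1}N$ and $B_{2}P$ have full row rank, since $N$ and $P$ already do.

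The problem thus reduces to a careful choice of $N_{0}, P_{0}, P_{1}$. Set $V_{N} := \span_{\Q}(\text{rows of }N)$, $V_{P} := \span_{\Q}(\text{rows of }P)$, $V_{1} := V_{N} \cap V_{P}$, $d := \dim V_{1}$, and let $L_{N}, L_{P} \subset \Z^{n}$ be the $\Z$-spans of the rows of $N$ and $P$ respectively. Any vector of $V_{1}$, written as a $\Q$-combination of the rows of $N$ or of $P$, can be rescaled by a suitable nonzero integer so as to lie in both $L_{N}$ and $L_{P}$; hence the finitely generated torsion-free abelian group $L_{N}\cap L_{P}$ has $\Q$-span $V_{1}$ and thus $\Z$-rank $d$. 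I take a $\Z$-basis of $L_{N}\cap L_{P}$ as the rows of $P_{1}\in \Mc_{d,n}(\Z)$, so that they belong to both $L_{N}$ and $L_{P}$. Then I stack $k-d$ rows of $N$ that are $\Q$-independent modulo $V_{1}$ into $N_{0}$, and symmetrically $l-d$ rows of $P$ into $P_{0}$.

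Since the rows of $\bigl(\begin{smallmatrix}N_{0}\\P_{1}\end{smallmatrix}\bigr)$ all lie in $L_{N}$, each is a $\Z$-combination of the rows of $N$; collecting the coefficient vectors as rows gives an integer matrix $B_{1}\in \Mc_{k,k}(\Z)$ with $B_{1}N = \bigl(\begin{smallmatrix}N_{0}\\P_{1}\end{smallmatrix}\bigr)$. By construction these rows span the $k$-dimensional space $V_{N}$, so $B_{1}N$ has rank $k$ and hence so does $B_{1}$. I construct $B_{2}$ symmetrically and set $A := \bigl(\begin{smallmatrix}B_{1} & 0\\ 0 & B_{2}\end{smallmatrix}\bigr)$: this is an integer $(k+l)\times(k+l)$ matrix of rank $k+l$ realising the required equation.

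Finally I verify the $\Q$-linear independence of the rows of $\bigl(\begin{smallmatrix}N_{0}\\P_{0}\\P_{1}\end{smallmatrix}\bigr)$. The three blocks span subspaces of $V_{N}$, $V_{P}$ and $V_{1}$ respectively; if $n+p\in V_{1}$ with $n$ in the $N_{0}$-row-span and $p$ in the $P_{0}$-row-span, then $p = (n+p) - n \in V_{N}$, so $p\in V_{N}\cap V_{P} = V_{1}$, whence $p = 0$ by the choice of $P_{0}$ modulo $V_{1}$, and symmetrically $n = 0$. The three spans are therefore in direct sum, yielding $(k-d)+(l-d)+d = k+l-d$ linearly independent rows, as required. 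The only step in the whole argument where any real care is needed is the integrality point, namely the equality $\rank_{\Z}(L_{N}\cap L_{P}) = d$, which is what allows $P_{1}$ to be chosen with integer entries; everything else is pure bookkeeping.
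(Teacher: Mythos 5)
Your proof is correct and takes essentially the same route as the paper's: choose for $P_{1}$ a spanning set of the intersection of the two row spaces, made integral in both row lattices, complete with complements $N_{0}$, $P_{0}$, and realise the equation with a block-diagonal integer matrix $A$ of full rank. Your explicit use of the lattice $L_{N}\cap L_{P}$ merely spells out the rescaling step that the paper compresses into its ``without loss of generality'' remark, so there is no substantive difference.
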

\begin{proof}
Consider the intersection of the $\Q$-linear spaces generated by
the rows of $N$ and by the rows of $P$. Let $P_{1}$ be the matrix
whose rows form a $\Q$-linear base of this intersection; let $N_{0}$
and $P_{0}$ be the matrices whose rows form $\Q$-linear bases of
the respective spaces over the span of the rows of $P_{1}$. Without
loss of generality, we may assume that all these vectors are in the
$\Z$-module generated by the rows of $N$ and $P$ respectively.

Under these assumptions, there exists two square integer matrices
$A_{0},A_{1}$ of maximum rank such that 
\[
\left(\begin{array}{c|c}
A_{0} & 0\\
\hline 0 & A_{1}
\end{array}\right)\cdot\left(\begin{array}{c|c}
N & 0\\
\hline 0 & P
\end{array}\right)=\left(\begin{array}{c|c}
A_{0}\cdot N & 0\\
\hline 0 & A_{1}\cdot P
\end{array}\right)=\left(\begin{array}{c|c}
\begin{array}{c}
N_{0}\\
\hline P_{1}
\end{array} & 0\\
\hline 0 & \begin{array}{c}
P_{0}\\
\hline P_{1}
\end{array}
\end{array}\right).
\]

Taking as $A$ the leftmost matrix, we find the desired decomposition.\end{proof}
\begin{prop}
\label{prop:one-dim-case-dico}Under the hypothesis of \prettyref{prop:one-dim-case},
we have that
\[
\odim(M\cdot r(V))\geq\rank M,
\]
 and if the equality holds then $V$ is perfectly rotund and either
\begin{itemize}
\item $k=l=n$, or
\item $k+l=n$, and the rows of $N$ and $P$ are $\Q$-linearly independent.
\end{itemize}
\end{prop}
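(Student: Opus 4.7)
The plan is to reduce to the normalised form provided by Lemma~\ref{lem:matrix-decomp}. Since the invertible integer matrix $A$ there acts as a finite-to-one semi-algebraic map on $\G^{k+l}$, multiplication by $A$ preserves $\odim$, so we may assume that $M$ itself is already in the decomposed form with $A$ the identity. Under this form the coordinate functions of $M\cdot r(V)$ split naturally into three blocks: real-parts-only coordinates $(N_{0}\cdot\Re\overline{z},|\overline{w}|^{N_{0}})$ in $(R\times R_{>0})^{k_{0}}$ coming from the rows of $N_{0}$; imaginary-parts-only coordinates $(P_{0}\cdot i\Im\overline{z},\Theta\overline{w}^{P_{0}})$ in $(iR\times\Sb^{1})^{l_{0}}$ coming from the rows of $P_{0}$; and a joint block from $P_{1}$ that together recovers $r(P_{1}\cdot V)\subset\G_{R}^{j}$. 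The essential point is that the stacked matrix $\check{M}$ obtained by concatenating $N_{0}$, $P_{0}$, and $P_{1}$ has $\Q$-linearly independent rows, with $\rank\check{M}=k_{0}+l_{0}+j$.

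For the lower bound, I would condition on the $P_{1}$-block. Since $r$ is a bijection onto its image, $\odim r(P_{1}\cdot V)=2\dim P_{1}\cdot V\geq2j$ by rotundity of $V$ applied to $P_{1}$. For a generic value of this block, the corresponding fibre $V'\subset V$ is a complex subvariety of dimension $\dim V-\dim P_{1}\cdot V$, and the remaining coordinate functions are the real-only projections of $N_{0}\cdot V'$ and the imaginary-only projections of $P_{0}\cdot V'$. Using rotundity of $V$ applied to $\check{M}$ one obtains $\dim\check{M}\cdot V\geq k_{0}+l_{0}+j$; taking the fibre over $P_{1}\cdot V$ shows the corresponding complex subvariety of $\G^{k_{0}+l_{0}}$ has complex dimension at least $k_{0}+l_{0}$, and this translates, via the $\Q$-linear independence of the rows of $\check{M}$, into the lower bound $k_{0}+l_{0}$ for the joint $\odim$ of the real-of-$N_{0}$ and imaginary-of-$P_{0}$ projections on $V'$. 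Summing, $\odim M\cdot r(V)\geq2j+k_{0}+l_{0}=k+l=\rank M$.

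For the equality case, each of the above inequalities must be saturated. By simpleness of $V$, the bound $\dim X\cdot V\geq\rank X$ is strict unless $\rank X\in\{0,n\}$, so equality for $P_{1}$ forces $j\in\{0,n\}$ and equality for $\check{M}$ forces $k_{0}+l_{0}+j\in\{0,n\}$. When $j=n$ one recovers $k=l=n$; when $j=0$ one obtains $k_{0}+l_{0}=n$, i.e.\ $k+l=n$ with the combined rows of $N$ and $P$ being $\Q$-linearly independent. In both situations $\dim V=n$, so $V$ is perfectly rotund. The main obstacle is the conditional $\odim$ estimate in the second paragraph: a priori, when one extracts only the real (resp.\ imaginary) half of a complex projection of $V$, the semi-algebraic dimension of the image might drop below the corresponding complex dimension, and closing this gap requires carefully exploiting the $\Q$-linear independence of the rows of $\check{M}$ guaranteed by Lemma~\ref{lem:matrix-decomp} to rule out any such unforeseen collapse.
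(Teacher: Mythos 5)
Your reduction via \prettyref{lem:matrix-decomp} and the final bookkeeping $2j+k_{0}+l_{0}=k+l$ are in the right spirit, but the step you yourself flag as ``the main obstacle'' is a genuine gap, and the fix you point to would not close it. The missing claim is: after fibring over $P_{1}\cdot V$, if one keeps only the real/modulus components of the $N_{0}$-coordinates and the imaginary/phase components of the $P_{0}$-coordinates, the semi-algebraic dimension of the image is at least the complex dimension of the fibre. This is \emph{not} a consequence of the $\Q$-linear independence of the rows of $\check{M}$: the collapses you must exclude come from relations such as $z_{1}=\lambda z_{2}+c$ with $\lambda\in R$ irrational, which make $\Re(z_{1})$ and $\Re(z_{2})$ interalgebraic while $z_{1},z_{2}$ stay $\Q$-linearly independent and are not forbidden by absolute freeness or rotundity. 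What is actually needed is an argument of the analytic type of \prettyref{sub:topological} (e.g.\ that on a complex tangent space the kernel of the mixed real/imaginary projection has real dimension at most the complex dimension, plus a transfer from $\R$ to $R$), which the paper only deploys later, for the strict inequality of \prettyref{prop:one-dim-case-bad}. There is also a smaller slip in the bookkeeping: ``the fibre has complex dimension at least $k_{0}+l_{0}$'' uses $\dim P_{1}\cdot V=j$, whereas rotundity only gives $\dim P_{1}\cdot V\geq j$; in general the fibre has dimension at least $k_{0}+l_{0}+j-\dim P_{1}\cdot V$ and you must carry the excess $\dim P_{1}\cdot V-j$ through the sum (the total still comes out $\geq k+l$, but not as written).

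For comparison, the paper's proof is arranged precisely so as to avoid the unproved claim: it picks a maximal algebraically independent set $S_{0}$ of coordinate functions of $P_{1}\cdot V$, extends it to a transcendence basis $S$ of the coordinates of $\check{M}\cdot V$, and notes that $r(S)$ is algebraically independent of size $2|S|$. The projection onto $M'\cdot r(V)$ retains all of $r(S_{0})$ (the $P_{1}$-rows occur in both halves of $M'$) and exactly one member of each pair $r(\{\phi\})$ for $\phi\in S\setminus S_{0}$; since the surviving functions form a subset of the algebraically independent set $r(S)$, they are automatically independent, giving $\odim(M\cdot r(V))\geq 2s_{0}+s_{1}\geq p_{1}+(n_{0}+p_{0}+p_{1})=k+l$ with no dimension-collapse issue, and the equality analysis then falls out of simpleness as in your last paragraph. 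To salvage your route you would have to prove the mixed-projection dimension bound directly; otherwise, switch to this counting.
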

\begin{proof}
We decompose $M$ as in \prettyref{lem:matrix-decomp}: 
\[
M':=A\cdot M=\left(\begin{array}{c|c}
\begin{array}{c}
N_{0}\\
\hline P_{1}
\end{array} & 0\\
\hline 0 & \begin{array}{c}
P_{0}\\
\hline P_{1}
\end{array}
\end{array}\right).
\]
Since $A$ is square of maximum rank, $\odim(M\cdot r(V))=\odim(M'\cdot r(V))$.
Therefore, it is sufficient to study $M'\cdot r(V)$.

We define our $\check{M}$ as 
\[
\check{M}:=\left(\begin{array}{c}
N_{0}\\
\hline P_{0}\\
\hline P_{1}
\end{array}\right)
\]

and we fix the following notation:
\begin{itemize}
\item $n_{0}:=\rank N_{0}$;
\item $p_{0}:=\rank P_{0}$;
\item $p_{1}:=\rank P_{1}$.
\end{itemize}
Note that $\rank N=k=n_{0}+p_{1}$ and that $\rank P=l=p_{0}+p_{1}$.

Let $\Pi$ be the projection matrix such that
\[
\Pi\cdot\left(\begin{array}{c|c}
\check{M} & 0\\
\hline 0 & \check{M}
\end{array}\right)=\Pi\cdot\left(\begin{array}{c|c}
\begin{array}{c}
N_{0}\\
\hline P_{0}\\
\hline P_{1}
\end{array} & 0\\
\hline 0 & \begin{array}{c}
N_{0}\\
\hline P_{0}\\
\hline P_{1}
\end{array}
\end{array}\right)=\left(\begin{array}{c|c}
\begin{array}{c}
N_{0}\\
\hline P_{1}
\end{array} & 0\\
\hline 0 & \begin{array}{c}
P_{0}\\
\hline P_{1}
\end{array}
\end{array}\right)=M'.
\]

In particular, $\Pi\cdot r(\check{M}\cdot V)=M'\cdot r(V)$.

Let us pick a set of algebraically independent coordinates of $\check{M}\cdot V$
in the following way. First, we pick a maximal set $S_{0}$ of algebraically
independent coordinates of $P_{1}\cdot V$, and then we take a maximal
algebraically independent set $S\supset S_{0}$ of coordinates of
$\check{M}\cdot V$.

Let $s_{0}:=|S_{0}|$ and $s_{1}:=|S\setminus S_{0}|$. The simpleness
of $V$ implies that $s_{0}\geq p_{1}$, and if the equality holds
then either $p_{1}=0$ or $V$ is perfectly rotund and $p_{1}=n$.
Similarly, $s_{0}+s_{1}\geq n_{0}+p_{0}+p_{1}$; as before, if the
equality holds then either $n_{0}+p_{0}+p_{1}=0$, which would imply
$M=0$, a contradiction, or $V$ is perfectly rotund and $n_{0}+p_{0}+p_{1}=n$.

Given a finite $\overline{c}$ that defines $\check{V}$, the sets
$r(S_{0})$ and $r(S)$ are both algebraically independent over $\overline{c}$,
and they contain respectively $2s_{0}$ and $2(s_{0}+s_{1})$ coordinate
functions of $r(\check{M}\cdot V)$.

Let us look at the effect of the projection $\Pi$ on $r(S_{0})$
and $r(S\setminus S_{0})$. By construction, each coordinate of $r(S_{0})$
is also a coordinate function of $M'\cdot r(V)$, so $r(S_{0})$ does
not change after the projection $\Pi$.

Let $\phi$ be a function in $S\setminus S_{0}$. Since $S_{0}$ was
maximal, the function $\phi$ must be of the form $\overline{m}\cdot\overline{z}$
or $\overline{w}^{\overline{m}}$ for some row $\overline{m}$ of
either $N_{0}$ or of $P_{0}$. If $\{\phi_{0},\phi_{1}\}=r(\{\phi\})$,
after the projection $\Pi$, exactly one of $\phi_{0}$, $\phi_{1}$
is also a coordinate function of $M'\cdot r(V)$.

This shows that all the coordinate functions in $r(S_{0})$ are also
coordinates of $M'\cdot r(V)$, while exactly half of the coordinates
in $r(S\setminus S_{0})$ are coordinates of $M'\cdot r(V)$. Since
they remain algebraically independent, we have
\begin{eqnarray*}
 & \odim(M\cdot r(V))=\odim(M'\cdot r(V))\geq\\
 & \geq2s_{0}+s_{1}=s_{0}+(s_{0}+s_{1})\geq p_{1}+n_{0}+p_{0}+p_{1}=k+l=\rank M.
\end{eqnarray*}

This is the required inequality. Moreover, if the equality is true,
then we must have had the equality at the previous steps; in particular,
$V$ is perfectly rotund, and either $p_{1}=n$, which implies $k=l=n$,
or $p_{1}=0$ and $n_{0}+p_{0}=n$, which implies that $k+l=n$ and
that the rows of $N$ and $P$ are $\Q$-linearly independent.
\end{proof}
This leaves us with two possible cases. We claim that the second case
does not actually happen.
\begin{prop}
\label{prop:one-dim-case-bad}Under the hypothesis of \prettyref{prop:one-dim-case},
if $k+l=n$ then
\[
\odim(M\cdot r(V))>\rank M.
\]

\end{prop}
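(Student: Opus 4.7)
We argue by contradiction. Suppose $\odim(M\cdot r(V))=n=k+l$. By the equality clause of \prettyref{prop:one-dim-case-dico}, the variety $V$ is perfectly rotund and $\check{M}:=\binom{N}{P}$ is an $n\times n$ integer matrix of rank $n$. Replacing $V$ with $\check{M}\cdot V$ (which remains perfectly rotund, since the action of a square integer matrix of maximal rank preserves dimension, simplicity, rotundity, absolute freeness, and irreducibility, by arguments that parallel \prettyref{prop:abs-irred}--\prettyref{prop:abs-free}) and performing a unimodular change of coordinates, we may arrange that $N=(I_{k}\mid 0)$ and $P=(0\mid I_{l})$. The coordinate functions of $M\cdot r(V)$ then split into a \emph{real side} $\{(x_{i},|w_{i}|)\}_{i\le k}$ and an \emph{imaginary side} $\{(iy_{j},\Theta(w_{j}))\}_{j>k}$.

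The main step treats the non-extremal case $0<k<n$ and $0<l<n$. Introduce the two half-matrices
\[
M_{\mathrm{re}}=\begin{pmatrix}(I_{k}\mid 0) & 0\\0 & 0\end{pmatrix},\qquad M_{\mathrm{im}}=\begin{pmatrix}0 & 0\\0 & (0\mid I_{l})\end{pmatrix},
\]
of ranks $k$ and $l$ respectively. Each has rank strictly less than $n$, so the two equality conditions of \prettyref{prop:one-dim-case-dico} (namely $k'=l'=n$ and $k'+l'=n$) both fail, yielding the strict bounds $\odim(M_{\mathrm{re}}\cdot r(V))\ge k+1$ and $\odim(M_{\mathrm{im}}\cdot r(V))\ge l+1$. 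Since the coordinates of $M_{\mathrm{re}}\cdot r(V)$ involve only the "first $k$" coordinate block of $V$ while those of $M_{\mathrm{im}}\cdot r(V)$ involve only the "last $l$" block, the absolute freeness of $V$ (which precludes non-trivial algebraic relations between disjoint coordinate sub-products on the generic point) guarantees that the two families are algebraically independent over $R$ on $r(V)$. Adding the two inequalities gives $\odim(M\cdot r(V))\ge(k+1)+(l+1)=n+2$, contradicting $\odim=n$.

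The remaining extremal subcase, $k=n$ and $l=0$ (with $k=0,l=n$ symmetric), is more delicate because \prettyref{prop:one-dim-case-dico} allows equality here. The hypothesis now amounts to $|\overline{w}|$ being real-algebraic over $R(\overline{x})$ on the real function field of $V$. To derive a contradiction I apply the already-proved first part of the present proposition to the auxiliary matrix $M^{\#}=\begin{pmatrix}(I_{n-1}\mid 0) & 0\\0 & (0\mid 1)\end{pmatrix}$ of rank $n$, whose parameters $k'=n-1,l'=1$ both lie strictly in $(0,n)$ when $n\ge 2$ and satisfy $k'+l'=n$; this yields $\odim(M^{\#}\cdot r(V))\ge n+2$. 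Because the coordinates of $M^{\#}\cdot r(V)$ differ from those of $M\cdot r(V)$ only by replacing $(x_{n},|w_{n}|)$ with $(iy_{n},\Theta(w_{n}))$, and since $(x_{n},|w_{n}|)$ is algebraic over the remaining real-side coordinates (by applying \prettyref{prop:one-dim-case-dico} to the rank-$(n-1)$ sub-matrix obtained by dropping the $n$-th real block, which again falls outside its equality range), the opposite bound $\odim(M^{\#}\cdot r(V))\le\odim(M\cdot r(V))+2=n+2$ holds. Iterating the argument with each index $j$ in place of $n$ forces every pair $(iy_{j},\Theta(w_{j}))$ to contribute maximally over $R(\overline{x},|\overline{w}|)$, which when combined with the perfect rotundity of $V$ (used to control how the real function field of $r(V)$ sits over $R(\overline{x},|\overline{w}|)$) leads to a contradiction with $\td_{R}r(V)=2n$; the base case $n=1$ is elementary, since on an absolutely free curve in $\G$ the modulus $|w|$ depends non-trivially on $\Im z$.

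The main obstacle is precisely the extremal subcase: \prettyref{prop:one-dim-case-dico} does not preclude equality there, and one must extract the missing transcendence degree indirectly, exploiting the absolute freeness and perfect rotundity of $V$ to rule out the pathological algebraic dependence $|\overline{w}|\in\acl(R(\overline{x}))$ on the real function field of $V$.
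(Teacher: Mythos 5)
Your argument has a genuine gap at its central step, the addition of the two bounds in the non-extremal case. From \prettyref{prop:one-dim-case-dico} you do correctly get $\odim(M_{\mathrm{re}}\cdot r(V))\geq k+1$ and $\odim(M_{\mathrm{im}}\cdot r(V))\geq l+1$ when $0<k,l<n$, but these dimensions only add if the real-side coordinates $(x_{i},|w_{i}|)_{i\leq k}$ and the imaginary-side coordinates $(y_{j},\Theta(w_{j}))_{j>k}$ are jointly algebraically independent in the appropriate sense, and that is exactly what needs proof. Your justification --- that absolute freeness ``precludes non-trivial algebraic relations between disjoint coordinate sub-products'' --- misreads absolute freeness: that axiom only says that every non-trivial $\Z$-linear (resp.\ multiplicative) combination of the coordinates has one-dimensional image; it says nothing about cross-block algebraic relations. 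Since $\dim V=n$, the $2n$ coordinate functions of $V$ satisfy $n$ algebraic relations, and these generically do link the first $k$ coordinates to the last $l$; the entire difficulty of the proposition is to show that, despite such relations, at least one extra algebraically independent real component survives after passing to $r(V)$. This is what the paper extracts from the monodromy argument of \prettyref{lem:alg-module-curves} and its extension \prettyref{lem:alg-module-general} (split according to whether $V$ is a product of $\pi_{a}(V)$ and $\pi_{m}(V)$), and it only yields the bound $n+1$; your claimed $(k+1)+(l+1)=n+2$ is both unsupported and stronger than what the method, or the statement, provides.

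The remaining parts inherit this problem and are themselves not proofs. The extremal subcase $k=n$, $l=0$ invokes the (unproved) $n+2$ bound for the auxiliary matrix $M^{\#}$, and the concluding sentence ``leads to a contradiction with $\td_{R}r(V)=2n$'' is asserted rather than derived: having each pair $(y_{j},\Theta(w_{j}))$ of transcendence degree $2$ over $R(\overline{x},|\overline{w}|)$ does not force total transcendence degree above $2n$ unless the pairs are jointly independent, which is again the missing cross-independence. Finally, the base case $n=1$ is not ``elementary'': showing that $|w|$ (or $\Theta(w)$) is not algebraic over $\Re(z)$, $\Im(z)$, or $|z|$ on an absolutely free curve is precisely the content of the paper's topological lemma, proved there by lifting paths and comparing homotopy classes (and requiring the multiplicative-independence caveat). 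To repair your proof you would need, at minimum, a genuine argument producing one additional algebraically independent function among the chosen real components --- i.e.\ something playing the role of \prettyref{lem:alg-module-general} --- rather than an appeal to absolute freeness.
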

The proof of this fact relies on the fact that an algebraic dependence
between two algebraic functions $z$ and $w$ yields an algebraic
dependence between, say, $|z|$ and $|w|$ only in a few exceptional
cases. This is be enough to find an extra algebraically independent
function, proving the strict inequality. Since the argument is topological
in nature and rather different from the current discussion, we postpone
its proof to \prettyref{sub:topological}.
\begin{proof}[Proof of \prettyref{prop:one-dim-case}]
 The coordinate functions of $M\cdot r(V)$ must be all either zero-dimensional
or one-dimensional, because they can only be of the form $\overline{m}\cdot\overline{x}$,
$\overline{p}\cdot\overline{y}$, $\overline{\rho}^{\overline{m}}$,
$\overline{\theta}^{\overline{p}}$. In particular, if $S$ is a transcendence
base of coordinate functions of $M\cdot\check{V}$, we have that $\td_{\overline{c}E(\overline{c})}S\geq\td_{\overline{c}E(\overline{c})}r(S)$.
In geometric terms, the algebraic dimension $\dim M\cdot\check{V}$
is at least the semi-algebraic dimension $\odim M\cdot r(V)$. In
this special case we can actually check that this is an equality.

Therefore, by \prettyref{prop:one-dim-case-dico}, $\dim M\cdot\check{V}\geq\rank M$.
Moreover, if the equality holds, then $V$ is perfectly rotund, and
either $k=l=n$, or the rows of $N$ and $P$ are $\Q$-linearly independent
and $k+l=n$. But by \prettyref{prop:one-dim-case-bad} the latter
condition implies $\dim M\cdot\check{V}>\rank M$, so only the former
case is possible, as desired.
\end{proof}

\subsection{The mixed case}

We continue with another special case.
\begin{prop}
\label{prop:mixed-dim-case}If 
\[
M=\left(\begin{array}{c|c}
N_{0} & 0\\
\hline N_{1} & P_{1}\\
\hline 0 & P_{0}
\end{array}\right),
\]
 with $N_{0}\in\Mc_{k,n}(\Z)$, $P_{0}\in\Mc_{l,n}(\Z)$ and $N_{1},P_{1}\in\Mc_{m,n}(\Z)$
four matrices with the property that
\[
\left(\begin{array}{c}
N_{0}\\
\hline N_{1}
\end{array}\right),\:\left(\begin{array}{c}
P_{0}\\
\hline P_{1}
\end{array}\right)
\]
have ranks $k+m$ and $m+l$ respectively, with $k+l+m>0$, then
\[
\dim M\cdot\check{V}\geq\rank M=k+l+m
\]
 and if the equality holds then $V$ is perfectly rotund and either 
\begin{itemize}
\item $m=0$ and $k=l=n$, or
\item $k=l=0$ and $m=n$.
\end{itemize}
\end{prop}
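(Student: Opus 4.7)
The approach mimics the one-dimensional case treated in \prettyref{prop:one-dim-case}, with the new difficulty coming from the middle block $(N_{1}\mid P_{1})$: its additive coordinate functions $\overline{n}\cdot\overline{x}+\overline{p}\cdot\overline{y}$ (and the multiplicative analogues $|\overline{w}|^{\overline{n}}\Theta(\overline{w})^{\overline{p}}$) are genuinely \emph{two-dimensional} in the sense of \prettyref{sub:mixed-functions}, whereas the outer blocks contribute only one-dimensional functions exactly as in \prettyref{prop:one-dim-case}.

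The plan is to produce, via a unimodular row manipulation analogous to \prettyref{lem:matrix-decomp}, an integer matrix $\check{M}$ whose rows form a $\Q$-basis of the union of the row spaces of $N_{0},N_{1},P_{0},P_{1}$, chosen so as to expose the overlapping rows (those common to several blocks, playing the role of $P_{1}$ in \prettyref{lem:matrix-decomp}). I would then realise $M\cdot r(V)$ as the image of $r(\check{M}\cdot V)$ under an algebraic map that, for each row of $M$, either keeps a single real or imaginary component (one-dimensional contribution) or takes a fixed $\Z$-combination of one real and one imaginary component coming from two distinct rows of $\check{M}$ (two-dimensional contribution from the middle block). Enumerating a maximal algebraically independent set of coordinates of $\check{M}\cdot V$ so that it begins with the overlapping-row coordinates, the counting principle of \prettyref{sub:mixed-functions} combined with simpleness of $V$ applied to successive subprojections of $\check{M}\cdot V$ yields
\[
\dim M\cdot\check{V}\ \geq\ k+l+m\ =\ \rank M,
\]
as desired. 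Each inequality invoked along the way has the form $\dim Q\cdot V\geq\rank Q$ for some integer submatrix $Q$ of $\check{M}$, so simpleness supplies exactly what is needed.

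For the equality case, as in \prettyref{prop:one-dim-case-dico}, every inequality in the count must be tight, which first forces $V$ to be perfectly rotund and then forces a rigid shape on the way $N_{0},N_{1},P_{0},P_{1}$ share rows: either the middle block is absent ($m=0$), in which case \prettyref{prop:one-dim-case} gives $k=l=n$; or the outer blocks vanish ($k=l=0$) and the middle block $(N_{1}\mid P_{1})$ is an $n\times 2n$ matrix of maximal rank with $m=n$. Intermediate mixed possibilities ($m>0$ together with $k>0$ or $l>0$) would be ruled out by an argument in the spirit of \prettyref{prop:one-dim-case-bad}: a saturated count would produce an algebraic relation between a one-dimensional coordinate of $r(V)$ and the real or imaginary part of a two-dimensional function, which is incompatible with the absolute freeness of $V$.

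The main obstacle I expect is the bookkeeping in the dimension count above: the two-dimensional coordinates from $(N_{1}\mid P_{1})$ and the one-dimensional coordinates from $N_{0}$ and $P_{0}$ can share rows of $\check{M}$ in several configurations, and one must keep simpleness' strict inequality in reserve precisely for those configurations that ultimately produce the perfectly rotund equality cases, invoking \prettyref{prop:one-dim-case-bad} (or an analogous topological argument, to be established as in \prettyref{sub:topological}) to exclude the intermediate equalities.
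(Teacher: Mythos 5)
Your overall frame (pass to a separated integer matrix, use the one-/two-dimensional counting of \prettyref{sub:mixed-functions}, and reduce every inequality to simpleness) is the right one, but as sketched the count does not reach $\rank M$: a two-dimensional coordinate contributes $1$ to $\dim M\cdot\check{V}$ while consuming $2$ independent coordinates of $r(\check{M}\cdot V)$, so a single simpleness-type bound on the separated coordinates, however you enumerate them, only yields $|S|\geq k_{1}+k_{2}$ with $k_{1}+2k_{2}\geq k+2m+l$, i.e.\ roughly $(k+2m+l)/2$, which falls short of $k+m+l$ whenever $k+l>0$. The missing idea is a \emph{second, independent} lower bound on the number $k_{1}$ of one-dimensional functions: project $M$ onto its outer rows to get $M''$ built from $N_{0}$ and $P_{0}$ alone, whose coordinates on $r(V)$ are at most one-dimensional, apply \prettyref{prop:one-dim-case} to it (when $M''\neq0$) to get $k_{1}\geq k+l$ --- which requires the enumeration of $S$ to \emph{begin} with a maximal independent set of coordinates of $M''\cdot r(V)$, not with the ``overlapping-row'' coordinates as you propose --- and then sum the two bounds, $2|S|\geq(k_{1}+2k_{2})+k_{1}\geq2(k+m+l)$. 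The first bound $k_{1}+2k_{2}=\odim M\cdot r(V)\geq k+2m+l$ is obtained in the paper by noting that the map $M'\cdot r(V)\to M\cdot r(V)$, with $M'$ the block-diagonal matrix with blocks $\left(\begin{array}{c}N_{0}\\ \hline N_{1}\end{array}\right)$ and $\left(\begin{array}{c}P_{0}\\ \hline P_{1}\end{array}\right)$, is invertible on realisations (real and imaginary parts can be separated again), and then applying \prettyref{prop:one-dim-case} to $M'$; your sketch never isolates either of these two bounds, and without both the counting principle cannot close.

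For the equality case you also take a wrong turn: no new topological argument is needed, and the mechanism you invoke would fail, since absolute freeness only guarantees that single $\Z$-combinations of the coordinates have infinite image; it does not forbid an algebraic relation between a one-dimensional coordinate and a real or imaginary component of a two-dimensional one. In the paper the intermediate possibilities ($m>0$ together with $k>0$ or $l>0$) are excluded purely arithmetically from the equality clauses of \prettyref{prop:one-dim-case}: equality forces $V$ perfectly rotund and $k+m=m+l=n$ (from the bound via $M'$), and, if $M''\neq0$, also $k=l=n$ (from the bound via $M''$), whence $m=0$; if $M''=0$ then $k=l=0$ and $m=n$. The topological input \prettyref{prop:one-dim-case-bad} is already consumed inside \prettyref{prop:one-dim-case}, so nothing analogous has to be established anew at this stage.
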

In this case, the functions coming from the rows of $N_{0}$ and $P_{0}$
are one-dimensional, while the ones coming from the rows of $(N_{1}|P_{1})$
could be two-dimensional, so we may have that $\dim M\cdot\check{V}<\odim M\cdot r(V)$.
However, the arguments of \prettyref{sub:mixed-functions} show that
we may still obtain a meaningful result if we find enough one-dimensional
functions.

We shall use \prettyref{prop:one-dim-case} twice: once on the matrices
$N_{0}$ and $P_{0}$, to find many one-dimensional functions, and
once on a matrix derived from $M$ in order to give a general estimate
on $\odim M\cdot r(V)$. The two estimates together will yield the
result.
\begin{proof}
First, we give a lower bound for $\odim M\cdot r(V)$. Note that
\[
M=\left(\begin{array}{c|c}
N_{0} & 0\\
\hline N_{1} & P_{1}\\
\hline 0 & P_{0}
\end{array}\right)=\left(\begin{array}{c|c}
\begin{array}{c}
\\
\mathrm{Id}_{k+m}\\
\\
\hline 0
\end{array} & \begin{array}{c}
0\\
\hline \\
\mathrm{Id}_{m+l}\\
\\
\end{array}\end{array}\right)\cdot\left(\begin{array}{c|c}
\begin{array}{c}
N_{0}\\
\hline N_{1}
\end{array} & 0\\
\hline 0 & \begin{array}{c}
P_{0}\\
\hline P_{1}
\end{array}
\end{array}\right).
\]
Let us call the last matrix on the right $M'$. The induced map
\[
M'\cdot r(V)\to M\cdot r(V)
\]
 is invertible; indeed, in order to calculate its inverse, it is sufficient
to separate again the real and imaginary parts of the coordinates
coming from the rows of $(N_{1}|P_{1})$.

This implies that
\[
\odim M\cdot r(V)=\odim M'\cdot r(V).
\]

By \prettyref{prop:one-dim-case}, this implies that 
\[
\odim M\cdot r(V)\geq k+2m+l,
\]
 and if the equality holds then $V$ is perfectly rotund and $k+m=m+l=n$.

We now look for many one-dimensional functions. Let us consider the
projection
\[
\Pi\cdot M=\Pi\cdot\left(\begin{array}{c|c}
N_{0} & 0\\
\hline N_{1} & P_{1}\\
\hline 0 & P_{0}
\end{array}\right)=\left(\begin{array}{c|c}
N_{0} & 0\\
\hline 0 & P_{0}
\end{array}\right)=:M''.
\]

The coordinate functions of $M''\cdot r(V)$ can only be zero-dimensional
or one-dimensional. By \prettyref{prop:one-dim-case} again, if $M''\neq0$
then $\odim(M''\cdot r(V))\geq k+l$, and if the equality holds then
$V$ is perfectly rotund and $k=l=n$.

Now, let $S_{0}$ be a maximal set of algebraically independent coordinate
functions of $M''\cdot r(V)$ and let $S\supset S_{0}$ be a maximal
set of algebraically independent coordinate functions of $M\cdot r(V)$.

By \prettyref{sub:mixed-functions}, if $k_{1}$ is the number of
one-dimensional functions of $S$, and $k_{2}$ is the number of its
two-dimensional functions in any ordering, then $|S|\geq k_{1}+k_{2}$
and $\td_{\overline{c}}r(S)=k_{1}+2k_{2}$, where $\overline{c}$
is a finite set that defines $\check{V}$.

But $\td_{\overline{c}}r(S)$ is the semi-algebraic dimension of $M\cdot r(V)$;
by the above argument,
\[
k_{1}+2k_{2}\geq k+2m+l.
\]

Moreover, if we enumerate the functions in $S$ starting with the
ones in $S_{0}$, then clearly $k_{1}\geq|S_{0}|$, so
\[
k_{1}\geq k+l.
\]

Summing the two inequalities, we have
\[
2|S|\geq2(k_{1}+k_{2})\geq2k+2m+2l=2(k+m+l)=2\rank M,
\]
 as desired. Moreover, if the equality holds, then $V$ is perfectly
rotund, and either $m=0$ and $k=l=n$ or $k=l=0$ and $m=n$.
\end{proof}

\subsection{The general case}

In order to conclude the theorem, it is sufficient to show that any
matrix $M$ can be actually be reduced to the shape required by \prettyref{prop:mixed-dim-case}.
\begin{lem}
\label{lem:matrix-reduce} Let $M\in\Mc_{p,2n}(\Z)$ be a matrix of
rank $0<p\leq2n$. There is a matrix $A\in\Mc_{p,p}(\Z)$ of maximum
rank such that
\[
A\cdot M=\left(\begin{array}{c|c}
N_{0} & 0\\
\hline N_{1} & P_{1}\\
\hline 0 & P_{0}
\end{array}\right)
\]
 with $N_{0}\in\Mc_{k,n}(\Z)$, $P_{0}\in\Mc_{l,n}(\Z)$ and $N_{1},P_{1}\in\Mc_{m,n}(\Z)$
four matrices such that
\[
\left(\begin{array}{c}
N_{0}\\
\hline N_{1}
\end{array}\right),\:\left(\begin{array}{c}
P_{0}\\
\hline P_{1}
\end{array}\right)
\]
have ranks $k+m$ and $m+l$ respectively, and $k+m+l=p$.\end{lem}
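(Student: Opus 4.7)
The plan is to do $\Q$-row reduction on $M$ to produce the block shape, then clear denominators to make the row-operation matrix integral. The block shape will come from cutting $\text{rowspan}_{\Q}(M)$ along the two ``coordinate'' subspaces obtained by setting the left or right half of the ambient space to zero.

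Write $\pi_{L},\pi_{R}:\Q^{2n}\to\Q^{n}$ for the left and right projections, and set $V:=\text{rowspan}_{\Q}(M)$, which has dimension $p$ since $M$ has rank $p$. Define
\[
V_{L}:=V\cap\ker\pi_{R},\qquad V_{R}:=V\cap\ker\pi_{L}.
\]
These are subspaces of $V$ with $V_{L}\cap V_{R}=0$, so if $k:=\dim V_{L}$ and $l:=\dim V_{R}$, then $m:=p-k-l\geq 0$. Choose $\Q$-bases of $V_{L}$ and $V_{R}$, and extend their union by $m$ further vectors to a $\Q$-basis $\mathcal{B}$ of $V$. Stacking these $p$ vectors as rows, ordered as (basis of $V_{L}$, then the $m$ extension vectors, then basis of $V_{R}$), gives a matrix $B\in\Mc_{p,2n}(\Q)$ whose block structure is exactly
\[
B=\left(\begin{array}{c|c} N_{0} & 0 \\ \hline N_{1} & P_{1} \\ \hline 0 & P_{0}\end{array}\right),
\]
by construction of $V_{L}$ and $V_{R}$.

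Since every row of $B$ lies in the $\Q$-rowspan of $M$, we can write $B=A'M$ for some $A'\in\Mc_{p,p}(\Q)$; the rows of $B$ form a basis of $V$ of dimension $p$, so $\det A'\neq 0$. To get the integer matrix $A$ required by the lemma, multiply each row of $A'$ by a positive integer clearing denominators in that row; this scales the corresponding row of $B$ by the same integer, preserving the block structure (a scaled row of shape $(N_{0,i},0)$ still has zero right half, and likewise for the other blocks) and keeping $\det A\neq 0$.

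Finally, I verify the rank conditions. Restricted to $V$, the projection $\pi_{L}$ has kernel $V_{R}$, so $\pi_{L}(V)$ has dimension $p-l=k+m$. By choice of $\mathcal{B}$, the top $k+m$ rows of $B$ span a vector complement of $V_{R}$ inside $V$, so $\pi_{L}$ is injective on this span; hence their left projections, which are precisely the rows of $\binom{N_{0}}{N_{1}}$, are $\Q$-linearly independent, giving $\text{rank}\binom{N_{0}}{N_{1}}=k+m$. The analogous argument with $\pi_{R}$ yields $\text{rank}\binom{P_{0}}{P_{1}}=m+l$, and adding $k+m+l=p$ is automatic. The only non-cosmetic point in the argument is the passage from the rational $A'$ to an integer $A$, but since we are free to rescale rows (neither the block pattern nor the rank conditions are affected), this is immediate.
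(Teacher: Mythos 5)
Your argument is correct and is in substance the same as the paper's own proof: both cut the $\Q$-row span of $M$ along the subspace of vectors vanishing on the right half and the subspace vanishing on the left half, complete by a complement, and read off the blocks $N_{0}$, $(N_{1}\mid P_{1})$, $P_{0}$ from an adapted basis, which is exactly what the paper's staged reduction (first to $\bigl(\begin{smallmatrix}N & Q\\ 0 & P\end{smallmatrix}\bigr)$, then splitting the row span of $Q$ along that of $P$) produces. The only difference is bookkeeping: the paper builds the integer matrix $A$ step by step through explicit integer row operations, whereas you pick the rational basis in one stroke, write it as $A'\cdot M$ with $A'$ invertible, and clear denominators row by row at the end -- both handle integrality correctly.
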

\begin{proof}
It is an elementary result of linear algebra that one can find a matrix
$A_{0}\in\Mc_{p,p}(\Z)$ of maximum rank such that 
\[
A_{0}\cdot M=\left(\begin{array}{c|c}
N & Q\\
\hline 0 & P
\end{array}\right),
\]
 with $N$ and $P$ both with $\Q$-linearly independent rows.

Similarly to what we have done before, let $Q_{0}$ be a matrix whose
rows form a $\Q$-linear base of the intersection of the span of the
rows of $Q$ and the span of the rows of $P$. Let $P_{1},P_{2}$
be two matrices whose rows form a $\Q$-linear base of the span of
the rows of $Q$, $P$ respectively over the span of the rows of $Q_{0}$.
We define $k$ as the number of rows of $Q_{0}$, $l$ as the number
of rows of $P$ and $m$ as the number of the rows of $P_{1}$.

Up to multiplying the new matrices by a non-zero integer, we can find
two square integer matrices $A_{1},A_{2}$ of maximum rank such that
\[
\left(\begin{array}{c|c}
A_{1} & 0\\
\hline 0 & A_{2}
\end{array}\right)\cdot\left(\begin{array}{c}
Q\\
\hline P
\end{array}\right)=\left(\begin{array}{c}
A_{1}\cdot Q\\
\hline A_{2}\cdot P
\end{array}\right)=\left(\begin{array}{c}
Q_{0}\\
\hline P_{1}\\
\hline Q_{0}\\
\hline P_{2}
\end{array}\right).
\]

Finally, let us apply the matrix
\[
A_{3}:=\left(\begin{array}{c|c|c|c}
\mathrm{Id}_{k} & 0 & -\mathrm{Id}_{k} & 0\\
\hline 0 & \mathrm{Id}_{m} & 0 & 0\\
\hline 0 & 0 & \mathrm{Id}_{k} & 0\\
\hline 0 & 0 & 0 & \mathrm{Id}_{l-k}
\end{array}\right);
\]
 we have 
\[
A_{3}\cdot\left(\begin{array}{c}
Q_{0}\\
\hline P_{1}\\
\hline Q_{0}\\
\hline P_{2}
\end{array}\right)=\left(\begin{array}{c}
0\\
\hline P_{1}\\
\hline Q_{0}\\
\hline P_{2}
\end{array}\right),\quad A_{3}\cdot\left(\begin{array}{c}
A_{1}\cdot N\\
\hline 0
\end{array}\right)=\left(\begin{array}{c}
A_{1}\cdot N\\
\hline 0
\end{array}\right).
\]

Putting all together, we have
\[
\left(A_{3}\cdot\left(\begin{array}{c|c}
A_{1} & 0\\
\hline 0 & A_{2}
\end{array}\right)\cdot A_{0}\right)\cdot M=\left(\begin{array}{c|c}
A_{1}\cdot N & \begin{array}{c}
0\\
\hline P_{1}
\end{array}\\
\hline 0 & \begin{array}{c}
Q_{0}\\
\hline P_{2}
\end{array}
\end{array}\right).
\]

This shows the required decomposition.
\end{proof}
We have now enough tools to prove the theorem.
\begin{proof}[Proof of~\prettyref{thm:rotund}]
 By \prettyref{lem:matrix-reduce}, there is a square matrix $A$
of maximum rank such that $ $$A\cdot M$ has the shape required in
\prettyref{prop:mixed-dim-case}. Since $A$ is square and has maximum
rank, then $\dim M\cdot\check{V}=\dim A\cdot M\cdot\check{V}$, while
$\rank A\cdot M=\rank M$. But by \prettyref{prop:mixed-dim-case},
\[
\dim M\cdot\check{V}=\dim A\cdot M\cdot\check{V}\geq\rank A\cdot M=\rank M,
\]
 and if the equality holds, then $V$ is perfectly rotund and either
$A\cdot M$ is square of maximum rank, or $A\cdot M=(N'|P')$ with
$N'$ and $P'$ square of maximum rank. But this clearly implies that
either $M$ has rank $p=2n$ or that $M$ has itself the shape $M=(N|P)$
with $N$ and $P$ square matrices of maximum rank, as desired.
\end{proof}

\subsection{\label{sub:topological}A topological argument}

It remains to prove \prettyref{prop:one-dim-case-bad}. We shall use
a topological argument about algebraic curves. Given two non-constant
functions $z$, $w$ on an algebraic curve, we look at their local
behaviour to show that certain pairs, e.g., $|z|$ and $|w|$, are
algebraically independent unless there is a strong reason to (in the
example, $z$ and $w$ would be multiplicatively dependent over the
constants). This is enough to obtain the result.

The trick is to look at the two functions around a zero of $z$. Locally,
the functions behave like $x\mapsto x^{d}$, where $d$ is the ramification
degree, up to analytic equivalence. We shall use this information
to show that certain closed paths must have non-trivial homotopy class;
for example, if we use a common zero of $z$ and $w$ (if it exists),
this implies that $\Theta(w)$ is surjective on $\Sb^{1}$ on the
paths where $|z|$ is constant. The information on the values will
be sufficient to deduce the algebraic independence; in the example,
it shows that $|z|$ and $\Theta(w)$ are algebraically independent.
\begin{lem}
\label{lem:alg-module-curves}Let $\Cc$ be an irreducible curve defined
over some $\overline{c}$ closed under $\sigma$ and let $z,w$ be
non-constant algebraic functions on some absolutely irreducible component
of $\Cc$.

The function $|w|$ (or $\Theta(w)$) is algebraically independent
from $\Re(z)$, $\Im(z)$, or $\Theta(z)$ (resp.\ $|z|$) over $\overline{c}$.
Moreover, if $z$ and $w$ are multiplicatively independent over $\acl(\overline{c})^{\times}$,
then $|w|$ (or $\Theta(w)$) is algebraically independent from $|z|$
(resp.\ $\Theta(z)$) over $\overline{c}$.\end{lem}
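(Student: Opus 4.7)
The plan is a local analysis at a carefully chosen point on the normalisation of (the absolutely irreducible component of) $\Cc$ on which $z$ and $w$ live. Since $K$ is algebraically closed of characteristic zero, at any smooth point $P$ we may pick a local uniformiser $x$ and expand $z$ and $w$ as formal power series in $x$. The loops $\{|x|=\epsilon\}$ for small $\epsilon\in R_{>0}$ are semi-algebraic, and the winding number of a semi-algebraic continuous map $\Sb^1\to\Sb^1$ is definable in the $o$-minimal structure of $R$.

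For the unconditional statements, we treat the independence of $\Theta(w)$ from $|z|$ as a template. Since $w$ is non-constant, on the smooth projective model there is a point $P$ with $\ord_P(w)=e\neq 0$; since $z$ has only finitely many zeros and poles, we may additionally insist either that $z(P)=z_0\in K^\times$ or, in the case that every zero or pole of $w$ lies over a zero or pole of $z$, that $z=x^d u(x)$ with $u(0)\neq 0$ and $d\neq 0$ at the same point. In either case the loop $|x|=\epsilon$ is mapped via $\Theta(w)$ around $\Sb^1$ with winding number $e$, hence surjectively, while $|z|$ stays within an arbitrarily small neighbourhood of its value at $P$ (either $|z_0|$ or $0$). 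A hypothetical polynomial relation $P(\Theta(w),|z|)\equiv 0$ on $\Cc$ would then force the polynomial $P(Y,t)$ to vanish on $\Sb^1$ for each value $t$ attained by $|z|$ on a small loop; since $t$ traces an open interval in $R_{>0}$ as $\epsilon$ varies, the coefficients of $P$ in $Y$ vanish identically in $T$, giving $P\equiv 0$, a contradiction. The other unconditional combinations follow the same recipe: pick $P$ to be a zero or pole of one of $z,w$ at which the relevant real-valued component of the partner is regular, and observe that one of the two components oscillates rapidly on the small loop while its partner varies only mildly.

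The conditional cases---$|w|$ versus $|z|$ and $\Theta(w)$ versus $\Theta(z)$---require the multiplicative-independence hypothesis, because without it the conclusion fails (e.g.\ if $z=w^2$ then $|z|=|w|^2$). On the smooth projective model, multiplicative independence of $z$ and $w$ over $\acl(\overline{c})^\times$ is equivalent to $\Z$-linear independence of the divisors $\divs(z)$ and $\divs(w)$: the only rational functions with trivial divisor are constants, so $z^a w^b\in\acl(\overline{c})^\times$ for some $(a,b)\neq(0,0)$ is equivalent to $a\divs(z)+b\divs(w)=0$. Under the independence hypothesis one locates a point $P$ where either exactly one of $\ord_P(z),\ord_P(w)$ is non-zero, or where the ratio of the two non-zero orders differs from the corresponding ratio at some other ramified point; at such a $P$ the local comparison between $|x|^d$ and $|x|^e$ (respectively $\Theta(x)^d$ and $\Theta(x)^e$) combined with the winding-number argument again produces a two-dimensional joint image in $R_{>0}\times R_{>0}$ (respectively $\Sb^1\times\Sb^1$), ruling out any polynomial relation.

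The main obstacle is making the ``small loop'' and ``winding number'' arguments rigorous over an arbitrary real closed field $R$ rather than over $\R$. This requires the $o$-minimal theory of semi-algebraic loops, a semi-algebraic version of the topological degree of a map $\Sb^1\to\Sb^1$, and a verification that the formal expansions of $z$ and $w$ at $P$ give semi-algebraic approximations with the predicted winding behaviour on $\{|x|=\epsilon\}$ for all sufficiently small $\epsilon\in R_{>0}$.
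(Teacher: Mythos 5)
Your overall strategy (local winding behaviour at a zero or pole, ``one coordinate oscillates while the other stays tame'') is the same idea the paper uses, but the proposal has genuine gaps at the decisive steps. First, the central deduction is logically flawed: from the fact that on the loop $\{|x|=\epsilon\}$ the function $\Theta(w)$ is surjective onto $\Sb^{1}$ while $|z|$ stays \emph{close to} $|z_{0}|$, you cannot conclude that $P(Y,t)$ vanishes for \emph{all} $Y\in\Sb^{1}$ at any single value $t$ attained by $|z|$; the relation only gives $P(\Theta(w)(Q),|z|(Q))=0$ pointwise, and for a fixed $t$ the points $Q$ on the loop with $|z|(Q)=t$ are finitely many, so the two coordinates cannot be decoupled as you do, and the conclusion ``the coefficients of $P$ in $Y$ vanish identically in $T$'' does not follow. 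The paper avoids this by taking the loops to be exact level sets $\{|z|=c\}$ (lifted through the covering map), so that the tame coordinate is genuinely constant on the path and algebraic dependence would force the other coordinate to take only finitely many values on that fibre, contradicting the arc/circle it actually sweeps. Second, ``the other unconditional combinations follow the same recipe'' is not true: for $\Theta(w)$ versus $\Re(z)$ (or $\Im(z)$) there may be no usable point at all --- e.g.\ on $\P^{1}$ with $w=x/(x-1)$ and $z=1/(x(x-1))$ every zero and pole of $w$ is a pole of $z$, so at every point where $\Theta(w)$ oscillates the function $\Re(z)$ is wildly varying rather than tame, and at the remaining zeros of $z$ both functions are tame. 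The paper needs a genuinely different argument here (lifting square paths with vertical sides around $w(P)$ and comparing values on opposite sides to contradict the non-trivial homotopy class), and your proposal contains no substitute for it.

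Two further points. For the conditional case you correctly translate multiplicative independence into linear independence of $\divs(z)$ and $\divs(w)$, but the step from ``ratios of orders differ at two points'' to ``a two-dimensional joint image, ruling out any polynomial relation'' is exactly what needs proving and is not supplied: locally at a single common zero/pole the joint image of $(|z|,|w|)$ is a thickened curve, and whether it is genuinely two-dimensional is equivalent to the non-constancy of $|w^{d}/z^{e}|$ there, i.e.\ to the statement being proved. The paper instead reduces to the already-established case by replacing $w$ with a balanced combination $z^{k}w^{l}$, which is non-constant precisely because of multiplicative independence, and then uses $|w|^{l}=|z|^{-k}|w'|$. Finally, you flag as ``the main obstacle'' the development of semi-algebraic winding numbers and local expansions over an arbitrary real closed field $R$, and you do not resolve it; the paper sidesteps this entirely by first reducing to $\R$ via a definability argument (putting $\Cc,z,w$ in a definable family, and bounding the exponents $k,l$ witnessing multiplicative dependence through the divisors so that the hypothesis becomes first-order), after which genuine topology over $\C$ is available. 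Without either that transfer step or a worked-out o-minimal degree theory, the winding-number arguments do not yet make sense over non-archimedean $R$, so this too is a gap rather than a routine verification.
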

\begin{proof}
First of all, we may assume that we are working over $\R$. Indeed,
once we take $\Cc$, $z$ and $w$ in a definable family over $\emptyset$,
the first part of the theorem is clearly expressible as a conjunction
of first-order formulas. Moreover, the degrees of $z$ and $w$ are
bounded in terms of the family.

For the second part, let us assume that the functions are multiplicatively
dependent over $\acl(\overline{c})^{\times}$, i.e., that $z^{k}w^{l}=c\in\acl(\overline{c})^{\times}$
for some $k,l\in\N^{\times}$. Since $\Cc$ is absolutely irreducible,
we may assume that $k$ and $l$ are coprime. But $z^{k}w^{l}$ is
constant if and only if
\[
k\divs(z)+l\divs(w)=0,
\]
where $\divs(z)$ and $\divs(w)$ are the divisors of the two functions.
By elementary theory of curves, there is at least one non-zero coefficient
in $\divs(z)$ whose norm is at most $\deg(z)$ (similarly for $\divs(w)$).
This puts a bound on the size of $k$ and $l$. Therefore, the multiplicative
independence in a given family is witnessed by a single first order
formula, so that the second part of the theorem may be expressed as
a conjunction of first-order formulas. Therefore, we may reduce to
$\R$.

Now, let $P\in\Cc$ be a zero of $z$. Without loss of generality,
we may assume that $P$ is non-singular, otherwise we replace $\Cc$
with a suitable blow-up. Let $U\subset\Cc$ be a neighbourhood of
$P$ in $\Cc$. We take $U$ small enough so that $z(U)$ and $w(U)$
are open sets in $\P_{1}(\C)$, and such that the maps $z:U\setminus\{P\}\to z(U)\setminus\{0\}$
and $w:U\setminus\{P\}\to w(U)\setminus\{w(P)\}$ are covering maps.
We may further assume that $U$ contains no zeroes or poles of $z,w$
except for $P$, so that $\Theta(z)$ and $\Theta(w)$ are well-defined
and continuous on $U\setminus\{P\}$.

We split the proof into three cases. Let $d$ and $e$ be the two
ramification degrees of $z$ and $w$ at $P$.

\textbf{$|w|$, $\Theta(w)$, $\Re(w)$ and $\Im(w)$ are not in $\acl(|z|\overline{c})$.}
Let $\gamma:[0,1]\to z(U)\setminus\{0\}$ be the closed path around
$0$ defined by $|z|=c$ for some small enough $c>0$. If we follow
the path $d$ times, we may lift it to a closed path $\eta$ around
$P\in U$. Since the homotopy class of $\gamma$ in $z(U)\setminus\{0\}$
is non-trivial, the same holds for $\eta$ in $U\setminus\{P\}$.
In particular, its image $w_{*}\eta$ through $w$ is also a closed
path around $w(P)$ with non-trivial homotopy class in $w(P)\setminus\{w(P)\}$.

This immediately implies that whenever we fix $|z|$, $\Re(w)$ and
$\Im(w)$ vary in two proper closed intervals around $\Re(w(P))$,
$\Im(w(P))$ respectively; moreover, if $w(P)\neq0,\infty$, $\Theta(w)$
takes values in a proper closed arc around $\Theta(w(P))$, and if
$w(P)=0,\infty$, its image is the whole $\Sb^{1}$. In particular,
they must be algebraically independent over $\overline{c}$.

Let us study what happens to $|w|$. Let us suppose that $w(P)\neq0,\infty$;
clearly, if $c$ is small enough, the path $w_{*}\eta$ around $w(P)$
is far from $0$. This shows that $|w|$ must vary in a proper closed
interval around $|w(P)|$, and it is therefore algebraically independent
from $|z|$ over $\overline{c}$.

If $w$ has a zero or a pole at $P$, we may replace $w$ with $w'=z^{k}w^{l}$,
for some integers $k,l\in\N^{\times}$ so that the zeroes and the
poles at $P$ balance out. By the hypothesis on $z$ and $w$, $w'$
is non-constant and we may apply the previous argument to show that
$|z|$ and $|w'|$ are algebraically independent over $\overline{c}$.
Since $|w|^{l}=|z|^{-k}|w'|$, this implies that $|z|$ and $|w|$
are algebraically independent over $\overline{c}$ as well.

\textbf{$\Theta(w)$ is not in $\acl(\Theta(z)\overline{c})$.} As
for $|z|$ and $|w|$, we may assume that $w(P)\neq0,\infty$ by replacing
$w$ with a suitable multiplicative combination $z^{k}w^{l}$. Again,
let $\gamma$ be the a closed path around $0\in z(U)$ defined by
$|z|=c$ and let $\eta$ be the lift of $\gamma$ traversed $d$ times.
As before, the range of $\Theta(w)$ is a proper closed arc around
$\Theta(w(P))$.

As $c$ tends to $0$, $\eta$ converges uniformly to $w(P)$, and
since $w(P)\neq0,\infty$, this implies that the range of $\Theta(w)$
converges uniformly to the point $\Theta(w(P))$. If $\Theta(w)$
were dependent on $\Theta(z)$, it would be locally a function of
$\Theta(z)$, which means that it should be actually constantly $\Theta(w(P))$,
a contradiction.

\textbf{$\Theta(z)$ is not in $\acl(\Re(z)\overline{c})$ or in $\acl(\Im(z)\overline{c})$.}
Let us suppose by contradiction that $\Theta(z)$ is algebraically
dependent on $\Re(w)$ over $\overline{c}$. This implies that whenever
we lift a ``vertical'' path in $w(U)\setminus\{w(P)\}$, i.e., one
where $\Re(w)$ is constant, the function $\Theta(z)$ is constant
on the lifted path.

Let $\gamma$ be a closed square path around $w(P)\in w(U)$ with
two vertical sides (note that we are now lifting a path in $w(U)$
rather than in $z(U)$). Let us write $\gamma$ as the concatenation
$\gamma_{1}*\gamma_{2}*\gamma_{3}*\gamma_{4}$, where $\gamma_{1}$,
$\gamma_{3}$ are vertical and $\gamma_{2}$, $\gamma_{4}$ are horizontal.
We may traverse the path $e$ times and lift it to a closed path $\eta$
around $P$ with non-trivial homotopy class, and the image through
$z$ will be a path around $0$ with non-trivial homotopy class in
$z(U)\setminus\{0\}$. In particular, the image through $\Theta(z)$
in $\Sb^{1}$ is a path with non-trivial homotopy class.

On the other hand, the fact that $\Theta(z)$ is constant on the vertical
sides implies that it is constant on the lifts of $\gamma_{1}$ and
$\gamma_{3}$. Let us call $\theta_{1}$ and $\theta_{3}$ the values
on the lifts that appear when we traverse $\gamma$ the first time.
When we lift $\gamma_{2}$ for the first time, we see that $\Theta(z)$
describes a path $\epsilon$ in $\Sb^{1}$ from $\theta_{1}$ to $\theta_{3}$.

However, we can connect the points of $\gamma_{2}$ and $\gamma_{4}$
with vertical lines except for the two points with $\Re(w)=\Re(w(P))$.
This implies that $\Theta(z)$ must take the same values on the lift
of the two paths, except possibly in two points; but then it takes
the same values everywhere by continuity. In particular, the function
$\Theta(z)$ describes exactly the path $\epsilon^{-1}$ on the first
lift $\gamma_{4}$, going from $\theta_{3}$ to $\theta_{1}$. This
implies that the image through $\Theta(z)$ of the path $\eta$ has
\emph{trivial} homotopy class in $\Sb^{1}$, a contradiction.

The same holds for $\Im(w)$ (e.g., by replacing $w$ with $iw$,
or repeating the same argument with $\Im(w)$ in place of $\Re(w)$).

The remaining cases can be recovered by swapping $z$ and $w$ and
using the exchange principle.
\end{proof}
The above statement can be now generalised to varieties of higher
dimension. We say that $z$ and $w$ are interalgebraic over $\overline{c}$
if $z\in\acl(\overline{c}w)$ and $w\in\acl(\overline{c}z)$.
\begin{lem}
\label{lem:alg-module-general}Let $V\subset\G^{n}$ be an absolutely
irreducible algebraic variety defined over some $\overline{c}$ closed
under $\sigma$. Let $B$ be a set of algebraically independent coordinate
functions of $V$, and let $w$ be a function in $\acl(B\overline{c})$.

The function $|w|$ (or $\Theta(w)$) cannot be interalgebraic with
$\Re(z)$, $\Im(z)$, or $\Theta(z)$ (resp.\ $|z|$) over $r(B\setminus\{z\})\cup\overline{c}$
for any $z\in B$.

Moreover, if $B$ contains only multiplicative functions, and $|w|\in\acl(\{|z|\,:\, z\in B\}\cup\overline{c})$
or $\Theta(w)\in\acl(\{\Theta(z)\,:\, z\in B\}\cup\overline{c})$,
then $w$ is multiplicatively dependent on $B$ modulo constants.\end{lem}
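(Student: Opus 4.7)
The strategy is to reduce the statement to the one-dimensional situation of \prettyref{lem:alg-module-curves} via a generic specialisation of all but one of the coordinates in $B$. Since $w\in\acl(B\overline{c})$, first replace $V$ by the image of its projection onto the coordinates in $B\cup\{w\}$, so that $\dim V=|B|$ and $B$ becomes a transcendence base of $K(V)$ over $\overline{c}$. Fix $z\in B$ and choose a tuple $\overline{b}=(b_y)_{y\in B\setminus\{z\}}\in K^{|B|-1}$ which is a generic specialisation of $B\setminus\{z\}$ over $\overline{c}$ with the additional property that $r(\overline{b})$ is algebraically independent over $\overline{c}$; a generic choice works, since the map $\overline{b}\mapsto r(\overline{b})$ has algebraically independent image on a Zariski-dense subset. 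Let $\Cc$ be an absolutely irreducible component of the fibre of $V$ above $\overline{b}$; by genericity $\Cc$ is a curve, defined over the $\sigma$-closed set $\overline{c}_\sigma:=\overline{c}\cup\{\overline{b},\sigma(\overline{b})\}$.

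For the first claim, assume for contradiction that $|w|$ is interalgebraic with, say, $\Re(z)$ over $r(B\setminus\{z\})\cup\overline{c}$, with both transcendental over that base. Specialising the coordinates in $B\setminus\{z\}$ to $\overline{b}$ transports the interalgebraicity to $\Cc$, so $|w|$ and $\Re(z)$ satisfy a nontrivial polynomial relation over $\overline{c}_\sigma$. Both $z$ and $w$ are non-constant on $\Cc$: $z$ because $B$ is algebraically independent, and $w$ because $|w|$ is transcendental over $r(B\setminus\{z\})\cup\overline{c}$ by hypothesis (so its specialisation cannot lie in $\acl(\overline{c}_\sigma)$). This contradicts \prettyref{lem:alg-module-curves} applied to $z,w$ on $\Cc$. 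The remaining cases (with $\Re(z)$ replaced by $\Im(z)$ or $\Theta(z)$, and $|w|$ by $\Theta(w)$) are identical, each invoking the matching subcase of \prettyref{lem:alg-module-curves}.

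For the second claim, suppose $B$ consists only of multiplicative coordinates and $|w|\in\acl(\{|z|:z\in B\}\cup\overline{c})$; assume for contradiction that $w$ is multiplicatively independent from $B$ modulo $\acl(\overline{c})^\times$. Pick $z_0\in B$ and specialise $B\setminus\{z_0\}$ as before; the hypothesis then gives $|w|\in\acl(|z_0|\cup\overline{c}_\sigma)$ on $\Cc$. The second half of \prettyref{lem:alg-module-curves} would force $w$ and $z_0$ to be multiplicatively dependent modulo $\acl(\overline{c}_\sigma)^\times$ on $\Cc$, so it suffices to exclude this possibility. The main obstacle is precisely this preservation of multiplicative independence under specialisation: as in the proof of \prettyref{lem:alg-module-curves}, any multiplicative relation $w^kz_0^\ell=c$ on $\Cc$ forces $|k|,|\ell|$ to be bounded by the divisor degrees of $w$ and $z_0$, so only finitely many such relations are possible, and these are avoided by choosing $\overline{b}$ generically. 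The case of $\Theta(w)$ is symmetric.
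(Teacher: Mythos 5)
Your first claim follows essentially the same route as the paper: specialise the remaining coordinates generically so that the interalgebraicity descends to a curve, and invoke \prettyref{lem:alg-module-curves}; up to the routine checks of non-degeneracy this part is fine. The gap is in the second claim, at the step ``only finitely many such relations are possible, and these are avoided by choosing $\overline{b}$ generically''. Bounding the exponents by divisor degrees does leave only finitely many candidate pairs $(k,\ell)$, but for a fixed pair the set of specialisations $\overline{b}$ for which $w^{k}z_{0}^{\ell}$ becomes constant on the fibre need not be a proper subvariety: it is \emph{all} of them exactly when $w^{k}z_{0}^{\ell}\in\acl(\overline{c}\,(B\setminus\{z_{0}\}))$, and this can happen even though $w$ is multiplicatively independent from the whole of $B$ modulo constants. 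For instance, if $w=z_{0}(1+y)$ with $y\in B\setminus\{z_{0}\}$, then $wz_{0}^{-1}=1+b_{y}$ is constant on every fibre, yet no relation $w^{a}z_{0}^{b}y^{c}=\mathrm{const}$ holds upstairs. So multiplicative independence from $B$ does not specialise to multiplicative independence of $w$ from $z_{0}$ on $\Cc$, and your contradiction does not arrive. (Under the full hypotheses of the second claim such a configuration is ultimately impossible, but that is precisely what the lemma asserts, so it cannot be assumed at this step.)

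What is missing is the mechanism the paper uses to handle exactly this case: when the specialised relation holds for a generic specialisation, conclude that $z_{0}^{\ell}w^{k}\in\acl(\overline{c}B')$ with $B':=B\setminus\{z_{0}\}$; observe that its modulus (resp.\ phase) still lies in $\acl(\{|z|\,:\,z\in B'\}\cup\overline{c})$, using that $r(B)$ is algebraically independent over $\overline{c}$; and then recurse, by induction on $|B|$ (after first reducing to $B$ minimal with $w\in\acl(B\overline{c})$, which also guarantees that $w$ is genuinely non-constant on the specialised curve -- a point your part-two argument does not address). The induction gives that $z_{0}^{\ell}w^{k}$ is multiplicatively dependent on $B'$ modulo constants, hence $w$ is multiplicatively dependent on $B$ -- note that in this branch one obtains the desired \emph{conclusion}, not a contradiction, so the proof-by-contradiction framing itself hides the needed descent. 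Without this induction your proof of the second statement does not go through.
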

\begin{proof}
First of all, we may assume that $B$ is minimal such that $w\in\acl(B\overline{c})$.
Indeed, suppose that $B'\subsetneq B$ and $w\in\acl(B'\overline{c})$.
The functions $|w|$ and $\Theta(w)$ are in $\acl(r(B')\overline{c})$
and they cannot be interalgebraic with the components of $z$, so
that the first part of the thesis holds trivially.

Moreover, if $|w|$ depends only on the moduli of the functions in
$B$, then it actually depends only on the moduli of the functions
in $B'$, because $r(B)$ is algebraically independent over $\overline{c}$.
The same holds for $\Theta(w)$. Therefore, the thesis for $B'$ implies
the thesis for $B$.

Let us assume that $B$ is minimal. We shall work by induction on
$|B|$, the case $|B|=0$ being trivial.

Since $B$ is minimal, for any $z\in B$ the functions satisfy the
equation
\[
p(z,w)=0
\]
 for some irreducible polynomial with coefficients in $\overline{c}B\setminus\{z\}$
such that both $z$ and $w$ appear. We specialise the variables in
$r(B)$ generically over $\overline{c}$, so that the polynomial define
an affine algebraic curve of which we take a projective model $\Cc$.

\prettyref{lem:alg-module-curves} implies immediately the first part
of the thesis.

If the hypothesis of the second part is satisfied, then \prettyref{lem:alg-module-curves}
implies that there must be two integers $k,l$ not both zero such
that the specialisation of $z^{k}w^{l}$ is constant. But the specialisation
was generic, hence $z^{k}w^{l}\in\acl(\overline{c}B')$ with $B':=B\setminus\{z\}$.

But now $|z^{k}w^{l}|$ (resp.\ $\Theta(z^{k}w^{l})$) only depends
on the moduli (resp.\ phases) of the functions in $B$, hence of
the functions in $B'$. The inductive hypothesis implies that $z^{k}w^{l}$
is multiplicatively dependent on $B'$ modulo constants; therefore,
$w$ is multiplicatively dependent on $B=B'\cup\{z\}$ modulo constants,
as desired.
\end{proof}
The above lemma suggests where to find some extra algebraically independent
functions for the proof of \prettyref{prop:one-dim-case-bad}. Indeed,
let us assume that $B$ is a transcendence base of the coordinate
functions of $\check{M}\cdot V$, and that $\overline{w}^{\overline{q}}$
is a coordinate function of $\check{M}\cdot V$ not in $B$ which
is interalgebraic with some $\overline{m}\cdot\overline{z}\in B$
over $B\setminus\{\overline{m}\cdot\overline{z}\}$.

Under the hypothesis of \prettyref{prop:one-dim-case-bad}, exactly
one function between $\overline{m}\cdot\overline{x}$ and $\overline{m}\cdot\overline{y}$
is a coordinate function of $M\cdot r(V)$, say $\overline{m}\cdot\overline{x}$,
and similarly say that $\overline{\rho}^{\overline{q}}$ is the only
component of $\overline{w}^{\overline{q}}$ appearing on $M\cdot r(V)$.
The lemma implies that $\overline{m}\cdot\overline{x}$ and $\overline{\rho}^{\overline{q}}$
are algebraically independent over $\overline{c}r(B\setminus\{\overline{m}\cdot\overline{z}\})$;
in this way, we gain a new independent function that was not in $r(B)$.

When the above choice of functions is possible, this proves the statement
of \prettyref{prop:one-dim-case-bad}. With a little abuse of notation,
we denote by $\pi_{a}$ the projection of $\G^{n}\to\G_{a}^{n}$ and
by $\pi_{m}$ the projection $\G^{n}\to\G_{m}^{n}$; the above choice
is possible exactly when the projection $\pi_{m\restriction V}$ does
not split, i.e., when $V$ is not the cartesian product of $\pi_{a}(V)$
and $\pi_{m}(V)$. In the non-split case, the previous argument yields
a proof, while we need a slightly different version for the split
case.

Recall that we are in the case 
\[
M=\left(\begin{array}{c|c}
N & 0\\
\hline 0 & P
\end{array}\right)
\]
with $M$ of rank $n$. If we define 
\[
\check{M}=\left(\begin{array}{c}
N\\
\hline P
\end{array}\right),
\]
 then for each coordinate function $\phi$ of $\check{M}\cdot V$,
exactly one function of $r(\{\phi\})$ is also a coordinate of $M\cdot r(V)$.
\begin{prop}
\label{prop:one-dim-case-bad-additive}Under the hypothesis of \prettyref{prop:one-dim-case-bad},
if $V$ does not split as the product of the two projections $\pi_{a}(V)\subset\G_{a}^{n}$
and $\pi_{m}(V)\subset\G_{m}^{n}$, then 
\[
\odim(M\cdot r(V))>\rank M.
\]
\end{prop}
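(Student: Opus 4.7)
The plan is to argue by contradiction: assume $\odim(M\cdot r(V))=\rank M=n$ and deduce that $V$ must split.

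\emph{Reduction.} Since $\check M=\binom{N}{P}\in\Mc_{n,n}(\Z)$ has maximum rank, the map $V\mapsto\check M\cdot V$ is a finite-to-one $\Z$-linear change of coordinates which acts separately on additive and multiplicative factors; it preserves absolute irreducibility, absolute freeness, perfect rotundity, and the splitting property $V=\pi_a(V)\times\pi_m(V)$. Up to a permutation of indices we may therefore replace $V$ by $\check M\cdot V$ and assume $N=(\mathrm{Id}_k\mid 0)$ and $P=(0\mid\mathrm{Id}_l)$, so the coordinate functions of $M\cdot r(V)$ are
\[
\{x_i,|w_i|\}_{i\leq k}\cup\{y_i,\Theta(w_i)\}_{i>k},
\]
with $x_i=\Re(z_i)$ and $y_i=\Im(z_i)$.

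\emph{Induced transcendence base.} For any transcendence base $T=\{z_i\}_{i\in A}\cup\{w_j\}_{j\in B}$ of the coordinate functions of $V$ over $\overline c$ with $|A|+|B|=n$, the $\sigma$-invariance and absolute irreducibility of $V$ show that $V$ is defined over $R$, so $V\times V^\sigma=V\times V$ has dimension $2n$, and $r(T)$ is algebraically independent of size $2n$. Define the subset of ``kept components''
\[
T^*:=\{x_i:i\in A,\,i\leq k\}\cup\{y_i:i\in A,\,i>k\}\cup\{|w_j|:j\in B,\,j\leq k\}\cup\{\Theta(w_j):j\in B,\,j>k\}\subset r(T),
\]
of size $n$, with complementary set $T^{**}:=r(T)\setminus T^*$ also of size $n$. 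The contradiction hypothesis gives $\td_{\overline c}T^*\leq n$, and by \prettyref{prop:one-dim-case-dico} applied to the ``complementary'' selection also $\td_{\overline c}T^{**}\leq n$; combined with $\td_{\overline c}r(T)=2n$, submodularity forces $T^*$ and $T^{**}$ to be algebraically independent, so $T^*$ is a transcendence base of the coordinate functions of $M\cdot r(V)$.

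\emph{Cross-type relation.} Since $V$ does not split, $\dim\pi_a(V)+\dim\pi_m(V)>n$, so some coordinate function $\phi\notin T$ must depend essentially on a coordinate of $T$ of the opposite type. Arranging a dual choice as needed, we may take $\phi=w_{j_0}$ with $j_0\leq k$ (so that the component $\psi:=|w_{j_0}|$ kept by $M$ lies in the range of \prettyref{lem:alg-module-general}) and $z_{i_0}\in A$ essential, i.e., $w_{j_0}\in\acl(T\cup\overline c)\setminus\acl((T\setminus\{z_{i_0}\})\cup\overline c)$; by exchange, $z_{i_0}\in\acl((T\setminus\{z_{i_0}\})\cup\{w_{j_0}\}\cup\overline c)$. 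Let $\xi\in\{\Re z_{i_0},\Im z_{i_0}\}$ be the component of $z_{i_0}$ kept by $M$. Under the contradiction hypothesis,
\[
\psi\in\acl(T^*\cup\overline c)\subset\acl(\{\xi\}\cup r(T\setminus\{z_{i_0}\})\cup\overline c),
\]
so one direction of interalgebraicity between $\psi$ and $\xi$ over $r(T\setminus\{z_{i_0}\})\cup\overline c$ holds. \prettyref{lem:alg-module-general} forbids both directions, hence
\[
\xi\notin\acl(\{\psi\}\cup r(T\setminus\{z_{i_0}\})\cup\overline c).\tag{$\star$}
\]

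\emph{Contradiction via the alternative transcendence base.} Apply the second step to $T':=(T\setminus\{z_{i_0}\})\cup\{w_{j_0}\}$, which is again a transcendence base of the coordinate functions of $V$ by exchange. Its kept-component set is $T'^*=(T^*\setminus\{\xi\})\cup\{\psi\}\subset\{\psi\}\cup r(T\setminus\{z_{i_0}\})$, and the same reasoning shows $T'^*$ is a transcendence base of the coordinate functions of $M\cdot r(V)$. Since $\xi$ is itself such a coordinate function, $\xi\in\acl(T'^*\cup\overline c)\subset\acl(\{\psi\}\cup r(T\setminus\{z_{i_0}\})\cup\overline c)$, contradicting $(\star)$.

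\emph{Main obstacle.} The delicate point is the third step: arranging the cross-type essential witness so that \prettyref{lem:alg-module-general} applies to the precise pair of components kept by $M$. The lemma covers $|w|$ against $\Re z$, $\Im z$, $\Theta z$ and $\Theta w$ against $|z|$, but not, e.g., $\Theta w$ against $\Re z$; the combinatorics of the non-splitting hypothesis must therefore be exploited to select a witness $(\phi,z_{i_0})$ landing in the covered pairs, possibly via a dual argument swapping the roles of additive and multiplicative coordinates.
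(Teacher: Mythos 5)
Your overall route is the same as the paper's: reduce along $\check{M}$, pick a transcendence base of coordinate functions, use non-splitting to produce a cross-type algebraic dependence, and feed the two ``kept'' components into \prettyref{lem:alg-module-general}; your two-base contradiction is essentially a repackaging of the paper's direct exhibition of $n+1$ algebraically independent coordinates of $M\cdot r(V)$. However, the point you flag as the ``main obstacle'' is a genuine gap in your write-up, and it is exactly the step the paper closes. You cannot in general arrange $j_{0}\leq k$, and no combinatorics of non-splitting will force it; what saves the argument is (i) symmetry plus exchange: ``$\phi$ essentially depends on $z_{i_{0}}$ over $T$'' is the same as ``$\phi$ and $z_{i_{0}}$ are interalgebraic over $\overline{c}\,(T\setminus\{z_{i_{0}}\})$'', so the exchange property always lets you place the \emph{additive} function inside the base and the \emph{multiplicative} one outside (this is the paper's ``up to replacing $\psi$ with $\phi$ in $B$''), leaving only pairs of the form $\{|w|,\Theta(w)\}$ against $\{\Re(z),\Im(z)\}$; and (ii) \prettyref{lem:alg-module-general} does cover $\Theta(w)$ against $\Re(z)$ and $\Im(z)$ --- this is the third case in the proof of \prettyref{lem:alg-module-curves}, read with $z$ and $w$ swapped --- and the paper's own proof of this proposition relies on that coverage, since it imposes no restriction on which components $\phi_{i},\psi_{j}$ survive the projection. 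So the missing idea is not a finer selection of the witness but one more application of the exchange trick you already use in your last step, together with the correct reading of the lemma.

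Two smaller defects. Your justification that $r(T)$ is algebraically independent of size $2n$ is wrong as stated: $\overline{c}$ being closed under $\sigma$ does not make $V$ invariant under $\sigma$, so ``$V$ is defined over $R$'' is unjustified (and unnecessary). The correct reason is that the projection of $V$ onto the coordinates in $T$ is constructible of dimension $|T|$, hence its realification has $o$-minimal dimension $2|T|$ and $r$ is injective on it; this is the same fact the paper uses silently in the proof of \prettyref{prop:one-dim-case-dico}. Finally, your normalisation to $N=(\mathrm{Id}_{k}\mid0)$, $P=(0\mid\mathrm{Id}_{l})$ presupposes $\rank\check{M}=n$; when the rows of $N$ and $P$ are not $\Q$-linearly independent the strict inequality already follows from \prettyref{prop:one-dim-case-dico}, and this case must be disposed of (as the paper does) before the reduction.
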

\begin{proof}
We may assume that $\rank M=\rank\check{M}=n$, otherwise the conclusion
would already follow from \prettyref{prop:one-dim-case-dico}.

If $V$ is not the product of $\pi_{a}(V)$ and $\pi_{m}(V)$, then
the same is true for $\check{M}\cdot V$ because we are assuming that
$\check{M}$ has rank $n$. In particular, if $B$ is any transcendence
basis over $\overline{c}$ of coordinate functions of $\check{M}\cdot V$,
we can find a function $\phi$ on $\pi_{a}(\check{M}\cdot V)$ and
a function $\psi$ on $\pi_{m}(\check{M}\cdot V)$, with one of them
in $B$, such that they are interalgebraic over $\overline{c}B\setminus\{\phi,\psi\}$.
Up to replacing $\psi$ with $\phi$ in $B$, using the exchange property,
we may assume that $\phi\in B$.

Let $B'$ be the subset of $r(B)$ of the functions appearing as coordinates
of $M\cdot r(V)$. If $\{\phi_{1},\phi_{2}\}=r(\{\phi\})$ and $\{\psi_{1},\psi_{2}\}=r(\{\psi\})$,
let $\phi_{i}$ and $\psi_{j}$ be the two functions appearing as
coordinates of $M\cdot r(V)$.

By the first consequence of \prettyref{lem:alg-module-general}, $\phi_{i}$
and $\psi_{j}$ are algebraically independent over $\overline{c}r(B\setminus\{\phi\})$.
This implies in particular that $\phi_{i}$ and $\psi_{j}$ are algebraically
independent over $\overline{c}B'\setminus\{\phi_{i}\}$, which in
turn means that $B'\cup\{\psi_{j}\}$ is algebraically independent
over $\overline{c}.$ But then 
\[
\odim(M\cdot r(V))\geq|B'\cup\{\psi_{j}\}|>|B'|\geq\rank M.
\]

\end{proof}
In order to analyse the case in which $\pi_{m}$ splits we need the
second part of \prettyref{lem:alg-module-general}.
\begin{prop}
\label{prop:one-dim-case-bad-mult} Under the hypothesis of \prettyref{prop:one-dim-case-bad},
if $V$ is the product of the two projections $\pi_{a}(V)\subset\G_{a}^{n}$
and $\pi_{m}(V)\subset\G_{m}^{n}$, then 
\[
\odim(M\cdot r(V))>\rank M.
\]
\end{prop}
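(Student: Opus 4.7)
I argue by contradiction: assume $\odim(M\cdot r(V))=n$ and derive the existence of an extra algebraically independent coordinate, mirroring the structure of the proof of \prettyref{prop:one-dim-case-bad-additive} but on the multiplicative side. Since $V=V_a\times V_m$ splits, I pick transcendence bases $B_a\subset\{\overline{m}_i\cdot\overline{z}\}_i$ on $V_a$ of size $d_a$ and $B_m\subset\{f_i:=\overline{w}^{\overline{m}_i}\}_i$ on $V_m$ of size $d_m$; by splitting, $B:=B_a\cup B_m$ is algebraically independent of size $n=d_a+d_m$. Index $B_m$ by $J_f\subseteq I:=\{1,\dots,n\}$, partitioned as $I=I_N\cup I_P$ according as $\overline{m}_i$ is an $N$- or $P$-row. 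Let $B'\subseteq r(B)$ collect the ``selected'' halves of $r(B)$ appearing as coordinates of $M\cdot r(V)$: real parts (resp.\ moduli) for $N$-rows, imaginary parts (resp.\ phases) for $P$-rows. Then $|B'|=n$, and under the hypothesis every coordinate of $M\cdot r(V)$ must lie in $\acl(B'\overline{c})$.

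The structural input is that absolute freeness of $V$ forces the kernel lattice $K_m$ of $V_m$ to be zero, so the whole family $\{f_i\}_{i\in I}$ is multiplicatively independent modulo constants, even though $\td(\{f_i\})=d_m<n$. I then choose indices $i\in I\setminus J_f$ and $j\in J_f$ lying in opposite row-types. Such a pair always exists: if $J_f$ meets both $I_N$ and $I_P$, pair with any $i\in I\setminus J_f$ of the opposite type (the set $I\setminus J_f$ has size $d_a\geq1$ and necessarily contains some type); if instead $J_f\subseteq I_N$, then $I\setminus J_f\supseteq I_P$ and we take $i\in I_P$ with $j\in J_f$ arbitrary (and symmetrically if $J_f\subseteq I_P$).

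Applying the first part of \prettyref{lem:alg-module-general} to $w=f_i\in\acl(B_m\overline{c})$, $z=f_j\in B_m$ (both multiplicative), with base $B_m$, I obtain that the selected halves (say $\Theta f_i$ and $|f_j|$, in the case $i\in I_P$, $j\in I_N$; the other case is symmetric) are not interalgebraic over $r(B_m\setminus\{f_j\})\cup\overline{c}$. Replicating the deduction of \prettyref{prop:one-dim-case-bad-additive}, non-interalgebraicity combined with $\Theta f_i\in\acl(r(B_m)\overline{c})$ and $\td_{\overline{c}}(r(B_m))=2d_m$ upgrades to genuine algebraic independence of $\Theta f_i$ and $|f_j|$ over $r(B_m\setminus\{f_j\})\overline{c}$. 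Since $B'\setminus\{|f_j|\}\subseteq r(B_a)\cup r(B_m\setminus\{f_j\})$ and the $B_a$-contribution is independent of $V_m$ by splitting, this yields $\Theta f_i\notin\acl(B'\overline{c})$, so $B'\cup\{\Theta f_i\}$ is an algebraically independent set of size $n+1$, contradicting $\odim(M\cdot r(V))=n$.

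The main difficulty will be the upgrade from ``not interalgebraic'' to ``algebraically independent over the rest of $B'$'' in the purely multiplicative setting. In the additive-vs-multiplicative setup of \prettyref{prop:one-dim-case-bad-additive} this comes immediately from the first part of \prettyref{lem:alg-module-general}, but in the multiplicative-vs-multiplicative case one likely has to supplement it with the second part of \prettyref{lem:alg-module-general}: the multiplicative independence of $\{f_i\}_I$ (which follows from $K_m=0$) ensures that any alleged algebraic dependence of the selected half of $f_i$ on the moduli in $B'\cap r(V_m)$ for $J_f\cap I_N$ together with the phases for $J_f\cap I_P$ would force a multiplicative dependence of $f_i$ on a proper subfamily of $\{f_j\}_{j\in J_f}$, contradicting the multiplicative independence. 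Edge cases where $k=0$ or $l=0$ need to be handled separately, essentially by direct inspection since one of the two groups of rows is absent.
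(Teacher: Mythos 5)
Your overall strategy is the same as the paper's (win one extra algebraically independent coordinate on the multiplicative side using \prettyref{lem:alg-module-general} together with the multiplicative freeness of $V$), but the pivotal step has a genuine gap. You pick $j\in J_f$ of type opposite to $i$ with no requirement that $f_j$ belong to a minimal subset $B_0\subseteq B_m$ such that $f_i\in\acl(\overline{c}B_0)$. If $f_i\in\acl(\overline{c}(B_m\setminus\{f_j\}))$ --- which your choice cannot exclude --- then the first part of \prettyref{lem:alg-module-general} applied to the pair $(f_i,f_j)$ is vacuous: in that case $\Theta(f_i)$ may already lie in $\acl(\overline{c}\,r(B_m\setminus\{f_j\}))$, so ``not interalgebraic with $|f_j|$'' carries no information, and the claimed upgrade to algebraic independence of $\Theta(f_i)$ and $|f_j|$ over $\overline{c}\,r(B_m\setminus\{f_j\})$ is simply false in that situation; the contradiction $\Theta(f_i)\notin\acl(B'\overline{c})$ does not follow. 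The fallback you sketch does not repair this: the second part of \prettyref{lem:alg-module-general} only converts a dependence of $|w|$ on moduli \emph{alone} (or of $\Theta(w)$ on phases \emph{alone}) into a multiplicative dependence; it says nothing about a dependence of a selected half on a mixture of moduli and phases, which is precisely the configuration your pairing leaves uncovered.

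The paper's proof is organised to avoid exactly this. It first reduces to the multiplicative projection, using $\odim M\cdot r(V)=\odim\pi_a(M\cdot r(V))+\odim\pi_m(M\cdot r(V))$ and $\odim\pi_a(M\cdot r(V))\geq\dim\pi_a(\check{M}\cdot V)$, so that no discussion of the additive halves (your splitting argument for $r(B_a)$) is needed. It then takes \emph{any} multiplicative coordinate $\psi$ outside a transcendence basis $B$ of $\pi_m(\check{M}\cdot V)$, passes to a \emph{minimal} $B_0\subseteq B$ with $\psi\in\acl(\overline{c}B_0)$, and case-splits on the selected halves of $B_0$: if some element of $B_0$ has the opposite-type half selected, the first part of the lemma applied to that element (where minimality makes the exchange argument work) yields the extra independent function; if all selected halves in $B_0$ have the same type, the second part of the lemma plus absolute freeness does. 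Note that your ``edge cases $k=0$ or $l=0$'' are just an instance of this same-type case and are not settled by inspection; more importantly, the same-type phenomenon can occur inside $B_0$ even when $k,l\geq1$, so the case analysis must be governed by the minimal support of the chosen non-basis coordinate, not by the global row types as in your proposal.
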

\begin{proof}
Since $V$ splits, then $\check{M}\cdot V$ splits as well, and we
have $\dim\check{M}\cdot V=\dim\pi_{a}(\check{M}\cdot V)+\dim\pi_{m}(\check{M}\cdot V)$.

Again, we may assume that $V$ is perfectly rotund and that $\rank M=n$,
otherwise the conclusion would already follow from \prettyref{prop:one-dim-case-dico}.

In particular, we may assume that 
\[
\dim\pi_{a}(\check{M}\cdot V)+\dim\pi_{m}(\check{M}\cdot V)=\rank M=n.
\]

Moreover,
\[
\odim\pi_{a}(M\cdot r(V))+\odim\pi_{m}(M\cdot r(V))=\odim M\cdot r(V).
\]

Note that $\odim\pi_{a}(M\cdot r(V))\geq\dim\pi_{a}(\check{M}\cdot V)$.
Therefore, it shall be sufficient to prove that \foreignlanguage{english}{$\odim\pi_{m}(M\cdot r(V))>\dim\pi_{m}(\check{M}\cdot V)$.}

By absolute freeness, the two projections cannot have dimension $0$,
so we must have $0<\dim\pi_{m}(\check{M}\cdot V)<n$. Let $B$ be
a transcendence basis over $\overline{c}$ of coordinate functions
of $\pi_{m}(\check{M}\cdot V)$, and let $\psi$ be a coordinate not
in $B$. Let $B_{0}\subset B$ be a minimal set such that $\psi\in\acl(\overline{c}B_{0})$.
Again, let $B'$ be the set of the functions in $r(B)$ appearing
as coordinates of $M\cdot r(V)$, and let $B_{0}':=B'\cap r(B_{0})$.

Let us suppose that $|\psi|$ is the member of $r(\{\psi\})$ appearing
as a coordinate of $M\cdot r(V)$. First, we claim that $B_{0}'\cup\{|\psi|\}$
is algebraically independent over $\overline{c}$.

By the first part of \prettyref{lem:alg-module-general}, $|\psi|$
is not interalgebraic with $\Theta(z)$ over $\overline{c}r(B_{0}\setminus\{z\})$
for any $z\in B_{0}$. In particular, $|\psi|$ is not interalgebraic
with $\Theta(z)$ over $\overline{c}B_{0}'\setminus\{|z|,\Theta(z)\}$.
If $\Theta(z)\in B_{0}'$ for some for some $z\in B_{0}$, then actually
$B_{0}'\cup\{|\psi|\}$ is algebraically independent over $\overline{c}$.
On the other hand, if $|z|\in B_{0}'$ for all $z\in B$, the absolute
freeness of $V$ and the second part of \prettyref{lem:alg-module-general}
imply that $B_{0}'\cup\{|\psi|\}$ is algebraically independent as
well.

In particular, since the functions in $B\setminus B_{0}$ are algebraically
independent over $\acl(\overline{c}B_{0})$, and in particular the
functions in $B'\setminus B_{0}'$ are independent over $\acl(B_{0}'\cup\{|\psi|\})$,
this implies that $B'\cup\{|\psi|\}$ is algebraically independent
over $\overline{c}$, and therefore 
\[
\odim\pi_{m}(M\cdot r(V))\geq|B'\cup\{|\psi|\}|>|B'|=\dim\pi_{m}(\check{M}\cdot V).
\]

If $\Theta(\psi)$ is the member of $r(\{\psi\})$ appearing as a
coordinate of $M\cdot r(V)$, it is sufficient to repeat the above
argument replacing every occurrence of $|\cdot|$ with $\Theta(\cdot)$
and every occurrence of $\Theta(\cdot)$ with $|\cdot|$. In particular,
\begin{eqnarray*}
 & \odim M\cdot r(V)=\odim\pi_{a}(M\cdot r(V))+\odim\pi_{m}(M\cdot r(V))>\\
 & \dim\pi_{a}(\check{M}\cdot V)+\dim\pi_{m}(\check{M}\cdot V)=\dim\check{M}\cdot V=\rank M,
\end{eqnarray*}
 as desired.
\end{proof}
This is enough to prove \prettyref{prop:one-dim-case-bad}.
\begin{proof}[Proof of~\prettyref{prop:one-dim-case-bad}]
 This is a corollary of \prettyref{prop:one-dim-case-bad-additive}
and \prettyref{prop:one-dim-case-bad-mult}.
\end{proof}
This completes the proof of \prettyref{thm:rotund}.
\begin{rem}
Note that the absolute (multiplicative) freeness in the proof of is
crucial for the split case of \prettyref{prop:one-dim-case-bad} to
hold, as shown by the following example.

Let $M\in\Mc_{k,n}(R)$, $N\in\Mc_{l,n}(\Z)$ be two matrices of maximum
rank with $k+l=n$. The variety defined by
\[
V_{M,N}:=\{\intv{\overline{z}}{\overline{w}}\in\G^{n}\,:\, M\cdot\overline{z}=\overline{0},\overline{w}^{N}=\overline{1}\}
\]
 has the property that
\[
\odim\left(\begin{array}{c|c}
\mathrm{Id} & 0\end{array}\right)\cdot V_{M,N}=k+l=n,
\]
 and indeed
\[
\dim\left(\begin{array}{c|c}
\mathrm{Id} & 0\end{array}\right)\cdot\check{V}_{M,N}=k+l=n.
\]

Note that with a careful choice of $M$ and $N$, we may even take
$V_{M,N}$ perfectly rotund and absolutely additively free.\end{rem}

\section{\label{sec:roots-g-res}Solutions and roots on the $\G$-restrictions}

As anticipated in \prettyref{sec:roots}, we need some kind of alternative
version of \prettyref{prop:solutions-in-roots} to work with dense
sets of real generic solutions. In order to do this, we produce a
suitable generalisation taking into accounts the $\G$-restrictions.
First of all, we introduce some other technical definitions.
\begin{defn}
An \emph{open subsystem} of $\Rc(V)$ is a collection of subsets $\Uc:=\{\Uc(W)\}_{W\in\Rc(V)}$
such that $\Uc(W)\subset W$ is open in $W$ w.r.t.\ the order topology.
\end{defn}
We extend the usual set-theoretic operations to the open subsystems:
the union of two or more open subsystems is the subsystem of the union
of the respective open sets, one subsystem $\Uc$ is contained in
another $\Uc'$ if for each $W\in\Rc(V)$ we have $\Uc(W)\subset\Uc'(W)$,
etc..
\begin{defn}
\label{def:dense-solved}A system $\Rc(V)$, for $V$ defined over
$\overline{c}$, with $\overline{c}$ closed under $\sigma$, is \emph{densely
solved} w.r.t. an open subsystem $\Uc$ if for all $W\in\Rc(V)$ there
is an infinite set of solutions of $W$, really algebraically independent
over $\acl(\overline{c})$, which is dense in $\Uc(W)$ w.r.t.\ the
order topology.

We say that $\Rc(V)$ is \emph{densely solved in $K_{E}$} if it is
densely solved in $K_{E}$ w.r.t.\ the trivial open subsystem $\Rc(V)$.
\end{defn}
Again, this definition does not depend on $\overline{c}$. Indeed,
if we use another $\overline{d}$ then \prettyref{prop:indep-up-to-finite}
implies that a set of algebraically independent points remains algebraically
independent when passing from $\overline{c}$ to $\overline{d}$,
up to removing a finite set. As there are no isolated points in any
of the varieties $W\in\Rc(V)$, the set of solutions remains dense
w.r.t\ an open subsystem after removing a finite set.
\begin{prop}
If $W\in\Rc(V)$, then $\check{W}\in\Rc(\check{V})$.\end{prop}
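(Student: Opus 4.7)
The plan is to show directly that if $q\cdot W=V$ for some $q\in\Z^{\times}$, then $q\cdot\check{W}=\check{V}$, and then invoke \prettyref{prop:abs-irred} to conclude that $\check W$ is absolutely irreducible and hence belongs to $\Rc(\check{V})$.

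The first key observation is that the realisation map $r$ is equivariant with respect to the $\Z$-action on $\G$. Explicitly, for $P=(z,w)\in\G$ one has $q\cdot P=(qz,w^{q})$, and a direct computation using $\Re(qz)=q\Re(z)$, $\Im(qz)=q\Im(z)$, $|w^{q}|=|w|^{q}$, $\Theta(w^{q})=\Theta(w)^{q}$ yields $r(q\cdot P)=q\cdot r(P)$. The same holds coordinatewise on $\G^{n}$, so applying $r$ to the equality $q\cdot W=V$ gives $q\cdot r(W)=r(V)$ as subsets of $\G^{2n}$.

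The second step is to transfer this equality to the Zariski closures. Since scalar multiplication by $q$ on $\G^{2n}$ is a finite (hence closed) morphism, $q\cdot\check{W}$ is Zariski closed; as it contains $q\cdot r(W)=r(V)$, it contains the closure $\check{V}$. For the reverse inclusion, continuity of the same morphism gives $q\cdot\check{W}=q\cdot\overline{r(W)}\subseteq\overline{q\cdot r(W)}=\overline{r(V)}=\check{V}$. Thus $q\cdot\check{W}=\check{V}$.

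Finally, \prettyref{prop:abs-irred} ensures that $\check{W}$ is absolutely irreducible (since $W$ is, being an element of $\Rc(V)$), so by definition $\check{W}\in\Rc(\check{V})$. There is no real obstacle here; the only point demanding a moment of care is the interplay between taking images and taking Zariski closures, which is resolved by noting that multiplication by the integer $q$ is a finite morphism and therefore both continuous and closed.
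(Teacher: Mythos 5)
Your proof is correct and follows essentially the same route as the paper: apply the realisation map to $q\cdot W=V$ to get $q\cdot r(W)=r(V)$, then pass to Zariski closures. You simply make explicit two points the paper leaves implicit, namely the equivariance $r(q\cdot P)=q\cdot r(P)$ and the fact that multiplication by $q$ is a finite (hence closed) morphism, which justifies $q\cdot\check{W}=\check{V}$, together with the absolute irreducibility of $\check{W}$ via \prettyref{prop:abs-irred}.
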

\begin{proof}
There is $q\in\N^{\times}$ such that $q\cdot W=V$. Taking the realisation
we obtain $q\cdot r(W)=r(V)$; taking the Zariski closure, $q\cdot\check{W}=\check{V}$.\end{proof}
\begin{prop}
If $V$ is Kummer-generic, then $\Rc(V)$ is densely solved if and
only if $\Rc(W)$ is densely solved for some $W\in\Rc(V)$.\end{prop}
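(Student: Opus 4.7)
The plan is to prove the two implications separately, with the reverse direction being the substantive one.

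For the forward direction, it suffices to note that $\Rc(W) \subseteq \Rc(V)$ whenever $W \in \Rc(V)$: if $r \cdot W'' = W$ and $q \cdot W = V$, then $qr \cdot W'' = V$, so $W'' \in \Rc(V)$. Hence each $W'' \in \Rc(W)$ inherits a dense really algebraically independent set of solutions from the hypothesis that $\Rc(V)$ is densely solved. The change of reference parameters from those defining $V$ to those defining $W$ removes at most finitely many elements, by an adaptation of \prettyref{prop:indep-up-to-finite} to the tuple $\zEz{\Re z, \Im z}$ underlying the real-algebraic-independence condition; since no variety in $\Rc(W)$ has isolated points, density is preserved.

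For the reverse direction, suppose $\Rc(W)$ is densely solved with $q \cdot W = V$, and take an arbitrary $W' \in \Rc(V)$ with $q' \cdot W' = V$. The key step is to use the Kummer-genericity of $V$ to exhibit a common root: let $W''$ be the unique absolutely irreducible variety with $qq' \cdot W'' = V$. Since $q \cdot (q' \cdot W'') = V = q \cdot W$, uniqueness applied to $V$ forces $q' \cdot W'' = W$, so $W'' \in \Rc(W)$; symmetrically $q \cdot W'' = W'$, so $W'' \in \Rc(W')$ as well. By hypothesis $W''$ admits a dense really algebraically independent set of solutions $\{z_j\}$, and I claim $\{q z_j\}$ is such a set for $W'$. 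Three things need checking, in parallel with \prettyref{prop:solutions-in-roots}. First, $\zEz{q z_j} = q \cdot \zEz{z_j} \in q \cdot W'' = W'$, so each $q z_j$ is a solution of $W'$. Second, density is preserved because the map $\mu_q \colon W'' \to W'$, $(z,w) \mapsto (q z, w^q)$, is continuous and surjective on $K$-points (surjectivity because $K$ is algebraically closed and $q \neq 0$), so the preimage of any non-empty open $U \subseteq W'$ is a non-empty open subset of $W''$ and is met by $\{z_j\}$. Third, real algebraic independence transfers because each component of $\zEz{\Re(q z), \Im(q z)} = \zEz{q \Re z, q \Im z}$ is interalgebraic over the algebraic closure of $\Q$ with the corresponding component of $\zEz{\Re z, \Im z}$ (multiplication by $q$ additively, and raising to the $q$-th power or taking the positive $q$-th root multiplicatively), so finite tuples have equal transcendence degrees.

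The main obstacle is the bookkeeping in the third point: tracking the change of base parameters from those defining $W''$ to those defining $W'$ (both sitting inside $\acl(\overline{c}_V E(\overline{c}_V))$) and verifying that the transcendence-degree equality survives this change up to a finite exception. This is an analogue of \prettyref{prop:indep-up-to-finite} for the doubled tuple $\zEz{\Re z, \Im z}$, which I expect to go through by the same argument since the discrepancy quantity $\Delta$ used in that proof remains bounded on both parameter sets.
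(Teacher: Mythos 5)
Your proof is correct and takes essentially the same route as the paper: both arguments hinge on using Kummer-genericity of $V$ to write every $W'\in\Rc(V)$ as $q\cdot W''$ with $W''\in\Rc(W)$ (your common-root uniqueness argument is exactly the paper's earlier unnamed corollary about $\Rc(M\cdot V)$, specialised to scalar matrices), and then pushing dense, really algebraically independent sets of solutions forward through the continuous finite-to-one map $(z,w)\mapsto(qz,w^{q})$. The paper merely compresses what you spell out, leaving the transfer of density and real algebraic independence, the change of defining parameters, and the trivial ``only if'' direction (take $W=V\in\Rc(V)$) implicit.
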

\begin{proof}
Since $V$ is Kummer-generic, each variety $W'\in\Rc(V)$ is of the
form $q\cdot W''$ for a $W''\in\Rc(W)$. If $\Rc(W)$ is densely
solved, then $W''$ has a dense set of really algebraically independent
solutions, which implies that $q\cdot W''=W'$ has one as well. In
particular, $\Rc(V)$ is densely solved.
\end{proof}
The following statement is the `right' version of \prettyref{prop:solutions-in-roots}
for densely solved systems.
\begin{prop}
\label{prop:dense-solutions-in-roots} Let $V\subset\G^{n}$ be a
Kummer-generic rotund variety and $K_{E}$ such that $\sigma\circ E=E\circ\sigma$.
Let $N,P\in\Mc_{n,n}(\Z)$ be two square integer matrix of maximum
rank, $\overline{z}\in\dom(E)^{n}$, and $X:=(N|P)\cdot\check{V}\oplus\zEz{\overline{z}}$.

If $\dim X=\dim V$ and $X$ is Kummer-generic, then there exists
an open subsystem $\Uc$ of $\Rc(X)$ such that $\Rc(V)$ is densely
solved if and only if $\Rc(X)$ is densely solved w.r.t.\ $\Uc$.\end{prop}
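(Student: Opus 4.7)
The plan is to exploit \prettyref{thm:rotund} together with both Kummer-genericity hypotheses to set up a transferable bijection between $\Rc(V)$ and $\Rc(X)$. First, since $\dim X = \dim V = n$, we have $\dim(N|P)\cdot\check{V} = n = \rank(N|P)$, so by \prettyref{thm:rotund} the variety $V$ must be perfectly rotund, compatibly with the hypothesis that $N$ and $P$ are square of maximum rank. Translation by $\zEz{\overline{z}/q}$ preserves Kummer-genericity (the map $Y \mapsto Y\oplus\zEz{\overline{z}/q}$ is a bijection on $q$-th roots), so $(N|P)\cdot\check{V}$ inherits it from $X$. Then, for each $q\in\N^{\times}$, the Kummer-genericity of $V$ yields a unique $W'_{q}\in\Rc(V)$ with $qW'_{q} = V$, and that of $X$ a unique $W_{q}\in\Rc(X)$ with $qW_{q} = X$; a direct computation shows $(N|P)\cdot\check{W'_{q}}\oplus\zEz{\overline{z}/q}$ is a $q$-th root of $X$, so by uniqueness it equals $W_{q}$. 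This yields a natural bijection $\psi\colon\Rc(V)\to\Rc(X)$, $W'_{q}\mapsto W_{q}$.

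Next, I would introduce a semi-algebraic map $\varphi_{q}\colon W'_{q}\to W_{q}$ by
\[
\varphi_{q}(\intv{\overline{v}}{\overline{w}}) := (N|P)\cdot r(\intv{\overline{v}}{\overline{w}})\oplus\zEz{\overline{z}/q},
\]
sending $(\overline{v},\overline{w})\in W'_{q}$ to $(N\Re\overline{v} + iP\Im\overline{v} + \overline{z}/q,\,|\overline{w}|^{N}\,\Theta(\overline{w})^{P}\,E(\overline{z}/q))$. Using $\sigma\circ E=E\circ\sigma$ and \prettyref{prop:commutes}, whenever $\overline{w} = E(\overline{v})$ the multiplicative coordinate equals $E$ of the additive coordinate, so $\varphi_{q}$ carries solutions of $W'_{q}$ to solutions of $W_{q}$. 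At a smooth point the $R$-differential of $\varphi_{q}$ has block structure built only from $N$ and $P$, both invertible over $\Q$, so $\varphi_{q}$ is a local $R$-diffeomorphism between the two $2n$-real-dimensional varieties $W'_{q}$ and $W_{q}$ on a dense open subset of $W'_{q}$; in particular $\varphi_{q}(W'_{q})$ contains a non-empty open subset of $W_{q}$ in the order topology. I define $\Uc(W_{q})$ to be the interior of $\varphi_{q}(W'_{q})$ in $W_{q}$, yielding the required open subsystem $\Uc$.

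The two directions then proceed symmetrically. If $\Rc(V)$ is densely solved, the dense really algebraically independent set of real generic solutions of $W'_{q}$ pushes through $\varphi_{q}$ to a set of the same kind in $W_{q}$: once we enlarge the parameter set by $\overline{z}/q$ and the $q$-th root $E(\overline{z}/q)$ (both algebraic over $\overline{c}E(\overline{c})$), $\varphi_{q}$ becomes algebraically invertible, so real genericity and real algebraic independence transfer directly, and continuity of $\varphi_{q}$ in the order topology maps the dense set to a dense subset of $\varphi_{q}(W'_{q})\supset\Uc(W_{q})$. Conversely, a really algebraically independent dense set of solutions of $W_{q}$ in $\Uc(W_{q})\subset\varphi_{q}(W'_{q})$ pulls back through $\varphi_{q}^{-1}$ to a really algebraically independent set of real generic solutions of $W'_{q}$, dense in $\varphi_{q}^{-1}(\Uc(W_{q}))$, an open subset of $W'_{q}$ whose complement is lower-dimensional and hence dense in $W'_{q}$.

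The hard part will be the careful bookkeeping of parameters in the interplay $\varphi_{q}\leftrightarrow\varphi_{q}^{-1}$: the translation by $\zEz{\overline{z}/q}$ forces the algebraic base to be enlarged to include a specific $q$-th root of $E(\overline{z})$, and only after this enlargement does $\varphi_{q}$ become bi-algebraic on the appropriate open subset, which is what makes real genericity and really algebraic independence transfer cleanly in both directions. A secondary technicality is checking the well-definedness of the bijection $\psi$, which depends on the Kummer-genericity of both $V$ and $X$, together with the observation above that translations by interleaved vectors preserve Kummer-genericity.
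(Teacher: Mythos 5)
Your setup — the correspondence $W'_{q}\mapsto W_{q}$ between $\Rc(V)$ and $\Rc(X)$ via Kummer-genericity, the semi-algebraic map $\varphi_{q}(Q)=(N|P)\cdot r(Q)\oplus\zEz{\overline{z}/q}$, the choice of $\Uc(W_{q})$ inside its image, and the forward transfer of dense really algebraically independent solutions using $\sigma\circ E=E\circ\sigma$ — is essentially the paper's. The genuine gap is in the right-to-left direction: $\varphi_{q}$ does not become ``algebraically invertible'' after enlarging the parameter set, nor in any sense strong enough for what you need. On the multiplicative coordinates it acts by $\overline{w}\mapsto|\overline{w}|^{N}\Theta(\overline{w})^{P}$, and the phase part $\Theta(\overline{w})\mapsto\Theta(\overline{w})^{P}$ is finite-to-one (its kernel is the $P$-torsion of $(\Sb^{1})^{n}$); no choice of parameters makes it injective. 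Worse, even when a solution $\zEz{\overline{s}}$ of $W_{q}$ lies in $\varphi_{q}(W'_{q})$, every preimage is of the form $(\overline{v},\zeta\cdot E(\overline{v}))$ with $\zeta$ killed by $P$: the additive coordinate and the modulus of the multiplicative coordinate are forced, but the phase only up to $P$-torsion, and nothing guarantees that the branch with $\zeta=1$, i.e.\ the actual graph point $(\overline{v},E(\overline{v}))$, belongs to $W'_{q}$. So your pullback produces points of $W'_{q}$ that need not be solutions of $W'_{q}$ at all, and the implication ``$\Rc(X)$ densely solved w.r.t.\ $\Uc$ $\Rightarrow$ $\Rc(V)$ densely solved'' does not follow as written.

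The paper circumvents exactly this: instead of inverting $\psi_{W}=\varphi_{q}$, it chooses integer matrices $\tilde{N},\tilde{P}$ with $\tilde{N}\cdot N=\tilde{P}\cdot P=k\cdot\mathrm{Id}$ and uses the \emph{forward} semi-algebraic map $\tilde{\psi}_{W}(R)=(\tilde{N}|\tilde{P})\cdot r(R\oplus\zEz{-\overline{z}/q})$, which satisfies $\tilde{\psi}_{W}\circ\psi_{W}=k\cdot\mathrm{Id}$ and, being built from $r$, integer matrices and a translation by a graph point, automatically sends solutions to solutions. The price is that it lands in $k\cdot W'_{q}$ rather than in $W'_{q}$; one then obtains really algebraically independent solutions dense in $k\cdot\psi_{W}^{-1}(U_{Y})$, hence in $k\cdot W'_{q}$, and covers all of $\Rc(V)$ because every $W\in\Rc(V)$ equals $k\cdot W''$ for some $W''\in\Rc(V)$. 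This is the missing idea. Two minor points: your appeal to \prettyref{thm:rotund} to conclude that $V$ is perfectly rotund presupposes that $V$ is simple, which is not a hypothesis here (and is not needed — the assumption $\dim X=\dim V$ is only used to make $\varphi_{q}(W'_{q})$ full-dimensional in $W_{q}$); and the final density claim for $\varphi_{q}^{-1}(\Uc(W_{q}))$ in $W'_{q}$ needs the remark that $W'_{q}$ has pure local dimension $\odim(W'_{q})=2\dim(W'_{q})$, as in the paper.
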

\begin{proof}
For each $W\in\Rc(V)$ we have $\check{W}\in\Rc(\check{V})$; in particular,
there is a $q\in\N^{\times}$ such that $M\cdot\check{W}\oplus\zEz{\frac{1}{q}\overline{z}}\in\Rc(X)$.
Let $Y$ be $M\cdot\check{W}\oplus\zEz{\frac{1}{q}\overline{z}}$.
Since $X$ is Kummer-generic, as $W$ varies in $\Rc(V)$, the variety
$Y$ takes all the possible values in $\Rc(X)$.

For brevity, let $\psi_{W}:W\to Y$ be the map 
\[
\psi_{W}(Q):=\left(\left(\begin{array}{c|c}
N & P\end{array}\right)\cdot r(Q)\right)\oplus\zEz{\frac{1}{q}\overline{z}}.
\]

Let $\tilde{N},\tilde{P}$ be two integer matrices such that $\tilde{N}\cdot N=\tilde{P}\cdot P=k\cdot\mathrm{Id}$
for some $k\in\N^{\times}$. We define $\tilde{\psi}_{W}:\psi_{W}(W)\to W$
as
\[
\tilde{\psi}_{W}(R):=\left(\begin{array}{c|c}
\tilde{N} & \tilde{P}\end{array}\right)\cdot r\left(R\oplus\zEz{-\frac{1}{q}\overline{z}}\right).
\]

Both are semi-algebraic maps between $r(W)$ and $r(Y)$. We claim
that $\psi_{W}$ is finite-to-one onto its image. Indeed, if $\psi_{W}(Q)$
is a point in the image, then 
\[
\tilde{\psi}_{W}(\psi_{W}(Q))=k\cdot r(Q)=r(k\cdot Q).
\]
Since there are finitely many points $Q'$ such that $k\cdot Q'=k\cdot Q$,
and $r$ is injective, the map $\psi_{W}$ is finite-to-one.

This implies that $\odim(\psi_{W}(W))=\odim(Y)$. In particular, there
is an open subset $U_{Y}$ of $Y$ such that $U_{Y}\subset\psi_{W}(W)$
and 
\[
\odim(\psi_{W}(W)\setminus U_{Y})<\odim(\psi_{W}(W)).
\]
In particular, $\psi_{W}^{-1}(U_{Y})$ is an open subset of $V$ such
that 
\[
\odim(W\setminus\psi_{W}^{-1}(U_{Y}))<\odim(W).
\]

For each $Y$, we have selected an open subset $U_{Y}$. We claim
that the resulting open subsystem $\Uc$ is the desired one.

From now on, let $\overline{c}$ be a defining tuple for $\check{V}$
containing $r(\zEz{\overline{z}})$ and closed under $\sigma$. In
particular, $\overline{c}$ also defines both $V$ and $X$.

The left-to-right direction is clear: since $\psi_{W}$ is continuous,
semi-algebraic, defined over $\overline{c}$, and finite-to-one, it
sends really algebraically independent dense sets to really algebraically
independent sets dense in the image; in particular, the image of the
solutions in $W$ through $\psi_{W}$ will be a really algebraically
independent set dense in $U_{Y}$. As the subsystem $\Uc$ is composed
exactly by the $U_{Y}$'s, the conclusion follows.

For the right-to-left we proceed as above. The map $\tilde{\psi}_{W}$,
for $Q\in U_{Y}$, is such that $\tilde{\psi}_{W}\circ\psi_{W}$ is
exactly $k\cdot\mathrm{Id}$. Hence it is a finite-to-one algebraic
continuous map, so as above it preserves really algebraically independent
dense sets over $\overline{c}$.

In particular, if there is a dense set of really algebraically independent
points in $U_{Y}$, then there is a corresponding dense set of really
algebraically independent points in $\tilde{\psi}_{W}(U_{Y})$. However,
this set is exactly $k\cdot\psi_{W}^{-1}(U_{Y})$. Since $\psi_{W}^{-1}(U_{Y})$
has complement of $o$-minimal dimension strictly smaller than $W$,
and the local dimension of $W$ is always $\odim(W)=2\cdot\dim(W)$,
we have that $\psi_{W}^{-1}(U_{Y})$ is dense in $W$; hence, its
multiple $k\cdot\psi_{W}^{-1}(U_{Y})$ is dense in $k\cdot W$.

In particular, the image of the solutions through $\tilde{\psi}_{W}$
is dense in $k\cdot W$. But for all $W\in\Rc(V)$ there is a $W'$
such that $k\cdot W'=W$; therefore, if all the open sets in the subsystem
$\Uc$ contain a dense set of really algebraically independent solutions,
the same is true for all the varieties in $\Rc(V)$.\end{proof}

\section{\label{sec:proof}$K_{E_{|K|}}$ is a Zilber field}

We finally proceed to the full verification that all the axioms listed
in \prettyref{sec:defaxioms} are satisfied in the $E$-field $K_{E_{|K|}}$
produced in \prettyref{sec:cons}.
\begin{prop}
\label{prop:obvious}$K_{E_{|K|}}$ satisfies (ACF\textsubscript{0}),
(E), (LOG), (SEC).\end{prop}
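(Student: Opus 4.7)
The claim splits into four parts, of which only (SEC) has any real content. For (ACF\textsubscript{0}) there is nothing to check, since the underlying field $K$ is fixed throughout the construction and is algebraically closed of characteristic zero by the hypotheses of \prettyref{thm:from-invo}. For (E), I would verify that each finite operation \textsc{domain}, \textsc{image}, \textsc{sol} extends the current function $\Q$-linearly to a new $\Q$-vector space complement, defining the values on a coherent system of roots, and therefore preserves the partial-$E$-field axioms; \textsc{roots} is a countable iteration of \textsc{sol}, and \textsc{limit} is simply the directed union of group homomorphisms. That the final domain equals $K$ follows from exhausting the enumeration $\{\alpha_{j}\}_{j<|K|}$ of $R\cup iR$: any $z\in K$ decomposes uniquely as $x+iy$ with $x,iy\in R\cup iR$, so step \prettyref{enu:rdom} at the relevant stages places both $x$ and $iy$ in the domain. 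The argument for (LOG) is symmetric, using the enumeration $\{\beta_{j}\}_{j<|K|}$ of $R_{>0}\cup\Sb^{1}$ together with the polar decomposition $w=|w|\Theta(w)$ of any $w\in K^{\times}$.

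The substantive axiom is (SEC), and here the plan is to first apply \prettyref{fact:sec-is-minus} to reduce to (SEC\textsubscript{1}): it suffices to exhibit, for each simple Kummer-generic variety $V$ over $K$, some tuple $\overline{c}$ over which $V$ is $E$-defined and an infinite algebraically independent set of solutions of $V(\overline{c})$ in $K_{E_{|K|}}$. By construction every such variety appears in the enumeration $\{V_{j}\}_{j<|K|}$---exactly once if $K$ is uncountable, countably often if $K$ is countable---so the two cases should be treated separately.

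In the uncountable case the unique occurrence $V=V_{k}$ triggers step \prettyref{enu:rsol}, which applies \textsc{roots} to $V(\overline{c})$; by the very definition of that operation the system $\Rc(V)$ becomes densely solved, so in particular $V(\overline{c})$ itself acquires a dense set of really algebraically independent---and hence algebraically independent---solutions. In the countable case each $V$ appears as $V_{k}$ for infinitely many $k$, and at each occurrence a single call to \textsc{sol} contributes one solution whose real and imaginary components have transcendence degree $2\dim V$ over the current field, thus over everything constructed in earlier stages. Consequently every new solution is algebraically independent from all the previous ones, and the infinitely many occurrences of $V$ in the list assemble the required infinite algebraically independent set.

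The single point that needs a sanity check---and the only place where nontrivial geometric input enters---is that \textsc{sol} is always executable, i.e.\ that a point in $r(V)$ with the prescribed transcendence-degree condition actually exists. This is where \prettyref{thm:rotund} is used: since $V$ is simple, its $\G$-restriction $\check{V}$ is absolutely irreducible of algebraic dimension $2\dim V$, and $r(V)$ has the same semi-algebraic dimension; because the working field $F$ is maintained of infinite transcendence degree over $K$ at every stage, a real-generic point over $F\cup\overline{c}E(\overline{c})$ genuinely exists in $r(V)$. Granted this, (SEC\textsubscript{1}) holds, the reduction yields (SEC), and the proposition follows. The main obstacle is not here but in the subsequent axioms: verifying (SP), (STD), and (CCP) against the \textsc{limit} step is what occupies the remaining sections.
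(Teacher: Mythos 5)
Your proposal is correct and follows essentially the same route as the paper's proof: (ACF\textsubscript{0}) is immediate, (E) and (LOG) follow from exhausting the enumerations of $R\cup iR$ and $R_{>0}\cup\Sb^{1}$ together with the additive and polar decompositions, and (SEC) is reduced via \prettyref{fact:sec-is-minus} to the fact that every simple Kummer-generic variety appears in $\{V_{j}\}$ and acquires infinitely many algebraically independent solutions through \textsc{sol}/\textsc{roots}. Your explicit countable/uncountable case split and the remark that \prettyref{thm:rotund} guarantees the existence of real generic points merely spell out what the paper leaves implicit in the construction.
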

\begin{proof}
The axiom (ACF\textsubscript{0}) is trivially true, as $K$ is algebraically
closed and has characteristic $0$.

At each substep, we define $E$ on some new elements $\Q$-linearly
independent from the previous domain; hence, the function is well-defined
and it satisfies the equation $E(x+y)=E(x)\cdot E(y)$. Moreover,
since we run over the whole enumeration $\{\alpha_{j}\}$ of $R\cup iR$,
and $R+iR=K$, we have $\dom(E_{|K|})=K$. Therefore $K_{E_{|K|}}$
satisfies (E).

Similarly, we run over the whole enumeration $\{\beta_{j}\}$ of $R_{>0}\cup\Sb^{1}$,
and $K^{\times}$ is $R_{>0}\cdot\Sb^{1}$, so $E_{|K|}(K)=K^{\times}$,
i.e., $K_{E_{|K|}}$ satisfies (LOG).

Every Kummer-generic simple variety $V$ appears in the sequence $\{V_{j}\}$,
so for some finite $\overline{c}$ there are infinitely many $\overline{z}$
such that $\zEEz{E_{|K|}}{\overline{z}}\in V_{j}$, and they are algebraically
independent over $\overline{c}$. By \prettyref{fact:sec-is-minus},
$K_{E_{|K|}}$ satisfies (SEC).
\end{proof}
Note that moreover the set of the solutions of Kummer-generic simple
varieties is dense, so that axiom (DEN) of \prettyref{sec:involuntary}
is satisfied as well.
\begin{prop}
\label{prop:std}Let $K_{E}\subset K_{E'}$ be a partial $E$-field
extension produced by a finite operation. If $\im(E)$ contains all
the roots of unity, then $\ker(E)=\ker(E')$.

In particular, $\ker(E_{j})=i\omega\Z$ for all $j\leq|K|$, and $K_{E_{|K|}}$
satisfies (STD) with $\ker(E_{|K|})=i\omega\Z$.\end{prop}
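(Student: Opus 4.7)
The plan is to prove the kernel-preservation statement by case analysis on the four finite operations (\textsc{domain}, \textsc{image}, \textsc{sol}, \textsc{roots}), and then to deduce the ``in particular'' clause by transfinite induction on $j\leq|K|$. The induction itself is essentially formal: $\ker(E_0)=i\omega\Z$ is immediate from $E_0(i(p/q)\omega)=\zeta_q^p$; the limit step is trivial since $\ker(\bigcup_k E_k)=\bigcup_k\ker(E_k)$; and the side invariant ``$\im(E_j)$ contains all roots of unity'' is automatic because $\im(E_0)$ already exhausts them. So the real work is the kernel-preservation statement itself.

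For \textsc{domain} applied to $\alpha$, if $E'(z+(p/q)\alpha)=E(z)\beta^{p/q}=1$, then $\beta^p=E(z)^{-q}\in F$, where $F$ is the field generated by $D\cup E(D)$ as in the setup of the operations. Since $\beta$ was chosen with $\beta\notin\acl(F\cup\{\alpha\})$, in particular $\beta$ is transcendental over $F$, which forces $p=0$; then $E(z)=1$ and $z\in\ker(E)$. For \textsc{image} applied to $\beta\notin\im(E)$, the analogous equation with $p\neq 0$ gives $\beta^{|p|}\in\im(E)$, say $\beta^{|p|}=E(w)$. Setting $u=w/|p|\in D$, the ratio $\beta/E(u)$ is a $|p|$-th root of unity, hence lies in $\im(E)$ by the hypothesis, so $\beta\in\im(E)$---a contradiction. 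Thus $p=0$ once more, and $z\in\ker(E)$.

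The operations \textsc{sol} and \textsc{roots} are handled uniformly by passing to the $\G$-restriction $\check{V}$. The construction only applies \textsc{sol} to simple varieties: either directly to some $V_k$, or inside \textsc{roots} to a $W\in\Rc(V_k)$. Simpleness descends along roots, since if $q\cdot W=V$ then $\dim M\cdot W=\dim M\cdot V$ for every $M\in\Mc_{n,n}(\Z)$, because $q\cdot$ is a finite morphism and $M\cdot(q\cdot W)=q\cdot(M\cdot W)$. Hence \prettyref{thm:rotund} applies and $\check{V}$ is absolutely free and rotund. The point chosen in \textsc{sol} lies in $r(V)\subset\check{V}$ with transcendence degree $2\dim V=\dim\check{V}$ over the relevant base $F(\overline{c}E(\overline{c}))$, so it is a generic point of $\check{V}$; extending $E$ by $E'(\alpha_j)=\beta_j$ and $E'(i\gamma_j)=\delta_j$ and then $\Q$-linearly is precisely the setup of \prettyref{fact:rotund-is-strong}, which yields $\ker(E')=\ker(E)$. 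Since \textsc{roots} is a countable iteration of such \textsc{sol} steps followed by a directed union, the conclusion propagates through it as well.

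The main obstacle is this last case: the appeal to \prettyref{fact:rotund-is-strong} is legitimate only once we know $\check{V}$ is absolutely free and rotund, which is the genuine content of \prettyref{thm:rotund}. By contrast, the \textsc{domain} case is pure transcendence over $F$, and the \textsc{image} case is a short divisibility argument specifically enabled by the root-of-unity hypothesis; both are essentially elementary.
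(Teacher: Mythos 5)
Your proof is correct and takes essentially the same route as the paper: a case analysis over the four finite operations, with \textsc{domain} handled by transcendence of $\beta$ over $F$, \textsc{image} by the roots-of-unity hypothesis, \textsc{sol} and \textsc{roots} via Theorem \ref{thm:rotund} combined with Fact \ref{fact:rotund-is-strong}, and then the formal transfinite induction for the ``in particular'' clause. Your treatment of the \textsc{image} case (identifying $\beta/E(u)$ as a root of unity) and your remark that simpleness passes to the varieties in $\Rc(V)$ are slightly more explicit than the paper's wording, but they are the same argument.
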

\begin{proof}
It is clear that $\ker(E)\subset\ker(E')$, so it is sufficient to
prove the other inclusion.

\textbf{\textsc{domain.}} If $\alpha\in D$, the conclusion is clear,
so we may assume that $\alpha\notin D$. Let us suppose that in $\dom(E')$
there is an element $x$ such that $E'(x)=1$. By definition of $E'$,
it means that there is a $z\in D$ and a rational number $\frac{p}{q}$
such that $x=z+\frac{p}{q}\alpha$ and $E'(x)=E(z)\cdot\beta^{p/q}=1$.
This means that $\beta^{p}=E(z)^{-q}\in E(D)$, but $\beta$ is transcendental
over $E(D)$, so we must have $p=0$. This implies $x=z\in D$, and
in turn $x\in\ker(E)$.

\textbf{\textsc{image.}} If $\beta\in E(D)$, the conclusion is again
clear, so we may assume that $\beta\notin E(D)$. As above, if $x\in\dom(E')$
is such that $E'(x)=1$, then $x=z+\frac{p}{q}\alpha$ and $E'(x)=E(z)\cdot\beta^{p/q}=1$
for some $z\in D$ and some rational number $p/q$. However, since
$\beta$ is not a root of unity, then $z\neq0$; if $p\neq0$ this
implies $\beta=E(-\frac{q}{p}z)$, i.e., $\beta\in E(D)$, a contradiction.
Therefore, $p=0$. This implies again $x=z\in D$, so $x\in\ker(E)$.

\textbf{\textsc{sol}}\textbf{.} By \prettyref{thm:rotund}, if $V$
is absolutely free, then $\check{V}$ is absolutely free. By \prettyref{fact:rotund-is-strong},
$\ker(E)=\ker(E')$.

\textbf{\textsc{roots.}} This operation is just a sequence of applications
of \textsc{sol}, hence $\ker(E)=\ker(E')$.

Now, the kernel of $E_{0}$ is exactly $i\omega\Z$. Therefore, $\ker(E_{j})=\ker(E_{j+1})$
for all $j<|K|$. The limit case is trivial.

\textbf{\textsc{limit.}} Since the equality holds at each finite operation,
it must hold at the limit as well: $\ker(E')=\bigcup_{k'<j}\ker(E_{k'})=\ker(E_{k})$
for any $k<j$.

In particular, for all $j\leq|K|$, $\ker(E_{j})=\ker(E_{0})=i\omega\Z$.\end{proof}
\begin{prop}
\label{prop:sp}Let $K_{E}\subset K_{E'}$ be a partial $E$-field
extension produced by a finite operation. Then $K_{E}\leq K_{E'}$.

In particular, $K_{E_{j}}$ satisfies (SP) for all $j\leq|K|$, and
$K_{E_{|K|}}$ satisfies (SP).\end{prop}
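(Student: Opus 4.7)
My plan is to verify the strong embedding $K_E \leq K_{E'}$ case by case for each of the four finite operations, and then deduce by transfinite induction that $K_{E_j}$ satisfies (SP) at every stage $j \leq |K|$, using \prettyref{fact:strong-keeps-sc} at successor steps and a direct argument at limit ordinals.

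For \textsc{domain} (and symmetrically \textsc{image}) the case $\alpha \in D$ is trivial. Otherwise $\dom(E') = D \oplus \Q\alpha$, and any finite $X \subset \dom(E')$ has $\ld_\Q(X/D) \leq 1$. If $\ld_\Q(X/D) = 1$, then modulo $D$ the set $X$ is generated by $\alpha$, and modulo $E(D)$ the image $E'(X)$ is a set of powers of $\beta$; thus $\td_\Q(X \cup E'(X) / D \cup E(D)) = \td_F(\alpha, \beta)$, which is at least $1$ since $\beta$ was chosen outside $\acl(F \cup \{\alpha\})$. Hence $\delta(X/D) \geq 0$ and $K_E \leq K_{E'}$.

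For \textsc{sol}, the extension adjoins to the domain the $2n$ elements $\alpha_1, i\gamma_1, \ldots, \alpha_n, i\gamma_n$, with images $\beta_1, \delta_1, \ldots, \beta_n, \delta_n$ respectively. By the transcendence condition imposed on the chosen point, $((\alpha_1, \beta_1), (i\gamma_1, \delta_1), \ldots) \in \check{V}$ is a generic point of $\check{V}$ over $F(\overline{c}, E(\overline{c}))$, and $\dim \check{V} = 2 \dim V$. By \prettyref{thm:rotund}, $\check{V}$ is absolutely irreducible, absolutely free and rotund; \prettyref{fact:rotund-is-strong} therefore applies to $\check{V}$ at this generic point and yields $K_E \leq K_{E'}$. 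For \textsc{roots}, $E'$ is the union of a countable chain $E = E_0 \subset E_1 \subset \cdots$ where each step is either the identity or an application of \textsc{sol}. Iterating the previous paragraph together with \prettyref{fact:strong-keeps-sc} gives $K_E \leq K_{E_m}$ for every finite $m$; for any finite $X \subset \dom(E')$ choose $m$ with $X \subset \dom(E_m)$, and observe that the predimension of $X$ over $\dom(E)$ computed in $K_{E'}$ coincides with the one computed in $K_{E_m}$, hence is non-negative. Thus $K_E \leq K_{E'}$.

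For the ``in particular'' clause, $K_{E_0}$ satisfies (SP): for any finite $X \subset i\omega\Q$ of $\Q$-linear dimension $1$, the set $X \cup E_0(X)$ contains the transcendental $\omega$, so $\td_\Q(X \cup E_0(X)) \geq 1 = \ld_\Q(X)$. The case analysis above, combined with \prettyref{fact:strong-keeps-sc}, then propagates (SP) through every successor stage. At a limit ordinal $j$, any finite $X \subset \dom(E_j)$ already lies in some $\dom(E_k)$ with $k < j$, so $\delta(X) \geq 0$ by (SP) on $K_{E_k}$. Hence $K_{E_{|K|}}$ satisfies (SP). The main technical obstacle is the \textsc{sol} case, which depends on \prettyref{thm:rotund} to ensure $\check{V}$ is absolutely free and rotund so that \prettyref{fact:rotund-is-strong} can be applied.
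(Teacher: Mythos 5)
Your proof is correct and takes essentially the same route as the paper: a case-by-case check of the four finite operations (the predimension computation for \textsc{domain}/\textsc{image}, \prettyref{thm:rotund} together with \prettyref{fact:rotund-is-strong} applied to the generic point of $\check{V}$ for \textsc{sol}, and the union-of-strong-extensions argument for \textsc{roots}), followed by transfinite induction using \prettyref{fact:strong-keeps-sc} with a trivial limit step. The extra details you spell out (the explicit check of (SP) for $K_{E_{0}}$ and the reduction of an arbitrary finite $X$ to the single new generator) are left implicit in the paper's terser proof.
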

\begin{proof}
Let us check the operations one by one.

\textbf{\textsc{domain.}} If $\alpha\in D$, the conclusion is clear,
so we may assume that $\alpha\notin D$. But this holds:
\[
\td(\alpha,E'(\alpha)/D\cup E(D))\geq\td_{F(\alpha)}(\beta)=1=\ld(\alpha/D),
\]
 hence $\delta'(\alpha/D)\geq0$, i.e., $K_{E}\leq K_{E'}$.

\textbf{\textsc{image.}} If $\beta\in E(D)$, the conclusion is again
clear, so we may assume that $\beta\notin E(D)$. As above,
\[
\td(\alpha,E'(\alpha)/D\cup E(D))\geq\td(\alpha/F(\beta))=1=\ld_{D}(\alpha),
\]
 hence $K_{E}\leq K_{E'}$.

\textbf{\textsc{sol.}} By \prettyref{thm:rotund}, if $V$ is rotund,
then $\check{V}$ is rotund. By \prettyref{fact:rotund-is-strong},
$K_{E}\leq K_{E'}$.

\textbf{\textsc{roots.}} This operation is just a sequence of applications
of \textsc{sol}; but the limit of strong extensions is a strong extension,
hence $K_{E}\leq K_{E'}$.

Now, $K_{E_{0}}$ satisfies (SP). Again, the limit case is trivial.

\textbf{\textsc{limit.}} As before, the limit of strong extensions
is a strong extension, hence $K_{E_{k}}\leq K_{E'}$ for any $k<j$.

This implies that $K_{E_{0}}\leq K_{E_{j}}$, so by \prettyref{fact:strong-keeps-sc}
$K_{E_{j}}$ satisfies (SP).\end{proof}
\begin{prop}
\label{prop:invo}For all $j\geq-1$, $\sigma\circ E_{j}=E_{j}\circ\sigma$.\end{prop}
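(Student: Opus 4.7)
The plan is a straightforward transfinite induction on $j$, using Proposition~\ref{prop:commutes} as the only real tool: we check at each stage that the three conditions $\sigma(\dom(E_j))=\dom(E_j)$, $E_j(R\cap\dom(E_j))\subset R_{>0}$ and $E_j(iR\cap\dom(E_j))\subset\Sb^{1}$ are preserved, and then conclude $\sigma\circ E_j=E_j\circ\sigma$.

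For the base case, $E_0$ has domain $i\omega\Q\subset iR$, which is visibly $\sigma$-stable, and its image lies in the set of roots of unity, which is contained in $\Sb^{1}$; the real-part condition is vacuous. Thus the three conditions of Proposition~\ref{prop:commutes} hold trivially for $E_0$.

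For the inductive step at successor ordinals I would verify each of the finite operations in turn. In \textsc{domain} we pick $\alpha\in R\cup iR$ and a corresponding $\beta$: if $\alpha\in R$ we force $\beta\in R_{>0}$ and use the positive real $q$-th roots, so every newly defined value $E'(z+\frac{p}{q}\alpha)=E(z)\beta^{p/q}$ lies in the correct set $R_{>0}$ or $\Sb^{1}$ according to whether $z\in R$ or $z\in iR$; the case $\alpha\in iR$ is symmetric, with $\beta\in\Sb^{1}$ and a coherent system of unit-modulus roots. The same check applies to \textsc{image}. For \textsc{sol}, the point chosen has additive coordinates of the shape $\alpha_j+i\gamma_j$ with $\alpha_j,\gamma_j\in R$ and multiplicative coordinates $\beta_j\delta_j$ with $\beta_j\in R_{>0}$ and $\delta_j\in\Sb^{1}$; since $\beta_j^{1/q}$ is taken positive and $\delta_j^{1/q}\in\Sb^{1}$, every new element of the domain decomposes into a sum of a real and a purely imaginary part, each assigned a value in the correct target set. \textsc{roots} is built as a countable chain of \textsc{sol} steps, so its closure under the three conditions follows from the preceding verification.

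For limit stages and for the final step $j=|K|$, the \textsc{limit} operation is simply a union of partial exponential functions all of which already commute with $\sigma$, and each of the three conditions of Proposition~\ref{prop:commutes} is preserved under arbitrary unions of the chain: the domain is the union of $\sigma$-stable sets, hence $\sigma$-stable, and the image conditions are preserved since each element of $\dom(E_j)$ already lies in some earlier $\dom(E_k)$. The only delicate point in the whole argument is ensuring that in the successor step the closure under $\sigma$ of the tuple $\overline{c}$ (required to apply \textsc{sol} with parameters) is available; this is guaranteed by step~\ref{enu:rsol-dom} of the construction, which applies \textsc{domain} to every element of $\overline{c}$, and by the fact that the preparatory choices made in~(\ref{enu:rdom}) and~(\ref{enu:rimg}) respect the real/imaginary dichotomy. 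Hence $\sigma\circ E_j=E_j\circ\sigma$ holds at every stage, completing the induction.
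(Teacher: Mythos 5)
Your proof is correct and follows essentially the same route as the paper, which simply observes that by construction $\dom(E_j)$ is $\sigma$-stable, $E_j(R)\subset R_{>0}$ and $E_j(iR)\subset\Sb^{1}$ at every stage, and then invokes Proposition \ref{prop:commutes}. Your version merely spells out the stage-by-stage (transfinite) induction and the check of each operation that the paper leaves implicit.
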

\begin{proof}
Indeed, by construction we have $E_{j}(R)\subset R_{>0}$ and $E_{j}(iR)\subset\Sb^{1}$,
and $\dom(E)$ is always closed under $\sigma$; by \prettyref{prop:commutes},
this implies that $\sigma\circ E_{j}=E_{j}\circ\sigma$.
\end{proof}
Proving (CCP) is much more difficult, at least in the uncountable
case. Let $D_{j}$ be the domain of the function $E_{j}$, and $F_{j}$
the field generated by $D_{j},E(D_{j})$, for $j\leq|K|$. We note
that, under certain conditions, only the operations \textsc{sol} and
\textsc{roots} can introduce new solutions to a given variety.
\begin{lem}
\label{lem:domain-image-no-new-sol}Let $K_{E}\leq K_{E'}$ be a partial
$E$-field extension and let $X(\overline{c})\subset\G^{n}$ be a
perfectly rotund variety $E$-defined over $\overline{c}\subset\acl(\dom(E),\im(E))$.

If the extension is the result of \textsc{domain} or \textsc{image},
then the generic solutions of $X(\overline{c})$ in $K_{E'}$ are
all contained in $K_{E}$.\end{lem}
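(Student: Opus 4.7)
I plan to suppose for contradiction that $\overline{z} \in K^{n}$ is a generic solution of $X(\overline{c})$ in $K_{E'}$ with $\overline{z} \not\subset D := \dom(E)$. Treating only \textsc{domain} (\textsc{image} is dual), $\dom(E') = \span_{\Q}(D, \alpha)$ for some $\alpha \in R \cup iR$, with $E'(\alpha) = \beta \notin \acl(F \cup \{\alpha\})$, where $F := \Q(D, E(D))$. Write $\overline{z} = \overline{x} + \overline{q}\alpha$ uniquely with $\overline{x} \in D^{n}$ and $\overline{q} \in \Q^{n}$; the assumption becomes $\overline{q} \ne 0$, and by rescaling $\overline{z}$ (passing to $k \cdot X$, still perfectly rotund) I may assume $\overline{q} \in \Z^{n}$ is primitive.

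The first step is to parameterise algebraically: define $\phi : \G \to \G^{n}$ by $\phi(t, s) := (\overline{x} + \overline{q}t,\; E(\overline{x}) \cdot s^{\overline{q}})$, a morphism defined over $F$. Then $\zEz{\overline{z}} = \phi(\alpha, \beta)$, and the Zariski closure $Y := \overline{\phi(\G)}$ is the coset $P_{0} \oplus H$ of the two-dimensional algebraic subgroup $H := \{(\overline{q}t,\; s^{\overline{q}}) : (t, s) \in \G\} \subset \G^{n}$, where $P_{0} := (\overline{x}, E(\overline{x}))$. Since $\alpha, \beta$ are algebraically independent over $F$, $(\alpha, \beta)$ is generic in $\G$ over $F$, so $\zEz{\overline{z}}$ is generic in $Y$ over $F$. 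As $X$ is defined over $\overline{c}E(\overline{c}) \subset \acl(F)$, the subvariety $X \cap Y$ of $Y$ is $\acl(F)$-closed and contains this generic point, forcing $Y \subset X$.

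From $Y \subset X$ I conclude by cases. For $n = 1$, the map $\phi : \G \to \G$ is surjective (since $q_{1} \ne 0$), so $\G = Y \subset X$, contradicting $\dim X = 1$. For $n \ge 2$, I pick $M \in \Mc_{n-1,n}(\Z)$ whose rows form a $\Z$-basis of $\{\overline{m} \in \Z^{n} : \overline{m} \cdot \overline{q} = 0\}$; augmenting $M$ with a zero row to an $n \times n$ matrix of rank $n - 1$ and invoking simpleness of $X$ yields $\dim M \cdot X = n$. Hence $M|_{X} : X \to M \cdot X$ has equidimensional source and target, so generic fibres are zero-dimensional. But $M$ annihilates $H$, so the fibre over $M \cdot P_{0} = M \cdot \zEz{\overline{z}}$ contains $Y$ and has dimension at least $2$. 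Thus $M \cdot \zEz{\overline{z}}$ lies in the proper closed subvariety $\{y \in M \cdot X : \dim (M|_{X})^{-1}(y) \ge 1\}$, defined over $\overline{c}E(\overline{c})$—contradicting the fact that $M \cdot \zEz{\overline{z}}$ is generic in $M \cdot X$ over $\overline{c}E(\overline{c})$, which follows from the genericity of $\zEz{\overline{z}}$ in $X$ together with $\dim M \cdot X = \dim X$.

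The main technical obstacle will be the degenerate subcase in which $\alpha \in \acl(F)$: then $(\alpha, \beta)$ is generic not in $\G$ but only in $\{\alpha\} \times \G_{m}$, and the conclusion $Y \subset X$ weakens to $\phi(\{\alpha\} \times \G_{m}) \subset X$, a one-dimensional coset. The matrix argument for $n \ge 2$ still applies because the fibre dimension over $M \cdot P_{0}$ remains at least $1$; for $n = 1$, the forced equality $X = \phi(\{\alpha\} \times \G_{m}) = \{c\} \times \G_{m}$ for some constant $c$ directly violates the absolute freeness condition $\dim \pi_{a}(X) = 1$.
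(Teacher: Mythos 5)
Your proposal is correct, and it reaches the paper's conclusion by a more geometric route than the paper's own proof. The underlying ingredients are the same: you clear denominators, you use the transcendence guaranteed by the construction ($\beta\notin\acl(F\cup\{\alpha\})$ for \textsc{domain}, dually for \textsc{image}), you use simpleness through a rank-$(n-1)$ integer matrix annihilating the coefficient vector $\overline{q}$ (the paper's composition of the coordinate change $M$ with the projection $N$ is exactly such an annihilator), and you use absolute freeness for $n=1$. Where you differ is in the packaging: you form the coset $Y=P_{0}\oplus H$ swept out by the new value, force $Y\subset X$ (or, in the degenerate case, a one-dimensional coset inside $X$) by genericity over $\acl(F)$, and then contradict perfect rotundity via upper semicontinuity of fibre dimension; the paper instead compares transcendence degrees directly and concludes that $\alpha$ and $\beta=E'(\alpha)$ would both be algebraic over $\acl(F)$, contradicting $\td_{F}(\alpha,\beta)\geq1$, with a separate interalgebraicity argument for $n=1$. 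Your coset formulation buys a clean geometric picture (the contradiction is literally ``a positive-dimensional fibre over a generic point''), at the cost of a case split on whether $\alpha\in\acl(F)$, which the paper avoids entirely because it only ever uses $\td_{F}(\alpha,\beta)\geq1$ rather than genericity of $(\alpha,\beta)$ in $\G$. Two small points to tidy: for \textsc{image} the degenerate case is $\beta\in\acl(F)$ and the $n=1$ contradiction is with $\dim\pi_{m}(X)=1$ rather than $\dim\pi_{a}(X)=1$, so the ``dual'' remark should be spelled out with that mirror; and the primitivity of $\overline{q}$ is neither needed in your argument nor actually achieved by rescaling alone, so it is best dropped.
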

\begin{proof}
Clearly, we may assume that the extension is not trivial. For brevity,
let $D:=\dom(E)$ and $D':=\dom(E')$, and let us call $\alpha$,
$\beta$ the numbers such that $D'=D\oplus\alpha$ and $E'(\alpha)=\beta$.
Let $F$ be the field generated by $D,E(D)$.

Let $\overline{x}$ be a generic solution of $X(\overline{c})$ in
$K_{E'}$. The vector $\overline{x}$ must be of the form $\overline{z}+\alpha\cdot\overline{m}$,
where $\overline{m}$ is a vector in $\Z^{n}$ and $\overline{z}\in D^{n}$.
We want to prove that $\overline{m}=0$. By using a square integer
matrix $M$ of maximum rank, we may transform the solution to one
of the form 
\[
\zEz{M\cdot\overline{z}+\alpha\cdot m\cdot\overline{e}_{1}}\in M\cdot X(\overline{c}),
\]
 where $m$ is some integer and $\overline{e}_{1}$ is the vector
$(1,0,\dots,0)$$ $. The variety $M\cdot X$ is still perfectly rotund.
We distinguish two cases.

If $n\geq2$, let $\overline{e}_{j}$ be the vectors that are $1$
on the $j$-th coordinate, and $0$ on the rest. Let $N$ be the matrix
which is the identity on $\overline{e}_{j}$ for $j>1$, and the zero
map on $\overline{e}_{1}$. Since $M\cdot X$ is perfectly rotund,
then $\dim(N\cdot M\cdot X)=n$, hence the point $\zEz{N\cdot M\cdot\overline{z}}$
has transcendence degree $n$ over $\overline{c}E'(\overline{c})$,
and in particular over $\overline{c}E(\overline{c})$. In particular,
\begin{eqnarray*}
 & \td_{\overline{c}E(\overline{c})}\zEz{N\cdot M\cdot\overline{z}}=\\
 & =\td_{\overline{c}E(\overline{c})}\zEz{M\cdot\overline{z}+\alpha\cdot m\cdot\overline{e}_{1}}.
\end{eqnarray*}
If $m\neq0$, this implies that $\alpha$ and $\beta=E(\alpha)$ are
both algebraic over $\zEz{\overline{z}}\cup\overline{c}E(\overline{c})$,
and in particular over $F(\overline{c}E(\overline{c}))\subset\acl(F)$.
However, we have $\td_{F}(\alpha,\beta)\geq1$, so $m=0$. This implies
that $\overline{m}=0$, as desired.

If $n=1$, then $\dim X(\overline{c})=1$, and the new point is of
the form $z+\alpha\cdot m$. Since the variety is absolutely free,
if $m\neq0$ we have that $z+\alpha\cdot m$ and $E(z)\cdot\beta^{m}$
are both transcendental over $\overline{c}E(\overline{c})$, but interalgebraic
over $\overline{c}E(\overline{c})$; in other words, they are the
solution of an irreducible polynomial over $\overline{c}E(\overline{c})$
where both of them appear. In particular, $\alpha$ and $\beta$ are
interalgebraic over $F$. However, by construction we either have
$\td_{F(\alpha)}(\beta)=1$ or $\td_{F(\beta)}(\alpha)=1$, therefore
$m=0$. Again, this implies that $\overline{m}=0$, as desired.
\end{proof}
Moreover, under similar hypothesis, if the operation \textsc{sol}
applied to $V$ introduces new solutions to $X$, then there is a
strong relation between $V$ and $X$ as semi-algebraic varieties.
\begin{lem}
\label{lem:new-sol-alg-map}Let $K_{E}\leq K_{E'}$ be a partial $E$-field
extension and let $X(\overline{c})\subset\G^{n}$ be a perfectly rotund
variety $E$-defined over $\overline{c}\subset\acl(\dom(E),\im(E))$.
Suppose that the extension is the result of \textsc{sol} applied to
an absolutely irreducible, absolutely free rotund variety $V\subset\G^{m}$
which is $E$-defined over $\acl(\dom(E),\im(E))$.

If $X(\overline{c})$ contains a generic solution in $K_{E'}$ which
is not in $K_{E}$, then there is an integer matrix $M$ of maximum
rank, a vector $\overline{z}\subset\dom(E)$ and a $W\in\Rc(V)$ such
that
\[
M\cdot\check{W}\oplus\zEz{\overline{z}}=X.
\]
\end{lem}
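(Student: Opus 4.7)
The strategy is to read the new generic solution as coming from the SOL point, and from the decomposition recover $M$, $\overline{z}$, and $W$. Let SOL add the point
\[
\bigl((\alpha_{1}+i\gamma_{1},\beta_{1}\delta_{1}),\dots,(\alpha_{m}+i\gamma_{m},\beta_{m}\delta_{m})\bigr)\in V,
\]
so that $\dom(E')=\dom(E)\oplus\span_{\Q}(\alpha_{1},\dots,\alpha_{m},i\gamma_{1},\dots,i\gamma_{m})$. A generic solution $\overline{x}$ of $X(\overline{c})$ in $K_{E'}$ that is not already in $K_{E}$ admits a unique decomposition
\[
\overline{x}=\overline{z}'+N'\cdot\overline{\alpha}+P'\cdot i\overline{\gamma},
\]
with $\overline{z}'\in\dom(E)^{n}$ and $N',P'\in\Mc_{n,m}(\Q)$ not both zero. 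Choose a common denominator $q$ and set $\overline{\alpha}':=\tfrac{1}{q}\overline{\alpha}$, $\overline{\gamma}':=\tfrac{1}{q}\overline{\gamma}$, $M:=(qN'\mid qP')\in\Mc_{n,2m}(\Z)$, and let $W$ be the absolutely irreducible component of $\tfrac{1}{q}\cdot V$ through $\zEEz{E'}{\overline{\alpha}'+i\overline{\gamma}'}$; then $W\in\Rc(V)$, and $\check{W}$ is absolutely irreducible by \prettyref{prop:abs-irred}. Unwinding the definition of $r$ yields the key identity
\[
\zEEz{E'}{\overline{x}}\oplus\zEz{-\overline{z}'}=M\cdot r\bigl(\zEEz{E'}{\overline{\alpha}'+i\overline{\gamma}'}\bigr)\in M\cdot\check{W}.
\]

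Let $F$ be the field generated by $\dom(E)\cup\im(E)$. The strengthened genericity built into SOL --- namely $\td_{F(\overline{c}_{V}E(\overline{c}_{V}))}(\overline{\alpha},\overline{\gamma},\overline{\beta},\overline{\delta})=2\dim V$, combined with $\overline{c}_{V}\subset\acl(F)$ --- says exactly that $r(\zEEz{E'}{\overline{\alpha}+i\overline{\gamma}})$ is generic in $\check{V}$ over $F$. Passing through the finite-to-one scaling by $\tfrac{1}{q}$, the rescaled point $r(\zEEz{E'}{\overline{\alpha}'+i\overline{\gamma}'})$ is generic in $\check{W}$ over $F$, so $\td_{F}(M\cdot r(\ldots))=\dim M\cdot\check{W}$. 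Because $\zEz{\overline{z}'}\in F$ and $\overline{c}E(\overline{c})\subseteq\acl(F)$, the identity above forces
\[
\dim M\cdot\check{W}=\td_{F}\bigl(\zEEz{E'}{\overline{x}}\bigr)\leq\td_{\overline{c}E(\overline{c})}\bigl(\zEEz{E'}{\overline{x}}\bigr)=n.
\]
On the other hand, $M\cdot r(\ldots)$ is a generic point of the irreducible variety $X\oplus\zEz{-\overline{z}'}$, so $X\oplus\zEz{-\overline{z}'}\subseteq M\cdot\check{W}$; combined with the dimension bound, the two irreducible varieties coincide, which upon translating back yields $X=M\cdot\check{W}\oplus\zEz{\overline{z}'}$, as required with $\overline{z}:=\overline{z}'$.

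Finally, maximum rank for $M$ follows from absolute freeness. If $\rank M<n$, choose a non-zero $\overline{v}\in\Z^{n}$ with $\overline{v}^{\top}M=0$; then $\overline{v}^{\top}\cdot\bigl(M\cdot r(\ldots)\bigr)=(0,1)\in\G$, which is therefore a generic point of $\overline{v}^{\top}\cdot(X\oplus\zEz{-\overline{z}'})$. But the additive and multiplicative projections of that variety both have dimension at least $1$ by absolute freeness of $X$ (preserved under $\oplus$-translation), contradicting the fact that $(0,1)$ has transcendence degree zero over the base. The main obstacle is the dimension bound $\dim M\cdot\check{W}\leq n$: it relies on the full $2\dim V$ transcendence condition in SOL, which promotes the added point from a generic point of $V$ to a generic point of $\check{V}$; without it, the equality $\td_{F}(M\cdot r(\ldots))=\dim M\cdot\check{W}$ fails and the whole argument collapses.
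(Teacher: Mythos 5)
Your setup follows the paper's proof closely: you decompose the new solution as $\overline{x}=\overline{z}'+N'\cdot\overline{\alpha}+P'\cdot i\overline{\gamma}$, clear denominators to produce $M$ and $W\in\Rc(V)$, and observe that $\zEEz{E'}{\overline{x}}\oplus\zEz{-\overline{z}'}$ is the image under $M$ of a point of $\check{W}$ which is generic over $F:=\Q(\dom(E),\im(E))$. The gap is in the sentence ``$M\cdot r(\ldots)$ is a generic point of the irreducible variety $X\oplus\zEz{-\overline{z}'}$, so $X\oplus\zEz{-\overline{z}'}\subseteq M\cdot\check{W}$''. What you actually know is that $\overline{x}$ is a generic solution of $X(\overline{c})$, i.e.\ generic over $\overline{c}E(\overline{c})$; but to deduce that containment you need the point to be generic in $X\oplus\zEz{-\overline{z}'}$ over a field over which $M\cdot\check{W}$ and the translation by $\zEz{-\overline{z}'}$ are defined, e.g.\ over $\acl(F)$ --- that is, you need $\td_{F}\zEEz{E'}{\overline{x}}=n$. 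Your own computation gives only $\td_{F}\zEEz{E'}{\overline{x}}=\dim M\cdot\check{W}\leq n$, so at the decisive point the argument is circular: genericity over $F$ is equivalent to the equality you are trying to prove, and without it you only get the useless inclusion $M\cdot\check{W}\subseteq X\oplus\zEz{-\overline{z}'}$. The final ``maximum rank'' paragraph relies on the same unproved genericity (the translation is not defined over $\overline{c}E(\overline{c})$), and your closing remark misplaces the difficulty: the bound $\dim M\cdot\check{W}\leq n$ is the easy direction.

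This missing step is exactly where the paper uses the hypothesis that $X$ is perfectly rotund, which you never invoke (you use only $\delta(X)=0$ and absolute freeness). In the paper one multiplies by an invertible integer matrix $N$ so that $N\cdot M$ has a block $Q$ of rank $\rank M$ on top and zero rows below; if $\rank M<n$, the last $n-\rank M$ coordinates of the transformed solution lie in $\dom(E)$, and simpleness of $N\cdot X$ forces the transcendence degree of that block over $\overline{c}E(\overline{c})$ to exceed $n-\rank M$, while rotundity of $\check{V}$ together with the genericity of the \textsc{sol} point forces the remaining block to have transcendence degree at least $\rank M$ over $F$; the total then exceeds $n$, a contradiction. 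Hence $\rank M=n$, and only then does one get $\td_{F}\zEEz{E'}{\overline{x}}=n$ and the equality of varieties. Simpleness is genuinely indispensable here: for a non-simple $X$ (say a product $X_{1}\times X_{2}$, where an old generic solution of $X_{1}$ is concatenated with a new solution of $X_{2}$ built from the \textsc{sol} point) the new tuple can be a generic solution of $X(\overline{c})$ while the conclusion of the lemma fails, so no argument avoiding simpleness can close this gap.
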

\begin{proof}
Let us suppose that $\intv{\overline{\alpha}}{\overline{\beta}}\in\check{V}$
is the new point we add to the graph of the exponential function.
As before, let $D$ be the domain of the exponential function before
adding the point $\intv{\overline{\alpha}}{\overline{\beta}}$, and
let $D':=\span_{\Q}(D\cup\overline{\alpha})$ be the domain after.
The vector $\overline{x}$ must be of the form $\overline{z}+M\cdot\overline{\alpha}$,
for some matrix $M\in\Mc_{n,2m}(\Q)\setminus\{0\}$, and $\overline{z}\in D^{n}$.
Let $F:=\Q(D,E(D))$.

Suppose that $M$ is an integer matrix. In this case, note that $\overline{x}$
is also a generic solution (over $F$) of 
\[
\zEz{\overline{z}+M\cdot\overline{\alpha}}\in M\cdot\check{V}\oplus\zEz{\overline{z}}
\]

Recall that $\td_{F}(\overline{\alpha},E(\overline{\alpha}))=\dim\check{V}=2m$,
and that for any matrix $P$ we have $\td_{F}(P\cdot\overline{\alpha},E(P\cdot\overline{\alpha}))\geq\rank P$.

Now, let $N$ be an square integer matrix of maximum rank such that
the first rows of $N\cdot M$ forms a matrix $Q$ of maximum rank
equal to $\rank M$, and that the remaining rows are zero. Clearly,
$N\cdot\overline{z}+N\cdot M\cdot\overline{\alpha}$ is a generic
solution of $N\cdot X(\overline{c})$ in $K_{E'}$, and in particular
$\zEz{N\cdot\overline{z}+N\cdot M\cdot\overline{\alpha}}$ is a point
of $N\cdot X$ of transcendence degree $n$ over $\overline{c}E(\overline{c})$.

Let $N\cdot\overline{z}=\overline{z}'\overline{z}''$, where $\overline{z}'$
is formed by the first $\rank M$ coordinates and $\overline{z}''$
by the remaining $(n-\rank M)$ ones. Let us suppose that $n>\rank M$.
By simpleness of $N\cdot X(\overline{c})$, we have $\td_{\overline{c}E(\overline{c})}(\overline{z}'',E(\overline{z}''))>(n-\rank M)$.

In particular, we also have $\td_{\overline{c}E(\overline{c})\overline{z}''E(\overline{z}'')}(\overline{z}'+Q\cdot\overline{\alpha},E(\overline{z}'+Q\cdot\overline{\alpha}))<\rank M$.
However, this contradicts the fact that $\td_{F}(Q\cdot\overline{\alpha},Q\cdot E(\overline{\alpha}))\geq\rank Q=\rank M$.
This implies that $n=\rank M$.

So $\zEz{\overline{z}+M\cdot\overline{\alpha}}$ is a generic point
of $X$ \emph{over $F$}, while it is also a generic point of $M\cdot\check{V}\oplus\zEz{\overline{z}}$
over $F$. This immediately implies the equality $M\cdot\check{V}\oplus\zEz{\overline{z}}=X$.

If $M$ is not an integer matrix, let $l$ be a non-zero integer such
that $l\cdot M$ is an integer matrix and let $W\in\Rc(V)$ be the
variety such that $l\cdot W=V$ and $\zEz{\frac{1}{l}\overline{\alpha}}\in W$;
the above argument applied to $\frac{1}{l}\overline{\alpha}$, $W$,
$l\cdot M$ and $\overline{z}$ implies that 
\[
(l\cdot M)\cdot\check{W}\oplus\zEz{\overline{z}}=X,
\]
 as desired.\end{proof}
\begin{cor}
\label{cor:new-sol-densely}Under the hypothesis of \prettyref{lem:new-sol-alg-map},
if $V$ and $X$ are Kummer-generic, and $V$ is simple, there is
an open subsystem $\Uc$ of $\Rc(X)$ such that $\Rc(V)$ is densely
solved in $K_{E}$ if and only if $\Rc(X)$ is densely solved in $K_{E}$
w.r.t.\ $\Uc$.\end{cor}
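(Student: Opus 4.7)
The plan is to combine \prettyref{lem:new-sol-alg-map}, which gives the algebraic shape of $X$ in terms of $\check W$ for some $W \in \Rc(V)$, with \prettyref{prop:dense-solutions-in-roots}, which transfers dense solvability across such relations; the structure theorem \prettyref{thm:rotund} will serve to pin down the form of the matrix involved. By \prettyref{lem:new-sol-alg-map}, there exist $W \in \Rc(V)$, an integer matrix $M$ of maximum rank, and $\overline z \subset \dom(E)$ with $M \cdot \check W \oplus \zEz{\overline z} = X$, where $W \subset \G^m$ and $X \subset \G^n$. First I would observe that $W$ inherits Kummer-genericity from $V$ (any further root of $W$ would be a root of $V$, contradicting uniqueness) and simpleness from $V$ (the defining inequality for simpleness is preserved under the isogeny $q \cdot W = V$).

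The crucial step is determining the form of $M$. Since $X$ is perfectly rotund of dimension $n$ and $X = M \cdot \check W \oplus \zEz{\overline z}$, we have $\dim M \cdot \check W = n = \rank M$. By \prettyref{thm:rotund} applied to $W$, this forces one of two possibilities: either $\rank M = 2m$ (so $M$ is a square $2m \times 2m$ invertible integer matrix) or $M = (N \mid P)$ with $N, P \in \Mc_{m,m}(\Z)$ both square of maximum rank, which entails $n = m$.

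The main obstacle I anticipate is ruling out the first case, and my plan is to exploit the intrinsic non-simpleness of $\check W$. The matrix $N_0 := (\mathrm{Id}_m \mid \mathrm{Id}_m) \in \Mc_{m,2m}(\Z)$ of rank $m$ satisfies $N_0 \cdot r(W) = W$ (the additive and imaginary parts recombine into the original coordinates), and taking Zariski closures gives $\dim N_0 \cdot \check W = m = \rank N_0$. If $M$ were square invertible, let $\tilde M \in \Mc_{2m,2m}(\Z)$ satisfy $\tilde M \cdot M = |\det M| \cdot \mathrm{Id}$; then the rank-$m$ matrix $N_0 \cdot \tilde M$ would yield $\dim (N_0 \tilde M) \cdot X = \dim |\det M| \cdot N_0 \cdot \check W = m = \rank(N_0 \tilde M)$, since the translation by $(N_0 \tilde M) \cdot \zEz{\overline z}$ does not affect dimensions. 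This witnesses that $X$ fails simpleness, contradicting perfect rotundity, and so this case cannot arise.

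Once the first case is excluded, $M = (N \mid P)$ with $N, P$ square of maximum rank and $n = m$, and $\dim X = m = \dim W$, exactly matching the hypotheses of \prettyref{prop:dense-solutions-in-roots} applied to $W$ in place of $V$ (using the Kummer-genericity of both $W$ and $X$). This produces an open subsystem $\Uc$ of $\Rc(X)$ such that $\Rc(W)$ is densely solved in $K_E$ if and only if $\Rc(X)$ is densely solved w.r.t.\ $\Uc$. Since $V$ is Kummer-generic and $W \in \Rc(V)$, the earlier proposition that $\Rc(V)$ is densely solved iff $\Rc(W)$ is densely solved for some (equivalently, every) $W \in \Rc(V)$ completes the chain of equivalences, yielding the corollary.
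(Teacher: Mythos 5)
Your proposal is correct and follows essentially the same route as the paper: apply \prettyref{lem:new-sol-alg-map}, use \prettyref{thm:rotund} to force $M=(N|P)$ with $N,P$ square of maximum rank (and $W$, hence $V$, perfectly rotund), and then conclude via \prettyref{prop:dense-solutions-in-roots}. Your exclusion of the invertible case by transporting the witness $\left(\mathrm{Id}\,|\,\mathrm{Id}\right)$ through $\tilde{M}$ is just an unpacking of the paper's remark that an invertible $M$ would make $\check{W}$ simple, and your explicit use of the proposition relating $\Rc(V)$ and $\Rc(W)$ for Kummer-generic $V$ is a slightly more careful rendering of a step the paper leaves implicit.
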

\begin{proof}
By \prettyref{lem:new-sol-alg-map} we have the equality
\[
M\cdot\check{W}\oplus\zEz{\overline{z}}=X
\]
 for some $W\in\Rc(V)$. This implies $\dim M\cdot\check{W}=\rank M$;
by \prettyref{thm:rotund}, $M$ is either the form $(N|P)$, with
both $N$ and $P$ invertible of rank $n=m$, or $M$ itself is invertible.
The latter case can be excluded, as it would imply that $\check{W}$
is simple, a contradiction. Moreover, $W$, and in turn $V$, must
be perfectly rotund.

In particular, by \prettyref{prop:dense-solutions-in-roots} there
is an open subsystem $\Uc$ such that $\Rc(V)$ is densely solved
in $K_{E}$ if and only if $\Rc(X)$ is densely solved in $K_{E}$
w.r.t.\ $\Uc$.\end{proof}
\begin{prop}
\label{prop:ccp}For all $j\leq|K|$, $K_{E_{j}}$ satisfies (CCP).
In particular, $K_{E_{|K|}}$ satisfies (CCP).\end{prop}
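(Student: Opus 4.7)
The plan is a transfinite induction on $j\leq|K|$. The base case $j=0$ is immediate since $\dom(E_0)=i\omega\Q$ is countable. For the successor step $j=k+1$, I would combine \prettyref{prop:sp} (which supplies (SP) at both stages) with \prettyref{prop:pres-ccp}: each finite operation enlarges $\dom(E_k)$ by a countable-dimensional $\Q$-space, so CCP transports from $K_{E_k}$ to $K_{E_{k+1}}$.

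The crux is the limit step. I would fix a perfectly rotund Kummer-generic variety $X(\overline{c})\subset\G^n$ with $\overline{c}\subset\dom(E_j)$ and, via \prettyref{fact:ccp-is-minus} combined with \prettyref{prop:pres-ccp-field}, reduce to bounding its generic solutions in $K_{E_j}$. Writing the solution set as a union over $k<j$ of fresh solutions produced at stage $k+1$, define the contributing set
\[
\Sigma := \{\, k<j : \text{stage } k+1 \text{ introduces a new generic solution of } X(\overline{c})\,\}.
\]
By the inductive hypothesis together with \prettyref{prop:pres-ccp}, each $k\in\Sigma$ contributes at most countably many new solutions, so the core task is to show $|\Sigma|\leq\aleph_0$.

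To bound $|\Sigma|$, I would first invoke \prettyref{lem:domain-image-no-new-sol} to rule out contributions from \textsc{domain} and \textsc{image} (once $\overline{c}$ has entered the domain, which happens at a stage strictly below $j$). Every $k\in\Sigma$ must therefore arise from a \textsc{roots} or \textsc{sol} operation on some $V_k$. \prettyref{lem:new-sol-alg-map} then forces a relation $M\cdot\check{W}\oplus\zEz{\overline{z}}=X$ with $W\in\Rc(V_k)$, making $V_k$ perfectly rotund Kummer-generic, and \prettyref{cor:new-sol-densely} supplies an open subsystem $\Uc_k$ of $\Rc(X)$ for which $\Rc(V_k)$ is densely solved in $K_{E_k}$ precisely when $\Rc(X)$ is densely solved w.r.t.\ $\Uc_k$. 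The internal check of \textsc{roots} guarantees that a genuine contribution at stage $k+1$ can occur only when $\Rc(X)$ is not yet densely solved w.r.t.\ $\Uc_k$ at stage $k$; after the operation it is. Since $\Rc(X)$ is countable (at most $q^n$ varieties $W$ satisfy $q\cdot W=X$) and the order topology on each $W\in\Rc(X)$ is second-countable by the hypothesis on $K^\sigma$, every $\Uc_k$ is a union of elements of a fixed countable basis, and each contribution must add a previously uncovered basis element. This yields $|\Sigma|\leq\aleph_0$. Moreover, once the unique stage $k_X$ at which $X=V_{k_X}$ has been processed, \textsc{roots} renders $\Rc(X)$ fully densely solved, and no later $k>k_X$ can contribute.

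The main obstacle I anticipate is showing that the ``densely solved'' property is inherited through limit stages: this will hinge on \prettyref{prop:indep-up-to-finite}, which trades exact real algebraic independence for independence modulo a finite exceptional set, absorbed by the fact that no $W\in\Rc(X)$ has isolated points. Once that preservation is secured, the second-countability bookkeeping bounds $|\Sigma|$ and the induction closes.
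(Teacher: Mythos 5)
Your proposal reproduces the paper's architecture exactly: the same base and successor steps via \prettyref{prop:pres-ccp}, the same reduction at limits via \prettyref{prop:pres-ccp-field} and the Kummer-generic decomposition, and the same use of \prettyref{lem:domain-image-no-new-sol}, \prettyref{lem:new-sol-alg-map}, \prettyref{cor:new-sol-densely} and second countability. The one genuine divergence is the endgame: you bound the \emph{cardinality} of the contributing set $\Sigma$ by $\aleph_{0}$, arguing that each contributing stage must cover a basic open set not covered by the subsystems of earlier contributing stages, which injects $\Sigma$ into the countable family of basic opens of the countably many varieties in $\Rc(X)$. The paper instead extracts a countable subfamily $\{\Uc_{k_{p}'}\}$ covering $\bigcup_{k'}\Uc_{k'}$ and shows its projections are cofinal in $\Lambda$, concluding only that $\Lambda$ has countable \emph{cofinality} -- which already suffices, since the inductive hypothesis bounds the solutions inside each $K_{E_{k}}$, $k<j$. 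Your variant is formally stronger and a little cleaner, and it is legitimate: it rests on precisely the inference the paper makes in its contradiction step (an open set covered by previously densely solved open sets is itself densely solved, hence the internal check of \textsc{roots} skips and no new solution of $X$ can appear), together with the fact that a completed application of \textsc{roots} leaves the associated subsystem of $\Rc(X)$ densely solved permanently. Three small touch-ups: a single stage consists of $\omega$ applications of \textsc{sol}, each with its own subsystem (the paper's bookkeeping over $\Lambda\times\omega$), so you should fix one witnessing sub-step and subsystem per contributing stage; the per-stage countability of new solutions needs only the inductive hypothesis that $K_{E_{k+1}}$ satisfies (CCP), not \prettyref{prop:pres-ccp}; and the role of \prettyref{prop:indep-up-to-finite} is robustness of ``densely solved'' under change of defining parameters, not its preservation through limit stages, which is automatic because density and real algebraic independence over the fixed $\acl(\overline{c}E(\overline{c}))$ persist when $E$ is extended.
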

\begin{proof}
It is clear that $K_{E_{0}}$ satisfies (CCP), as $\dom(E_{0})=\omega\Q$
is countable. We want to prove that (CCP) holds on $K_{E_{j}}$ by
induction. If $K$ is countable, (CCP) is trivially true, so we shall
assume that $K$ is uncountable.

First of all, for all $j$ there is a finite or countable set $B_{j}$
such that $D_{j+1}=\span_{\Q}(D_{j}\cup B_{j})$. By \prettyref{prop:pres-ccp},
if $K_{E_{j}}$ satisfies (CCP), then $K_{E_{j+1}}$ satisfies (CCP).
We claim the induction works also at limit ordinals.

Let $j$ be a limit ordinal such that for all $k<j$, $K_{E_{k}}$
satisfies (CCP). By \prettyref{prop:pres-ccp-field}, in order to
prove (CCP) for $K_{E_{j}}$ it is sufficient to verify that for any
perfectly rotund variety $E_{j}$-defined over $D_{j}$, the number
of generic solutions is at most countable. As mentioned in the proof
of \prettyref{fact:sec-is-minus}, any such variety can be written
as $q\cdot(W_{1}\cup\dots\cup W_{m})$ with $W_{1},\dots,W_{m}$ Kummer-generic.
Therefore, we may restrict to Kummer-generic varieties $E_{j}$-defined
over $\acl(F_{j})$.

Let $X(\overline{c})\subset\G^{n}$ be a Kummer-generic perfectly
rotund variety $E_{j}$-defined over $\acl(F_{j})$. If $\overline{x}\in D_{j}^{n}$
is a generic solution of $X(\overline{c})$ in $K_{E_{j}}$, then
there is a smallest $k$ such that $\overline{x}\in D_{k+1}^{n}$
and $\overline{c}E_{j}(\overline{c})=\overline{c}E_{k}(\overline{c})\subset\acl(F_{k})$.
Let $\Lambda\subset j$ be the subset composed by the ordinals $k$;
the solutions of $X(\overline{c})$ in $K_{E_{j}}$ will be the union
of all the solutions in each $K_{E_{k+1}}$ for $k\in\Lambda$. We
claim that $\Lambda$ has countable cofinality, i.e., that the above
union can be written as a countable union.

Let $\overline{x}$ be a new generic solution of $X(\overline{c})$
contained in $D_{k+1}\setminus D_{k}$ for some $k\in\Lambda$. By
\prettyref{lem:domain-image-no-new-sol}, the solution $\overline{x}$
must have appeared during the substep~\prettyref{enu:rsol}. Since
we are working with $K$ uncountable, we are dealing with the operation
\textsc{roots}, which is a sequence of operations \textsc{sol.}

In order to account for the fact that at each step $\lambda\in\Lambda$
we apply the operation \textsc{sol} $\omega$ times, we introduce
an extra index in the enumeration: we consider the set $\Lambda\times\omega$
with the lexicographic order, so that $(k,l)$ correspond to the $l$-th
application of \textsc{sol} during the $k$-th application of \textsc{roots}.
We call $\Lambda'\subset\Lambda\times\omega$ the set of indices corresponding
to the applications of \textsc{sol} that actually produce new solutions.
With a little abuse of notation, we write $E_{k'}$, with $k'\in\Lambda'$,
to denote the exponential function to which \textsc{sol} is being
applied at the $k'$-th application. Note that if $\pi:\Lambda\times\omega\to\Lambda$,
then clearly $\pi(\Lambda')=\Lambda$.

By \prettyref{cor:new-sol-densely}, for each $k'\in\lambda'$, we
may pick one subsystem $\Uc_{k'}$ such that $\Rc(V_{k'})$ is densely
solved\emph{ }$K_{E_{k'}}$ if and only if $\Rc(X)$ is densely solved
in $K_{E_{k'}}$ w.r.t.\ $\Uc_{k'}$.

By second-countability of the order topology, we can extract an at
most countable subsequence $\{\Uc_{k_{p}'}\}_{p<f}$, for some $f\leq\omega$,
such that for each $Y\in\Rc(X)$, the union $\bigcup_{k'\in\Lambda'}\Uc_{k'}(Y)$
is covered by $\bigcup_{p<f}\Uc_{k_{p}'}(Y)$. We claim that the projection
$(\pi(k_{p}'))_{p<f}$ is cofinal in $\Lambda$.

Indeed, let $\tilde{k}\in\Lambda$ be an index such that $\tilde{k}>\pi(k_{p}')$
for all $p<f$. There exists an integer $l$ such that $\tilde{k}':=(\tilde{k},l)\in\Lambda'$;
let $V_{\tilde{k}'}$ be the corresponding variety to which we are
applying \textsc{sol}. By the previous argument, $\Rc(V_{\tilde{k}'})$
is densely solved in $K_{E_{\tilde{k}'}}$ if and only if $\Rc(X)$
is densely solved in $K_{E_{\tilde{k}'}}$ w.r.t.\ $\Uc_{\tilde{k}'}$.

However, if $U\subset Y\in\Rc(X)$ is an open set of the subsystem
$\Uc_{\tilde{k}'}$, then $U$ is covered by the union of some open
sets $U_{p}$, with $U_{p}\in\Uc_{k_{p}'}$. By construction, after
the steps $\pi(k_{p}')$, which correspond to full applications of
\textsc{roots}, and therefore in $K_{E_{\tilde{k}'}}$, the open sets
$U_{p}$ contain dense sets of really algebraically independent solutions.
But then the open set $U$ contains a dense set of really algebraically
independent solutions in $K_{E_{\tilde{k}'}}$. This means that the
family $\Rc(X)$ is densely solved in $K_{E_{\tilde{k}'}}$ w.r.t.\ $\Uc_{\tilde{k}'}$,
and in turn $\Rc(V_{\tilde{k}'})$ is densely solved in $K{}_{E_{\tilde{k}'}}$.

But by the definition of the operation \textsc{roots}, this actually
implies that at step $\tilde{k}'$ we \emph{do not} add a new solution
to $V_{\tilde{k}'}$, a contradiction. This proves that $(\pi(k_{p}'))_{p<f}$
must be cofinal in $\Lambda$, and therefore $\Lambda$ has at most
countable cofinality.

Now, the set of the generic solutions of $X(\overline{c})$ contained
in $K_{E_{j}}$ is the union of the solutions contained in $K_{E_{k+1}}$
for $k\in\Lambda$; but its cofinality is at most countable, hence
it is just the union of the solutions contained in $K_{E_{k}}$ for
$k$ running on at most countable subsequence. But this is an at most
countable union of countable sets, hence $X(\overline{c})$ has at
most countably many generic solutions in $K_{E_{j}}$.

By induction, this proves that $K_{E_{j}}$ satisfies (CCP) for all
$j\leq|K|$ .
\end{proof}
We have obtained our proof:
\begin{proof}[Proof of \prettyref{thm:from-invo}]
 By Propositions \ref{prop:obvious}, \ref{prop:std}, \ref{prop:sp}
and \ref{prop:ccp}, $K_{E_{|K|}}$ is a Zilber field, and by \prettyref{prop:invo},
$\sigma$ commutes with $E_{|K|}$, hence it is an involution of $K_{E_{|K|}}$.
Moreover, the kernel of $E_{|K|}$ is exactly $i\omega\Z$.

Taking $E=E_{|K|}$ we obtain the desired result.\end{proof}
\begin{rem}
By construction, each simple variety has a dense set of solutions.
This proves the existence of at least one model of cardinality $2^{\aleph_{0}}$
for \prettyref{thm:invo-dense}, and the existence of the other models
follow by the argument given in \prettyref{sec:involuntary}.
\end{rem}

\begin{rem}
Our construction is potentially abundant: we add ``real generic''
solutions to rotund varieties, and it is not at all clear if a Zilber
field with an involution should always satisfy this condition.\end{rem}

\section{\label{sec:further}Further results}

The general strategy used in our construction can be useful for showing
some other properties of Zilber fields.

Here we use it to obtain a full classification of which partial $E$-fields
can be embedded into Zilber fields. It is immediate to see that in
order to be embeddable, a partial $E$-field must satisfy (SP), (CCP),
and the kernel must be either trivial or cyclic; we prove that this
is also sufficient. This was already known, but it has never been
explicitly stated. A corollary of this statement is that if Schanuel's
Conjecture is true, then we know at least that $\C_{\exp}$ embeds
into $\B$.

Moreover, we show that if we replace the full axiom (SEC) with a weaker
version, stating that just certain simple varieties have solutions
(namely, curves in $\G^{1}$), then we can create a function $E$
such that $E_{\restriction K^{\sigma}}$ is order-preserving. It has
been proven in \cite{Marker2006} that if Schanuel's Conjecture is
true, then the weaker axiom holds on $\C_{\exp}$ at least for the
case of algebraic coefficients.

If we go further and drop completely axiom (SEC), then we can even
obtain a continuous $E$. The study of these cases shows quite explicitly
where the general construction fails at producing an order-preserving
or a continuous $E$.

The resulting fields embed into all the Zilber fields of equal and
larger cardinality. This is analogous to the result of Shkop \cite{Shkop2011},
who proved that there are several real closed fields inside Zilber
fields such that the restriction of the pseudoexponential function
to the real line is monotone.

\subsection{\label{sec:embedding}Embeddability}

If we drop the involution $\sigma$, the construction of \prettyref{sec:cons}
gives a general method to directly build Zilber fields. As a bonus,
we can actually build a Zilber field extending a given partial exponential
function satisfying certain properties. A direct construction of this
kind is studied in \cite{Kirby2009a}, but without the relevant proofs
for the uncountable case.

Our approach is indeed similar to the one of \cite{Kirby2009a}, except
that we do things in a slightly different order.
\begin{thm}
\label{thm:embeds}Let $K_{E}$ be a partial $E$-field satisfying
(SP) and (CCP).

If $\ker(E)=\{0\}$ or $\ker(E)=i\omega\Z$ for some $\omega\in K^{\times}$,
then there is a strong embedding $K_{E}\leq L_{E'}$ into a Zilber
field $L_{E'}$.
\end{thm}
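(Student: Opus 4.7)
The plan is to adapt the construction of \prettyref{sec:cons}, stripping away the involution $\sigma$ and the density requirement. Starting from the given $K_{E}$, we enlarge $K$ to an algebraically closed field $L$ of any cardinality $\kappa\geq\max(|K|,\aleph_{0})$ and extend $E$ to a function $E'$ on $L$ by transfinite recursion of length $\kappa$. At each successor stage we apply \textsc{domain}, \textsc{image} and \textsc{sol} (not \textsc{roots}) in sequence, cycling through enumerations of $L$, $L^{\times}$, and the simple Kummer-generic rotund varieties over $L$ (each appearing once if $\kappa>\aleph_{0}$ and $\omega$-many times if $\kappa=\aleph_{0}$); at limit stages we take unions. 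If $\ker(E)=\{0\}$ we first carry out a preliminary step adjoining a transcendental $\omega$ with $E(\omega)=1$: a direct predimension calculation shows this is a strong extension, after which $\ker=\omega\Z$ as required for (STD).

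The verifications of (ACF\textsubscript{0}), (E), (LOG), (STD) and (SEC) then proceed exactly as in Propositions~\ref{prop:obvious} and~\ref{prop:std}: the three enumerations force totality, surjectivity, and, via \prettyref{fact:sec-is-minus}, (SEC) from \textsc{sol}; kernels are preserved by each finite operation, directly for \textsc{domain}/\textsc{image} and by \prettyref{fact:rotund-is-strong} applied to the absolutely free rotund $V$ itself for \textsc{sol} (here we use $V$ rather than $\check{V}$, since without $\sigma$ the $\G$-restriction machinery of \prettyref{sec:g-res} is unnecessary). Moreover, each operation yields a strong extension---again by \prettyref{fact:rotund-is-strong} for \textsc{sol} and by a predimension computation for \textsc{domain}/\textsc{image}---and limits of strong extensions remain strong. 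Since $K_{E}\leq K_{E}$ trivially, the chain of strong extensions gives $K_{E}\leq L_{E'}$ and hence (SP) on $L_{E'}$ via \prettyref{fact:strong-keeps-sc}.

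As in \prettyref{sec:proof}, the main obstacle is (CCP) at limit ordinals of uncountable cofinality. At successor stages \prettyref{prop:pres-ccp} suffices, since each finite operation enlarges the domain by only countably many elements. At a limit ordinal $j$ of uncountable cofinality, for a perfectly rotund $X(\overline{c})$ with $\overline{c}\subset\acl(F_{k_{0}})$ for some $k_{0}<j$, \prettyref{lem:domain-image-no-new-sol} restricts attention to new solutions produced by \textsc{sol}; an analogue of \prettyref{lem:new-sol-alg-map} (without $\G$-restrictions, using only the absolute freeness of $V$ and the simplicity of $X$) shows that any new generic solution of $X(\overline{c})$ comes from \textsc{sol} applied to a variety $V$ with $X=M\cdot W\oplus\zEz{\overline{z}}$ for some $W\in\Rc(V)$, some invertible integer matrix $M$, and some $\overline{z}\in D_{k_{0}}$. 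For fixed $X$ the triples $(M,\overline{z})$ form a countable set and therefore pin down at most countably many candidates $V$; each $V$ is processed at most $\omega$-many times in the enumeration, contributing only finitely many generic solutions to $X$ via \prettyref{prop:solutions-in-roots}. Hence the total count of generic solutions of $X$ at stage $j$ is a countable union of countable sets, yielding (CCP). The resulting $L_{E'}$ is therefore a Zilber field, and by construction $K_{E}\leq L_{E'}$.
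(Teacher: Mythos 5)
Your overall plan (drop $\sigma$ and density, pass to a larger algebraically closed field, extend $E$ by a transfinite back-and-forth, and reduce everything to the limit-stage verification of (CCP)) is indeed the paper's route, and your treatment of (E), (LOG), (STD), (SEC), (SP) and of the trivial-kernel preliminary step matches the paper's, up to two small points: the values at $\frac{1}{q}\omega$ must be a coherent system of \emph{primitive} roots of unity (otherwise the kernel becomes the divisible group $\omega\Q$ and (STD) fails), and $L$ must be chosen with enough room, e.g.\ $|L|>|K|$ as in the paper, so that at every stage there are points generic over the field generated by the current domain and image (``any cardinality $\kappa\geq\max(|K|,\aleph_{0})$'' does not guarantee this when $K_{E}$ is total).

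The genuine gap is in the (CCP) count at limit stages, and it is caused by discarding \textsc{roots}. First, the analogue of \prettyref{lem:new-sol-alg-map} produces $X=M\cdot W\oplus\zEz{\overline{z}}$ with $\overline{z}$ in the domain \emph{at the stage where \textsc{sol} is applied}, not in $D_{k_{0}}$; so $\overline{z}$ ranges over a set of size up to $|L|$, the triples $(M,\overline{z})$ are not countable, and the candidate varieties $V$ are not pinned down to countably many. Indeed, for every $\overline{z}$ already in the domain the translate $X\oplus\zEz{-\overline{z}}$ is again a simple Kummer-generic variety occurring in your enumeration, and an \emph{unconditional} application of \textsc{sol} to it manufactures a new generic solution of $X(\overline{c})$; nothing in your scheme bounds how many stages of this kind occur, so no countable bound on the solutions of $X(\overline{c})$ follows (and \prettyref{prop:solutions-in-roots} does not say that each $V$ ``contributes only finitely many'' solutions -- it is a transfer statement about complete solvedness of systems of roots). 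This is exactly why the paper keeps the operation \textsc{roots'}: it solves the whole system $\Rc(V)$ at once \emph{and} skips any variety whose system is already completely solved. With that mechanism, \prettyref{lem:new-sol-alg-map} plus \prettyref{prop:solutions-in-roots} show that after the first stage at which $X(\overline{c})$ gains a new solution, $\Rc(X)$ is completely solved, hence every later variety whose processing could feed $X$ is itself already completely solved and is skipped; so the set $\Lambda$ of contributing stages has cardinality at most one, which is what replaces the countable-cofinality argument of \prettyref{prop:ccp}. To repair your proof you must reinstate either \textsc{roots'} or at least the conditional ``add a solution only if the variety does not yet have infinitely many algebraically independent ones'' check, and rerun the paper's $|\Lambda|\leq1$ argument instead of the counting you propose.
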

The converse is clearly true: if a partial $E$-field is (strongly)
embedded into a Zilber field, then it satisfies (SP) and (CCP), and
its kernel is either trivial or cyclic.

As $\C_{\exp}$ satisfies (CCP), this shows that Schanuel's Conjecture
is true if and only if $\C_{\exp}$ embeds into $\B$.

In order to prove \prettyref{thm:embeds} we proceed as in \prettyref{sec:cons}:
we enlarge the field $K$ to some bigger field $L$ and we define
$E'$ by back-and-forth, extending $E$.

First of all, we choose an algebraically closed field $L\supset K$
whose cardinality is strictly greater than the cardinality of $K$.
If $\ker(E)$ is non-trivial, we fix $E_{-1}:=E$. Otherwise, we choose
an arbitrary $\omega\in L$ transcendental over $K$, a coherent system
of roots of unity $(\zeta_{q})_{q\in\N^{\times}}$, and we define
$E_{-1}(x+\frac{p}{q}\omega)=E(x)\cdot\zeta_{q}^{p}$, with $x\in\dom(E)$.

$L_{E_{-1}}$ is our new base step. The rest of the construction of
the sequence $(E_{j})_{j\leq|L|}$ is the same as \prettyref{sec:cons},
just by dropping all the references to $\sigma$, with the obvious
changes in the enumerations, in the operations and in the construction.
Here we show explicitly just the most sensible changes:
\begin{description}
\item [{\textsc{sol'}}] We start with an absolutely irreducible variety
$V(\overline{c})\subset\G^{n}$, where $\overline{c}$ is a subset
of $K$.\\
We choose a point $((\alpha_{1},\beta_{1}),\dots,(\alpha_{n},\beta_{n}))\in V$
such that
\[
\td_{F(\overline{c}E(\overline{c}))}(\alpha_{1},\gamma_{1},\beta_{1},\delta_{1},\dots)=2\dim V.
\]
 We fix an arbitrary system of roots $\beta_{j}^{1/q}$ of $\beta_{j}$
and we define
\[
E'\left(z+\frac{p_{1}}{q_{1}}\alpha_{1}+\dots+\frac{p_{n}}{q_{n}}\alpha_{n}\right):=E(z)\cdot\beta_{1}^{p_{1}/q_{1}}\cdot\dots\cdot\beta_{n}^{p_{n}/q_{n}}.
\]

\item [{\textsc{roots'}}] We start with an absolutely rotund variety $V(\overline{c})\subset\G^{n}$,
where $\overline{c}$ is a finite subset of $K$.\\
Consider an enumeration $W_{m}(\overline{d}_{m})_{m<\omega}$ of all
the varieties $W_{m}$ of $\Rc(V)$, where $\overline{d}_{m}$ is
a finite subset of $\acl(\overline{c}E(\overline{c}))$ over which
$W$ is $E$-defined. We produce inductively a sequence of partial
exponential functions $E_{m}$, starting with $E_{0}:=E$.\\
Let us suppose that $E_{m-1}$ has been defined. If $W_{m}(\overline{d}_{m})$
has an infinite set of algebraically independent solutions in $K_{E_{m-1}}$,
we define $E_{m}:=E_{m-1}$; otherwise we apply \textsc{sol'} to $W_{m}(\overline{d}_{m})$
over the $E$-field $K_{E_{m-1}}$. The resulting exponential function
will be $E_{m}$.\\
Finally, we define $E':=\bigcup_{m\in\N}E_{m}$.
\end{description}
In other words, we drop any reference to the now absent order topology.

The proof that the final structure $L_{E_{|L|}}$ is a Zilber field
is also quite similar, with another sensible difference in managing
the operation \textsc{roots'}. We sketch the modified proof here.
\begin{proof}[Proof of \prettyref{thm:embeds}]
We need to prove that $L_{E_{|L|}}$ is a Zilber field.

First of all, we observe that $K_{E}\leq L_{E_{-1}}$, and that (CCP)
holds on $L_{E_{-1}}$. Then axioms (ACF\textsubscript{0}), (E),
(LOG), (SEC), (STD) and (SP) can be verified by repeating almost identically
the proofs of Propositions \ref{prop:obvious}, \ref{prop:std} and
\ref{prop:sp}.

In order to verify (CCP), we need to repeat the proof of \prettyref{prop:ccp}
with one essential change: since in\textsc{ roots'} we are adding
generic point to \emph{simple} Kummer-generic varieties rather than
their $\G$-restrictions, we can use the stronger \prettyref{prop:solutions-in-roots}
in place of \prettyref{prop:dense-solutions-in-roots}. This is sufficient
to prove that the set $\Lambda$ has cardinality at most $1$, rather
than proving that it has countable cofinality. This provides again
the induction at limit ordinals.

Therefore, $L_{E_{|L|}}$ is a Zilber field.
\end{proof}
Note that the use of \prettyref{prop:solutions-in-roots} instead
of \prettyref{prop:dense-solutions-in-roots}, and the fact that in
the operation \textsc{roots'} we do not need a dense set of solutions,
let the induction work on cardinalities greater than $2^{\aleph_{0}}$.
Note also that the the operation \textsc{roots'} becomes equivalent
to applying \textsc{sol'} countably many times to $V$; in this way,
we may actually get rid of the operation \textsc{roots'} and just
enumerate each variety countably many times, mimicking the construction
for the countable case.

\subsection{\label{sub:order}A monotone $E_{\restriction K^{\sigma}}$ and a
continuous $E$ (without (SEC))}

Our construction fails at producing an exponential function $E$ such
that $E$ is continuous, and it is not even capable to make the restriction
$E_{\restriction K^{\sigma}}$ increasing.

In order to see why, we present first an example construction where
$E$ is increasing, at the price of dropping the axiom (SEC) from
the final structure. Not everything is lost, however, as we manage
to verify a partial instance of (SEC) which we call ``(1-SEC)''.
This example makes evident where our technique fails in producing
order-preserving exponential functions.

The following axiom is the special instance of (SEC) we manage to
verify.
\begin{enumerate}
\item [(1-SEC)] $1$-dimensional Strong Exponential-algebraic Closure:
for every absolutely free rotund variety $V\subset\G^{1}$ irreducible
over $K$, and every tuple $\overline{c}\in K^{<\omega}$ such that
$V$ is defined over $\overline{c}$, there is a generic solution
of $V(\overline{c})$.
\end{enumerate}
It is known that if Schanuel's Conjecture is true, then (1-SEC) holds
on $\C_{\exp}$ at least in the case of $V$ defined over $\oQ$ \cite{Marker2006}.
It is not known if Schanuel's Conjecture implies also (SEC) on $\C_{\exp}$.

The construction, with some adaptation, yields the following.
\begin{thm}
\label{thm:order}For all saturated algebraically closed fields $K$
of characteristic $0$ there is a function $E:K\to K^{\times}$ and
an involution $\sigma$ commuting with $E$ such that $K_{E}$ satisfies
(E), (LOG), (STD), (SP), (1-SEC) and (CCP), and $E_{\restriction K^{\sigma}}$
is a monotone function.
\end{thm}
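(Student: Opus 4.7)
The plan is to adapt the back-and-forth construction of Section~\ref{sec:cons}, using the saturation of $K$ to maintain a \emph{monotone} exponential on $R := K^{\sigma}$, at the price of solving only one-dimensional rotund varieties (which yields (1-SEC) in place of full (SEC)). I initialise $E_{-1}$ on $i\omega\Q$ as in \prettyref{sec:cons} (its restriction to $R \cap i\omega\Q = \{0\}$ is trivially monotone), then enumerate the elements of $R$, of $R_{>0}$, and of all $\sigma$-invariant absolutely free rotund irreducible curves $V \subset \G^1$, and proceed by transfinite recursion over $|K|$.

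The operations \textsc{domain} and \textsc{image} are replaced by monotone variants: when applied to $\alpha \in R \setminus D$, the value $\beta := E'(\alpha) \in R_{>0}$ is chosen not merely transcendental over the current $E$-algebraic data, but also inside the monotonicity interval
\[
J_\alpha := \Big( \sup\{E(z) : z \in D \cap R,\ z < \alpha\},\ \inf\{E(z) : z \in D \cap R,\ z > \alpha\} \Big).
\]
By the inductive monotonicity of $E_{\restriction R}$ and the saturation of $K$, this interval is non-empty and contains elements transcendental over any subfield generated by fewer than $|K|$ elements, so such a choice exists. The operation \textsc{roots} is dropped entirely, and \textsc{sol} is replaced by \textsc{sol-curve}, applied only to $\sigma$-invariant absolutely free rotund curves $V \subset \G^1$: choose $(z,w) \in V(K)$ such that $\Re(z)$ lies in a prescribed open interval of $R$ and $|w| = E(\Re(z))$ lies in the corresponding monotonicity interval $J_{\Re(z)}$. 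Curves not $\sigma$-invariant are handled together with their $\sigma$-conjugate.

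The core geometric input is that, for a $\sigma$-invariant absolutely free rotund curve $V \subset \G^1$, the set $V(K)$ is a two-real-dimensional $\sigma$-invariant semi-algebraic subset of $K^2 \simeq R^4$, and the joint semi-algebraic map $V(K) \to R \times R_{>0}$, $(z,w) \mapsto (\Re(z), |w|)$, has image of real dimension $2$. This uses the absolute freeness of both projections $\pi_a, \pi_m$ of $V$ together with the fact that the $\G$-restriction $\check V \subset \G^2$ is absolutely free and rotund by \prettyref{thm:rotund}. Saturation of $K$ then provides a generic point in the preimage of any product of non-empty open intervals $I \times J$, whose transcendence degree over the prescribed data is maximal.

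The verification of the remaining axioms follows the template of Section~\ref{sec:proof}. Axioms (E), (LOG), (STD) and (SP) go through unchanged using \prettyref{prop:commutes} and \prettyref{fact:rotund-is-strong}, while (1-SEC) is immediate by construction combined with the reduction to Kummer-generic components via \prettyref{fact:sec-is-minus}. For (CCP), the counting at limit stages is simpler than in \prettyref{prop:ccp} since \textsc{roots} is absent: \prettyref{lem:new-sol-alg-map} forces any new generic solution of a perfectly rotund $X \subset \G^n$ to arise from an application of \textsc{sol-curve} to a curve $V \subset \G^1$ with $M \cdot \check W \oplus \zEz{\overline z} = X$ for some $W \in \Rc(V)$ and $M \in \Mc_{n,2}(\Z)$ of maximal rank; this already forces $n \leq 2$, and for each such $X$ there are only countably many such triples, giving at most countably many new solutions per $X$ across the whole construction. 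The principal obstacle is the geometric existence step: monotonicity imposes a serious constraint on $|w|$, and it is precisely at this point that the construction does not extend to varieties $V \subset \G^n$ with $n \geq 2$, because in higher dimension the real locus of $V$ need not project onto $R^n \times R_{>0}^n$ in a way compatible with monotonicity, explaining why (SEC) must be weakened to (1-SEC).
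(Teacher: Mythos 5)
Your verification of (CCP) contains the decisive gap. The claim that for a fixed perfectly rotund $X(\overline{c})\subset\G^{n}$ there are ``only countably many triples'' $(M,\overline{z},W)$ with $M\cdot\check{W}\oplus\zEz{\overline{z}}=X$ is false: $M$ indeed ranges over a countable set, but the translation vector $\overline{z}$ ranges over $\dom(E)$, which has cardinality $|K|$, and for \emph{every} such $\overline{z}$ the translate $X\ominus\zEz{\overline{z}}$ is again an absolutely free, simple, irreducible (and Kummer-generic, if $X$ is) curve over $K$, since translation by a point of the graph preserves all these properties; hence it occurs in your enumeration of curves. Because your \textsc{sol-curve} is applied \emph{unconditionally} --- you dropped \textsc{roots} and with it the check ``skip if the variety already has an infinite set of really algebraically independent solutions'' --- every stage at which such a translate is processed after $\overline{z}$ has entered the domain adds a brand-new generic point, which translated back by $\zEz{\overline{z}}$ is a new generic solution of $X(\overline{c})$, distinct from all previous ones. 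A pressing-down argument (the map sending a stage $j$ to the enumeration index of the translate of $X$ by the point added at stage $j$ is finite-to-one, so for regular uncountable $|K|$ it cannot be regressive outside a small set) shows that uncountably many such stages actually fire, so $X(\overline{c})$ acquires uncountably many generic solutions and (CCP) fails. This is exactly the difficulty the paper's proof is built around: it keeps a \textsc{roots}-type operation with the skip condition, and to make the skip condition effective it needs \prettyref{prop:rotund-1-dim} (for a simple curve, any $M$ with $\dim M\cdot\check{V}=1$ has the form $(k|\pm k)$ and the induced map $V\to M\cdot\check{V}$ is \emph{surjective}) together with \prettyref{prop:solutions-surj}, which transfer really algebraically independent solutions in both directions between $\Rc(V)$ and $\Rc(X)$; only then does one get that the set $\Lambda$ of stages contributing new solutions to a fixed $X$ has cardinality at most one, which is what carries the induction through limit stages. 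Also note that \prettyref{lem:new-sol-alg-map} controls only the solutions produced by \textsc{sol}; without \prettyref{lem:domain-image-no-new-sol}-style bookkeeping and the above transfer, the bound ``$n\leq2$'' does nothing to cap the \emph{number} of contributing stages.

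There is a secondary gap in your existence step. It is not true that the semi-algebraic image of $(z,w)\mapsto(\Re(z),|w|)$ on a simple free curve meets every product of open intervals $I\times J$: for the curve $w=z$ the image is $\{(x,\rho):\rho\geq|x|,\ \rho>0\}$, which misses $(2,3)\times(0,1)$. Two-dimensionality of the image plus saturation is therefore not enough to place the new value of $E$ where monotonicity demands; what is actually needed, and what \prettyref{lem:order-point} proves, is that the image meets the \emph{interior of the staircase region} determined by the monotone function built so far, and this uses surjectivity of $\Re(z)$ and $|w|$ onto $R$ and $R_{>0}$, definable connectedness of the curve, and finiteness of the fibres of $(z,w)\mapsto(\Re(z),|w|)$, via a path argument rather than a dimension count. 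With that lemma your monotone \textsc{domain} and \textsc{image} steps and the verifications of (E), (LOG), (STD), (SP) and (1-SEC) are essentially as in the paper, but the (CCP) argument must be repaired along the lines indicated above.
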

As in \prettyref{sec:embedding}, we need to do some changes to the
construction. Here is just a sketch of the modified procedure.
\begin{enumerate}
\item We start with a large saturated real closed field $R$ (with some
technical adjustment, we may replace saturated with $\kappa$-saturated
for a sufficiently large cardinal $\kappa$, so that the construction
does not depend on set-theoretic assumptions; however, it will be
easier to just take a saturated $R$).
\item In the operations \textsc{domain} and \textsc{image} on $R$, we exploit
the saturation of $R$ to choose the new values so that $E$ is always
a monotone function.
\item We exploit the saturation in \textsc{sol} to keep $E$ order-preserving,
and we drop any mention of the topology, and in particular of density,
in \textsc{sol} and \textsc{roots}, as we did in \prettyref{sec:embedding}.
\end{enumerate}
The fact that we can keep $E$ order-preserving when adding solutions
to simple varieties is guaranteed by the following lemma of which
we only sketch the proof.
\begin{lem}
\label{lem:order-point} Let $V\subset\G^{1}$ be a simple variety
defined over some $\overline{c}$. If $f:R\to R$ is any partial strictly
increasing function with $|\dom(f)|<|R|$, then there is a point $(z,w)\in V$
generic over $\dom(f),\im(f),\overline{c}$ such that $f\cup\{(\Re(z),|w|)\}$
is an increasing function.\end{lem}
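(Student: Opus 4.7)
The plan is to combine \prettyref{thm:rotund} with the saturation of $R$ (inherited from the saturated $K$). If $\dim V=2$ then $V=\G$ and $\{(\Re(z),|w|):(z,w)\in V\}=R\times R_{>0}$, so any $(x_0,y_0)$ extending $f$ works and any lift is automatically generic. Assume henceforth $\dim V=1$. Apply \prettyref{thm:rotund} to the matrix $M=(1\;0)\in\Mc_{1,2}(\Z)$ acting on $\check V\subset\G^2$: we have $\dim M\cdot\check V\geq\rank M=1$, with equality forcing $M$ to have the form $(N|P)$ where $N,P$ are square of maximum rank --- impossible because $P=0$. Hence $\dim M\cdot\check V=2$, so the $K$-Zariski closure of the semi-algebraic set $S:=\{(\Re(z),|w|):(z,w)\in V\}\subset R\times R_{>0}$ is the whole of $\G$. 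As $S$ is defined over $\overline c$, this forces $\odim S=2$; absolute freeness of $V$ further yields $\pi_{1}(S)=R$ and $\pi_{2}(S)=R_{>0}$ (both projections being surjective because $\pi_{a}(V)$ and $\pi_{m}(V)$ are cofinite in $\A^1(K)$ and $K^{\times}$ respectively).

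Set $R_0:=\overline c\cup\dom(f)\cup\im(f)$, so $|R_0|<|R|$. Consider the partial type $p(x,y)$ over $R_0$ asserting $(x,y)\in S$, $(x-x')(y-y')>0$ for every $(x',y')\in f$, and that $(x,y)$ is generic in $S$ over $R_0$ in the sense that $x\notin\acl(R_0)$ and $y\notin\acl(R_0\cup\{x\})$. By the saturation of $R$, once $p$ is finitely satisfiable it is realised; a realisation $(x_0,y_0)\in R\times R_{>0}$ makes $f\cup\{(x_0,y_0)\}$ strictly increasing by construction. Because $(x_0,y_0)\in S$, there exists $(z_0,w_0)\in V$ with $\Re(z_0)=x_0$ and $|w_0|=y_0$; invoking saturation once more to choose $(\Im(z_0),\Theta(w_0))$ generic over $R_0(x_0,y_0)$ ensures $(z_0,w_0)$ is generic in $V$ over $R_0$, delivering the point required by the lemma.

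The hard part, and the main obstacle, is establishing the finite satisfiability of $p$. Given a finite increasing subset $F=\{(x_1,y_1)<\cdots<(x_k,y_k)\}\subset f$, the admissible region $A_F\subset R\times R_{>0}$ is the disjoint union of the $k+1$ open staircase rectangles, bordered by the extreme boxes $(-\infty,x_1)\times(0,y_1)$ and $(x_k,+\infty)\times(y_k,+\infty)$. I would prove $S\cap A_F\neq\emptyset$ by an o-minimal cell-decomposition argument: the 2-dimensional cells of $S$ form open bands $\{a(x)<y<b(x)\}$ above intervals $I\subset R$, and the asymptotic behaviour of $V$ at the points at infinity of its projective closure (forced by absolute freeness of $V$) guarantees that at least one such band extends unboundedly into some extreme rectangle of $A_F$ --- for instance, into $(x_k,+\infty)\times(y_k,+\infty)$ via a branch of $V$ where $\Re(z)\to+\infty$ and $|w|\to+\infty$, or into $(-\infty,x_1)\times(0,y_1)$ via a branch where $\Re(z)\to-\infty$ and $|w|\to 0$. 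The case analysis at infinity is the only real obstacle; once it is settled, saturation and the genericity of the lift furnish the conclusion.
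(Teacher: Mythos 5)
Your framework (finite satisfiability of a type over $\dom(f)\cup\im(f)\cup\overline{c}$ plus saturation of $R$) matches the paper, and your opening step is sound: applying \prettyref{thm:rotund} to $M=(1\mid 0)$ does show that the Zariski closure of $S=\{(\Re(z),|w|):(z,w)\in V\}$ is all of $\G$, hence $\odim S=2$, and freeness gives surjectivity of both coordinate projections. The genuine gap is exactly the step you flag as "the only real obstacle", and it cannot be settled as stated: it is false that a band of $S$ must extend unboundedly into one of the two \emph{extreme} rectangles via a branch of $V$ at infinity with $\Re(z)\to+\infty,\ |w|\to+\infty$ or $\Re(z)\to-\infty,\ |w|\to0$. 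Take $V$ to be the graph $w=(z^{2}+1)/(z^{2}+2)$, a simple, absolutely free, absolutely irreducible rotund curve in $\G^{1}$: as $|z|\to\infty$ in every direction $w\to1$, while $|w|\to0$ or $|w|\to\infty$ only for $z$ near $\pm i$ or $\pm i\sqrt{2}$, i.e.\ with $\Re(z)$ bounded. So no branch of the required kind exists, and if the finite fragment $F$ contains a pair $(x_{1},y_{1})$ with $x_{1}$ very negative and $y_{1}$ tiny, and a pair $(x_{k},y_{k})$ with $x_{k}$ very positive and $y_{k}$ huge, then $S$ misses both extreme boxes entirely; the admissible points lie only in the middle rectangles (here near the level $|w|=1$). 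Two-dimensionality of $S$ together with surjectivity of the two projections does not by itself put a point of $S$ inside an arbitrary staircase, so your finite-satisfiability argument stops precisely where the work has to be done.

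The paper closes this hole with a separation/crossing argument rather than an analysis at infinity: the closed staircase $B$ determined by $F$ definably disconnects $R\times R_{>0}$ into an upper-left and a lower-right region; the image of $\pi:(z,w)\mapsto(\Re(z),|w|)$ is definably connected and meets both regions (again by surjectivity of $\Re(z)$ and $|w|$); and $\pi$ is finite-to-one on $V$, proved via Puiseux expansions. One then joins two points of $V$ mapping to opposite sides by a definable path avoiding the finitely many $\pi$-preimages of the corner points $(x_{j},f(x_{j}))$; since those corners are the only places where the two complementary regions touch without the interior of $B$ in between, the image of the path must enter the interior of $B$, which yields the required generic point and hence finite satisfiability for every $F$, including the middle rectangles your argument does not reach. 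A secondary slip: your final appeal to saturation to choose $(\Im(z_{0}),\Theta(w_{0}))$ "generic over $R_{0}(x_{0},y_{0})$" is not available, because the fibre of $\pi$ over $(x_{0},y_{0})$ is finite (this is the finite-to-one-ness just mentioned), so those coordinates are algebraic over $R_{0}(x_{0},y_{0})$; fortunately it is also unnecessary, since genericity of the point of $V$ already follows from $\td_{R_{0}}(x_{0},y_{0})=2$ together with finiteness of the fibre.
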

\begin{proof}
Let us fix $x_{1},\dots,x_{n}$ points in $\dom(f)$. We claim that
there is a generic point $(z,w)\in V$ such that for some $1\leq j<n$,
$x_{j}<\Re(z)<x_{j+1}$ and $f(x_{j})<|w|<f(x_{j+1})$. These conditions
would form a finitely satisfiable type as $x_{1},\dots,x_{n}$ vary,
and saturation would imply the existence of the desired point.

The set $B:=\bigcup_{j=0}^{n}\{(x,w)\,:\, x_{j}\leq x\leq x_{j+1},\, f(x_{j})\leq w\leq f(x_{j+1})\}$,
where we assume $x_{0}=-\infty$, $x_{n+1}=+\infty$, $f(-\infty)=0$
and $f(+\infty)=+\infty$, definably disconnects the upper half plane.
The image $z(V)$ is $ $$K$ minus finitely many points, and similarly
$w(V)$ is $K^{\times}$ minus finitely many points; hence, the image
of $\Re(z)$ is the whole line $R$, and the image of $|w|$ is $R^{\times}$.
Moreover, the image of the map $\pi:z\mapsto(\Re(z),|w|)$ must be
definably connected.

Clearly, $\pi(V)\cap B$ must contain at least one point of the form
$(x_{j},f(x_{j}))$. We want to prove that $\pi(V)\cap B$ must actually
contain points in the interior of $B$.

We start by proving that the map $\pi:z\mapsto(\Re(z),|w|)$ is finite-to-one.
One may exploits the convergence of the Puiseux series, i.e., the
fact for any $P\in V(\C)$ an equality of the form
\[
w=a_{0}+a_{1}(z-z(P))^{k/d}+O((z-z(P))^{(k+1)/d})
\]
 holds in a sufficiently small neighbourhood of $P$. This shows immediately
that whenever $\Re(z)$ is fixed to $\Re(z(P))$ in a neighbourhood
of $P$, a change in $\Im(z)$ forces (locally) a change in $|w|$,
and actually $|w|$ must locally vary in a proper interval. We skip
the details here.

For the sake of notation, let $\pi_{1}$ and $\pi_{2}$ the two functions
$\Re(\cdot)$ and $|\cdot|$; the above statement says that if $U$
is a small neighbourhood of $P$, then $\pi_{2}$ takes different
values on $U\cap\pi_{1}^{-1}(\Re(z(P)))$. Let us assume by contradiction
that $\pi$ is not finite-to-one, i.e., that there is be a point $(c,d)\in R\times R_{>0}$
such that $\pi^{-1}(c,d)$ is infinite.

The sets $\pi_{1}^{-1}(c)$ and $\pi_{2}^{-1}(d)$ have both dimension
1, and since their intersection is infinite, they must contain a common
line $l$. Moreover, we may assume that the line $l$ is small enough
so that if $P\in l$, there is a neighbourhood $U\ni P$ such that
$U\cap\pi_{1}^{-1}(c)\subset l$. However, this implies that $\pi_{2}$
takes only one value on $U\cap\pi_{1}^{-1}(\Re(z(P)))$, a contradiction.

Now, let us consider two points whose images in $\pi(V)$ lie on different
sides w.r.t.\ $B$. Since $V$ is an algebraic variety over the algebraic
closure of $R$, we can connect the points with a definable continuous
path that does not pass through any of the finitely many points whose
image is of the form $(x_{j},f(x_{j}))$. Therefore, the image of
the path in the upper half plane must pass through the interior of
$B$, as desired.

It is easy to see that there are also generic points in $\pi(V)\cap B$
which lie in the interior of $B$. In particular, the partial type
we are trying to realise is finitely satisfiable; by saturation, there
exists a realisation in $R$.
\end{proof}
Note that once $E_{\restriction R}\cup\{(\Re(z),|w|)\}$ is monotone,
the same is true if we extend the function to the rational multiples
of $\Re(z)$.

Since we do not require the solutions to be dense, the proof of (CCP)
is again different. The following facts shows that the situation is
substantially the same as in \prettyref{sec:embedding}. Let us introduce
an intermediate definition between Definitions \ref{def:comp-solved}
and \ref{def:dense-solved}.
\begin{defn}
A system $\Rc(V)$, with $\overline{c}$ defining $\check{V}$, is
\emph{really completely solved} in $K_{E}$ if for all $W\in\Rc(V)$
there is an infinite set of solutions of $W$ really algebraically
independent over $\acl(\overline{c})$.
\end{defn}
The following stronger version of \prettyref{thm:rotund}, pointed
out by Maurizio Monge, holds in this case.
\begin{prop}
\label{prop:rotund-1-dim}Let $V\subset\G^{1}$ be a simple variety.
If $M\in\Mc_{1,2}(\Z)$ is such that $\dim M\cdot\check{V}=1$, then
$M$ is of the form
\[
M=\left(\begin{array}{c|c}
k & \pm k\end{array}\right)
\]
for some integer $k\in\Z$. In particular, the induced map $V\to M\cdot\check{V}$
is surjective.\end{prop}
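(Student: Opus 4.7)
The plan is to first invoke \prettyref{thm:rotund} to pin down the gross shape of $M$, then use a differential-form computation on $\check{V}$ to rule out all possibilities except $M=(k,\pm k)$. Assuming $M\neq 0$ (the conclusion being trivial otherwise with $k=0$), we have $\dim M\cdot\check{V}=1=\rank M$, so \prettyref{thm:rotund} with $n=1$ excludes $\rank M=2n=2$ and leaves only the case $\rank M=n=1$ with $M=(N\mid P)$ where $N,P$ are $1\times 1$ of maximal rank; that is, $M=(a\mid b)$ with $a,b\in\Z^{\times}$, and moreover $V$ is perfectly rotund. It remains to force $b=\pm a$.

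For this, parametrise $\check{V}$ by coordinates $(x,\rho,y,\theta)\in\G^{2}$ subject to the Zariski-closed relations $(x+y,\rho\theta)\in V$ and $(x-y,\rho/\theta)\in V^{\sigma}$ that cut out the closure of $r(V)$. Setting $u:=x+y$, $u':=x-y$, $v:=\rho\theta$, $v':=\rho/\theta$, we get $(u,v)\in V$, $(u',v')\in V^{\sigma}$, $\rho^{2}=vv'$ and $\theta^{2}=v/v'$. The two coordinate functions of $M\cdot\check{V}$ become
\[
\phi:=ax+by=\alpha\,u+\beta\,u',\qquad \psi:=\rho^{a}\theta^{b},
\]
with $\alpha:=(a+b)/2$ and $\beta:=(a-b)/2$. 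Using $d\rho/\rho=(dv/v+dv'/v')/2$, $d\theta/\theta=(dv/v-dv'/v')/2$, together with the one-dimensionality of $V$ and $V^{\sigma}$ (which gives $dv/v=g\,du$ and $dv'/v'=g'\,du'$ for some $g\in K(V)$, $g'\in K(V^{\sigma})$), a direct computation yields
\[
d\phi\wedge d\psi=\psi\,\alpha\beta\,(g'-g)\,du\wedge du'
\]
in $\Omega^{2}(\check{V})$.

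Suppose, for contradiction, that $a\neq\pm b$, so $\alpha\beta\neq 0$. Algebraic dependence of $\phi,\psi$ on the two-dimensional $\check{V}$ would force the above 2-form to vanish, hence $g=g'$ in $K(\check{V})$. But $g\in K(V)$ and $g'\in K(V^{\sigma})$ are pulled back from distinct factors of $V\times V^{\sigma}$, so their equality forces both to be constants in $K$; and a constant logarithmic derivative $dv/v=c\,du$ on the projective closure of $V$ has no residues, so $v$ would have no zeros or poles and therefore be constant, contradicting the absolute freeness of $V$. Hence $b=\pm a$, so $M=(k,\pm k)$ with $k=a$. Surjectivity of the induced map $V\to M\cdot\check{V}$ is then immediate: for $M=(k,k)$ the image of $r(V)$ under $M$ is $\{(kz,w^{k}):(z,w)\in V\}=k\cdot V$, so $M\cdot\check{V}=k\cdot V$ and the induced map is just $(z,w)\mapsto k\cdot(z,w)$; the case $M=(k,-k)$ is symmetric, with image $k\cdot V^{\sigma}$.

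The main obstacle is making the differential-form calculus rigorous on $\check{V}$: one should reduce to the absolutely irreducible case, so that $K(V\times V^{\sigma})=K(V)\otimes_{K}K(V^{\sigma})$ is a field and $K(\check{V})$ a finite extension thereof, and then work on the smooth locus. The residue argument at the end is routine once one passes to the projective completion of the curve $V$.
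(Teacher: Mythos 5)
Your proof is correct, but after the common first step (invoking \prettyref{thm:rotund} to reduce to $M=(a|b)$ with $a,b\in\Z^{\times}$ and $V$ perfectly rotund) it proceeds by a genuinely different method from the paper. The paper transfers the statement to $\R$ and argues analytically: locally $x+iy$ and $ax+iby$ are holomorphic functions of $\rho\theta$ and $\rho^{a}\theta^{b}$, and the Cauchy--Riemann equations in polar coordinates force $a^{2}=b^{2}$. You instead stay purely algebraic: viewing $K(\check{V})$ as a finite extension of the fraction field of $K(V)\otimes_{K}K(V^{\sigma})$ (the same parametrisation underlying \prettyref{prop:abs-irred}), you compute $d\phi\wedge d\psi=\psi\,\alpha\beta\,(g'-g)\,du\wedge du'$ in the module of K\"ahler differentials, so that algebraic dependence of the two coordinate functions of $M\cdot\check{V}$ forces either $\alpha\beta=0$ (i.e.\ $a=\pm b$) or $g=g'$; linear disjointness of $K(V)$ and $K(V^{\sigma})$ over $K$ then makes $g$ a constant $c$, and the residue argument on the smooth projective model of $V$ (all residues of the exact form $c\,du$ vanish, so $v$ has no zeros or poles and is constant) contradicts absolute freeness. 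The wedge computation, the case analysis, and the surjectivity statement ($M\cdot\check{V}=k\cdot V$ or $\sigma(k\cdot V)$ with the map $P\mapsto k\cdot P$ resp.\ $k\cdot\sigma(P)$) are all correct. What your route buys is uniformity: it works directly over any algebraically closed field of characteristic zero with the given $\sigma$, with no reduction to $\R$ and no appeal to holomorphy, at the cost of the differential and residue machinery. The one input you assume rather than prove is that $u$ and $u'$ are algebraically independent on $\check{V}$, i.e.\ that $\dim\check{V}=2$ so that $\check{V}\dashrightarrow V\times V^{\sigma}$ is dominant and generically finite; this is exactly the Zariski density of the points $P\times P^{\sigma}$ used in \prettyref{prop:abs-irred} and the equality $\dim\check{V}=2\dim V$ already used in \prettyref{lem:new-sol-alg-map}, so it is at the paper's level of rigour, but it should be cited explicitly rather than folded into the phrase ``a finite extension thereof''.
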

\begin{proof}
By \prettyref{thm:rotund}, $M$ must be of the form $M=\left(a|b\right)$
for some non-zero integers $a,b$, and $\dim V=1$. It is sufficient
to verify the claim on $\R$.

Locally, the functions $x+iy$ and $ax+iby$ are holomorphic functions
of $\rho\theta$ and $\rho^{a}\theta^{b}$ respectively. If $\theta=e^{i\vartheta}$,
and if we choose the determination of $\theta^{b/a}$ as $e^{i\vartheta\frac{b}{a}}$,
we may also say that $(x+i\frac{b}{a}y)$ is locally a holomorphic
function of $\rho e^{i\vartheta b/a}$. The Cauchy-Riemann equations
in polar coordinates imply
\[
\frac{\partial y}{\partial\rho}=-\frac{1}{\rho}\cdot\frac{\partial x}{\partial\vartheta}\quad\textrm{and}\quad\frac{\frac{b}{a}\partial y}{\partial\rho}=-\frac{1}{\rho}\cdot\frac{\partial x}{\frac{b}{a}\partial\vartheta}.
\]
 This implies $a^{2}=b^{2}$, i.e., $a=\pm b$, as desired.

Moreover, $M\cdot\check{V}$ is either $k\cdot V$ or $\sigma(k\cdot V)$
for some integer $k$, and the map $V\to M\cdot\check{V}$ is clearly
surjective in both cases.
\end{proof}
Using surjectivity, we can repeat the proofs of Propositions \ref{prop:solutions-in-roots}
and \ref{prop:dense-solutions-in-roots} to obtain the following stronger
result.
\begin{prop}
\label{prop:solutions-surj}Let $V\subset\G^{n}$ be a Kummer-generic
rotund variety and $K_{E}$ a partial $E$-field such that $\sigma\circ E=E\circ\sigma$.
Let $N,P\in\Mc_{n,n}(\Z)$ be two square integer matrix of maximum
rank, $\overline{z}\in\dom(E)^{n}$, and $X:=\left(N|P\right)\cdot\check{V}\oplus\zEz{\overline{z}}$.

For $W\in\Rc(V)$, let $X_{W}\in\Rc(X)$ be the variety $(N|P)\cdot\check{W}\oplus\zEz{\frac{1}{q}\overline{z}}$
for some $q\in\Z^{\times}$. If $X$ is Kummer-generic, and for all
$W$ the induced map $W\to X_{W}$ is surjective, then $\Rc(V)$ is
really completely solved if and only if $\Rc(X)$ is.
\end{prop}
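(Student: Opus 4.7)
The plan is to mimic the proof of \prettyref{prop:dense-solutions-in-roots}, but to replace the open-subsystem device by the surjectivity hypothesis. First, for each $W\in\Rc(V)$ with associated $q_{W}\in\Z^{\times}$ and $X_{W}:=(N|P)\cdot\check{W}\oplus\zEz{\frac{1}{q_{W}}\overline{z}}\in\Rc(X)$, I would introduce the continuous semi-algebraic maps
\begin{align*}
\psi_{W}(Q) & :=\bigl((N|P)\cdot r(Q)\bigr)\oplus\zEz{\tfrac{1}{q_{W}}\overline{z}},\\
\tilde{\psi}_{W}(R) & :=(\tilde{N}|\tilde{P})\cdot r\bigl(R\oplus\zEz{-\tfrac{1}{q_{W}}\overline{z}}\bigr),
\end{align*}
with $\tilde{N},\tilde{P}\in\Mc_{n,n}(\Z)$ chosen so that $\tilde{N}\cdot N=\tilde{P}\cdot P=k\cdot\mathrm{Id}$ for some $k\in\N^{\times}$, both defined over a finite set $\overline{c}$ closed under $\sigma$. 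A direct computation gives $\tilde{\psi}_{W}\circ\psi_{W}=k\cdot\mathrm{Id}_{W}$, so $\psi_{W}$ is finite-to-one; the surjectivity hypothesis then upgrades $\psi_{W}(W)$ to the whole target $X_{W}$, and makes $\tilde{\psi}_{W}$ itself finite-to-one onto $k\cdot W$.

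Since $r$ commutes with scalar multiplication by integers on $\G$, Kummer-genericity of both $V$ and $X$ implies that $W\mapsto X_{W}$ is a bijection between $\Rc(V)$ and $\Rc(X)$, parametrised by the integer $q_{W}$ such that $q_{W}\cdot W=V$. A direct calculation, using $\sigma\circ E=E\circ\sigma$, shows that $\psi_{W}$ sends solutions of $W$ to solutions of $X_{W}$ and that $\tilde{\psi}_{W}$ sends solutions of $X_{W}$ to solutions of $k\cdot W$. Because both maps are finite-to-one and algebraic over $\overline{c}$, they preserve transcendence degrees on generic points over $\overline{c}$, and hence preserve real algebraic independence over $\overline{c}$.

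For the forward direction, given $Y\in\Rc(X)$, write $Y=X_{W}$ for the unique corresponding $W\in\Rc(V)$ and push an infinite really algebraically independent set of solutions of $W$ through $\psi_{W}$ to obtain the required set for $Y$. For the backward direction, given $W\in\Rc(V)$, Kummer-genericity of $V$ provides the unique $W'\in\Rc(V)$ with $k\cdot W'=W$; the associated $X_{W'}\in\Rc(X)$ has, by hypothesis, an infinite really algebraically independent set of solutions, and applying $\tilde{\psi}_{W'}$ (finite-to-one onto $k\cdot W'=W$) yields the required set of solutions of $W$.

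The hardest part is verifying the preservation of real algebraic independence by $\psi_{W}$ and $\tilde{\psi}_{W}$; this is essentially the same routine argument used implicitly in \prettyref{prop:dense-solutions-in-roots}, based on the fact that both maps induce finite-to-one algebraic morphisms $\check{W}\to\check{X}_{W}$ and $\check{X}_{W}\to k\cdot\check{W}$ defined over $\overline{c}$, which leave transcendence degrees of generic points (and tuples thereof) invariant.
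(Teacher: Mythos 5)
Your proposal is correct and follows essentially the same route as the paper, which itself only says to repeat the arguments of \prettyref{prop:solutions-in-roots} and \prettyref{prop:dense-solutions-in-roots} with surjectivity replacing the open-subsystem and density devices; your maps $\psi_{W},\tilde{\psi}_{W}$, the identity $\tilde{\psi}_{W}\circ\psi_{W}=k\cdot\mathrm{Id}$, the use of Kummer-genericity to match up $\Rc(V)$ and $\Rc(X)$, and the pull-back via a $k$-th root $W'$ are exactly the intended argument. The only cosmetic over-claims (uniqueness of the correspondence $W\mapsto X_{W}$, and the implicit fact $\dim X_{W}=\dim W$, which follows from the finite-to-one surjection) do not affect correctness.
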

Hence, the proof is much more similar to the one of \prettyref{sec:embedding}.
\begin{proof}[Proof of \prettyref{thm:order}]
 The proof is the same as the one of \prettyref{thm:embeds}, where
we replace the use of \prettyref{prop:solutions-in-roots} with \prettyref{prop:solutions-surj}.
In particular, we prove again that $\Lambda$ has cardinality $1$
rather than having countable cofinality.

After having produced a Zilber field $L_{E}$ on the algebraic closure
of $R$, we reduce the cardinality by taking the closure of some sets
of exponentially-algebraically independent elements of $R$.
\end{proof}
Note that since the resulting $K_{E}$ is an $E$-field satisfying
(STD), (SP) and (CCP), by \prettyref{sec:embedding} it embeds into
a Zilber field.\\

If we drop (1-SEC) as well, it is easy to produce a \emph{continuous}
function $E$.
\begin{thm}
\label{thm:continuous}For all saturated algebraically closed fields
$K$ of characteristic $0$ there is a function $E:K\to K^{\times}$
and an involution $\sigma$ commuting with $E$ such that $K_{E}$
satisfies (E), (LOG), (STD), (SP), and (CCP), and $E$ is a continuous
function with respect to the topology induced by $\sigma$.
\end{thm}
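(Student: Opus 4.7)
The plan is to adapt the construction of Section~\ref{sec:cons}, dropping entirely the operations \textsc{sol} and \textsc{roots}, since we no longer need to verify any form of exponential-algebraic closure. We fix a saturated real closed field $R = K^{\sigma}$, some transcendental $\omega \in R$, and a coherent system of roots of unity, and we build $E$ by transfinite back-and-forth on enumerations of $R \cup iR$ and $R_{>0} \cup \Sb^{1}$ using only the operations \textsc{domain} and \textsc{image}, as in Theorem~\ref{thm:embeds}.

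The key modification, mirroring Theorem~\ref{thm:order}, is that at each application of \textsc{domain} or \textsc{image} we use the saturation of $R$ to impose one extra order-theoretic requirement on the newly chosen value. We maintain as a loop invariant that $E_{\restriction D \cap R}$ is monotone increasing and that $E_{\restriction D \cap iR}$ lifts to a monotone function $D \cap iR \to iR$ through a fixed section of the quotient $iR \to iR / i\omega\Z \cong \Sb^{1}$. When at step \textsc{domain} we are given a new $\alpha \in R$, the partial type prescribing
\[
\sup\{E(z) : z \in D \cap R,\, z < \alpha\} < \beta < \inf\{E(z) : z \in D \cap R,\, z > \alpha\}
\]
together with $\beta$ transcendental over $F \cup \{\alpha\}$ is finitely satisfiable in $R$: the interval is non-empty by the inductive monotonicity, and transcendence over a small set excludes only countably many algebraic conditions inside any non-degenerate interval. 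By saturation the type is realised. The treatment of \textsc{image}, and of the analogues on $iR$, is entirely symmetric.

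The verification that the resulting $K_{E_{|K|}}$ satisfies (E), (LOG), (STD) and (SP) proceeds exactly as in Propositions~\ref{prop:obvious},~\ref{prop:std} and~\ref{prop:sp}, since those arguments depend only on what happens inside \textsc{domain} and \textsc{image}. The axiom (CCP) becomes immediate when combined with Lemma~\ref{lem:domain-image-no-new-sol}: a single application of \textsc{domain} or \textsc{image} never adds a new generic solution to a perfectly rotund variety defined over parameters already in the current domain, so the set of generic solutions of any such variety is frozen from the first step at which its parameters enter the domain. This handles (CCP) at successor and limit ordinals at once. The commutation $\sigma \circ E = E \circ \sigma$ is preserved step by step as in Proposition~\ref{prop:invo}. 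Finally, a monotone bijection between densely ordered sets is automatically continuous (a jump would force a non-empty interval disjoint from its image, contradicting bijectivity on a dense codomain); this yields continuity of $E_{\restriction R}$ and, via the lift, of $E_{\restriction iR}$, and the decomposition $E(x+iy) = E(x)E(iy)$ propagates continuity to all of $K$ in the topology induced by $\sigma$.

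The main obstacle is the interplay between the transcendence condition, required for (SP) and for the strong-embedding property of each extension, and the continuity condition, which forces the new values to lie in a prescribed interval. Saturation of $R$ is the single ingredient that unlocks this: the continuity interval is non-degenerate precisely because $E$ has been monotone up to this step, and inside any non-degenerate interval of a saturated real closed field there are elements transcendental over any preassigned set of strictly smaller cardinality. This is exactly why saturation of $K$ appears in the hypothesis.
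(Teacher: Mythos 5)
Your proposal is correct and follows essentially the same route as the paper's sketch: drop \textsc{sol} and \textsc{roots}, run only \textsc{domain} and \textsc{image} over a saturated real closed field, use saturation to realise simultaneously the transcendence condition and the cut condition on $R$ (resp.\ the ``counterclockwise'' condition on $[0,i\omega)\to\Sb^{1}$, which is how the paper phrases your lift invariant), and deduce continuity from the fact that a strictly monotone (resp.\ circular-order preserving) surjection between these ordered sets is automatically continuous, with $E(x+iy)=E(x)E(iy)$ propagating continuity to $K$. The one step you state too quickly is (CCP): \prettyref{lem:domain-image-no-new-sol} freezes the solution set of a variety only once its parameters are algebraic over the current domain, so it says nothing about countability at the very stage where the parameters first appear; as in the paper's \prettyref{prop:ccp}, successor stages still require \prettyref{prop:pres-ccp} (via the reduction of \prettyref{prop:pres-ccp-field}), and your freezing observation is precisely what replaces the dense-solutions machinery at limit stages.
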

The trick is again to start with a saturated real closed field $R$,
and exploit the saturation in the operations \textsc{domain} and \textsc{image}:
we make sure at once that $E:K^{\sigma}\to K^{\sigma}$ is monotone
and that $E:[0,\omega)\to\Sb^{1}(K)$ moves `counterclockwise'.
This is sufficient to obtain that $E$ is continuous.

As there is no operation \textsc{roots} in this case, the axioms (SP)
and (CCP) are even easier to verify. Again, the resulting structures
embed into Zilber fields.

All of this shows quite well the two obstructions that our method
is not able to overcome: first of all, if $V\subset\G^{n}$, it is
not always true that the topological dimension of $\left(\begin{array}{c|c}
\mathrm{Id} & 0\end{array}\right)\cdot\check{V}$ is $2n$, so the argument of \prettyref{lem:order-point} does not
work in the general case, and we cannot be sure that we are always
able to produce an order-preserving exponential function. This prevents
us from getting continuity as well.

However, we have also seen that if we discover that the map $V\to M\cdot\check{V}$
is surjective when $\dim V>1$ (for example, $M$ could be actually
forced to be of the form $(N|\pm N)$, as in \prettyref{prop:rotund-1-dim}),
then \prettyref{prop:solutions-surj} would apply in all cases; hence,
we would not need the density arguments and the second countability,
as the work of \prettyref{sec:embedding} would be sufficient to get
(CCP) without further complications. In particular, it would be possible
to find involutions on Zilber fields of arbitrary cardinalities, using
arbitrary real closed fields, and we would find models outside of
the class described in \prettyref{sec:involuntary}. Work is in progress
about finding such a generalisation.

\addtocontents{toc}{\SkipTocEntry}

\end{document}